\definecolor{darkred}{rgb}{0.5,0,0}
\definecolor{darkgreen}{rgb}{0,0.5,0}
\definecolor{darkblue}{rgb}{0,0,0.5}
\numberwithin{equation}{section}
\numberwithin{figure}{section}
\def\@tocline#1#2#3#4#5#6#7{\relax
  \ifnum #1>\c@tocdepth % then omit
  \else
    \par \addpenalty\@secpenalty\addvspace{#2}%
    \begingroup \hyphenpenalty\@M
    \@ifempty{#4}{%
      \@tempdima\csname r@tocindent\number#1\endcsname\relax
    }{%
      \@tempdima#4\relax
    }%
    \parindent\z@ \leftskip#3\relax \advance\leftskip\@tempdima\relax
    \rightskip\@pnumwidth plus4em \parfillskip-\@pnumwidth
    #5\leavevmode\hskip-\@tempdima
      \ifcase #1
       \or\or \hskip 1em \or \hskip 2em \else \hskip 3em \fi%
      #6\nobreak\relax
    \dotfill\hbox to\@pnumwidth{\@tocpagenum{#7}}\par
    \nobreak
    \endgroup
  \fi}
\newtheorem{thm}{Theorem}[section]
\newtheorem{cor}[thm]{Corollary}
\newtheorem{conjecture}[thm]{Conjecture}
\newtheorem{prop}[thm]{Proposition}
\newtheorem{lemma}[thm]{Lemma}
\theoremstyle{definition}
\newtheorem{defn}[thm]{Definition}
\theoremstyle{remark}
\newtheorem{rem}[thm]{Remark}
\newtheorem{hyp}[thm]{Hypothesis}
\newcounter{notes}
{\end{list}}
\newcommand\qu{/\kern-.7ex/} % Categorical quotients
\renewcommand{\setminus}{\smallsetminus}
\newcommand{\immerse}{\looparrowright}
\newcommand{\beq}{\begin{equation}}
\newcommand{\eeq}{\end{equation}}
\newcommand{\beqn}{\begin{equation*}}
\newcommand{\eeqn}{\end{equation*}}
\newcommand{\ov}{\overline}
\newcommand{\mb}{\mathbb}
\newcommand{\mc}{\mathcal}
\newcommand{\mf}{\mathfrak}
\newcommand{\A}{{\bm A}}
\renewcommand{\a}{{\bm a}}
\newcommand{\M}{{\bm M}}
\newcommand{\N}{{\bm N}}
\renewcommand{\P}{{\bm P}}
\renewcommand{\H}{{\bm H}}
\newcommand{\C}{{\bm C}}
\newcommand{\tr}{{\rm tr}}
\newcommand{\uds}[1]{\underline{\smash{#1}}}
\title[Compactness for ASD Equation with Translation Symmetry]{A Compactness Theorem for $SO(3)$ Anti-Self-Dual Equation with Translation Symmetry}
\begin{document}

\author[Xu]{Guangbo Xu}
\address{
Department of Mathematics\\
Texas A{\&}M University\\
College Station, TX 77843 USA
}
\email{guangboxu@math.tamu.edu}

\date{\today}

%------------------TOC----------------------

\setcounter{tocdepth}{1}

% set the table of contents up to sections

\maketitle

\begin{abstract}
Motivated by the Atiyah--Floer conjecture, we consider $SO(3)$ anti-self-dual instantons on the product of the real line and  a three-manifold with cylindrical end. We prove a Gromov--Uhlenbeck type compactness theorem, namely, any sequence of such instantons with uniform energy bound has a subsequence converging to a type of singular objects which may have both instanton and holomorphic curve components. In particular, we prove that holomorphic curves that appear in the compactification must satisfy the Lagrangian boundary condition, a claim which has been long believed in the literature. This result is the first step towards constructing a natural bounding cochain proposed by Fukaya for the $SO(3)$ Atiyah--Floer conjecture. 
\end{abstract}

\tableofcontents

\section{Introduction}

\subsection{The Atiyah--Floer conjecture}

In 1980s Floer \cite{Floer_instanton}\cite{Floer_intersection}\cite{Floer_CMP} introduced several important invariants of different types of geometric objects. These invariants are now generally called the Floer (co)homology. The Atiyah--Floer conjecture \cite{Atiyah_Floer_conjecture} asserts that two such invariants, the {\it instanton Floer homology} of a three-dimensional manifold and the {\it Lagrangian intersection Floer homology} associated to a splitting of the three-manifold, are isomorphic. This conjecture has become a central problem in the field of symplectic geometry, gauge theory, and low-dimensional topology. The principle underlying the Atiyah--Floer conjecture has also motivated a number of important constructions in low-dimensional topology, such as the Heegaard--Floer homology \cite{OZ}. There are two principal versions of the Atiyah--Floer conjecture, the $SU(2)$ case and the $SO(3)$ case, corresponding respectively to the two choices of the gauge group. Despite many progresses made in recent years, the general cases of both versions are still open. On the level of Euler characteristics, the Atiyah--Floer conjecture was proved by Taubes \cite{Taubes_1990}.

Let us briefly review Atiyah's intuitive argument \cite{Atiyah_Floer_conjecture} leading to the identification of the two Floer homologies. Let $M$ be a closed oriented three-manifold. Let $\Sigma \subset M$ be an embedded surface separating $M$ into two pieces $M^-$ and $M^+$ which share the common boundary $\Sigma$ (see Figure \ref{figure1}). Consider a $G$-bundle $P \to M$ where $G$ is either $SU(2)$ or $SO(3)$. The moduli space of flat connections on $P|_\Sigma$, denoted by $R_\Sigma$, is naturally a (singular) symplectic manifold. Inside $R_\Sigma$ there are two Lagrangian submanifolds $L_{M^-}$ and $L_{M^+}$ associated to the splitting, i.e., the set of gauge equivalence classes of flat connections on $\Sigma$ which can be extended to a flat connection on $P|_{M^+}$ resp. $P|_{M^-}$. The generators of the instanton Floer chain complex, which are gauge equivalence classes of flat connections on $M$, correspond naturally to intersections of the two Lagrangian submanifolds. These intersection points are generators of the Lagrangian Floer chain complex. On the other hand, the differential map of the instanton Floer homology $I(M, P)$ is defined by counting solutions to the anti-self-dual equation (the ASD equation) on the product ${\bm R} \times M$ of the real line ${\bm R}$ and $M$. This equation depends the metric on $M$ while the resulting homology is independent of the metric. If one ``stretches the neck,'' namely one chooses a family of metrics $g_T$ on $M$ such that a fixed neighborhood of $\Sigma$ is isometric to $[-T, T] \times \Sigma$, then one expects that solutions to the ASD equation converge as $T \to \infty$ to holomorphic maps $u: [-1, 1] \times {\bm R} \to R_\Sigma$ with boundary condition $u(\{\pm 1\} \times {\bm R}) \subset L_{M^\pm}$---the counting of these holomorphic maps defines the differential map of the Lagrangian intersection Floer chain complex with resulting homology group $HF(L_{M^-}, L_{M^+})$. The correspondence between instantons and holomorphic strips shows that the two Floer chain complexes, and hence the homology groups, should be isomorphic.

\begin{figure}[ht]
\centering
\includegraphics{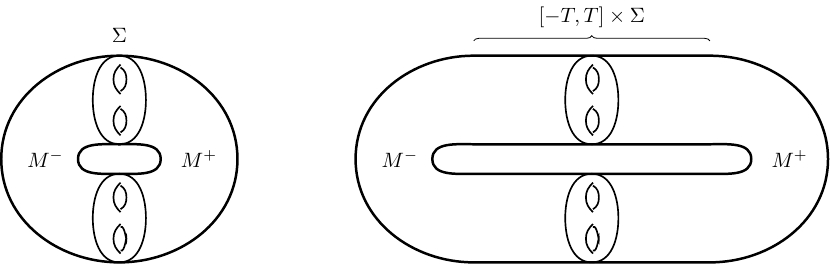}

\caption{Splitting and neck-stretching along an embedded surface.}

\label{figure1}

\end{figure}

This paper is motivated by the $SO(3)$ case of the Atiyah--Floer conjecture. We first remark on a crucial difference between the $SO(3)$ case and the $SU(2)$ case. When $G = SU(2)$, the moduli space of flat connections over a surface has singularities corresponding to reducible connections. This fact makes the Lagrangian Floer homology $HF(L_{M^-}, L_{M^+})$ difficult to define (see \cite{Manolescu_Woodward} for an equivariant construction of this Floer homology). When $G = SO(3)$, the moduli space $R_\Sigma$ of flat connections on a nontrivial $SO(3)$-bundle over a surface is smooth. Consequently the Lagrangian Floer homology can be defined in the traditional way and the corresponding Atiyah--Floer conjecture has been proved in certain cases. For example, Dostoglou--Salamon \cite{Dostoglou_Salamon} proved the $SO(3)$ case for mapping cylinders. 

Another motivation of this paper comes from the original neck-stretching argument sketched above. As people become more interested in the alternative approach using Lagrangian boundary conditions for the instanton equation (see \cite{Salamon_ICM} \cite{Fukaya_1997, Fukaya_1998} \cite{Wehrheim_CMP_1, Wehrheim_CMP_2, Wehrheim_2005} \cite{Salamon_Wehrheim_2008} \cite{Fukaya_2015} \cite{Daemi_Fukaya_2018}), the neck-stretching argument is more or less abandoned. Furthermore, the neck-stretching argument can provide a direct comparison between the moduli spaces, potentially leading to Atiyah--Floer type correspondences for more refined invariants. We would like to see how far Atiyah's original idea can go beyond the situation of \cite{Dostoglou_Salamon}. Meanwhile, the analytic problems involved in the neck-stretching limit have their own interests and deserve to be explored. 

\subsection{Bounding cochain on the symplectic side}

A more direct motivation of this paper is a conjecture of Fukaya \cite[Conjecture 6.7]{Fukaya_2018}. This conjecture is related to an additional complexity in the study of the Atiyah--Floer conjecture, that is, the regularity of the Lagrangian submanifolds. From now on we assume $G = SO(3)$, which grants a smooth moduli space $R_\Sigma$ of flat connections on the nontrivial $SO(3)$-bundle over the surface $\Sigma$. If both Lagrangians $L_{M^+}$ and $L_{M^-}$ are embedded and intersect transversely, then one can define $HF(L_{M^-}, L_{M^+})$ in a standard way as in \cite{Floer_intersection} \cite{Oh_1993, Oh_1993_2}, as the two Lagrangians are both monotone. In general, however, the natural maps $L_{M^\pm} \to R_\Sigma$ are not embeddings. After a generic perturbation, one can only achieve immersions $L_{M^\pm} \immerse R_\Sigma$ which satisfy the monotonicity condition in a weak sense. In this situation we need to apply the more complicated construction of immersed Lagrangian Floer homology developed by Akaho--Joyce \cite{Akaho_Joyce_2010} with the appearance of {\it bounding cochains}. A general Lagrangian immersion may not have bounding cochains; even it has, the Floer homology involving the immersed Lagrangian depends on the choice of a bounding cochain.

The necessity of considering bounding cochains in the Atiyah--Floer conjecture can also be seen via a closer look at the neck-stretching process. As one stretches the neck, the energy of ASD instantons can be distributed in three parts, ${\bm R} \times M^-$, ${\bm R} \times M^+$, and ${\bm R} \times [-T, T] \times \Sigma$ (corresponding to the left, the right, and the middle parts of the second picture of Figure \ref{figure1}). The energy stored on the left and on the right can form ASD instantons on ${\bm R} \times M_\infty^-$ and ${\bm R} \times M_\infty^+$, where $M_\infty^\pm$ is the completion of $M^\pm$ by adding the cylindrical end $\Sigma \times [0, +\infty)$. On the other hand, the energy stored in the middle part produces holomorphic strips in $R_\Sigma$ as predicted by the Atiyah--Floer conjecture. Hence a general limiting object could be a complicated configuration having components corresponding to either instantons over ${\bm R} \times M_\infty^\pm$ or holomorphic strips (see Figure \ref{figure2}); we ignore bubbling of instantons over ${\bm R}^4$, instantons over $\C \times \Sigma$, and holomorphic spheres, as they happen in high codimensions. While in the case when $L_{M^\pm}$ are immersed, instantons over ${\bm R} \times M_\infty^\pm$ may give nontrivial contributions which happen in codimension zero.

\begin{figure}[ht]
\centering

\includegraphics{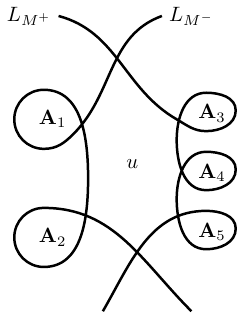}

\caption{A typical limiting configuration in the adiabatic limit: $u$ is a holomorphic strip in $R_\Sigma$ with boundary in $L_{M^-}$ and $L_{M^+}$, $\A_1$ and $\A_2$ are ASD instantons over ${\bm R} \times M_\infty^-$, $\A_3$, $\A_4$, $\A_5$ are ASD instantons over ${\bm R} \times M_\infty^+$. The limits of the instantons $\A_i$ are double points of the Lagrangian immersions $L_{M^\pm}$.}

\label{figure2}
\end{figure}

The geometric picture discussed above suggests that one has to modify the Lagrangian Floer chain complex to match with the instanton Floer chain complex. The differential map of the modified chain complex counts not only usual holomorphic strips, but also strips with additional boundary constraints associated with the instantons over ${\bm R} \times M_\infty^\pm$. This kind of boundary constraints can be regarded as bounding cochains. As introduced in \cite{FOOO_Book}, for any cochain model chosen for the Lagrangian Floer theory (such as Morse cochains), a bounding cochain is a cochain of odd degree which can cancel all contributions of disk bubbling; these disk bubbles may obstructed $d^2 = 0$ in the original Floer chain complex. In a symplectic manifold, if $L_-$, $L_+$ are Lagrangian submanifolds, then a choice of a pair of bounding cochains $b_-$, $b_+$ leads to a deformed complex $CF((L_-, b_-), (L_+, b_+))$. The differential map of the deformed complex counts holomorphic strips with boundary ``insertions,'' i.e., points on the two boundary components satisfying geometric constraints prescribed by the cochains $b_-$ and $b_+$ (see Figure \ref{figure3}).

\begin{figure}[ht]
\centering
\includegraphics{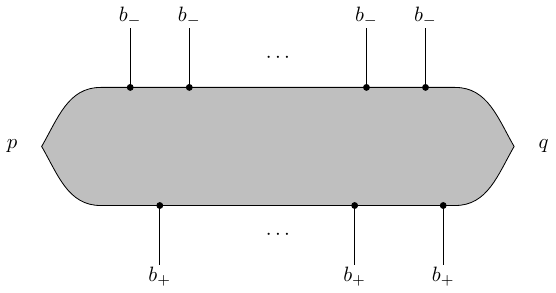}

\caption{A holomorphic strip with boundary insertions lying in constraints given by the bounding cochains. }
\label{figure3}
\end{figure}

The following conjecture of Fukaya summarizes the above discussion and directly motivates the work of the current paper. 

\begin{conjecture}\cite[Conjecture 6.7]{Fukaya_2018}\label{conj11}
Let $M$ be a three-manifold with a cylindrical end isometric to $\Sigma \times [0, +\infty)$ and $P \to M$ be an $SO(3)$-bundle whose restriction to every connected component of $\Sigma$ is nontrivial. Let $L_M$ be the moduli space of flat connections over $M$. Suppose the natural map $L_M\to R_\Sigma$ is an immersion with transverse double points. Then ``counting'' instantons on ${\bm R} \times M$ defines a bounding cochain
\beqn
b_M  \in CF (L_M). 
\eeqn
of the $A_\infty$-algebra associated to the immersed Lagrangian $L_M \immerse R_\Sigma$. 
\end{conjecture}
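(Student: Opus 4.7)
The plan is to define $b_M$ by counting, with signs, the rigid finite-energy $SO(3)$ ASD instantons on $\bm R \times M$ modulo $\bm R$-translation, and then to verify the Maurer--Cartan equation for the Akaho--Joyce $A_\infty$-algebra \cite{Akaho_Joyce_2010} on $CF(L_M)$ by analysing the boundary of one-dimensional moduli spaces; the central analytic input for that analysis is the compactness theorem established in the present paper. In the Akaho--Joyce model one writes
\[
CF(L_M) \;=\; C^*(L_M) \;\oplus\; \bigoplus_{(\alpha_-,\alpha_+)} \bm Z\bigl\langle (\alpha_-,\alpha_+)\bigr\rangle,
\]
where the sum runs over ordered transverse double points of $L_M \immerse R_\Sigma$ and $C^*(L_M)$ is a chain model on $L_M$ (Morse cochains, say). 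For each pair $(\alpha_-,\alpha_+)$ let $\mc M(\alpha_-,\alpha_+)$ denote the moduli space of gauge-equivalence classes of finite-energy ASD connections on $\bm R \times M$ asymptotic to $\alpha_\pm$ as the $\bm R$-coordinate tends to $\pm \infty$. After achieving transversality, which I expect to require holonomy perturbations supported in the cylindrical end $\Sigma \times [0, \infty)$ together with an abstract perturbation scheme, this is a smooth manifold whose dimension is given by a relative spectral flow. Define
\[
b_M \;=\; \sum_{(\alpha_-,\alpha_+)} \#\bigl(\mc M(\alpha_-,\alpha_+)_0 / \bm R\bigr)\,(\alpha_-,\alpha_+) \;+\; \textrm{(Morse-theoretic terms on } L_M\textrm{)},
\]
where $\mc M_0$ denotes the component of formal dimension one (and so zero after quotienting by translation).

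To verify $\sum_{k \geq 0} m_k(b_M,\ldots,b_M) = 0$, consider a one-dimensional component of $\mc M(\alpha_-,\alpha_+) / \bm R$. By the main compactness theorem of the present paper, every sequence in it has a subsequence converging in the Gromov--Uhlenbeck sense to a nodal configuration whose components are ASD instantons over $\bm R \times M$ and $J$-holomorphic disks in $R_\Sigma$ with boundary on the immersed $L_M$ and corner insertions at double points, matched by their asymptotes. Each such degeneration should correspond to a single term in the Maurer--Cartan expansion: broken instanton configurations on $\bm R \times M$ correspond to concatenations of double-point generators, while a disk bubble with $k$ corners at double-point asymptotes corresponds to $m_k(b_M, \ldots, b_M)$; the $m_0$ term accounts for unasymptotic disks, and the Morse-theoretic part of $m_1$ is produced by trajectories with diagonal asymptotes. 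An inverse gluing theorem then realises each stable nodal limit as a genuine boundary endpoint of the one-dimensional moduli space, after which the fact that a compact oriented $1$-manifold has signed boundary count zero yields the Maurer--Cartan equation.

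The main obstacles will be threefold. First, establishing transversality over the noncompact four-manifold $\bm R \times M$ for all relevant asymptotic pairs $(\alpha_-,\alpha_+)$, particularly those lying on the singular locus of the evaluation $L_M \to R_\Sigma$, is delicate, and classical holonomy perturbations may need to be supplemented by a virtual/Kuranishi framework. Second, and in my view the hardest step, is developing the gluing theorem that inverts the compactness statement in the adiabatic regime: a long instanton neck on $[-T, T]\times\Sigma$ must be glued to a $J$-holomorphic disk in $R_\Sigma$ with immersed boundary and double-point insertions, extending the Dostoglou--Salamon adiabatic gluing \cite{Dostoglou_Salamon} to the open-string setting. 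Third, the instanton moduli spaces must be coherently oriented via their determinant line bundles and the resulting signs shown to match the Akaho--Joyce $A_\infty$-signs; the hypothesis that $P|_\Sigma$ is nontrivial (so that $R_\Sigma$ is a smooth symplectic manifold) is used here. Bubbling on $\bm R^4$, on $\bm C \times \Sigma$, and of holomorphic spheres in $R_\Sigma$ all occur in codimension at least two and so do not affect the $1$-dimensional boundary analysis, in accordance with the discussion in the introduction.
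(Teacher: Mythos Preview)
The paper does not prove this statement: Conjecture~\ref{conj11} is explicitly presented as an open conjecture of Fukaya, and the paper's contribution is only the compactness theorem (Theorem~\ref{thm14}), which it describes as ``the first step towards the resolution of Conjecture~\ref{conj11}.'' So there is no proof in the paper to compare against; your proposal is a program outline for a result the paper does not claim to establish.

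That said, your outline is broadly consistent with the strategy the paper envisions, and you correctly identify the compactness theorem here as the central analytic input and the remaining obstacles (transversality, adiabatic gluing, orientations) as genuinely open. Two small points of friction with the paper's own discussion: first, the paper notes (paragraph after Conjecture~\ref{conj11}) that by a weak monotonicity argument of Fukaya the bounding cochain $b_M$ is expected to be a linear combination of \emph{only} double points, so your ``Morse-theoretic terms on $L_M$'' should in fact vanish; second, the holonomy perturbations used to achieve the transversality in Hypothesis~\ref{hyp28} are supported \emph{away} from the boundary (see the remark following Hypothesis~\ref{hyp28}), not in the cylindrical end as you suggest. Neither of these is fatal to the outline, but they indicate places where the details would need adjustment.
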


Indeed, for the situation of immersed Lagrangians considered in \cite{Akaho_Joyce_2010}, the cochain model has summands corresponding to the double points of the immersion. Hence {\it a priori} a bounding cochain is in general a linear combination of ordinary cochains on $L_M$ and double points of the immersion. As observed by Fukaya \cite{Fukaya_2018}, due to a weak version of monotonicity, the bounding cochain $b_M$ in the above conjecture is expected to be a linear combination of only double points. 

A refined version of the $SO(3)$ Atiyah--Floer conjecture can be stated as follows. 

\begin{conjecture} [The $SO(3)$ Atiyah--Floer conjecture]\label{conj12}
For a closed three-manifold $M$ with a suitable $SO(3)$-bundle $P \to M$ and a suitable splitting $M = M^- \cup M^+$, there is a natural isomorphism of abelian groups
\beq\label{eqn11}
I (M, P) \cong HF( (L_{M^-}, b_{M^-} ), (L_{M^+}, b_{M^+} ) ).
\eeq
\end{conjecture}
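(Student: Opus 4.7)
The plan is to prove Conjecture \ref{conj12} by executing Atiyah's original neck-stretching program in its most refined form, upgraded to handle the immersed Lagrangians $L_{M^\pm} \immerse R_\Sigma$ via bounding cochains. Fix a one-parameter family of metrics $g_T$ on $M$ in which a tubular neighborhood of $\Sigma$ is isometric to $[-T, T] \times \Sigma$. For each $T$ the instanton Floer chain complex $CI(M, P; g_T)$ has homology $I(M, P)$, independent of $T$. The goal is to construct, for $T$ sufficiently large, a chain map
\beqn
\Phi_T \colon CI(M, P; g_T) \longrightarrow CF\bigl((L_{M^-}, b_{M^-}), (L_{M^+}, b_{M^+})\bigr)
\eeqn
and show it is a quasi-isomorphism, where the bounding cochains $b_{M^\pm}$ are those produced by Conjecture \ref{conj11} (whose construction is precisely what the compactness theorem of this paper is a first step toward).

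The definition of $\Phi_T$ is the usual neck-stretching count: an ASD trajectory on $({\bm R} \times M, dt^2 + g_T)$ connecting two flat connections on $M$. The heart of the argument is to identify, via compactness and gluing, the limit as $T \to \infty$ of these moduli spaces with the moduli spaces counted by the differential of the deformed Floer complex on the right. First I would prove the appropriate compactness theorem, generalizing the translation-symmetric version of this paper: any sequence of finite-energy ASD instantons on ${\bm R} \times M$ with $T = T_n \to \infty$ has a subsequence converging to a hybrid configuration as in Figure \ref{figure2}, consisting of a holomorphic strip $u \colon [-1,1] \times {\bm R} \to R_\Sigma$ with boundary on $L_{M^\pm}$, together with finitely many ASD instantons $\A_i$ on ${\bm R} \times M_\infty^\pm$ whose asymptotic flat connections at the cylindrical end reproduce the limits of $u$ along the corresponding boundary component, jumping at finitely many marked boundary points that correspond to double points of the Lagrangian immersion. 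Such configurations are exactly the terms contributing to the differential of $CF((L_{M^-}, b_{M^-}), (L_{M^+}, b_{M^+}))$ via the Akaho--Joyce immersed formalism with the bounding cochains of Conjecture \ref{conj11} inserted. Next I would prove the matching gluing theorem: any rigid hybrid configuration is the limit of a unique (up to sign) family of genuine ASD instantons for $T$ large. Together these identify the two rigid moduli spaces, hence $\Phi_T$ is a bijective count on generators. Finally, by examining ends of one-dimensional moduli spaces (and comparing instanton breakings with strip breakings, edge contractions, and boundary bubblings), $\Phi_T$ is a chain map, and an inductive filtration by energy together with an acyclic-model argument upgrades it to a quasi-isomorphism.

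The principal obstacle is the gluing step in the presence of holomorphic strip components meeting instanton components at double points of an immersed Lagrangian. One must match a Lagrangian boundary condition in $R_\Sigma$ for $u$ to an asymptotic flat connection on a three-manifold cylindrical end, precisely at points where the Lagrangian fails to be embedded; this is a genuinely new analytic problem, blending the Dostoglou--Salamon adiabatic limit analysis with corner-type degenerations. A closely related technical issue is that the Akaho--Joyce moduli spaces supporting $b_{M^\pm}$ exist only as virtual fundamental cycles, so $\Phi_T$ must itself be constructed at the level of Kuranishi (or polyfold) perturbations, with perturbation data chosen \emph{coherently} with those used to define $b_{M^\pm}$; verifying this coherence, and the resulting compatibility of orientations, is likely to be the most delicate part of the entire program. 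Subsidiary, but still nontrivial, difficulties include ruling out (or accounting for) instanton bubbles on ${\bm R}^4$ and on $\C \times \Sigma$, sphere bubbles in $R_\Sigma$, and reducibles (avoided here by nontriviality of $P|_\Sigma$), as well as the standard transversality and monotonicity bookkeeping required to make the Floer homologies on both sides well-defined in the first place.
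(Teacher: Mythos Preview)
The statement you are attempting to prove is labeled \emph{Conjecture} in the paper, and the paper contains no proof of it. Conjecture~\ref{conj12} is stated purely as motivation: the paper's actual contribution is Theorem~\ref{thm14}, a compactness theorem which the author explicitly describes as ``the first step towards the resolution of Conjecture~\ref{conj11}.'' There is therefore no proof in the paper to compare your proposal against.

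Your proposal is not a proof but a program outline, and you acknowledge as much. Two points are worth flagging explicitly. First, your construction of $\Phi_T$ takes the bounding cochains $b_{M^\pm}$ of Conjecture~\ref{conj11} as input, but that conjecture is itself open; the compactness theorem of this paper is only the first ingredient toward defining $b_{M^\pm}$, and the remaining steps (gluing, transversality/virtual techniques, orientations, verification of the Maurer--Cartan equation) are not carried out anywhere. Second, the ``matching gluing theorem'' you invoke---gluing a holomorphic strip in $R_\Sigma$ to an instanton on ${\bm R}\times M_\infty^\pm$ across a corner where the Lagrangian is only immersed---is precisely the hard new analysis that does not yet exist in the literature, as you correctly identify. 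So while your sketch accurately reflects the intended strategy behind the Atiyah--Floer conjecture and is consistent with the picture the paper draws (Figures~\ref{figure2} and~\ref{figure3}), it should be understood as a roadmap for a substantial research program, not as a proof.
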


\begin{rem}
In \cite{Fukaya_2015, Fukaya_2018} a different strategy of proving the existence of a bounding cochain was sketched. Instead of considering instantons over ${\bm R} \times M$ where $M$ has a cylindrical end, consider the ASD equation over ${\bm R} \times M_0$ where $M_0$ is the corresponding manifold with boundary, imposing the boundary condition given by a testing Lagrangian $L \subset R_\Sigma$. The advantage of this approach is that it can avoid certain difficult analysis associated to the ASD equation on ${\bm R} \times M$, while a simpler moduli space is enough to produce a chain map. However, this approach lacks a direct comparison between the moduli space of instantons and moduli space of holomorphic strips as indicated by the straightforward neck-stretching argument. Such a comparison between moduli spaces can be useful in establishing relations between more refined invariants. The approach of using Lagrangian boundary conditions have also been adopted to solve the Atiyah--Floer conjecture, see \cite{Salamon_ICM} \cite{Fukaya_1997, Fukaya_1998, Fukaya_2015} \cite{Wehrheim_2005, Wehrheim_CMP_1, Wehrheim_CMP_2} \cite{Salamon_Wehrheim_2008} \cite{Daemi_Fukaya_2018}.
\end{rem}

\subsection{Main results of this paper}

The purpose of this paper is to take the first step towards the resolution of Conjecture \ref{conj11}, namely, to compactify the moduli space of ASD instantons over ${\bm R} \times M$ where $M$ is a three-manifold with cylindrical end. More precisely, given a sequence of ASD instantons $\A_i$ over ${\bm R} \times M$ with uniformly bounded energy, we study the possible limiting configurations as $i \to \infty$. There are several phenomena preventing $\A_i$ from converging to an ASD instanton. (i) As in the usual situation of the ASD equation, energy may concentrate in small scales and bubble off instantons on ${\bm R}^4$. (ii) Since ${\bm R} \times M$ is noncompact, the energy may concentrate at different regions of the same scale which move apart from each other. This is similar to the situation in Morse theory, where a sequence of gradient lines can converge to a broken gradient line. There can also be instantons over $\C \times \Sigma$ appearing as energy may escape in the noncompact direction of $M$. (iii) A nontrivial amount of energy may escape from any finite region of ${\bm R} \times M$ and spread over larger and larger domains; after rescaling such energy form either holomorphic spheres or holomorphic disks in $R_\Sigma$. In general a combination of these phenomena can happen in the limit. The hierarchy of different speeds of energy concentration or spreading is captured by the combinatorial type of the limiting object described by a tree. See Figure \ref{figure4} for a typical configuration of the limiting object, which we will call stable scaled instantons.

\begin{figure}[ht]
\centering

\includegraphics{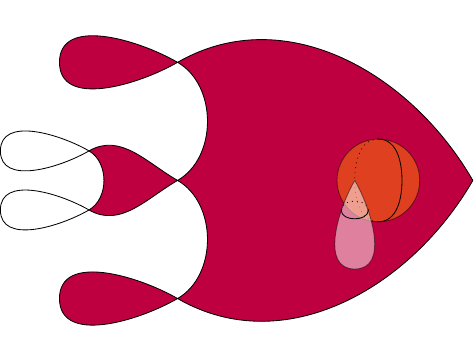}

\caption{A typical stable scaled instanton with eight components. The dark regions are holomorphic curve components while the corners are where the curves meet the double points of immersed Lagrangian. The white regions are instanton components: the two on the left are instantons over ${\bm R} \times M$ and the one on the right (the teardrop) is an instanton over ${\bm C} \times \Sigma$. }
\label{figure4}
\end{figure}

One can see that the limiting configurations are very similar to objects appearing in the adiabatic limit of the {\it symplectic vortex equation} (see \cite{Gaio_Salamon_2005} and \cite{Ziltener_thesis, Ziltener_book} for the closed case and \cite{Wang_Xu} \cite{Woodward_Xu} for the case with Lagrangian boundary condition). Combinatorially these objects are also similar to certain objects appearing in the compactification of pseudoholomorphic quilts studied by \cite{Wehrheim_Woodward_2015} and \cite{Bottman_Wehrheim_2018}. To describe such limiting objects, we need to define certain singular configurations which have components corresponding to energy concentrations in different scales (see \cite[Section 4]{Wang_Xu} and Section \ref{section6} of the current paper). Having this picture in mind, in this paper we define the notion of {\it stable scaled instantons} (see Definition \ref{defn62}) as the expected limiting objects, and a Gromov--Uhlenbeck type convergence (see Definition \ref{defn63}). Then we can state our main theorem as follows. 

\begin{thm}\label{thm14}
Let $M$ be a three-manifold with cylindrical end and $P \to M$ be an $SO(3)$-bundle. Suppose $(M, P)$ satisfies assumptions of Conjecture \ref{conj11}. Then given a sequence of anti-self-dual instantons on ${\bm R} \times P \to {\bm R} \times M$ with uniformly bounded energy, there is a subsequence which converges modulo gauge transformation and translation to a stable scaled instanton (in the sense of Definition \ref{defn63}). 
\end{thm}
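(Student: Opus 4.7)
The plan is to run a multi-scale extraction procedure in the spirit of Uhlenbeck--Gromov compactness, tailored to the four distinct noncompact phenomena the introduction enumerates: (i) point concentration of curvature producing instanton bubbles on $\mathbf{R}^4$, (ii) translation in the $\mathbf{R}$-direction producing broken trajectories over $\mathbf{R} \times M_\infty$, (iii) escape of energy into the cylindrical end $\Sigma \times [0,+\infty)$ producing instantons over $\mathbf{C} \times \Sigma$, and (iv) diffuse spreading of energy producing holomorphic strips/spheres in $R_\Sigma$ via an adiabatic limit. The overall architecture will mirror the vortex compactifications of \cite{Gaio_Salamon_2005, Ziltener_thesis, Wang_Xu, Woodward_Xu}: I fix an a priori energy bound $E_0$ and build the limiting stable scaled instanton inductively by exhausting the total energy one component at a time, each component being recorded as a vertex of the tree in Definition \ref{defn62}.

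First I would establish the basic local compactness. Using Uhlenbeck's weak compactness theorem on relatively compact open sets $K \Subset \mathbf{R} \times M$ together with the ASD equation and the $\varepsilon$-regularity for Yang--Mills, I obtain, after passing to a subsequence and applying suitable local gauge transformations, a weak $L^2_{1,\mathrm{loc}}$-limit $\mathbf{A}_\infty$ on the complement of a finite set $Z \subset \mathbf{R} \times M$ of bubbling points, with strong $C^\infty_{\mathrm{loc}}$ convergence away from $Z$; $\mathbf{A}_\infty$ is then an ASD instanton on $\mathbf{R} \times M$ by removal of singularities. A translation normalization (translating so that the energy of each $\mathbf{A}_i$ is balanced on $\mathbf{R}$) prevents the vacuum limit; this gives the principal instanton component.

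Next comes the bubble-tree step. At each point of $Z$ I perform the standard rescaling $\mathbf{A}_i \mapsto \lambda_i^{-1} \mathbf{A}_i(\lambda_i \cdot)$ with $\lambda_i$ the local energy scale. Depending on how $\lambda_i$ compares to the geometric scales present, three qualitatively different rescaled limits arise: a Euclidean $\mathbf{R}^4$ instanton (when $\lambda_i$ goes to zero faster than the distance to the cylindrical end), a $\mathbf{C} \times \Sigma$ instanton (when the bubbling happens at a point escaping into the end at a commensurate rate), or, if the rescaling is anisotropic because the instanton becomes essentially translation-invariant along $\Sigma$-directions, a holomorphic strip in $R_\Sigma$. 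For each bubble I iterate the same Uhlenbeck argument on the rescaled sequence, recording a new vertex of the tree, and I continue until no energy remains unaccounted for; the energy quantization (a uniform positive lower bound on the energy of any nontrivial bubble, for all four component types) guarantees termination.

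The hard part is the analysis of the neck regions, especially the adiabatic-limit necks producing the holomorphic curves in $R_\Sigma$. Here I need two things. First, a quantitative connection estimate showing that in long cylindrical necks $[S_i, T_i] \times \Sigma$ where the energy density is small, the ASD equation forces the connection to stay near the Lagrangian of flat $\Sigma$-connections and to evolve as a near-holomorphic map into $R_\Sigma$; this is the Dostoglou--Salamon--type adiabatic analysis, adapted to the cylindrical-end and immersed-Lagrangian setting. Second, a matching/no-loss-of-energy statement asserting that the sum of the energies of all extracted components equals $\lim_i \mathcal{E}(\mathbf{A}_i)$, so that no energy is lost in the annular transition regions between scales. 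These matching conditions are exactly what force the asymptotic limits of the instanton components to be double points of $L_{M^\pm} \immerse R_\Sigma$, as depicted in Figure \ref{figure2}; proving them requires sharp exponential decay estimates on ASD connections with small local energy along cylinders, leveraging the non-degeneracy of the immersion and the Morse-type structure of the Chern--Simons functional on the space of flat connections over $\Sigma$. Once the neck analysis is in place, stability of the resulting tree is automatic from the energy lower bound, convergence in the sense of Definition \ref{defn63} follows by construction, and the conclusion of Theorem \ref{thm14} is complete.
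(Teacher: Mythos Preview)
Your overall architecture---multi-scale extraction, bubble tree, energy quantization for termination, neck analysis for matching---is sound and broadly parallel to the paper's. There is one organizational difference worth noting: you take the principal component to be an instanton over $\mathbf{R}\times M$ obtained from Uhlenbeck compactness at the original scale, whereas the paper first selects a scale $R_i$ (the smallest radius capturing all but $\hbar/2$ of the energy) and lets the root be an instanton if $R_i$ stays bounded or a holomorphic half-plane in $R_\Sigma$ if $R_i\to\infty$. Your ordering can be made to work, but it forces you to recover the holomorphic strip as a rescaled bubble at infinity rather than as the root, which complicates the stability bookkeeping.

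The genuine gap is in your boundary/neck analysis. You propose to handle the adiabatic limit near the compact piece $M_0$ by a ``Dostoglou--Salamon--type adiabatic analysis, adapted to the cylindrical-end and immersed-Lagrangian setting,'' together with generic exponential decay on cylinders. But the ASD equation over $\mathbf{R}\times M_0$ does \emph{not} furnish a genuine Lagrangian boundary condition for the rescaled ASD equation on $\mathbf{H}\times\Sigma$; the paper explicitly flags this (opening of Section~\ref{section4}) as the reason interior elliptic estimates do not extend to the seam $\mathbf{R}\times\Sigma$ in any straightforward way. The paper's substitute is to view the slice at radius $\rho$ as a piecewise smooth connection on the \emph{closed} doubled manifold $M^{\rm double}$, prove an isoperimetric inequality $|F_{\rm loc}(A)|\le c_P\|F_A\|_{L^2}^2$ for the Chern--Simons functional there (Theorem~\ref{isoperimetric}), and derive from it the annulus lemma (Proposition~\ref{annulus}) and, crucially, a boundary diameter estimate (Lemma~\ref{diameter2}) controlling ${\rm diam}(\A;\M_R)$ by $\sqrt{E(\A;\M_{3R})}$. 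These three ingredients are what give you (i) the exponential energy decay in boundary necks, (ii) no energy loss at boundary nodes, and (iii) $C^0$ boundary convergence of the projected holomorphic maps (Proposition~\ref{prop26}, Lemma~\ref{lemma55})---none of which follow from Dostoglou--Salamon's interior estimates alone. Without this mechanism your matching step and your claim that limits land on $\iota(L_M)$ with well-defined switching at double points are not justified.
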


Besides proving the above compactness theorem, using the same method and more simplified argument, we can also prove a counterpart for instantons over $\C \times \Sigma$.

\begin{thm}\label{thm15}
Let $\Sigma$ be a compact Riemann surface (not necessarily connected) and $Q \to \Sigma$ be an $SO(3)$-bundle which is nontrivial over every connected component of $\Sigma$. Then given a sequence of anti-self-dual instantons on $\C \times Q \to \C \times \Sigma$ with uniformly bounded energy, there is a subsequence which converges modulo gauge transformation and translation to a stable scaled instanton.
\end{thm}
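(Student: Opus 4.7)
The plan is to specialize and simplify the compactness argument for Theorem \ref{thm14} to the setting of $\C\times\Sigma$, where the base has no cylindrical end and its noncompactness is entirely in the Euclidean $\C$-direction. The hypothesis that $Q$ is nontrivial on every component of $\Sigma$ guarantees that $R_\Sigma$ is smooth and that no reducible flat connections arise on $\Sigma$; both facts are essential for the Uhlenbeck-type estimates and for exponential decay of finite-energy ASD ends. Given a sequence of ASD instantons $\A_i$ with $E(\A_i)\le E_0$, the goal is to extract a hierarchy of scales, apply Uhlenbeck compactness at each, and organize the resulting components into the tree structure of a stable scaled instanton as in Definition \ref{defn62}.

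First I would identify the distinct scaling regimes of energy concentration. For each $\A_i$ I select translation centers in $\C$ and rescaling factors $\lambda_i > 0$ so that, after the corresponding conformal transformation, a definite fraction of the energy lands in a fixed compact region. Three regimes arise: (i) $\lambda_i\to 0$, where a conformally flat neighborhood of a point rescales to ${\bm R}^4$ and the limit is an ASD instanton on ${\bm R}^4$; (ii) $\lambda_i$ bounded, where a subsequence converges after gauge transformation and removal-of-singularities to an ASD instanton on $\C\times\Sigma$; and (iii) $\lambda_i\to\infty$, the adiabatic regime, in which the fiber $\Sigma$ is seen at smaller and smaller scales relative to the base, and the equation degenerates to the Cauchy--Riemann equation for maps into $R_\Sigma$. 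On each scale, Uhlenbeck compactness on an exhaustion of $\C\times\Sigma$ by compact sets produces, after gauge transformation, a $C^\infty_{\mathrm{loc}}$ subsequential limit away from a finite bubbling locus; at each bubbling point I rescale conformally to extract a further limit instanton, and iterate. The integer-valued energy quantization for ASD connections on ${\bm R}^4$ forces the process to terminate in finitely many levels. For the macroscopic regime I invoke the adiabatic-limit analysis originating in \cite{Dostoglou_Salamon}: after a $z\mapsto z/R_i$ rescaling with $R_i\to\infty$, the ASD equation on $\C\times\Sigma$ with the rescaled metric converges to the holomorphic curve equation in $R_\Sigma$, and removal of singularities together with standard Gromov compactness for spheres in $R_\Sigma$ yields the holomorphic sphere bubble tree.

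The main obstacle is fitting all these scales into a single stable scaled instanton without losing energy. Two properties must be established. Property (a), annular energy decay: in an annulus separating two consecutive scales $\lambda_i^{(k)}$ and $\lambda_i^{(k+1)}$, the energy of $\A_i$ tends to zero. This relies on the standard exponential decay of finite-energy ASD connections over a long cylinder, which holds here precisely because $R_\Sigma$ is smooth and the Chern--Simons functional is Morse--Bott along $R_\Sigma$. Property (b), matching of asymptotic values at each node: the flat connection on $\Sigma$ to which an instanton component decays at spatial infinity must coincide, after gauge, with the value in $R_\Sigma$ at the corresponding marked point of the adjacent holomorphic sphere component. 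Property (a) feeds the quantitative estimates for property (b), and together they determine the combinatorial tree underlying the limit. Once these two are in place, stability of the resulting object follows from the usual stabilization procedure of contracting ghost or otherwise unstable components, as in \cite{Wang_Xu}. The overall bookkeeping is simpler than, but strictly parallel to, the corresponding analysis for Theorem \ref{thm14}, since the absence of a cylindrical end on $M$ removes one layer of asymptotic matching.
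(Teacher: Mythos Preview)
Your proposal is correct and follows the same overall route the paper takes: it explicitly states that Theorem \ref{thm15} is proved by the same inductive bubble-tree construction as Theorem \ref{thm14}, only simplified because the underlying scaled tree has empty base. Two points where the paper's organization differs from yours are worth flagging. First, ${\bm R}^4$-instantons are not separate tree vertices in Definition \ref{defn62}; they are absorbed into the Dirac measures ${\bm m}_v$ attached to the scale-$1$ instanton vertices, so your regime (i) does not contribute its own component to the limiting object. Second, rather than constructing a possibly unstable tree and then contracting ghosts, the paper uses the \emph{soft rescaling} device: at each bubbling point $w_k$ one chooses the new center $w_{i,k}$ and radius $r_{i,k}$ by minimizing $r_i(w)$ over nearby centers (see \eqref{eqn76}), which forces every newly produced holomorphic component to carry either nontrivial energy or at least two bubbling points (Lemmas \ref{lemma77} and \ref{lemma78}). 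This yields stability directly during the induction and makes termination transparent, whereas your post-hoc stabilization would require a separate check that contracting ghosts preserves the matching conditions.
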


%From the argument of proving the above two compactness theorems it is relatively easier to prove a compactness theorem for the adiabatic limit associated to neck-stretching. This extension is left to interesting readers.

We remark that certain compactness problems in gauge theory with respect to adiabatic limit or neck-stretching which are of similar nature have been considered by other people, for example Chen \cite{Chen_1998}, Nishinou \cite{Nishinou_2010}, and Duncan \cite{Duncan_thesis, Duncan_2012}. Comparing to these previous works, the main contribution of this paper is the treatment of the compactness problem near the ``boundary'' (the compact part of the three-manifold with cylindrical ends). The argument is based on the isoperimetric inequality (Theorem \ref{isoperimetric}), the annulus lemma (Proposition \ref{annulus} and extensions), and the boundary diameter estimate (Lemma \ref{diameter2}). The method of using boundary diameter estimate to establish boundary compactness would also be useful in other situations. For example, for the compactness problem about the strip-shrinking limit of pseudoholomorphic quilts, this method potentially leads to a simplified argument as opposed to the method of \cite{Bottman_Wehrheim_2018} which appeals to hard elliptic estimates over varying domains. Another contribution of the current paper is to define the correct notion of singular configurations (stable scaled instantons) that may appear in the limit and provide detailed argument of constructing the limiting bubble tree. Last but not the least, a modification of our construction will lead to a proof of a compactness theorem about the neck-stretching limit for the $SO(3)$ instanton equation. The details will be completed in future works. 

This paper is organized as follows. In Section \ref{section2} we recall basic notions and facts about the anti-self-dual equation, holomorphic curves, state the main assumption of the three-manifold, and recall a few technical results. In Section \ref{section3} we recall a basic compactness theorem for the rescaled ASD equation over the product of two surfaces in the adiabatic limit and prove a refinement of an interior estimate of Dostoglou--Salamon. In Section \ref{section4} we (re)prove an isoperimetric inequality for a closed three-manifold and the annulus lemma, and establish a boundary diameter estimate. In Section \ref{section5} we prove the compactness modulo energy blowup theorem for the ASD equation over the noncompact four-manifold ${\bm R} \times M$. In Section \ref{section6} we state the main theorem in technical terms. In Section \ref{section7} we finish the proof of the main theorem (Theorem \ref{thm14} and Theorem \ref{thm15}). In the appendix we provide a complete proof of Theorem \ref{thm33}.

\subsection*{Acknowledgments}

The author would like to thank Professor Kenji Fukaya for suggesting this problem, for many stimulating discussions, and for his warm encouragement and generous support. The author would like to thank Simons Center for Geometry and Physics for creating a wonderful environment for mathematical research. The author would like to thank Donghao Wang, David Duncan, and Chris Woodward for helpful discussions. The author also thanks the anonymous referee for many valuable suggestions to improve the paper.

This work is partially supported by the Simons Collaboration Grant on Homological Mirror Symmetry.

\section{Preliminaries}\label{section2}

In this paper we study gauge theory for $SO(3)$-bundles over manifolds of dimension at most $4$. Let $U$ be such a smooth manifold and $P \to U$ be a smooth $SO(3)$-bundle. As in the usual treatment of $SO(3)$-gauge theory, we modify the definition of gauge transformations as follows. The conjugation of $SO(3)$ can be extended to an $SO(3)$-action on $SU(2)$. A gauge transformation on $P$ is regarded as $SU(2)$-valued, i.e., a map $g: P \to SU(2)$ satisfying 
\beqn
g(p h) = h^{-1} g(p) h,\ \forall h \in SO(3),\ p \in P.
\eeqn
Since $\mf{so}(3) \cong \mf{su}(2)$, such $SU(2)$-valued gauge transformations act on $SO(3)$-connections in the usual way. Let ${\mc A}(P)$ be the space of smooth connections on $P$ and ${\mc G}(P)$ the space of $SU(2)$-valued smooth gauge transformations. The gauge equivalence class of a connection $A\in {\mc A}(P)$ is usually denoted by $[A]$.

In this paper, when there is no extra explanation, the sequential convergence of smooth objects are always regarded as convergence in the $C^\infty_{\rm loc}$-topology.

\subsection{Chern--Simons functional and the anti-self-dual equation}

\subsubsection{The Chern--Simons functional}\label{subsection21}

The instanton Floer cochain complex can be formally viewed as the Morse cochain complex for the Chern--Simons functional. Let $\M$ be a smooth oriented four-manifold. Let $\P \to \M$ be a smooth $SO(3)$-bundle. By Chern--Weil theory, the Pontryagin class can be represented by the differential form 
\beqn
p_1(\A) = - \frac{1}{2 \pi^2} {\rm tr} ( F_\A \wedge F_\A) \in \Omega^4(\M)\footnote{Here the trace operator ${\rm tr}: \mf{so}(3) \to {\mb R}$ is defined via the spin representation of $\mf{so}(3)$.}
\eeqn
for any smooth connection $\A$ on $\P$. When $\M$ is closed, the integral of this differential form is an integer and is a topological invariant. On the other hand, suppose $\M$ has a nonempty boundary $\partial \M \cong M$ where $M$ inherits a natural orientation. Denote $P = \P|_M$. Then for any smooth connection $A \in {\mc A}(P)$, for any smooth extension $\A$ of $A$ to the interior, the Chern--Weil integral
\beqn
\frac{1}{2} \int_\M \tr( F_\A \wedge F_\A)
\eeqn
only depends on the boundary restriction $A$. Denote this integral by 
\beqn
{\it CS}_\P(A)\in {\bf R}
\eeqn
which is invariant under gauge transformations on $P$ which extend to $\P$. More generally, if $M$ has several components $M_1, \ldots, M_k$ and $A_1, \ldots, A_k$ are the restrictions of $A$, then we also denote this action by
\beqn
{\it CS}_\P(A_1, \ldots, A_k).
\eeqn

For the purpose of this paper, we define the Chern--Simons functional on three-manifolds in a relative perspective. Let $N$ be a closed oriented three-manifold and $P\to N$ be an $SO(3)$-bundle. For a reference connection $A_0$ and any $A \in {\mc A}(P)$, define the {\bf relative} Chern--Simons action of $A$ to be 
\beqn
{\it CS}^{\rm rel}_{A_0}(A):= {\it CS}^{\rm rel}(A_0, A):= {\it CS}_{\bm P} (A_0, A)
\eeqn
where $\P = [0,1]\times P$ is the product bundle over $[0,1]\times N$. The relative action has the following more explicit expression: 
\beq\label{eqn21}
\int_M {\rm tr} \Big[F_{A_0} \wedge (A - A_0) +  \frac{1}{2} d_{A_0} (A - A_0) \wedge (A - A_0) + \frac{1}{3} ( A - A_0) \wedge (A - A_0) \wedge (A - A_0) \Big].
\eeq
Notice that ${\it CS}^{\rm rel}_{A_0}( A)$ is invariant under the action of the identity component ${\mc G}_0(P) \subset {\mc G}(P)$. For each $A \in {\mc A}(P)$ and a deformation $\alpha \in \Omega^1({\rm ad}P)$, the directional derivative of the Chern--Simons functional in the direction of $\alpha$ (which is independent of the choice of the reference connection $A_0$) is 
\beqn
( D_A {\it CS}^{\rm rel}_{A_0})(\alpha) = \int_M {\rm tr}( F_A \wedge \alpha).
\eeqn
Hence critical points of the Chern--Simons functional are flat connections.

\subsubsection{The anti-self-dual equation}

Suppose $\M$ is equipped with a Riemannian metric. A connection $\A \in {\mc A} (\P )$ is called an {\bf anti-self-dual connection} (ASD connection for short\footnote{When the domain is ${\bm R}^4$, or the product of the complex plane and a closed surface, or the product of the real line and a three-manifold with cylindrical end, we call an ASD connection an instanton.}) if 
\beqn
F_{\A} + *_4 F_{\A} = 0 \in \Omega^2( \M, {\rm ad} \P). 
\eeqn
Here $F_{\A}$ is the curvature of $\A$ and $*_4$ is the Hodge star operator on differential forms on $\M$. When $\M$ is noncompact, we also impose the finite energy condition. Then define the {\bf Yang--Mills functional}, also called the {\bf energy}, of a connection $ \A$ by 
\beqn
E ( \A):= \frac{1}{2} \int_{ \M} | F_{ \A} |^2.
\eeqn
Here the norm of the curvature is induced from the metric on $\M$ and the Killing metric on the Lie algebra. We always assume that ASD connections have finite energy. 

A particular case is when $\M = {\bm R}^4$ which is equipped with the standard Euclidean metric. We call an ASD connection over ${\bm R}^4$ an {\bf ${\bm R}^4$-instanton}.

The ASD equation can be viewed as the gradient flow equation of the Chern--Simons functional. A direct consequence of this perspective is the following energy identity (see for example \cite[Equation (2.7)]{Donaldson_YM}).

\begin{lemma}\label{lemma21}
Let $\M$ be a compact oriented four-manifold with boundary and $\P \to \M$ be an $SO(3)$-bundle. Then for any Riemannian metric on $\M$ and any ASD connection on $\P$ with respect to this metric, one has
\beqn
E (\A) = {\it CS}_{\P} ( \A|_{\partial \M}).
\eeqn
\end{lemma}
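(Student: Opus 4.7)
The plan is to derive the identity from the pointwise algebraic relation between the curvature norm and the Chern--Weil $4$-form that is special to anti-self-dual connections, and then invoke the definition of the Chern--Simons functional given in Section \ref{subsection21}.

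First, I would record the pointwise identity. With the convention that the inner product on $\mf{so}(3)$ is the (normalized, positive) Killing form, one has $|F_\A|^2 \, d\on{vol}_\M = -\tr(F_\A \wedge \coAsterisk F_\A)$ at every point of $\M$. Substituting the ASD condition $\coAsterisk F_\A = - F_\A$ gives the crucial pointwise identity
\beqn
|F_\A|^2 \, d\on{vol}_\M = \tr(F_\A \wedge F_\A).
\eeqn
Integrating over $\M$ yields
\beqn
E(\A) = \frac{1}{2}\int_\M |F_\A|^2 = \frac{1}{2}\int_\M \tr(F_\A \wedge F_\A).
\eeqn

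Next, I would invoke the definition of the Chern--Simons functional from Section \ref{subsection21}: for a compact oriented four-manifold $\M$ with boundary and any smooth connection $\A$ extending $A = \A|_{\partial \M}$, the quantity $\frac{1}{2}\int_\M \tr(F_\A \wedge F_\A)$ depends only on the gauge equivalence class $[A]$, and equals ${\it CS}([\A|_{\partial \M}])$. Combining this with the previous display gives the desired identity $E(\A) = {\it CS}([\A|_{\partial \M}])$.

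There is no genuine obstacle here; the whole content is the pointwise identity $|F_\A|^2 \, d\on{vol}_\M = \tr(F_\A \wedge F_\A)$ for ASD connections, together with the definitional fact that this Chern--Weil integral is exactly the Chern--Simons action of the boundary. The only point that requires some care is fixing sign and normalization conventions for the Killing form on $\mf{so}(3)$ and the Hodge star, so that $-\tr(\cdot \wedge \coAsterisk \cdot)$ really coincides with the inner product used to define $|F_\A|^2$; once those are consistent with the conventions of Section \ref{subsection21}, the argument is a one-line computation.
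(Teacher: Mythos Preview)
Your proof is correct and is the standard argument; the paper itself states this lemma without proof, treating it as a well-known consequence of the ASD condition and the Chern--Weil/Chern--Simons formalism set up in Section~\ref{subsection21}. There is nothing to add.
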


\subsubsection{Convergence and compactness}

We discuss the topology of the space of connections and recall the celebrated Uhlenbeck compactness theorem. Let $\M$ be a manifold and $\P \to \M$ be a principal bundle. The convergence of smooth $SO(3)$-connections $\A_i \in{\mc A}(\P)$ towards a limit $\A_\infty \in {\mc A}(\P)$ (in the $C^\infty_{\rm loc}$ topology) means for any precompact open subset $K \subset \M$, $\A_i|_K$ converges uniformly with all derivatives to $A_\infty|_K$. We define the more general notion of convergence in the Uhlenbeck sense. Suppose $\M$ is equipped with a Riemannian metric. Let $\M_i\subset \M$ be an exhausting sequence of open subsets, meaning that every compact subset of $\M$ is contained in $\M_i$ for sufficiently large $i$. Let $\P_i \to \M_i$ be $SO(3)$-bundles and $\A_i \in {\mc A}(\P_i )$ be a sequence of ASD connections on $\P_i$. Let $\P_\infty \to \M$ be an $SO(3)$-bundle and $\A_\infty \in {\mc A}(\P_\infty)$ be an ASD connection on $\P$. Let ${\bm m}_\infty$ be a positive measure on $\M$ supported at finitely many points. 

\begin{defn}\label{defn22}
We say that $\A_i$ converges to $(\A_\infty, {\bm m}_\infty)$ {\bf in the Uhlenbeck sense} if
\begin{enumerate}
\item the sequence of functions $|F_{\A_i}|^2$ converge as measures to $| F_{\A_\infty}|^2 +  2 {\bm m}_\infty$, and

\item there are bundle isomorphisms $\rho_i: \P_\infty \to \P_i$ over $\M_i \setminus  {\rm Supp} {\bm m}_\infty$ such that $\rho_i^*  \A_i$ converges to $\A_\infty$.
\end{enumerate}
\end{defn}

This notion of convergence is independent of the choices of representatives in their gauge equivalence classes. Therefore if $\A_i$ and $(\A_\infty, {\bm m}_\infty)$ satisfy conditions of Definition \ref{defn22}, we will say that $[\A_i]$ converges to $([\A_\infty], {\bm m}_\infty)$ in the Uhlenbeck sense. Further, the measure ${\bm m}_\infty$ in the limit, called the {\bf bubbling measure}, is nonzero at ${\bm x} \in \M$ if and only if a nontrivial ${\bm R}^4$-instanton bubbles off in the limit. We know that the masses of ${\bm m}_\infty$ are in $4\pi^2 {\bf Z}_+$. The convergence implies the following energy identity: for any compact subset ${\bm K} \subset \M$ containing the support of ${\bm m}_\infty$, there holds
\beqn
\lim_{i \to \infty} E ( \A_i; {\bm K}) = E( \A_\infty; {\bm K}) + \int_{{\bm K}} {\bm m}_\infty.
\eeqn

We summarize the celebrated Uhlenbeck compactness theorem as follows. 

\begin{thm}{\rm (}cf. \cite[Section 4.4]{Donaldson_Kronheimer}{\rm )} \label{thm23} Let $\M$, $\M_i$, $\P_i$, $\A_i$ be as above. Suppose the energy of $\A_i$ is uniformly bounded from above, i.e, 
\beqn
\limsup_{i \to \infty} E(\A_i) <  +\infty.
\eeqn
Then there exist a subsequence (still indexed by $i$ ), a positive measure ${\bm m}_\infty$ on $\M$ with finite support, an $SO(3)$-bundle $\P_\infty \to \M$, and an ASD connection $\A_\infty \in {\mc A}( \P_\infty)$, such that $\A_i$ converges to $(\A_\infty, {\bm m}_\infty)$ in the Uhlenbeck sense. In particular, there is a positive constant $\hbar >0$ with the following property: if $E(\A_i) < \hbar$, then a subsequence of $\A_i$ converges to a limiting ASD connection $\A_\infty$ without bubbling.
\end{thm}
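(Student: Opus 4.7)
The plan is to combine Uhlenbeck's local gauge fixing with an $\varepsilon$-regularity estimate for the ASD equation, a covering and diagonalization argument, and the removable singularities theorem to extract a limit that is a smooth ASD connection on an $SO(3)$-bundle over all of $\M$. First I would set up the $\varepsilon$-regularity: there exist $\hbar > 0$ and, for every $k$, a constant $C_k$ such that whenever $\A$ is an ASD connection on $\P|_{B_r({\bm x})}$ satisfying $\int_{B_r} |F_\A|^2 < \hbar$, one can find a local gauge in which $\|\A\|_{C^k(B_{r/2})}$ is controlled (with the correct $r$-scaling) by $\|F_\A\|_{L^2(B_r)}$. This is the standard combination of Uhlenbeck's local Coulomb gauge theorem (yielding $\|\A\|_{W^{1,2}} \lesssim \|F_\A\|_{L^2}$) with a bootstrap for the coupled elliptic system $d^*\A = 0$, $d^+\A + \A \wedge \A = 0$.

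Second, I would extract the bubbling measure. The uniform energy bound, together with an exhaustion of $\M$ by precompact open sets, gives, after weak-$*$ compactness and a diagonal subsequence, a finite Radon measure $\mu$ on $\M$ with $|F_{\A_i}|^2 \rightharpoonup \mu$. Define
\[
S := \{\, {\bm x} \in \M : \mu(\{{\bm x}\}) \geq \hbar \,\},
\]
which is finite by the total mass bound. Every ${\bm x} \notin S$ admits a small ball $B$ with $\mu(\overline B) < \hbar$, and hence $\int_B |F_{\A_i}|^2 < \hbar$ for all sufficiently large $i$.

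Third, I would patch local gauges into a global bundle on $\M \setminus S$. Cover $\M \setminus S$ by a countable family $\{B_\alpha\}$ of such good balls. On each $B_\alpha$, the $\varepsilon$-regularity supplies gauge transformations $u_{i,\alpha}$ with $u_{i,\alpha}^* \A_i$ uniformly $C^k$-bounded, and Arzel\`a--Ascoli extracts a smooth subsequential limit $\A_\alpha$. On every double overlap $B_\alpha \cap B_\beta$, the transition cocycle $g_{\alpha\beta,i} := u_{i,\beta}\, u_{i,\alpha}^{-1}$ intertwines two $C^k$-bounded families of connections and is therefore itself $C^k$-bounded; a further diagonal extraction produces smooth limits $g_{\alpha\beta}$ satisfying the cocycle condition. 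These glue the local models into an $SO(3)$-bundle $\P_\infty \to \M \setminus S$ with a smooth ASD connection $\A_\infty$, and the weak-$*$ convergence of energy measures then yields $\mu = |F_{\A_\infty}|^2 + 2{\bm m}_\infty$ with ${\bm m}_\infty$ supported on $S$.

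Fourth, Uhlenbeck's removable singularities theorem extends the pair $(\P_\infty, \A_\infty)$ smoothly across each ${\bm x} \in S$ (in a suitable gauge), using that $\A_\infty$ has finite Yang--Mills energy on a punctured ball. For the quantization ${\bm m}_\infty \in 4\pi^2 {\bf Z}_+$, at each bubbling point I would choose centers ${\bm x}_i \to {\bm x}$ and scales $\lambda_i \to 0$ that capture a definite quantum of energy, rescale, and apply the same compactness argument to obtain a nontrivial finite-energy ASD connection on ${\bm R}^4$ (iterating if secondary bubbles occur); a Chern--Weil computation then forces its energy into the claimed lattice, and conservation of energy propagates this to the atomic masses of ${\bm m}_\infty$. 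The main obstacle will be the two nonlinear, nonlocal steps: the non-abelian \v{C}ech-style patching, where the cocycle must be controlled simultaneously on all overlaps, and the removable singularities step, which hinges on a Morrey-type decay estimate specific to finite-energy ASD connections in four dimensions. The local $\varepsilon$-regularity and the weak-$*$ extraction of $\mu$ are, by contrast, routine.
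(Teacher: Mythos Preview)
The paper does not prove this theorem: it is stated as a known result with a reference to \cite[Section 4.4]{Donaldson_Kronheimer}, and no proof is given. Your outline is a correct sketch of the standard argument found there (local $\varepsilon$-regularity via Coulomb gauge, weak-$*$ extraction of the defect measure, \v{C}ech-type patching of local gauges, removable singularities, and energy quantization via rescaling), so there is nothing to compare against in the paper itself.
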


\subsection{Product of two surfaces}\label{subsection22}

The Atiyah--Floer conjecture can be regarded as an effect of the reduction from 4D gauge theory to 2D sigma model observed by physicists \cite{BJSV}. Consider the special case that $\M = S \times \Sigma$ where $S$ and $\Sigma$ are oriented surfaces, equipped with a product metric. We assume for simplicity that $S$ is an open subset of either the complex plane ${\bm C}$ or the upper half plane ${\bm H}$, in which cases $S$ is equipped with the standard holomorphic coordinate $z = s + {\bm i}t$ and the standard flat metric. Let $Q \to \Sigma$ be an $SO(3)$-bundle and $\P  = S \times Q \to \M$ be the pullback of $Q$ via the projection $\M \to \Sigma$. Then we can write a connection $\A \in {\mc A}( \P )$ as 
\beqn
\A = d_S + \phi ds + \psi dt + B,
\eeqn
where $d_S$ is the exterior differential in $S$, $B: S \to {\mc A}(Q)$ is a smooth map,  and 
\beqn
\phi, \psi \in \Omega^0( S \times \Sigma, {\rm ad} Q ).
\eeqn

Now we look at the ASD equation with respect to the product metric. Introduce 
\begin{align}\label{eqn22}
&\ \A_s = \partial_s B - d_B \phi,\ &\ \A_t = \partial_t B - d_B \psi.
\end{align}
and
\begin{align}\label{eqn23}
&\ \kappa_\A = \partial_s \psi - \partial_t \phi + [\phi, \psi],\ &\ \mu_\A = F_B.
\end{align}
Then the ASD equation can be written in the local form 
\begin{align}\label{eqn24}
&\ \A_s + * \A_t = 0,\ &\ \kappa_\A + * \mu_\A = 0.
\end{align}
Here $*$ is the Hodge star on $\Sigma$. 

\begin{rem}
The equation \eqref{eqn24} can be viewed as an infinite dimensional version of the symplectic vortex equation introduced by Cieliebak--Gaio--Salamon \cite{Cieliebak_Gaio_Salamon_2000} and Mundet \cite{Mundet_thesis, Mundet_2003}. Indeed this perspective is one of the motivation of \cite{Cieliebak_Gaio_Salamon_2000} to propose the symplectic vortex equation. 
\end{rem}

\subsection{Holomorphic curves}

We recall a few basic facts about pseudoholomorphic maps from Riemann surfaces to almost complex manifolds. In this subsection, $(X, J)$ always denotes a {\it compact} almost complex manifold. We fix a Riemannian metric $h$ on $X$. Let $S$ be a smooth Riemann surface with possibly nonempty boundary. A {\bf $J$-holomorphic map} from $S$ to $X$ is a continuous map $u: S \to X$ which is smooth in the interior and satisfies the Cauchy--Riemann equation 
\beqn
\ov\partial_J u:= \frac{1}{2} \left( \frac{\partial u}{\partial s} + J \frac{\partial u}{\partial t} \right) d \bar z = 0.
\eeqn
Here $z = s + {\bm i} t$ is a local holomorphic coordinate on $S$. In this paper $S$ is always an open subset of either the complex plane ${\bm C}$ or the upper half plane ${\bm H}$. The energy of $u$ is 
\beqn
E(u; S):= \frac{1}{2} \| du \|_{L^2(S)} = \frac{1}{2} \int_S |du|_h^2 ds dt.
\eeqn
Here $|\cdot|_h$ is the norm with respect to the fixed Riemannian metric $h$. When $S$ is understood from the context, we abbreviate $E(u; S)$ by $E(u)$.

\subsubsection{Compactness}

We would like to define the notion of convergence of $J$-holomorphic maps over a bordered surface without any appropriate boundary condition. Let $S \subset \H$ be an open subset and $S_i \subset S$ be an exhausting sequence of open subsets.

\begin{defn}\label{defn25}
Let $u_i: S_i \to X$ be a sequence of $J$-holomorphic maps. Let $u_\infty: S \to X$ be another $J$-holomorphic map. Then we say that $u_i$ {\bf converges} to $u_\infty$ if $u_i$ converges to $u_\infty$ in $C_{\rm loc}^0(S)\cap C_{\rm loc}^\infty( S \cap  {\rm Int} \H)$. 
\end{defn}

Now we recall a compactness result about holomorphic maps on bordered surfaces without imposing a boundary condition and give a proof.

\begin{prop}\label{prop26}
Let $u_i: S_i \to X$ be a sequence of $J$-holomorphic maps such that for all compact sets $K \subset S$ there holds
\beqn
\limsup_{i \to \infty} E(u_i; K) < +\infty.
\eeqn
\begin{enumerate}

\item Assume there is no energy blowup in the interior, namely, for all $z \in S \cap {\rm Int} {\bm H}$, one has 
\beq\label{eqn25}
\lim_{r \to 0} \limsup_{i \to \infty} E(u_i; B_r(z) \cap S_i) = 0.
\eeq
Then there exist a subsequence (still indexed by $i$) and a holomorphic map $u_\infty: S \cap {\rm Int} {\bm H} \to X$ such that $u_i$ converges to $u_\infty$ in $C^\infty_{\rm loc}(S \cap {\rm Int} {\bm H})$. 

\item In addition, suppose for each $z \in \partial S$ there holds
\beq\label{eqn26}
\lim_{r \to 0} \limsup_{i \to \infty} {\rm diam}( u_i(B_r^+(z) \cap S_i)) = 0
\eeq
(here $B_r^+(z)$ is the intersection of the radius $r$ open disk $B_r(z) \subset {\bm C}$ with the upper half plane ${\bm H}$). Then the limit $u_\infty$ extends continuously to $S \cap \partial {\bm H}$ and $u_i$ converges to $u_\infty$ in the sense of Definition \ref{defn25}.
\end{enumerate}
\end{prop}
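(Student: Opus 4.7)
The plan is to reduce both parts to the standard $\varepsilon$-regularity for $J$-holomorphic maps together with an Arzel\`a--Ascoli argument, handling the boundary entirely through hypothesis \eqref{eqn26}.

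For part (a), since $X$ is compact the $u_i$ are automatically uniformly bounded in $C^0$, so the task is to upgrade this to uniform $C^k$ bounds on interior compact subsets. I would invoke the standard mean value inequality for $J$-holomorphic maps: there exist constants $\hbar, C > 0$ (depending on $(X,J,h)$) such that whenever $B_r(z)$ is contained in the interior of the domain and $E(u; B_r(z)) < \hbar$, one has $|du(z)|^2 \leq C r^{-2} E(u; B_r(z))$. Given a compact $K \subset S \cap {\rm Int}\,{\bm H}$, hypothesis \eqref{eqn25} combined with a finite subcover of $K$ yields some $r_0 > 0$ such that $E(u_i; B_{r_0}(z)) < \hbar$ uniformly in $z \in K$ for all sufficiently large $i$, giving a uniform bound on $|du_i|$ over $K$. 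Elliptic bootstrapping for $\bar\partial_J u_i = 0$ then produces uniform $C^k$ bounds on every slightly smaller compact subset, and a diagonal extraction yields a subsequence converging in $C^\infty_{\rm loc}(S \cap {\rm Int}\,{\bm H})$ to a $J$-holomorphic limit $u_\infty$.

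For part (b), I would use \eqref{eqn26} first to extend $u_\infty$ continuously to $S \cap \partial {\bm H}$ and then to strengthen the convergence. Fix $z_0 \in S \cap \partial {\bm H}$ and $\varepsilon > 0$; by \eqref{eqn26} choose $r > 0$ with ${\rm diam}\,u_i(B_r^+(z_0) \cap S_i) < \varepsilon$ for all large $i$, then pass to the limit using the interior $C^\infty_{\rm loc}$ convergence to conclude ${\rm diam}\,u_\infty(B_r^+(z_0) \cap {\rm Int}\,{\bm H}) \leq \varepsilon$. Letting $\varepsilon \to 0$ along a sequence of shrinking radii shows that $u_\infty(w)$ has a well-defined limit as $w \to z_0$ from the interior; I would define $u_\infty(z_0)$ to be this limit. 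The same diameter estimates directly supply a modulus of continuity at every boundary point, so the extension is continuous on all of $S$.

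It then remains to upgrade to $C^0_{\rm loc}(S)$ convergence. For any compact $K \subset S$, the family $\{u_i\}$ is equicontinuous on $K$ for $i$ large: at interior points by the uniform derivative bounds from (a), and at boundary points by \eqref{eqn26}, with a Lebesgue-number argument on a finite cover of $K \cap \partial {\bm H}$ producing a common modulus. Arzel\`a--Ascoli then extracts a uniformly convergent sub-subsequence on $K$, whose limit must coincide with $u_\infty$ by part (a) and boundary continuity; a standard subsubsequence argument promotes this to convergence of the entire chosen subsequence uniformly on $K$. The step I expect to require the most care is precisely this last uniformity of the modulus at the boundary, since \eqref{eqn26} is pointwise in $z$ and only asymptotic in $i$, so one must combine the finite boundary cover with a simultaneous tail choice of $i$ to produce a genuinely uniform modulus on $K$.
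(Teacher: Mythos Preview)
Your proposal is correct and follows essentially the same approach as the paper: part (a) is the standard Gromov compactness via $\varepsilon$-regularity (which the paper simply cites), and for part (b) both you and the paper pass the diameter bound \eqref{eqn26} through the interior convergence to extend $u_\infty$ continuously and then upgrade to $C^0_{\rm loc}$ convergence. You have spelled out more detail---in particular the Arzel\`a--Ascoli step and the care needed to combine the pointwise-in-$z$ hypothesis \eqref{eqn26} with a finite boundary cover---whereas the paper leaves these ``to the reader.''
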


\begin{proof}
Part (a) is the classical Gromov compactness result (see for example \cite{McDuff_Salamon_2004} \cite{IS_compactness}). For part (b), we first show that $u_\infty$ has limits at all boundary points. Choose $z \in \partial S$. By \eqref{eqn26}, for any $\epsilon>0$, there exists an $r>0$ such that
\beqn
\limsup_{i \to \infty}  {\rm diam}( u_i(B_r^+(z))) < \epsilon.
\eeqn
Then for sufficiently large $i$, for all $z', z'' \in B_r^+(z) \cap S \cap {\rm Int} {\bm H}$, there holds
\beqn
d( u_i(z'), u_i(z'')) < \epsilon.
\eeqn
Since $u_i(z') \to u_\infty(z')$ and $u_i(z'') \to u_\infty(z'')$ as $i \to \infty$. It implies that 
\beqn
z', z''\in B_r^+(z) \Longrightarrow d( u_\infty(z'), u_\infty(z'')) < \epsilon.
\eeqn
Hence $u_\infty$ has limits at all boundary points. It is a similar argument to show that the boundary limits define a continuous extension of $u_\infty$ and $u_i$ converges to $u_\infty$ in $C^0_{\rm loc}(S)$. We leave the details to the reader. \end{proof}

\subsubsection{Boundary regularity}

Now assume that $L \subset X$ is a {\bf totally real} submanifold of $(X, J)$, i.e., for all $p \in L$, $T_p X = T_p L \oplus J(T_p L)$.

\begin{lemma}\label{regularity}
Let $S \subset {\bm H}$ be an open subset, $u: S \to X$ be a $J$-holomorphic map satisfying the boundary condition $u(\partial S) \subset L$. Then $u$ is also smooth on the boundary. 
\end{lemma}

\begin{proof}
The proof is essentially the same as \cite[Theorem B.4.1]{McDuff_Salamon_2004} where the map $u$ is assumed to be $W^{1,p}$ for some $p>2$. The same argument works if we assume $u$ is smooth in the interior and continuous along the boundary. Suppose ${\rm dim} X = 2n$. For any $p \in L$, there exists a coordinate chart $\psi: U \to {\mb R}^{2n}$ such that
\begin{align*}
&\ \psi(U \cap L) = \psi(U) \cap ({\mb R}^n \times \{0\}),\ &\ J_0 d\psi(x) = d\psi(x) J(x)\ \forall x\in U\cap L.
\end{align*}
Here $J_0$ is the standard complex structure on ${\mb R}^{2n}$. We may assume that the image of $u$ is contained in such a coordinate chart. Hence $u$ is a solution to 
\beqn
\partial_s u + J(u) \partial_t u = 0,\ u( \partial S) \subset {\mb R}^n \times \{0\}
\eeqn
where $J$ is a smooth almost complex structure on ${\mb R}^{2n}$ making ${\mb R}^n \times \{0\}$ totally real. Then the continuity of $u$ and the boundary condition implies that
\beqn
\int_S \langle \partial_s \phi + J(u)^T \partial_t \phi, u \rangle + \int_S \langle \phi, (\partial_t J(u)) u \rangle = 0
\eeqn
for all test functions $\phi \in C_0^\infty(S, {\mb R}^{2n})$ satisfying $\phi(S \cap \partial {\bm H}) \subset {\mb R}^n \oplus \{0\}$. Namely, $u$ is a weak solution to the Cauchy--Riemann equation with the totally real boundary condition. Then by \cite[Proposition B.4.9]{McDuff_Salamon_2004}, $u$ is smooth.
\end{proof}

\subsubsection{Immersed Lagrangian boundary condition}

We recall basic notions of pseudoholomorphic curves with an immersed Lagrangian boundary condition. We fix a compact symplectic manifold $(X, \omega)$. A {\bf Lagrangian immersion} is a smooth immersion $\iota: L \looparrowright X$ such that $\iota^* \omega = 0$ and such that ${\rm dim} X = 2{\rm dim} L$. We assume $L$ is compact. We assume that $L$ only {\bf has transverse double points}, which means the following.
\begin{enumerate}

\item For each $x \in \iota(L)$, $\# \iota^{-1}(x) \leq 2$. Each $x \in \iota(L)$ with $\# \iota^{-1}(x) = 2$ is called a {\bf double point} of the immersed Lagrangian. 

\item The map $\iota\times \iota: L \times L \to X \times X$ is transverse to the diagonal $\Delta_X\subset X \times X$ away from the diagonal $\Delta_L \subset L \times L$.
\end{enumerate}
%\beqn
%\iota \times \iota: L \times L \to X \times X
%\eeqn
%is assumed to be transverse to the diagonal $\Delta_X \subset X \times X$ away from the diagonal $\Delta_L \subset L \times L$. \textcolor{blue}{Each element of 
%\beqn
%(\iota\times \iota)^{-1}( \Delta_X) \setminus \Delta_L 
%\eeqn
%is called an {\bf ordered double point}, denoted by $(p, q)$. The image $\iota(p) = \iota(q)$ is called a {\bf double point}}
The compactness of $L$ implies that there are finitely many double points of $\iota(L)$. Elements of the set 
\beqn
R_L: = \{ (p, q) \in L \times L\ |\ \iota(p) = \iota(q),\ p \neq q \}.
\eeqn
are called {\bf ordered double points}. The map $(p, q) \mapsto (q, p)$ which preserves the set $\Delta_L \sqcup R_L$ is called the {\bf transpose}. 

Now we define the notion of holomorphic curves with boundary lying in the immersed Lagrangian. One can see that this notion coincides with that in \cite{Akaho_Joyce_2010} after ordering the set of marked points $W$. Fix an {\bf $\omega$-compatible} almost complex structure $J$ on $(X, \omega)$, namely, the bilinear form on $TX$ defined by 
\beqn
g(\xi, \xi'): = \omega(\xi, J\xi')
\eeqn
is a Riemannian metric on $X$. %We also assume that $L$ is {\bf totally real} with respect to $J$, namely, for every $x \in L$, 
%\beqn
%T_{\iota(x)} X = d \iota( T_x L) \oplus J d\iota(T_x L).
%\eeqn

\begin{defn}(cf. \cite[Definition 4.2]{Akaho_Joyce_2010})\label{defn28}
Let $S$ be a Riemann surface with possibly nonempty boundary. A {\bf marked $J$-holomorphic map} from $S$ to $X$ with boundary in $\iota(L)$ is a triple
\beqn
{\bm u}= (u, W, \gamma)
\eeqn
where $u: S \to  X$ is a $J$-holomorphic map with $u(\partial S) \subset \iota(L)$, $W \subset S$ is a finite subset, and $\gamma: \partial S \setminus W \to L$ is a continuous map, satisfying the following {\bf boundary condition}:
\beqn
u|_{\partial S \setminus W} = \iota \circ \gamma.
\eeqn

Given such a marked $J$-holomorphic map, for each $w \in W \cap \partial S$, there is a local holomorphic coordinate chart $U_w \cong \phi_w(U_w) \subset {\bm H}$. The boundary condition implies that the limit
\beqn
{\rm ev}_w({\bm u}):= ( \lim_{s \to 0-} \gamma(\phi_w^{-1}(s)), \lim_{s \to 0+} \gamma( \phi_w^{-1}(s))  ) \in \Delta_L \cup R_L\subset L \times L
\eeqn
exists. We call this limit the {\bf evaluation} of ${\bm u}$ at $w$. If ${\rm ev}_w({\bm u}) \in R_L$, we call $w$ a {\bf switching point} of ${\bm u}$. On the other hand, the evaluation of ${\bm u}$ at an interior marking $w \in W \cap {\rm Int}S$ is the value ${\rm ev}_w({\bm u}) = u(w) \in X$. 
\end{defn}

A corollary of the boundary regularity result Lemma \ref{regularity} is that the boundary map $\gamma$ is necessarily smooth.

\begin{lemma}\label{lemma29}
Let ${\bm u} = (u, W, \gamma)$ be as in Definition \ref{defn28}. Then $\gamma$ is smooth.
\end{lemma}

\begin{proof}
As $J$ is $\omega$-compatible, the immersion $\iota: L \immerse X$ is totally real. Then as the immersion has only transverse double points, locally we can view $u$ as a $J$-holomorphic map with boundary lying in an embedded totally real submanifold and $\iota$ is the boundary restriction of $u$. Hence by Lemma \ref{regularity}, $u$ is smooth near boundary points which are not in $W$. Hence $\gamma$ is smooth.
\end{proof}

We also allow a nontrivial Dirac measure in the datum of a holomorphic curve. More precisely, if $m: W \to {\bf R}_+$ is a function, regarded as a positive measure on $S$ whose support is contained in $W$, then we call the tuple 
\beqn
\tilde {\bm u} = (u, W, \gamma, m)
\eeqn
a {\bf marked holomorphic curve with mass}. When $\partial S = \emptyset$, we simplify the notation as $\tilde {\bm u} = (u, W, m)$.

\subsection{Flat connections on three-manifolds}\label{subsection24}

Now we introduce the basic assumptions on the three-manifolds. Let $M_0$ be a connected, oriented three-manifold with a nonempty and not necessarily connected  boundary $\partial M_0 \cong \Sigma$. Let $M$ be the completion, i.e., 
\beq\label{eqn27}
M:= M_0 \cup ( [0, +\infty) \times \Sigma),
\eeq
where the two parts are glued along the common boundary. Then we always identify $M_0$ with a closed subset of $M$. Let $P \to M$ be an $SO(3)$-bundle. Let $P_0 \to M_0$ be the restriction of $P$ to $M_0 \subset M$ and 
\beqn
Q \to \Sigma
\eeqn
be the restriction of $P_0$ to the boundary. Let $L_M$ be the moduli space of gauge equivalence classes of flat connections on $P_0$, i.e., 
\beqn
L_M:= \Big\{ A \in {\mc A}(P_0)\ |\ F_A = 0 \Big\}/ {\mc G}(P_0).
\eeqn
Let $R_\Sigma$ be the moduli space of gauge equivalence classes of flat connections on $Q$, i.e., 
\beqn
R_\Sigma:= \Big\{ B \in {\mc A}(Q)\ |\ F_B = 0\Big\}/ {\mc G}(Q).
\eeqn
Both $L_M$ and $R_\Sigma$ have natural topology. There is a natural continuous map 
\beqn
\iota: L_M \to R_\Sigma
\eeqn
induced by boundary restriction.

\subsubsection{Transversality assumption}

Now we consider moduli spaces of flat connections on $M$ and $\Sigma$. We impose certain extra conditions to guarantee that these moduli spaces are smooth. For any flat connection $A \in {\mc A}(P_0 )$, the covariant derivative $d_A$ makes ${\rm ad} P_0 $ a flat bundle with a twisted de Rham complex 
\beqn
\xymatrix{ \Omega^0( M_0, {\rm ad}P_0) \ar@{^{(}->}[r]^{d_A } & \Omega^1( M_0, {\rm ad} P_0 ) \ar[r]^{d_A} & \Omega^2 ( M_0, {\rm ad} P_0 ) \ar@{->>}[r]^{d_A } & \Omega^3( M_0, {\rm ad} P_0)}.
\eeqn
Similarly, when $B$ is a flat connection on $Q$, there is a complex 
\beqn
\xymatrix{ \Omega^0(\Sigma, {\rm ad} Q) \ar@{^{(}->}[r]^{d_B} & \Omega^1( \Sigma, {\rm ad} Q) \ar@{->>}[r]^{d_B} & \Omega^2(\Sigma, {\rm ad} Q)}.
\eeqn
When $A |_{\Sigma} = B$, one can form the relative complex with 
\beqn
\Omega^k_{\rm rel}({\rm ad} P_0) = \Omega^k( M_0, {\rm ad} P_0 ) \oplus \Omega^{k-1}( \Sigma, {\rm ad} Q)
\eeqn
and differential $d_{A, B}$, which is defined as 
\beqn
d_{A, B} \left[ \begin{array}{c} \alpha\\ h \end{array} \right] = \left[ \begin{array}{cc} d_A & 0 \\  - r^* & d_B \end{array} \right] \left[ \begin{array}{c} \alpha\\ h \end{array} \right].
\eeqn
Here $r^*: \Omega^k( M_0, {\rm ad} P_0 ) \to \Omega^k( \Sigma, {\rm ad} Q)$ is the pullback. Then there is a long exact sequence
\beq\label{eqn28}
\xymatrix{ \cdots H^0 (d_A ) \ar[r] & H^0 ( d_B ) \ar[r] & H^1( d_{A, B} ) \ar[r] & H^1( d_A ) \ar[r] & H^1( d_B ) \cdots}
\eeq

We assume the following conditions throughout this paper. 

\begin{hyp}\label{hyp210}  The three-manifold with boundary $M_0$ and the $SO(3)$-bundle $P_0 \to M_0$ satisfy the following conditions. 
\begin{enumerate}
\item For any flat connection $B$ on $Q$, $H^0(d_B) \oplus H^2(d_B)$ vanishes. This implies that $R_\Sigma$ is a smooth manifold with
\beqn
{\rm dim} R_\Sigma = - 3 \chi(\Sigma).
\eeqn

\item For any flat connection $A$ on $P_0$, $H^2(d_A) = 0$. It follows that the moduli space $L_M$ of flat connections on $P$ is smooth and
\beqn
{\rm dim} L_M = - \frac{3}{2} \chi(\Sigma)
\eeqn
(see \cite[Equation (2.3)]{Fukaya_2018}, also the index formula of Yoshida \cite[Lemma 4.2]{Yoshida_1991} for the $SU(2)$ case).

\item For any flat connection $A$ on $P_0$ whose boundary restriction is $B$, Poincar\'e duality and item (b) above imply that $H^1(d_{A, B}) \cong H^2(d_A) = 0$. Then it follows from the long exact sequence \eqref{eqn28} that the map $H^1(d_A) \to H^1(d_B)$ is injective, hence the natural map $L_M \to R_\Sigma$ is an immersion (cf. \cite[Proof of lemma 2.4]{Fukaya_2018}. We assume that the immersion $\iota: L_M \immerse R_\Sigma$ has transverse double points.

%For any flat connection $A$ on $P_0$ whose boundary restriction is $B$, the map $H^1( d_A ) \to H^1(d_B)$ is injective. This implies that the natural map $L_M \to R_\Sigma$ is an immersion. 

%\item The immersion $\iota: L_M \immerse R_\Sigma$ has transverse double points. 
\end{enumerate}
\end{hyp}

\begin{lemma}
Item (a) of Hypothesis \ref{hyp210} holds if and only if $Q$ is nontrivial over each connected component of $\Sigma$. In this case $\Sigma$ has an even number of connected components. 
\end{lemma}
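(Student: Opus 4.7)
The plan is to reduce vanishing of $H^0(d_B)\oplus H^2(d_B)$ to an elementary statement about holonomy, and then to distinguish trivial from nontrivial $SO(3)$-bundles by whether their flat connections can be reducible.

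First I would note that on a closed oriented surface $\Sigma$, Poincar\'e duality for the twisted de Rham complex of ${\rm ad}\,Q$ provides a perfect pairing $H^0(d_B)\otimes H^2(d_B)\to \mathbb{R}$ given by $\alpha\otimes\beta\mapsto \int_\Sigma \langle \alpha,\beta\rangle$, so $H^2(d_B)\cong H^0(d_B)^\ast$ and Hypothesis \ref{hyp28}(a) is equivalent to $H^0(d_B)=0$ for every flat $B$ on $Q$. Decomposing $\Sigma=\bigsqcup_i\Sigma_i$, one has $H^0(d_B)=\bigoplus_i H^0(d_{B|_{\Sigma_i}})$, and each summand is the Lie algebra of the centralizer in $SO(3)$ of the image of the holonomy representation $\rho_{B_i}\colon \pi_1(\Sigma_i)\to SO(3)$.

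Next I would classify the closed subgroups $H\subset SO(3)$ with nontrivial infinitesimal centralizer $\mathfrak{so}(3)^H$. Under the identification $\mathfrak{so}(3)\cong \mathbb{R}^3$ the adjoint action is the standard one, so $\mathfrak{so}(3)^H\neq 0$ exactly when $H$ preserves some axis, i.e.\ is conjugate into a maximal torus $SO(2)\subset SO(3)$. Hence $H^0(d_{B_i})\neq 0$ iff $B_i$ reduces to a flat $U(1)$-connection. A flat $U(1)$-bundle on a closed oriented surface has vanishing real first Chern class, and since $H^2(\Sigma_i;\mathbb{Z})$ is torsion-free this reduction forces the underlying $U(1)$-bundle, and hence the associated $SO(3)$-bundle $Q|_{\Sigma_i}$, to be topologically trivial. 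Conversely, if $Q|_{\Sigma_i}$ is trivial then it carries the trivial flat connection, whose stabilizer algebra is all of $\mathfrak{so}(3)$; combining this with any flat connection on the other components produces a flat $B$ on $Q$ with $H^0(d_B)\neq 0$. This yields the stated equivalence.

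Finally, for the parity statement, one has $w_2(Q)=\iota^\ast w_2(P_0)$ where $\iota\colon \Sigma\hookrightarrow M_0$, and since $\Sigma=\partial M_0$ the class $\iota_\ast[\Sigma]$ vanishes in $H_2(M_0;\mathbb{Z}/2)$ (as the boundary map in the long exact sequence of the pair). Therefore
\beqn
0 \;=\; \bigl\langle w_2(P_0),\,\iota_\ast[\Sigma]\bigr\rangle \;=\; \sum_i \bigl\langle w_2(Q|_{\Sigma_i}),\,[\Sigma_i]\bigr\rangle \pmod{2}.
\eeqn
If every $Q|_{\Sigma_i}$ is nontrivial then each summand equals $1$, so the number of components must be even. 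The only step requiring genuine care will be the classification of subgroups of $SO(3)$ with nontrivial infinitesimal centralizer combined with the triviality of flat $U(1)$-bundles over a closed oriented surface; the remaining ingredients (Poincar\'e duality for twisted cohomology and the long exact sequence of $(M_0,\partial M_0)$) are formal.
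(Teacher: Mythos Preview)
Your proof is correct and follows essentially the same approach as the paper. The paper is terser: it reduces to $H^0(d_B)$ via Poincar\'e duality just as you do, states only the easy direction (trivial $Q|_{\Sigma_i}$ admits reducible flat connections) and takes the converse as a known fact, whereas you supply it explicitly via the holonomy/centralizer argument and the triviality of flat $U(1)$-bundles; for the parity claim the paper runs the cohomological long exact sequence $H^2(M_0)\to H^2(\partial M_0)\to H^3(M_0,\partial M_0)$ with $\mathbb{Z}/2$ coefficients, which is the Poincar\'e--Lefschetz dual of your homological pairing argument.
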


\begin{proof}
If $Q|_{\Sigma_i} \cong \Sigma_i \times SO(3)$ for some component $\Sigma_i \subset \Sigma$, then for a flat connection $B$ on $Q$ which is trivial on $\Sigma_i$, the constant sections of ${\rm ad} (Q|_{\Sigma_i}) \cong \Sigma_i \times \mf{so}(3)$ are contained in $H^0(d_B)$. On the other hand, suppose $Q$ is nontrivial over a component $\Sigma_0 \subset \Sigma$, we claim that $H^0(d_{B_0}) \oplus H^2(d_{B_0}) = 0$ where $B_0$ is the restriction of a flat connection $B$ on $Q$ to $\Sigma_0$. By Poincar\'e duality, one only needs to show $H^0(d_{B_0}) = 0$. Suppose on the contrary that $H^0(d_{B_0}) \neq 0$. Then there exists a nonzero section $\xi \in \Gamma({\rm ad} Q|_{\Sigma_0})$ such that $d_{B_0} \xi = 0$. Then $\xi$ is parallel and hence reduces the structure group of $Q|_{\Sigma_0}$ from $SO(3)$ to $SO(2)$. As a flat $SO(2)$-bundle over $\Sigma_0$ is topologically trivial, this contradicts the assumption that $Q|_{\Sigma_0}$ is nontrivial. Therefore if $Q$ is nontrivial over every component of $\Sigma$, one has $H^0(d_B) \oplus H^2(d_B) = 0$ for all flat connection $B$ on $Q$.

%it is well-known that any flat connection $B$ on $Q$ has no stabilizer in the group of $SU(2)$-valued gauge transformations except for $\pm 1$. A typical argument is to identify connections on $Q$ with connections on certain $U(2)$-bundle over $\Sigma$ with a fixed determinant (see for example \cite[Proposition 1.6]{Donaldson_Braam}). In particular there is no nontrivial infinitesimal stabilizer and hence $H^0(d_B) = 0$. By Poincar\'e duality $H^2(d_B)$ also vanishes. Hence (a) is equivalent to the nontriviality of $Q$ over each $\Sigma_i$. 

To show that $\Sigma$ has an even number of connected components, consider the exact sequence in ${\bf Z}_2$ coefficients
\beqn
\xymatrix{ \cdots \ar[r] & H^2(M_0) \ar[r] & H^2(\partial M_0) \ar[r] & H^3(M_0, \partial M_0) \ar[r] & \cdots}
\eeqn
It follows that the second Stiefel--Whitney class $w_2(Q)$, which is the image of $w_2(P) \in H^2(M_0)$, is sent to zero in $H^3(M_0, \partial M_0) \cong {\bf Z}_2$. On the other hand, since $Q$ is nontrivial over each component $\Sigma_i$, $w_2(Q)$ restricts to the generator of $H^2(\Sigma_i; {\bf Z}_2)$, while each generator is sent to the generator of $H^3(M_0, \partial M_0) \cong {\bf Z}_2$. Hence $\Sigma$ has an even number of connected components. 
\end{proof}

\begin{rem}
In general Item (b) and (c) of Hypothesis \ref{hyp210} may not hold. However, one can perturb the Chern--Simons functional by the so-called holonomic perturbation supported away from the boundary, so that critical points of the perturbed Chern--Simons functional (i.e., certain perturbed flat connection) are non-degenerate in the Bott sense so Item (b) holds (see the $SU(2)$ case in \cite{Herald_1994}); at the same time, the perturbation can also be made such that the immersion $\iota: L_M \immerse R_\Sigma$ has only transverse self-intersections.
\end{rem}

Pairs of flat connections on two three-manifolds with boundary induce certain flat connections on a closed three-manifold. Let $N^-$, $N^+$ be connected oriented three-manifolds with boundary such that 
\beqn
\partial N^- \cong \Sigma \cong \partial (N^+)^{\rm op}.
\eeqn
Here $(N^+)^{\rm op}$ is a copy of $N^+$ with the reversed orientation. Let $P_{N^-} \to N^-$, $P_{N^+} \to N^+$, and $Q \to \Sigma$ be $SO(3)$-bundles such that
\beqn
P_{N^-}|_{\partial N^-} \cong Q \cong P_{N^+}|_{\partial N^+}.
\eeqn
Suppose $(P_{N^-}, N^-)$ and $(P_{N^+}, N^+)$ both satisfy Hypothesis \ref{hyp210}. Then one obtains Lagrangian immersions
\begin{align*}
&\ \iota^-: L_{N^-} \immerse R_\Sigma,\ &\ \iota^+: L_{N^+} \immerse R_\Sigma.
\end{align*}
We assume in addition that 

\begin{hyp}\label{hyp213}
The two three-manifolds with boundary $N^-$, $N^+$ and the $SO(3)$-bundles $P_{N^\pm} \to N^\pm$ satisfy the following condition. 
\begin{itemize}
\item The immersions $\iota^-: L_{N^-} \immerse R_\Sigma$ and $\iota^+: L_{N^+} \immerse R_\Sigma$ intersect cleanly.
\end{itemize}
\end{hyp}

Define a closed three-manifold together with an $SO(3)$-bundle as follows. Let $N$ be the closed three-manifold defined by 
\beq\label{eqn29}
\check N:= N^- \cup N^+:= N^- \cup N^+.
\eeq
Here we identify the common boundaries $\partial N^- \cong \partial (N^+)^{\rm op}$. The bundles $P_{N^\pm}$ can be glued similarly to give an $SO(3)$-bundle
\beq\label{eqn210}
P_{\check N}:= P_{\check N} |_{N^-} \cup P_{\check N} |_{N^+}:= P_{N^-} \cup P_{N^+}.
\eeq
Let $L_{\check N}$ be the moduli space of gauge equivalence classes of flat connections on $P_{\check N}$. Then $L_{\check N}$ is a compact manifold (with possibly varying dimensions) with a diffeomorphism
\beqn
L_{\check N} \cong (\iota^- \times \iota^+)^{-1} (\Delta_{R_\Sigma}) \subset L_{N^-} \times L_{N^+}.
\eeqn

In practice we need a slightly different construction. Define 
\beqn
N:= N^- \cup [0, \pi] \times \Sigma \cup N^+
\eeqn
where we glue $\partial N^-$ with $\{0\}\times \Sigma$ and glue $\partial N^+$ with $\{\pi\}\times \Sigma$. We denote $N^{\rm neck} = [0, \pi]\times \Sigma$. Obviously $N$ is diffeomorphic to $\check N$. Then one can construct a similar bundle $P_N \to N$ whose restriction to the neck region is $[0, \pi]\times Q$. Notice that this is only a special case of the construction of $P_{\check N} \to \check N$. Indeed if we set $N_\times^- = N^- \sqcup N^+$, $N_\times^+ = N^{\rm neck}$. Then the manifold obtained by gluing $N_\times^-$ and $N_\times^+$ along the common boundary is exactly $N$.

\begin{rem}\label{rem214}
There are two special situations when Hypothesis \ref{hyp213} is satisfied. The first special case is when $N^- \cong (N^+)^{\rm op} \cong M_0$ and $P_{N^-} \cong P_{N^+} \cong P_0$. In this case $N$ is diffeomorphic to the doubling of $M_0$, denoted by $M^{\rm double}$ and one has
\beqn
L_N \cong L_{\check N} \cong \Delta_{L_M} \cup R_{L_M}\subset L_M \times L_M.
\eeqn
The second special case is when $N^- \cong (N^+)^{\rm op} \cong [0, \pi] \times \Sigma$ whose boundary is two copies of $\Sigma$ and $P_{N^-} \cong P_{N^+} \cong [0, \pi]\times Q$. In this case $N$ is diffeomorphic to $S^1 \times \Sigma$ and $L_N$ is diffeomorphic to $R_\Sigma$. 
\end{rem}

It is convenient to allow certain piecewise smooth connections. Define
\beqn
{\mc A}^{\rm p.s.}(P_{\check N} ):= \big\{  (A_-, A_+) \in {\mc A}(P_{N^-} \sqcup P_{N^+}) \ |\  A_-|_{\partial N^-} = A_+|_{\partial N^+} \big\}
\eeqn
whose elements are called {\bf piecewise smooth connections}. Define the space of piecewise smooth gauge transformations ${\mc G}^{\rm p.s.}(P_{\check N} )$ in a similar way. Then one has
\beqn
L_{\check N} \cong \big\{ A \in {\mc A}^{\rm p.s.}(P_{\check N} )\ |\ F_A = 0 \}/ {\mc G}^{\rm p.s.}(P_{\check N}).
\eeqn

\subsubsection{Almost flat connections on $M_0$}

Now we turn to the analytical part of the gauge theory. For all three-manifolds with boundary considered in this paper, say $M_0$ for example, we fix a Riemannian metric on $M_0$ such that a neighborhood of the boundary is isometric to $(-\epsilon, 0] \times \partial M_0$. The metric on $M_0$ induces a metric on its completion \eqref{eqn27} which is of the product type on the cylindrical end. When discussing a pair of three-manifolds $N^-$ and $N^+$ sharing the same boundary, we assume the boundary restrictions of the metrics are isometric. The pair of metrics induce a metric on the closed manifold $N$ defined by \eqref{eqn210} which is of product type over the neck.

The differentiation of gauge fields depends on the choice of a covariant derivative. For any $SO(3)$-bundle $P$ over a Riemannian manifold $U$, for any connection $A \in {\mc A}(P)$, let $W_A^{k,p}$ be the corresponding Sobolev norm on sections of ${\rm ad} P$ or $\Lambda^1 \otimes {\rm ad} P$. For example, if $s \in \Omega^1(U, {\rm ad} P)$, then 
\beqn
\| s \|_{W_A^{k,p}(U)}:= \sum_{l=0}^k \| (\nabla^A)^l s \|_{L^p(U)}.
\eeqn
Here $\nabla^A: \Omega^1(U, {\rm ad} P) \to \Omega^1(U, T^* U \otimes {\rm ad} P)$ is the covariant derivative induced from $A$ and the Levi--Civita connection on $U$. However, the Banach topology on these spaces such as $W^{k,p}_A(U, {\rm ad} P)$ do not depend on the choice of a smooth connection $A$. Hence we often write $W^{k,p}(U, {\rm ad} P)$ instead of $W^{k,p}_A(U, {\rm ad} P)$ when the norm is not emphasized. 

The following lemma shows that near an almost flat connection on the three-manifold with boundary there is always a flat connection. It essentially follows from the transversality assumption Hypothesis \ref{hyp210} and the implicit function theorem. 

\begin{lemma}\label{lemma215}
Let $p \geq 2$. There exist $\epsilon = \epsilon_p>0$ and $C = C_p>0$ satisfying the following properties. Let $A$ be a smooth connection on $P_0 \to M_0$ with 
\beqn
\| F_A\|_{L^p(M_0)} \leq \epsilon.
\eeqn
Then there exists a flat connection $A^*$ on $P_0$ of regularity $W^{1,p}$ satisfying 
\beqn
\| A - A^* \|_{W_A^{1,p}(M_0)} \leq C \| F_A \|_{L^p(M_0)}.
\eeqn
\end{lemma}

\begin{proof}
Consider the Banach space 
\begin{multline*}
W^{1,p}_{A,\partial}( M_0, (\Lambda^0 \oplus \Lambda^1) \otimes {\rm ad} P_0) \\
 = \Big\{ (f, \alpha) \in W_A^{1,p}(M_0, (\Lambda^0 \oplus \Lambda^1) \otimes {\rm ad} P_0)\ \left| \ * \alpha|_{\partial M_0} = 0, f|_{\partial M_0} = 0  \Big\}. \right.
 \end{multline*}
Consider the linear operator
\beqn
D_A: W^{1,p}_{A, \partial} ( M_0, (\Lambda^0\oplus \Lambda^1) \otimes {\rm ad} P_0)  \to L^p(M_0,( \Lambda^0\oplus \Lambda^1) \otimes {\rm ad}P_0) 
\eeqn
defined by 
\beqn
D_A (f, \alpha) = ( d_A^* \alpha, * d_A \alpha + d_A f).
\eeqn
This is Fredholm with index $-\frac{3}{2} \chi(\Sigma)$. We claim that there exist $\epsilon = \epsilon_p > 0$ and $C = C_p>0$ such that when $\| F_A \|_{L^p(M_0)} \leq \epsilon$, $D_A$ is surjective and there is a right inverse $Q_A$ with $\| Q_A \| \leq C_p$. Suppose this is not the case, then there exist a sequence of connections $A_i$ with $\| F_{A_i}\|_{L^p} \to 0$ but $D_{A_i}$ is not surjective. Then by the weak Uhlenbeck compactness theorem in three dimensions (see \cite[Theorem A]{Wehrheim_Uhlenbeck}, which applies to both closed manifolds and manifolds with boundary) implies that a subsequece converges modulo gauge to a flat connection $A_\infty$ on $M_0$, and the convergence is weakly in $W^{1,p}$. This implies that $D_{A_i}$ converges to $D_{A_\infty}$ in operator norm (notice the Sobolev embedding $W^{1,2} \hookrightarrow L^6$ in dimension three). However, by Hypothesis \ref{hyp210}, $D_{A_\infty}$ is surjective as its kernel is the tangent space of $L_M$ at $[A_\infty]$. Hence for $i$ sufficiently large, $D_{A_i}$ is also surjective. This contradiction means as long as $\| F_A \|_{L^p}$ is sufficiently small, $D_A$ is surjective. Same argument further guarantees the existence of a right inverse with bounded norm as $L_M$ is compact. Then one can apply the implicit function theorem (see for example \cite[Proposition A.3.4]{McDuff_Salamon_2004}). More precisely, $D_A$ is the linearization of the nonlinear map ${\mc F}^A (f, \alpha) = ( d_A^* \alpha, * F_{A+\alpha} + d_{A+\alpha} f)$ at $(0, 0)$. One has
\beqn
{\mc F}^A (0, 0) = (0, * F_A)
\eeqn
whose norm is small. Then the implicit function theorem implies that there is a nearby connection $A^* = A + \alpha$ and $f$ such that 
\beqn
{\mc F}^A (f, \alpha) = (d_A^* \alpha, * F_{A+\alpha} + d_{A+\alpha} f ) = (0, 0).
\eeqn
Furthermore, by the Bianchi identity $d_{A+\alpha}^* (* F_{A+\alpha}) = 0$, integration by parts, and the boundary condition $f|_{\partial M_0} = 0$, we see that $*F_{A+\alpha}$ and $d_{A+\alpha} f$ are $L^2$-orthogonal. Hence $F_{A+\alpha} = 0$. So $A^*$ is a flat connection and the implicit function theorem implies that
\beqn
\| A^* - A \|_{W_A^{1,p}(M_0)} \leq C \| {\mc F}^A(0, 0)\| = C \| F_A \|_{L^p(M_0)}. \qedhere
\eeqn
\end{proof}

\begin{rem}
Throughout this paper, we adopt the convention that $C$ and $\epsilon$ represent constants which are allowed to vary from line to line. 
\end{rem}

\subsection{The representation variety}

We recall basic facts about the moduli space of flat connections over a surface, which we often call by the name representation variety. Let $\Sigma$ be a closed surface and $Q \to \Sigma$ be the $SO(3)$-bundle which is nontrivial over each connected component of $\Sigma$. The space ${\mc A}(Q)$ of smooth connections on $Q$ is an affine space modelled on $\Omega^1({\rm ad} Q)$. There is a symplectic form defined by 
\beq\label{eqn211}
\omega_{{\mc A}(Q)} (\alpha, \beta) = - \int_\Sigma {\rm tr} (\alpha \wedge \beta),\ \alpha, \beta \in \Omega^1({\rm ad} Q).
\eeq
The conformal class of the metric on $\Sigma$ defines an almost complex structure, i.e., 
\beqn
J_{{\mc A}(Q)} \alpha = * \alpha
\eeqn
where the Hodge star operator on 1-forms on $\Sigma$ only depends on the complex structure. $\omega_{{\mc A}(Q)}$ and $J_{{\mc A}(Q)}$ make ${\mc A}(Q)$ an (infinite dimensional) K\"ahler manifold. 

The space of gauge transformations ${\mc G}(Q)$ acts on ${\mc A}(Q)$ by pulling back connections. The action is Hamiltonian, with a moment map 
\beqn
\mu(B) = - F_B \in \Omega^2( {\rm ad} Q) \cong \Big( {\rm Lie} {\mc G}(Q) \Big)^*
\eeqn
The representation variety associated to $Q$ can be identified with the symplectic quotient 
\beqn
R_\Sigma \cong \mu^{-1}(0) / {\mc G}(Q).
\eeqn
The representation variety $R_\Sigma$ inherits a K\"ahler structure from the symplectic form $\omega_{{\mc A}(Q)}$ and the complex structure $J_{{\mc A}(Q)}$ (see \cite{Goldman_1984}). The associated K\"ahler metric on $R_\Sigma$ is called the $L^2$-metric. 

Under Hypothesis \ref{hyp210}, the immersion $\iota: L_M \immerse R_\Sigma$ is Lagrangian. Indeed, for any $a \in L_M$ represented by a flat connection $A \in {\mc A}_{\rm flat}(P_0)$, tangent vectors $\xi \in T_a L_M \cong H^1(d_A)$ are represented by $\alpha \in \Omega^1({\rm ad} P_0)$ satisfying $d_A \alpha = 0$. Then for any pair of tangent vectors $\xi, \xi'\in T_a L_M$ represented by $\alpha, \alpha' \in {\rm ker} d_A$ respectively, one has
\beqn
- \omega( \iota_* \xi, \iota_* \xi') = \int_\Sigma {\rm tr}( \alpha|_\Sigma\wedge \alpha'|_\Sigma) = \int_{M_0} d( {\rm tr}(\alpha \wedge \alpha') ) = \int_{M_0} {\rm tr}( d_A \alpha \wedge \alpha' - \alpha \wedge d_A \alpha') = 0.
\eeqn
(See also \cite{Yoshida_1991}\cite{Herald_1994} for discussions of the Lagrangian property in the $SU(2)$ case.)

\subsubsection{Projection onto the representation variety}

We would like to show that, analogous to the finite-dimensional situation, there is a natural map (which will be called the Narasimhan--Seshadri map) sending ``nearly flat'' connections on the $SO(3)$-bundle $Q \to \Sigma$ to a flat connection defined using complex gauge transformations. Moreover, we show the good behavior of this map by proving some estimates.

We briefly explain the idea of complex gauge transformations in the finite-dimensional setting. Suppose $V$ is a K\"ahler manifold acted by a compact Lie group $K$ with a moment map $\mu: V \to {\mf k}^*$. Suppose $0\in {\mf k}^*$ is a regular value of $\mu$. Because the $K$-action preserves the complex structure, the action extends to the complexification $G = K^{\mb C}$. Moreover, for each $x \in V$ sufficiently close to $\mu^{-1}(0)$, there exists a unique $\xi \in {\mf k}$ with $|\xi|$ small such that the ``purely imaginary'' translation $e^{{\bm i} \xi} x$ of $x$ lies in the zero locus of $\mu$. This defines a smooth $K$-equivariant map sending $x$ to $e^{{\bm i} \xi} x$. After composing with the projection $\mu^{-1}(0) \to \mu^{-1}(0)/K$ it becomes a holomorphic map onto the GIT quotient. If $\mu^{-1}(0)$ is compact, then one can prove estimates such as $|\xi| \leq C |\mu(x)|$ and $|d\xi/dx|\leq C$.

To discuss the infinite-dimensional analogue, we first need to extend gauge transformations to the complexified version. Similar discussions can be found in many literature. Here we follow \cite[Section 2]{Fukaya_1998} which treats the $SO(3)$-case. (For the $SU(2)$-case, see for example \cite[Section 6.1]{Donaldson_Kronheimer}.) Let $\hat Q \to \Sigma$ be the principal $U(2)$-bundle whose first Chern class is the fundamental class of $\Sigma$. The $SO(3)$-bundle associated to $\hat Q$ and the representation $U(2) \to U(2)/U(1) \cong SO(3)$ can then be identified with $Q$. Fix a connection $\nabla^{\det}$ on the determinant line bundle $\det \hat Q \to \Sigma$. As one has the $U(2)$-equivariant decomposition ${\mf u}(2) \cong \mf{so}(3) \oplus \mf{u}(1)$, each connection $B \in {\mc A}(Q)$ can be identified with a connection $\hat B = B \oplus  \nabla^{\det} \in {\mc A}(\hat Q)$, where the latter induces the connection $\nabla^{\det}$ on $\det \hat Q$. Let ${\mc A}(\hat Q, \nabla^{\det}) \subset {\mc A} (\hat Q)$ be the subset of those $U(2)$-connections.

From now on we shall not distinguish $B \in {\mc A}(Q)$ with the unitary connection $\hat B = B \oplus \nabla^{\rm det}$. Let $\hat E \to \Sigma$ be the rank 2 Hermitian vector bundle associated to $\hat Q$. Each connection $\hat B$ on $\hat Q$ induces a $\ov\partial$-operator 
\beqn
\ov\partial_{\hat B}: \Omega^0(\hat E) \to \Omega^{0,1}(\hat E)
\eeqn
(i.e. complex linear operators satisfying the Leibniz rule) on $\hat E$ by taking the $(0,1)$-part of the covariant derivative associated to $\hat B$. Let ${\mc D}''(E)$ be the space of $\ov\partial$-operators on $\hat E$ and ${\mc D}''(\hat E, \nabla^{\det}) \subset {\mc D}''(E)$ the subset of those which induce the $(0,1)$-part of $\nabla^{\rm det}$. Then one has the isomorphisms
\beqn
\Psi: {\mc A}(Q) \cong {\mc A}(\hat Q, \nabla^{\det}) \cong {\mc D}''(\hat E, \nabla^{\rm det}),\ B \mapsto \ov\partial_{\hat B}.
\eeqn
The complexification of the group of gauge transformations on $Q$ can be regarded as 
\beqn
{\mc G}(Q)^{\mb C} = \Gamma( \Sigma, Q \times_{SO(3)} SU(2))^{\mb C} = \Gamma( \Sigma, Q \times_{SO(3)} SL(2; {\mb C})).
\eeqn
Hence each $g \in {\mc G}(Q)^{{\mb C}}$ can be regarded as an automorphism of the complex vector bundle $\hat E$ whose determinant is the identity. The complex gauge transformation on ${\mc A}(Q)$ is then defined as 
\beqn
g^* B:= \Psi^{-1} ( g^{-1} \ov\partial_{\hat B} g).
\eeqn
In particular, a {\bf purely imaginary} gauge transformation, which has the form 
\beqn
g = e^{{\bm i} h},\ {\rm where}\ h \in \Gamma({\rm ad}Q)
\eeqn
acts by
\beqn
g^* B = B - \left( \partial_B e^{{\bm i} h} \right) e^{-{\bm i} h} + e^{-{\bm i} h} \left( \ov\partial_B e^{{\bm i} h} \right).
\eeqn
Here $\partial_B + \ov\partial_B$ is the covariant derivative on $E$ associated to $B$. The infinitesimal version of this action is 
\beqn
h \mapsto - * d_B h.
\eeqn
The curvature of the transformed connection is (see \cite[page 5]{Donaldson_1985} or \cite[page 11]{Duncan_2012})
\beq\label{eqn212}
F_{g^* B} = e^{-{\bm i} h} \left( F_B - \ov\partial_B \big( e^{2{\bm i} h} (\partial_B e^{- 2 {\bm i} h}) \big) \right) e^{{\bm i} h}.
\eeq

Following the terminology of Duncan \cite{Duncan_thesis, Duncan_2012}, we define a nonlinear map which assign to each almost flat connection on the surface to a flat connection via a unique imaginary gauge transformation. For $p>1$ and $\epsilon>0$, define 
\beqn
{\mc A}^{1,p}_\epsilon(Q) = \Big\{ B \in {\mc A}^{1,p}(Q)\ |\ \| F_B \|_{L^p(\Sigma)} < \epsilon\Big\}
\eeqn
and 
\beqn
{\mc A}^{1,p}_{\rm flat}(Q) = \Big\{ B \in {\mc A}^{1,p}(Q)\ | \ F_B = 0 \Big\}.
\eeqn

\begin{lemma}\label{lemma217}
Let $p \geq 2$. There exist positive constants $\epsilon = \epsilon_p$, $\epsilon' = \epsilon_p'$, and $C = C_p$ satisfying the following conditions. For each $B \in {\mc A}_\epsilon^{1,p}(Q)$, there exists a unique purely imaginary gauge transformation of the form $g = e^{ {\bm i} h_B}$ where $h_B \in W_B^{2,p}(\Sigma, {\rm ad}Q)$ such that 
\begin{align}\label{eqn213}
&\ g^* B \in {\mc A}_{\rm flat}^{1,p}(Q),\ &\ \| h_B \|_{W^{2,p}_B} < \epsilon'.
\end{align}
Moreover, $h_B\in W_{B}^{2,p}(\Sigma, {\rm ad}(Q))$ is a $C^1$ function of $B \in {\mc A}_{\epsilon}^{1,p}(Q)$ and there holds 
\beq\label{eqn214}
\| h_B \|_{W_B^{2,p}(\Sigma)} \leq C \| F_B \|_{L^p(\Sigma)}.
\eeq
\end{lemma}

\begin{proof}
We use the implicit function theorem. Consider the map
\beqn
W_B^{2,p}(\Sigma, {\rm ad}Q ) \ni h \mapsto  * F_{g^* B} \in L^p(\Sigma, {\rm ad}Q )
\eeqn
where $g = e^{ {\bm i} h}$. Its linearization at $h = 0$ is the linear operator
\beqn
\Delta_B:= d_B^* d_B: W_B^{2,p}(\Sigma, {\rm ad} Q) \to L^p(\Sigma, {\rm ad} Q).
\eeqn
This is a Fredholm operator of index zero. If $B$ is flat, then Hypothesis \ref{hyp210} implies that $\Delta_B$ is invertible. Since $\Delta_B$ depends on $B \in {\mc A}^{1,p}(Q)$ smoothly, the norm $W^{2,p}_B$ is gauge-invariant and varies smoothly with $B$, and $R_\Sigma$ is compact, there is a constant $C = C_p>0$ such that 
\beqn
\| \Delta_B h \|_{L^p(\Sigma)} \geq 2C \| h \|_{W_B^{2,p}(\Sigma)},\ \forall B \in {\mc A}_{\rm flat}^{1,p}(Q). 
\eeqn
Then when $\epsilon$ is sufficiently small, by Uhlenbeck's weak compactness, any $B \in {\mc A}_\epsilon^{1,p}(Q)$ is sufficiently close to a flat $W^{1,p}$-connection in the $W^{1,p}$-norm. It follows that
\beqn
\| \Delta_B h \|_{L^p(\Sigma)} \geq C \| h \|_{W_B^{2,p} (\Sigma)},\ \forall B \in {\mc A}_\epsilon^{1,p}(Q).
\eeqn
Then by applying the implicit function theorem (see for example \cite[Proposition A.3.4]{McDuff_Salamon_2004}), one obtains an appropriate $\epsilon_p'>0$ and a unique $h_B$ satisfying \eqref{eqn213}. Moreover, $h_B$ satisfies the estimate \eqref{eqn214}. The $C^1$-dependence of $h_B$ on $B$ is also a consequence of the implicit function theorem.
\end{proof}

\begin{defn} \label{nsmap}
Let $p\geq 2$ and $\epsilon_p$ be the one in Lemma \ref{lemma217}. Define the {\bf Narasimhan--Seshadri map}
\beq\label{eqn215}
{\it NS}_p: {\mc A}_{\epsilon_p}^{1,p}(Q) \to {\mc A}_{\rm flat}^{1,p}(Q)
\eeq
by 
\beqn
{\it NS}_p(B) = (e^{{\bm i} h_B})^* B.
\eeqn
\end{defn}

We know that each flat connection in ${\mc A}_{\rm flat}^{1,p}(Q)$ is gauge equivalent via a gauge transformation of class $W^{2,p}$ to a smooth flat connection. Then there is a homeomorphism
\beqn
{\mc A}_{\rm flat}^{1,p}(Q)/ {\mc G}^{2,p}(Q) \cong R_\Sigma. 
\eeqn
The composition of ${\it NS}_p$ with the projection ${\mc A}_{\rm flat}^{1,p}(Q) \to R_\Sigma$ is denoted by
\beqn
\ov{\it NS}_p: {\mc A}_{\epsilon_p}^{1,p}(Q) \to R_\Sigma.
\eeqn
By abusing names we still call $\ov{\it NS}_p$ the Narasimhan--Seshadri map. An important property of this map is that it maps instantons over $S \times \Sigma$ to holomorphic maps from $S$ to $R_\Sigma$.

\begin{prop}\label{prop219}
Let $S \subset {\bm C}$ be an open subset and $\A = d_S + \phi ds + \psi dt + B$ be a smooth connection on $S \times Q$ satisfying the first equation of \eqref{eqn24}, i.e.
\beqn
\A_s + * \A_t = 0.
\eeqn
Suppose for sufficiently small $\epsilon$ one has $B(z) \in {\mc A}_\epsilon^{1,2}(Q)$ for all $z \in S$, then the map $u: S \to R_\Sigma$ defined by $u(z) = \ov{\it NS}_2 (B(z))$ is holomorphic.
\end{prop}

\begin{proof}
By the definition of ${\it NS}_2$, there exists a complex gauge transformation $g$ on $S \times Q \to S \times \Sigma$ whose restriction to each fibre $\{z \}\times Q$ is the purely imaginary gauge transformation $g_z = e^{{\bm i}h_{B(z)}}$ provided by Lemma \ref{lemma217} such that ${\it NS}_2(B(z)) = g_z^* B(z)$. Suppose $g^* \A = \A' + \phi' ds + \psi' dt + B'$. Since complex gauge transformations preserve the first equation of \eqref{eqn24} (see \cite[Corollary 2.24]{Fukaya_1998}), one still have
\beq\label{eqn216}
0 = \A_s' + * \A_t' =  \partial_s B' - d_{B'} \phi' + * ( \partial_t B' - d_{B'} \psi').
\eeq
For each $z \in S$, $u(z)$ is represented by the flat connection $B'(z)$ and one can identify
\beqn
T_{u(z)} R_\Sigma \cong \{ \beta \in \Omega^1({\rm ad} Q)\ |\ d_{B'(z)} \beta = d_{B'(z)} * \beta = 0 \}
\eeqn
where the complex structure $J$ on the left is identified with the Hodge star on the right. Let $\pi_{B'(z)}$ be temporarily the orthogonal projection onto $d_{B'(z)}$-harmonic 1-forms. Then with respect to the above identification, one has 
\begin{multline*}
\partial_s u(z) + J \partial_t u(z) = \pi_{B'(z)} ( \partial_s B'(z)) + * \pi_{B'(z)} (\partial_t B'(z)) = \pi_{B'(z)} ( \partial_s B'(z) + * \partial_t B'(z)) \\
= \pi_{B'(z)} ( \partial_s B'(z) - d_{B'(z)} \phi' + * ( \partial_t B'(z) - d_{B'(z)} \psi')) = 0.
\end{multline*}
So $u$ is a holomorphic map.
\end{proof}

\subsubsection{Energy identity for pseudoholomorphic curves in the representation variety}

We show that the energy of holomorphic curves in the representation variety can be expressed in terms of the Chern--Simons functional. For simplicity we only discuss a special case but the general formula can be obtained using the same argument. Let $P_{N^\pm} \to N^\pm$ be a pair of $SO(3)$-bundles over three-manifolds with boundary which satisfy Hypothesis \ref{hyp210} and Hypothesis \ref{hyp213}. Let the domain of the holomorphic curve be the strip
\beqn
S_{[a, b]}:= [a,b] \times [-1, 1]
\eeqn
whose coordinates are $(s, t)$. Let $\partial^\pm S_{[a, b]} \subset \partial S_{[a, b]}$ be the $\pm 1$ side of the boundary. Denote 
\beq\label{eqn217}
\N_{[a,b]}:= (\partial^- S_{[a,b]}\times N^-) \cup (S_{[a, b]}\times \Sigma) \cup (\partial^+ S_{[a,b]} \times N^+)
\eeq
which is a four-manifold with boundary $\partial \N_{[a,b]} = \{a, b\}\times N$. Define an $SO(3)$-bundle $\P \to \N_{[a,b]}$ as 
\beqn
\P:= (\partial^- S_{[a,b]}\times P_{N^-} ) \cup (S_{[a, b]}\times Q) \cup (\partial^+ S_{[a,b]} \times P_{N^+}).
\eeqn
A piecewise smooth connection on ${\bm P}$ is a continuous connection whose restriction to each of  the three parts in \eqref{eqn217} is smooth.

Now consider holomorphic maps from $S_{[a, b]}$ to $R_\Sigma$ with the two boundary components $\partial^\pm S_{[a,b]}$ mapped into the two immersed Lagrangians $\iota^\pm(L_{N^\pm}) \subset R_\Sigma$ respectively. Such an object can be described by a triple ${\bm u} = (u, \gamma^-, \gamma^+)$ where $u: S_{[a,b]}\to R_\Sigma$ is a holomorphic map and $\gamma^\pm: \partial^\pm S_{[a,b]} \to L_{N^\pm}$ are continuous maps satisfying
\beqn
u|_{\partial^\pm S_{[a,b]}} = \iota^\pm \circ \gamma^\pm.
\eeqn
By Lemma \ref{lemma29}, $u$ is indeed smooth along the boundary and $\gamma^\pm$ are also smooth.

\begin{defn}\label{lift}
For a holomorphic map ${\bm u} = (u, \gamma^-, \gamma^+)$ as described above, a {\bf lift} of ${\bm u}$ is a piecewise smooth connection ${\bm A}$ on $\P_{[a, b]}\to \N_{[a,b]}$ satisfying the following conditions.

\begin{enumerate}
\item Over $[a, b]\times N^\pm$, if we write ${\bm A} = d_s + \eta^\pm(s) + A^\pm_s$, then for all $s \in [a,b]$
\begin{align*}
&\ A^\pm_s\in {\mc A}_{\rm flat}(P_{N^\pm}),\ &\ [A^\pm_s] = \gamma^\pm(s)\in L_{N^\pm}.
\end{align*}

\item Over $S_{[a,b]}\times \Sigma$, if we write ${\bm A} = d_{\bm C} + \phi ds + \psi dt + B(z)$, then for all $z \in S_{[a, b]}$
\begin{align*}
&\ B(z) \in {\mc A}_{\rm flat}(Q),\ &\ [B(z)] = u(z) \in R_\Sigma
\end{align*}
and for all $z \in S_{[a,b]}$, one has  
\beq\label{eqn218}
\partial_s B(z) - d_{B(z)} \phi,\ \partial_t B(z) -d_{B(z)} \psi \in {\rm ker} d_{B(z)} \cap {\rm ker} d_{B(z)}^*.
\eeq
\end{enumerate}
\end{defn}

\begin{prop}\label{prop221}
For a holomorphic map ${\bm u} = (u, \gamma^-, \gamma^+): S_{[a, b]} \to R_\Sigma$ as described above, there exists a piecewise smooth lift ${\bm A}$. Moreover, the energy of ${\bm u}$ is equal to $CS_{\bm P}(A_a, A_b)$ where $A_a, A_b \in {\mc A}^{\rm p.s.}(N)$ are the boundary restrictions of any lift ${\bm A}$. 
\end{prop}

\begin{proof}
Let ${\bm u} = (u, \gamma^-, \gamma^+)$ be a holomorphic map as described. First, one can find a family of smooth connections $B(z) \in {\mc A}_{\rm flat} (Q)$ parametrized by by $z \in S_{[a,b]}$ such that 
\beqn
[B(z)] = u(z).
\eeqn
We claim that the family $B(z)$ can be chosen such that it depends smoothly on $z$. Indeed, for each $z \in S_{[a, b]}$, one can choose a smooth $B(z)$ representing $u(z)$. Then there exists a small $\delta >0$ such that the local Coulomb slice
\beqn
V_{B(z)}(\delta):= \{ B\in {\mc A}_{\rm flat}(Q)\ |\ d_{B(z)}^* (B - B(z)) = 0,\ \| B - B(z)\|_{L^2(\Sigma)} < \delta \}
\eeqn
is diffeomorphic to a neighborhood of $u(z)$ in $R_\Sigma$ via the map $B \mapsto [B]$. Since $u$ is smooth, there exists $r>0$ such that for all $w \in B_r(z)\cap S_{[a, b]}$ (where $B_r(z) \subset {\bm C}$ is the open disk), one can choose $B(w) \in V_{B(z)}(\delta) \subset {\mc A}_{\rm flat}(Q)$. Hence $B$ depends smoothly on $w$. One can cover $S_{[a, b]}$ by finitely such neighborhoods of the form $B_r(z) \cap S_{[a, b]}$ such that over each such open set one has a lift of $u$ which varies smoothly. Then one can change these local lifts inductively by applying gauge transformations to construct a lift which is smooth globally over $S_{[a, b]}$.

Since $R_\Sigma$ is a locally free symplectic quotient, there are unique $\phi(z), \psi(z) \in \Omega^0( {\rm ad} Q)$ such that \eqref{eqn218} is satisfied. Then define
\beqn
\A_1 = d_S + \phi(z) ds + \psi(z) dt + B(z)
\eeqn
which is a smooth connection on ${\bm P}$ restricted to $S_{[a,b]} \times \Sigma$. %By applying suitable gauge transformations, we can assume that 
%\beqn
%\psi|_{\partial^\pm S_{[a,b]}\times \Sigma} \equiv 0.
%\eeqn

On the other hand, for each boundary point $(s, \pm 1) \in \partial^\pm S_{[a,b]}$, using the map $\gamma^\pm$ one can find a family of flat connections $A^\pm_s \in {\mc A}_{\rm flat}(P_{N^\pm})$ such that for all $s \in [a,b]$,
\begin{align*}
&\ [A^\pm_s] = \gamma^\pm (s),\ &\ A^\pm_s|_{\partial N^\pm} = B(s, \pm 1).
\end{align*}
One can also make $A^\pm_s$ depend smoothly on $s$. Moreover, by applying a further gauge transformation which is the identity over $[a,b]\times \Sigma \subset [a,b]\times N^\pm$, one may assume that along the boundary $A^\pm_s$ has vanishing $dt$ component. Then define 
\beqn
\A_\pm = d_s + \eta^\pm(s) ds + A_{s, \pm} \in {\mc A}( \partial^\pm S_{[a,b]} \times P_{N^\pm} )
\eeqn
where $\eta^\pm(s)\in \Omega^0(N^\pm, {\rm ad} P_{N^\pm})$ is an arbitrary smooth extension of the boundary restriction of $\phi(s)$. This is a smooth connection on ${\bm P}$ restricted to $\partial^\pm S_{[a,b]} \times N^\pm$. Then $\A_1$ and $\A_\pm$ together define a piecewise smooth connection $\A$ on ${\bm P}$. This finishes the proof of the existence of a piecewise smooth lift.  

Now assume that $\A$ is a piecewise smooth lift of ${\bm u}$. We calculate the energy of ${\bm u}$. Indeed, since $A^\pm_s$ is flat, one has 
\beq\label{eqn219}
{\rm tr}( F_\A \wedge F_\A)|_{\partial^\pm S_{[a,b]} \times N^\pm} = 0.
\eeq
On the other hand, over $S_{[a,b]} \times \Sigma$ one has
\beqn
F_\A = F_{B(z)} + ds \wedge ( \partial_s B(z) - d_B \phi) + dt\wedge ( \partial_t B(z) - d_B \psi) + ( \partial_s \psi - \partial_t \phi + [\phi, \psi]) ds\wedge dt.
\eeqn
Since $F_{B(z)} \equiv 0$, one has 
\beq\label{eqn220}
{\rm tr} (F_\A \wedge F_\A) =   {\rm tr} \Big( ( \partial_s B - d_B \phi ) \wedge (\partial_t B(z) - d_B \psi) \Big) ds\wedge dt.
\eeq
Moreover, since the linear map $D\ov{\it NS}_2: {\rm ker} d_{B(z)} \cap {\rm ker} d_{B(z)}^* \to T_{[B(z)]} R_\Sigma$ is an isometry, by \eqref{eqn218} one has 
\beq\label{eqn221}
|\partial_s u |^2  = |\partial_s B - d_{B } \phi(z)|^2  = - \int_\Sigma {\rm tr}  \Big( (\partial_s B - d_{B } \phi ) \wedge (\partial_t B - d_{B} \psi ) \Big).
\eeq
Then by \eqref{eqn219}, \eqref{eqn220}, and \eqref{eqn221}, one has
\beqn
{\it CS}_{\bm P}(A_a, A_b) =  - \frac{1}{2} \int_{\N_{[a,b]}} {\rm tr} (F_\A \wedge F_\A)  = \int_{S_{[a,b]}} |\partial_s u|^2 dsdt = E(u). \qedhere
\eeqn
\end{proof}

\subsubsection{A technical lemma}

The following lemma will be used a few times in the rest of this paper. 

\begin{lemma}\label{lemma222}
Let $p>2$. There exist $\delta>0$ and $C>0$ satisfying the following property. Suppose $B \in {\mc A}^{1,p}(Q)$ and $g \in {\mc G}^{1,p}(Q)$ satisfying 
\begin{align*}
&\ \| F_B \|_{L^p(\Sigma)} \leq \delta,\ &\ \| g^* B - B \|_{L^p(\Sigma)} \leq \delta.
\end{align*}
Then there exists $h \in W^{1,p}({\rm ad} Q)$ with $g = \pm e^h$ and 
\beqn
\| h \|_{W^{1,p}_B} \leq C \| g^* B - B \|_{L^p(\Sigma)}.
\eeqn
\end{lemma}
 
\begin{proof}
Suppose this is not the case. Then there exist a sequence of connections $B_i \in {\mc A}^{1,p}(Q)$ and a sequence of gauge transformations $g_i \in {\mc G}^{1,p}(Q)$ such that
\begin{align*}
&\ \| F_{B_i} \|_{L^p(\Sigma)} \to 0,\ &\ \| g_i^* B_i - B_i \|_{L^p(\Sigma)} \to 0
\end{align*}
and such that none of $g_i$ satisfies the required property. Then by the Uhlenbeck compactness a subsequence of $B_i$ (still indexed by $i$) converges up to gauge transformation to a flat connection $B_\infty$ weakly in $W^{1,p}$. Notice that this lemma is gauge invariant. Then using the local slice theorem and the elliptic estimate, one can assume that $B_i$ converges to $B_\infty$ in $W^{1,p}$. Then it follows that $g_i$ has uniformly bounded $W^{1,p}$-norm. By the Sobolev embedding $W^{1,p}\to C^0$ in dimension two, a subsequence of $g_i$ (still indexed by $i$) converges in $C^0$ to a limit continuous gauge transformation $g_\infty$. Then in the weak sense $g_\infty^* B_\infty = B_\infty$. Since $Q$ has no reducible flat connections, it follows that $g_\infty = \pm 1$. Without loss of generality, we may assume that $g_\infty = 1$. Then the $C^0$ convergence of $g_i \to 1$ implies that for $i$ sufficiently large, one can write $g_i = e^{h_i}$ with $h_i \in W^{1,p}({\rm ad} Q)$. Then by the definition of gauge transformation, pointwise on $\Sigma$ one has the bound
\beqn
| d_{B_i} h_i - ( g_i^* B_i - B_i)| \leq C |h_i| |d_{B_i} h_i|.
\eeqn
Since $h_i$ is small in the $C^0$ norm, it follows that 
\beqn
\| d_{B_i} h_i \|_{L^p(\Sigma)} \leq C \| g_i^* B_i - B_i \|_{L^p(\Sigma)} \to 0.
\eeqn
On the other hand, the operator $d_{B_\infty}: W^{1,p}_{B_\infty}({\rm ad} Q) \to L^p(\Lambda^1\otimes {\rm ad} Q)$ is injective as $B_\infty$ is not reducible. The weak $W^{1,p}$-convergence of $B_i \to B_\infty$ implies that $d_{B_i}$ is also injective when $i$ is sufficiently large. Hence it follows that 
\beqn
\| h_i \|_{W^{1,p}_{B_i}} \leq C \| d_{B_i} h_i \|_{L^p(\Sigma)} \leq  C \| g_i^* B_i - B_i \|_{L^p(\Sigma)}
\eeqn
which contradicts our assumption. Hence the lemma is proved. \end{proof}

\section{The rescaled equation and interior compactness}\label{section3}

In this section we consider the ASD equation over the product of two surfaces. In the adiabatic limit of the rescaled version, we recall an interior estimate by Dostoglou--Salamon \cite{Dostoglou_Salamon, Dostoglou_Salamon_corrigendum} which leads to the compactness modulo bubbling results (Theorem \ref{thm33}). We also give a refined version of the interior estimate near the boundary (Corollary \ref{cor36}) which will be useful in the next section.

\subsection{An interior estimate for the rescaled equation}\label{subsection31}

First we recall the notion of the rescaled ASD equations. Let $\rho$ be a positive number and $S \subset {\bm C}$ be open. Let 
\beqn
\varphi_\rho: {\bm C} \to {\bm C}
\eeqn
be the multiplication $z \mapsto \rho z$. Recall that a connection $\A$ on $\varphi_\rho(S) \times Q$ can be written in components as $\A = d_{\varphi_\rho(S)} + \phi ds + \psi dt + B$. Denote 
\beqn
\A' = \varphi_\rho^* \A = d_S + \phi' ds + \psi' dt + B'.
\eeqn
Recall the notations introduced in \eqref{eqn22} and \eqref{eqn23}. Then the ASD equation on $\A$ can be rewritten as the following equation on $\A'$ over $S \times \Sigma$
\begin{align}\label{eqn31}
&\ \A_s' + * \A_t' = 0,\ &\ * \kappa_{\A'} + \rho^2 \mu_{\A'} = 0.
\end{align}
We call this equation the {\bf $\rho$-ASD equation}. Define the rescaled energy density function
\beq\label{eqn32}
e_{\rho}(z):=  \| \A_s' \|_{L^2(\{z\}\times \Sigma)}^2 + \rho^2 \|\mu_{\A'} \|_{L^2(\{z\}\times \Sigma)}^2
\eeq
and the rescaled energy 
\beqn
E_\rho(\A'; S):= \int_{S} e_\rho(z)ds dt.
\eeqn
The basic relation between the rescaled energy density and the original energy density is
\beqn
e_\rho(z) = \rho^2 \| F_\A \|_{L^2( \{\rho z \}\times \Sigma)}^2,\ \forall z \in S.
\eeqn
Therefore we have
\beqn
E( \A; \varphi_\rho(S)) = \frac{1}{2} \| F_{\A}\|_{L^2( \varphi_{\rho}(S) \times \Sigma)}^2 = E_\rho(\A';S).
\eeqn
When the set $S$ is understood from the context, we abbreviate $E_\rho(\A'; S)$ by $E_\rho(\A')$. We also introduce another density in terms of fibrewise $L^\infty$-norm:
\beqn
e_\rho^\infty(z):= \| \A_s' \|_{L^\infty(\{z\}\times \Sigma)}^2 + \rho^2 \| \mu_{\A'} \|_{L^\infty( \{z\}\times \Sigma)}^2.
\eeqn

In \cite{Dostoglou_Salamon, Dostoglou_Salamon_corrigendum} Dostoglou--Salamon obtain several important estimates about the rescaled equation. These estimates are essential in proving the convergence of rescaled instantons towards holomorphic curves. We recall them as follows.

\begin{thm}\cite[Theorem 7.1]{Dostoglou_Salamon}\cite[Corollary 1.1]{Dostoglou_Salamon_corrigendum}\cite[Lemma 2.1]{Dostoglou_Salamon_corrigendum}\label{thm31} Let $S \subset {\bm C}$ be an open set, $K \subset S$ be a compact subset, and $c_0>0$. Then for $p \in [2, \infty]$, there exist constants $C_p > 0$ and $T_p>0$ such that the following holds. Let $\A' = (B', \phi', \psi')$ be a solution to \eqref{eqn31} over $S \times \Sigma$ with $\rho \geq T_p$ satisfying
\beq\label{eqn33}
\sup_{z \in S} e_\rho(z) = \sup_{z \in S} \Big( \| \A_s' \|_{L^2(\Sigma)}^2 + \rho^2 \| \mu_{\A'} \|_{L^2(\Sigma)}^2 \Big) \leq c_0.
\eeq
Then there holds
\beq\label{eqn34}
\| d_{B'} \A_s' \|_{L^p(K \times \Sigma)} + \| d_{B'} \A_t'\|_{L^p(K \times \Sigma)} \leq C_p \rho^{- \frac{2}{p}} \sqrt{E_\rho(\A'; S)};
\eeq
\beq\label{eqn35}
\| \A_s'\|_{L^\infty(K \times \Sigma)} + \rho \| \mu_{\A'} \|_{L^\infty(K \times \Sigma)} \leq C_\infty \sqrt{ E_\rho (\A'; S)}.
\eeq
\beq\label{eqn36}
\sup_{z \in K} \| \mu_{\A'}(z)\|_{L^2(\Sigma)} \leq C_p \rho^{- 2 + \frac{2}{p}}.
\eeq
\end{thm}

\begin{proof}
By \cite[Theorem 7.1]{Dostoglou_Salamon}, for $c_0'>0$ there exist $C_p>0$ and $T_p>0$ such that for a solution $\A'= (B', \phi', \psi')$ to the $\rho$-ASD equation for $\rho \geq T_p$ and satisfying 
\beq\label{eqn37}
\sup_{z\in S} e_\rho^\infty(z) \leq c_0'
\eeq
there holds
\beqn
\| d_{B'} \A_s' \|_{L^p(K \times \Sigma)} + \| d_{B'} \A_t' \|_{L^p( K \times \Sigma)} \leq C_p \rho^{-\frac{2}{p}} \sqrt{ E_\rho(\A'; S \times \Sigma)}.
\eeqn
By the discussion at the beginning of \cite[Section 1]{Dostoglou_Salamon_corrigendum}, the assumption \eqref{eqn37} can be weakened to \eqref{eqn33}. So \eqref{eqn34} holds. Second, the estimate \eqref{eqn35} follows from \cite[Corollary 1.1]{Dostoglou_Salamon_corrigendum}. Lastly we prove \eqref{eqn36}. Choose a pair of precompact open neighborhoods $S'' \subset S' \subset S$ of $K$ whose areas are finite such that $\ov{S''} \subset S'$ and $\ov{S'} \subset S$. Let $C_\infty'>0$ such that \eqref{eqn35} holds if we replace $K$ by $\ov{S''}$, $S$ by $S'$, and $C_\infty$ by $C_\infty'$. By \cite[Lemma 2.1]{Dostoglou_Salamon_corrigendum}, for any $c_1>0$, there exist $C_p >0$, $T_p >0$ (which also depend on $S''$ and $K$) and $\delta_0>0$ (which is independent of $p$, $c_1$, and regions $S'', K$) with the following property. Let $\A' = (B', \phi', \psi')$ be a solution to the $\rho$-ASD equation over $S'' \times \Sigma$ for $\rho \geq T_p$ satisfying
\begin{align}\label{eqn38}
&\ \| \A_s'\|_{L^\infty(S'' \times \Sigma)} \leq c_1,\ &\ \| \mu_{\A'}\|_{L^\infty(S'' \times \Sigma)} \leq \delta_0.
\end{align}
Then \eqref{eqn36} holds. On the other hand, by \eqref{eqn33} and \eqref{eqn35}, choosing $T_p$ sufficiently large, one has
\beqn
\| \A_s' \|_{L^\infty(S'' \times \Sigma)} \leq C_\infty' \sqrt{E_\rho(\A'; S')} \leq C_\infty' \sqrt{c_0 {\rm Area}(S')}
\eeqn
and
\beqn
\| \mu_{\A'}\|_{L^\infty(S'' \times \Sigma)} \leq \frac{C_\infty' \sqrt{E_\rho(\A'; S')}}{T_p} \leq \frac{ C_\infty' \sqrt{ c_0 {\rm Area} (S')}}{T_p} \leq \delta_0.
\eeqn
Then by applying \cite[Lemma 2.1]{Dostoglou_Salamon_corrigendum} for $c_1 = C_\infty' \sqrt{c_0 {\rm Area} (S')}$, \eqref{eqn36} is proved.
\end{proof}

\begin{rem}
Dostoglou--Salamon's estimates are valid for the case that the $\rho$-ASD equation has a type of holonomic perturbation term. Such a perturbation induces a family of Hamiltonian function on $R_\Sigma$. See \cite[Section 7]{Dostoglou_Salamon} for more details. 
\end{rem}

\subsection{Adiabatic limit}

Now we consider the adiabatic limit of the rescaled ASD equation towards holomorphic maps in the representation variety. The following theorem, which is a reformulation of results used in the proof of \cite[Theorem 9.1]{Dostoglou_Salamon}, is fundamental to the main result of this paper. For completeness we provide its proof in the appendix.

\begin{thm}\label{thm33}(cf. \cite[Theorem 9.1]{Dostoglou_Salamon} \cite[Lemma 3.5]{Duncan_2012} \cite[Theorem 1.2]{Nishinou_2010})
Let $\rho_i \to  + \infty$ be a sequence of positive numbers diverging to infinity. Let $S \subset  \C$ be an open subset and $S_i \subset S$ be an exhausting sequence of open subsets. Let $\A_i' = d_S + \phi_i' ds + \psi_i' dt + B_i'(z)$ be a sequence of solutions to the $\rho_i$-ASD equation over $S_i \times \Sigma$. Let $e_{\rho_i}: S_i \to [0, +\infty)$ the rescaled energy density function of $\A_i'$. Suppose 
\beq\label{energydensity}
\limsup_{i \to \infty} E_{\rho_i}(\A_i') < \infty.
\eeq
Then there exists a subsequence (still indexed by $i$) and a holomorphic map with mass
\beqn
\tilde {\bm u}_\infty = (u_\infty, W_\infty, m_\infty): S \to R_\Sigma
\eeqn
satisfying the following condition. 

\begin{enumerate}

\item For every point $z \in S$, for sufficiently small $r$, the limit 
\beqn
\lim_{i \to \infty} E_{\rho_i}(\A_i'; B_r(z)\times \Sigma)
\eeqn
exists. Moreover, there holds
\beqn
\lim_{r \to 0} \lim_{i\to \infty} E_{\rho_i}(\A_i'; B_r(z) \times \Sigma) = m_\infty(z).
\eeqn

\item For any precompact open subset $K \subset S \setminus W_\infty$, for $i$ sufficiently large, $B_i'(z)$ is contained in the domain of the Narasimhan--Seshadri map $\ov{\it NS}_2$ for all $z \in K$, hence $\A_i'$ induces a holomorphic map 
\beqn
u_i': K \to R_\Sigma,\ u_i' (z):= \ov{\it NS}_2( B_i'(z)).
\eeqn
Moreover, the sequence of maps $u_i'$ converges to $u_\infty$ in $C^\infty ( K)$ and the energy density $e_{\rho_i}$ converges to the energy density of $u_\infty$ in $C^0(K)$. 

\item For any compact subset $K \subset S$ containing $W_\infty$, there holds
\beqn
\lim_{i \to \infty} E_{\rho_i}(\A_i'; K \times \Sigma)  = E(u_\infty; K) + \int_K m_\infty.
\eeqn
\end{enumerate}
Moreover, $W_\infty = \emptyset$ if and only if for all compact $K \subset S$ there holds
\beqn
\limsup_{i \to \infty} \| e_{\rho_i}\|_{L^\infty(K)} < \infty.
\eeqn
\end{thm}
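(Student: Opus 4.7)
\medskip

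\noindent\textbf{Proof proposal.} The plan is to follow the standard bubbling/compactness dichotomy, using Dostoglou--Salamon's interior estimate (Theorem \ref{thm31}) as the main engine, and then to produce the limiting holomorphic map via the Narasimhan--Seshadri projection (Proposition \ref{prop217}).

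\emph{Step 1: Define the bubbling set.} Let $\hbar>0$ be the bubbling threshold from the Uhlenbeck compactness theorem (Theorem \ref{thm23}), and fix a small constant $c_0$ to be determined. Since $E_{\rho_i}(\A_i') = E(\A_i;\varphi_{\rho_i}(S_i)\times\Sigma)$ is uniformly bounded, the measures $e_{\rho_i}(z)\,ds\,dt$ form a tight sequence of finite positive measures on $S$, so along a subsequence they converge weakly to a finite Borel measure $\mu_\infty$. Define
\beqn
W_\infty:= \Big\{ z \in S \ \Big|\ \liminf_{r \to 0} \mu_\infty(\ov{B_r(z)}) \geq \hbar_0 \Big\}
\eeqn
for a threshold $\hbar_0$ small enough that the ensuing estimates work. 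By the total mass bound, $W_\infty$ is a finite subset of $S$, and set $m_\infty(z) := \mu_\infty(\{z\})$ for $z \in W_\infty$. This gives conclusion (a).

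\emph{Step 2: Fibrewise smallness and $L^\infty$ bounds off $W_\infty$.} Fix a compact $K \subset S \setminus W_\infty$. Choose a slightly thicker compact $K \subset K' \subset S \setminus W_\infty$ so that, for $i$ large, $E_{\rho_i}(\A_i'; K' )<c_0/2$. A standard mean-value / Moser iteration argument applied to the fibrewise energy density (using the Bochner-type formula for $|\A_s'|^2 + \rho^2|\mu_{\A'}|^2$ that underlies \cite{Dostoglou_Salamon_corrigendum}) upgrades this integral smallness to the pointwise estimate $\sup_{z \in K'} e_{\rho_i}(z) \leq c_0$ required by hypothesis \eqref{eqn33}. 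Then Theorem \ref{thm31} yields
\beqn
\|\A_{i,s}'\|_{L^\infty(K\times \Sigma)} + \rho_i \|\mu_{\A_i'}\|_{L^\infty(K\times \Sigma)} \leq C\sqrt{E_{\rho_i}(\A_i';K')},
\eeqn
with $C$ independent of $i$. In particular $\|F_{B_i'(z)}\|_{L^\infty(\Sigma)} \to 0$ uniformly in $z \in K$.

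\emph{Step 3: Projection via Narasimhan--Seshadri and extraction of the limit.} For $i$ large, $B_i'(z) \in {\mc A}_{\epsilon_2}^{1,2}(Q)$ for every $z\in K$, so the map $u_i'(z) := \ov{\it NS}_2(B_i'(z))$ is defined, and by Proposition \ref{prop217} it is holomorphic. The Step 2 bound on $\A_s'$ together with the Lipschitz property of $\ov{\it NS}_2$ and the equation $\A_s' + *\A_t' = 0$ give a uniform $C^0$, hence (by elliptic bootstrap for holomorphic maps in the compact target $R_\Sigma$) a uniform $C^\infty$ bound on $u_i'|_K$. Passing to a diagonal subsequence as $K$ exhausts $S \setminus W_\infty$, we obtain a holomorphic map $u_\infty: S \setminus W_\infty \to R_\Sigma$ with $u_i' \to u_\infty$ in $C^\infty_{\rm loc}$. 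The energy densities then converge in $C^0_{\rm loc}$ because $D\ov{\it NS}_2$ is an isometry on the horizontal subspace and the vertical contribution to $e_{\rho_i}$ (i.e.\ the piece in $\mathrm{im}\,d_{B_i'}$) vanishes in the limit by the $\rho_i\mu_{\A_i'}$ bound. This gives conclusion (b).

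\emph{Step 4: Removable singularities and energy identity.} Around each $z_* \in W_\infty$ choose a small disk $B_r(z_*)$ meeting $W_\infty$ only at $z_*$. On $B_r(z_*)\setminus\{z_*\}$ the map $u_\infty$ is holomorphic with energy bounded by $\mu_\infty(B_r(z_*))<\infty$; removable singularity for $J$-holomorphic maps into the compact manifold $R_\Sigma$ extends $u_\infty$ continuously to all of $S$. For any compact $K \subset S$ containing $W_\infty$, split $K = K_\delta \sqcup (K\setminus K_\delta)$ where $K_\delta$ is a $\delta$-neighborhood of $W_\infty$; by Step 3, $E_{\rho_i}(\A_i'; (K\setminus K_\delta)\times \Sigma) \to E(u_\infty; K\setminus K_\delta)$, and by weak convergence of measures, $E_{\rho_i}(\A_i'; K_\delta \times \Sigma)\to \mu_\infty(K_\delta) = \int_{K_\delta} m_\infty + o_\delta(1)$ as $\delta \to 0$. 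Letting $\delta \to 0$ produces conclusion (c). The last \emph{iff} is immediate: $W_\infty = \emptyset$ iff no fibrewise blow-up occurs, which by Step 2 is equivalent to local $L^\infty$ bounds on $e_{\rho_i}$.

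\emph{Expected obstacle.} The analytically delicate step is Step 2, namely upgrading the integrated energy smallness to the pointwise fibrewise bound \eqref{eqn33} needed to trigger Theorem \ref{thm31}. One must be careful because the equation \eqref{eqn31} is only elliptic up to gauge, so a local Uhlenbeck gauge on $K'\times \Sigma$ is required before running the mean-value estimate, and one must verify that the resulting local gauge is compatible with the fibrewise Narasimhan--Seshadri projection used in Step 3. Once this is in place the remaining steps are essentially soft, reducing to standard Gromov compactness for $J$-holomorphic maps in the compact target $R_\Sigma$.
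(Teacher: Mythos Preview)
The paper does not give its own proof of this theorem; it records it as ``well-known'' and defers to Dostoglou--Salamon, Duncan, and Nishinou. So there is no proof in the paper to compare against, and your outline in Steps~1, 3, 4 is the standard one and is fine.

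The genuine gap is in Step~2, and your proposed fix does not close it. The Bochner identity you are invoking (computed explicitly in the proof of Corollary~\ref{cor36}) reads
\[
\Delta h_0 \;=\; 2h_1 \;+\; 5\,\langle \A_s',\, *[\A_s'\wedge \kappa_{\A'}]\rangle_{L^2(\Sigma)},
\qquad h_0=\tfrac12 e_{\rho},
\]
with $\kappa_{\A'}=-\rho^2 *F_{B'}$. The cubic term therefore carries a factor of $\rho$ (or, after absorbing into $h_1$, a factor of $\|\A_s'\|_{L^\infty(\Sigma)}^2$, which is exactly what you are trying to bound). One only gets $\Delta h_0 \ge -C\rho\, h_0^{3/2}$ or $\Delta h_0 \ge -C\|\A_s'\|_{L^\infty}^2 h_0$, neither of which yields a mean-value estimate with constants independent of $\rho_i$. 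Your Uhlenbeck-gauge remedy on $K'\times\Sigma$ produces only $|F_{\A_i}|\le C$ at the \emph{original} scale, i.e.\ $e_{\rho_i}^\infty\le C\rho_i^2$ after rescaling; this is precisely the hypothesis of Lemma~\ref{lemma35}, not the hypothesis \eqref{eqn33} of Theorem~\ref{thm31}, and the paper itself proves Lemma~\ref{lemma35} \emph{using} Theorem~\ref{thm33}, so this route is circular.

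The way Dostoglou--Salamon actually obtain \eqref{eqn33} away from $W_\infty$ is a rescaling/bubbling argument, not a mean-value estimate. Suppose $\sup_{K'}e_{\rho_i}\to\infty$ along a subsequence. Apply Hofer's lemma to find points $z_i\to z_*\in\overline{K'}$ and radii $\delta_i\to 0$ with $c_i^2:=e_{\rho_i}(z_i)\to\infty$, $c_i\delta_i\to\infty$, and $e_{\rho_i}\le 4c_i^2$ on $B_{\delta_i}(z_i)$. Pulling back by $w\mapsto z_i+w/c_i$ gives solutions of the $\tilde\rho_i$-ASD equation with $\tilde\rho_i=\rho_i/c_i$ and $e_{\tilde\rho_i}\le 4$ on $B_{c_i\delta_i}$. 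If $\tilde\rho_i\to\infty$, the pointwise bound now \emph{does} allow Theorem~\ref{thm31}, and one extracts a nonconstant holomorphic sphere in $R_\Sigma$; if $\tilde\rho_i$ stays bounded, Uhlenbeck compactness on $\C\times\Sigma$ (with the rescaled fibre metric) gives a nontrivial instanton. In either case the bubble carries energy at least $\hbar$, contradicting $z_*\notin W_\infty$. This is the missing ingredient; once it is in place, your Steps~3 and~4 go through verbatim.
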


\begin{proof}
See Appendix \ref{appendixa}.
\end{proof}

\begin{defn}\label{defn34}
Let $S \subset {\bm C}$ be an open subset and $S_i \subset S$ be an exhaustive sequence of open subsets. Let $\rho_i \to +\infty$ be a sequence of positive numbers. Let $\A_i$ be a sequence of solutions to the ASD equation over $\varphi_{\rho_i}(S_i) \times \Sigma$. Let $\tilde {\bm u}_\infty = (u_\infty, W_\infty, m_\infty)$ be a holomorphic map from $S$ to $R_\Sigma$ with mass. We say that $\A_i$ converges to $\tilde {\bm u}_\infty$ along with $\{\rho_i\}$ if for the corresponding sequence of solutions $\A_i'$ of the $\rho_i$-ASD equation over $S_i \times \Sigma$, conditions (a), (b), and (c) of Theorem \ref{thm33} are satisfied. 
\end{defn}

\subsection{Estimates near the boundary}

In this section we extend certain estimates of Dostoglou--Salamon near the boundary of the domain. Such an extension will be useful in the next section. Let $B_R= B_R(0) \subset {\bm C}$ be the radius $R$ open disk centered at the origin. 

\begin{lemma}\label{lemma35}
There exist $\epsilon>0$, $T>0$, and $C>0$ satisfying the following conditions. Let $\A = d_{\bm C} + \phi ds + \psi dt + B(z)$ be a solution to the $\rho$-ASD equation over $B_1 \times \Sigma$ satisfying $\rho \geq T$, $E_\rho(A) \leq \epsilon$ and 
\beqn
\sup_{B_1} e_{\rho}^\infty \leq \rho^2.
\eeqn
Then for all $r \in (0,1)$ there holds
\beq\label{eqn311}
\sup_{B_r} e_\rho^\infty \leq \frac{C}{(1-r)^2}.
\eeq
\end{lemma}

\begin{proof}
Suppose this is not the case. Then there exist a sequence $\rho_i \to \infty$, a sequence of solutions $\A_i$ to the $\rho_i$-ASD equation over $B_1 \times \Sigma$, and a sequence of points $z_i \in B_1$ such that $E_{\rho_i} (\A_i) \to 0$, $\sup e_{\rho_i}^\infty \leq \rho_i^2$, and 
\beq\label{eqn312}
\lim_{i \to \infty} ( 1 - r_i)^2 e_{\rho_i}^\infty (z_i)  = \infty,\ {\rm where}\ r_i = |z_i|.
\eeq
Then it follows that 
\beqn
\rho_i^2 \geq e_{\rho_i}^\infty(z_i) \gg \frac{1}{(1-r_i)^2}.
\eeqn
Here $a_i \gg b_i$ (equivalently $b_i \ll a_i$) means $a_i / b_i \to +\infty$. Denote $\tilde \rho_i:= \rho_i (1-r_i)$ which diverges to infinity. Then define the rescaling 
\beqn
\varphi_i: B_1 \to B_{1-r_i}(z_i),\ w \mapsto z_i + (1-r_i) w.
\eeqn
The sequence $\A_i$ is pulled back to a sequence of solutions $\tilde \A_i$ to the $\tilde \rho_i$-ASD equation over $B_1\times \Sigma$ whose energy converges to zero. On the other hand, \eqref{eqn312} implies that 
\beq\label{eqn313}
e_{\tilde \rho_i}^\infty(0) \to +\infty.
\eeq
By Theorem \ref{thm33}, this sequence converges to the constant holomorphic map, implying that $e_{\tilde \rho_i}$ converges to zero uniformly over compact subsets of $B_1$. This contradicts the interior estimate of Theorem \ref{thm31} applied to $\tilde \A_i$ and \eqref{eqn313}. 
\end{proof}

There is also a more refined case of the inequality \eqref{eqn35} of Theorem \ref{thm31} which will be useful in the next section. Recall that for $z \in {\bm H}$,
\beqn
B_r^+(z) = \{ w \in {\bm C}\ |\ {\rm Im} w \geq 0,\ |z-w| < r\}.
\eeqn
Define $B_r^+:= B_r^+(0)$ which contains the interval $(-r, r) \subset \partial {\bm H}$ but not half circle.

\begin{cor}\label{cor36}
There exist $\epsilon>0$, $C >0$, $T>0$ satisfying the following conditions. Suppose $\A = d_{\bm C} + \phi ds + \psi dt + B(z)$ is a solution to the $\rho$-ASD equation over ${\rm Int} B_2^+ \times \Sigma$ for $\rho \geq T$ satisfying 
\beqn
E_\rho(\A) \leq \epsilon,\ \hspace{2cm}\ \sup_{z \in {\rm Int} B_2^+} \| F_{B(z)}\|_{L^3 (\Sigma)} \leq \epsilon,\footnote{In fact what we need is that $F_{B(z)}$ is uniformly small in $L^p$ for some $p>2$.}\ \hspace{2cm}\  \sup_{B_2^+} e_\rho^\infty \leq \rho^2.
\eeqn
Then for all $z = s + {\bm i} t \in {\rm Int} B_1^+$ there holds
\beq\label{eqn314}
\| \A_s \|_{L^2 (\{z\}\times \Sigma)} + \rho \| \mu_\A\|_{L^2 (\{z\}\times \Sigma)} \leq \frac{C}{t} \sqrt{ E_\rho (\A; B_t (z) \times \Sigma)}.
\eeq
\end{cor}

\begin{proof}
The proof is essentially from the proof of \cite[Theorem 7.1]{Dostoglou_Salamon}. Define two functions $h_0, h_1: {\rm Int} B_2^+ \to [0, +\infty)$ by 
\beqn
\begin{split}
h_0 (z) = &\ \frac{1}{2} \Big( \| \A_s \|_{L^2(\{z\}\times \Sigma)}^2 + \rho^2 \| F_{B(z)}\|_{L^2(\Sigma)}^2 \Big) = \frac{1}{2} e_\rho(z), \\
h_1(z) = &\ \frac{1}{2} \Big( \rho^2 \| d_{B(z)} \A_s \|_{L^2(\Sigma)}^2 + \rho^2 \| d_{B(z)} * \A_s \|_{L^2(\Sigma)}^2 + \| \nabla_s \A_s \|_{L^2(\Sigma)}^2 + \| \nabla_t \A_s \|_{L^2(\Sigma)}^2 \\
   &\ + \| d_{B(z)} \kappa_{\A} \|_{L^2(\Sigma)}^2 + \rho^{-2} \| \nabla_s \kappa_{\A} \|_{L^2(\Sigma)}^2 + \rho^{-2} \| \nabla_t \kappa_{\A} \|_{L^2(\Sigma)}^2 \Big).
         \end{split}
\eeqn
Then by the explicit calculation in \cite[p. 616]{Dostoglou_Salamon}, one obtains 
\beqn
\Delta h_0 = 2 h_1 + 5 \langle \A_s, * [\A_s \wedge \kappa_{\A} ] \rangle \geq \| d_{B(z)} \kappa_{\A} \|_{L^2(\Sigma)}^2 + 5 \langle \A_s, * [\A_s \wedge \kappa_{\A} ] \rangle.
\eeqn
When $\epsilon$ is small enough, the condition $\| F_{B(z)}\|_{L^3 (\Sigma)}\leq \epsilon$, the weak Uhlenbeck compactness in dimension two (see \cite[Theorem A]{Wehrheim_Uhlenbeck}), and the Sobolev embedding $W^{1,3} \to C^0$ imply that $B(z)$ is sufficiently close to a flat connection on $Q \to \Sigma$ in the $C^0$-norm. Since there is no reducible flat connections on $Q$ and $R_\Sigma$ is compact, there is a constant $a > 0$ such that
\beqn
\| d_{B(z)} \kappa_{\A} \|_{L^2(\Sigma)} \geq \frac{1}{a }  \| \kappa_{\A} \|_{L^2(\Sigma)}
\eeqn
(cf. \cite[Lemma 7.6]{Dostoglou_Salamon}). On the other hand, by Lemma \ref{lemma35}, when $\rho$ is sufficiently large and the total energy of $\A$ is sufficiently small, for some $C>0$ there holds
\beqn
\| \A_s \|_{L^\infty( \{z\}\times \Sigma)} \leq C t^{-1},\ \forall z = s + {\bm i} t \in {\rm Int} B_1^+.
\eeqn
Hence
\beqn
\big| \langle \A_s, * [\A_s\wedge \kappa_{\A} ]\rangle \big| \leq \frac{C a }{t}  \| d_{B(z)} \kappa_{\A} \|_{L^2(\Sigma)} \| \A_s \|_{L^2(\Sigma)}.
\eeqn
Therefore, for a suitably modified value of $C$ there holds
\beqn
\Delta h_0 \geq  - \frac{ C }{2 t^2} \| \A_s \|_{L^2(\Sigma)}^2 \geq - \frac{C}{t^2} h_0.
\eeqn
Then apply Lemma \ref{meanvalue} below to the disk $B_{\frac{t}{2}}(z)$, one has
\beqn
h_0 (z) \leq \frac{C}{t^2} \int_{B_{\frac{t}{2}}(z)} h_0 \leq \frac{C}{t^2} E_\rho (\A; B_t (z)\times \Sigma).
\eeqn
By the definition of $h_0$, \eqref{eqn314} follows.
\end{proof}

The following mean value estimate was used in the above proof.

\begin{lemma}\cite[Lemma 7.3]{Dostoglou_Salamon}\label{meanvalue}
Let $u: \ov{B_r}  \to [0, +\infty)$ be a $C^2$-function satisfying 
\beqn
\Delta u \geq - a u,\ {\rm where}\  a \geq 0.
\eeqn
Then there holds
\beqn
u(0) \leq \frac{2}{\pi} \Big( a + \frac{4}{r^2} \Big) \int_{B_r} u.
\eeqn
\end{lemma}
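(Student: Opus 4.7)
My plan is to reduce to a one-dimensional analysis of the spherical mean $\phi(\rho) := \frac{1}{2\pi}\int_0^{2\pi} u(\rho e^{{\bm i}\theta})\, d\theta$ for $\rho \in [0, r]$. Since $u \in C^2(\overline{B_r})$ and $u \geq 0$, $\phi$ is $C^2$ with $\phi(0) = u(0)$ and $\phi'(0) = 0$. In polar coordinates $(\rho\phi'(\rho))' = \rho\, \overline{\Delta u}(\rho)$, so the hypothesis $\Delta u \geq -au$ averages to the ODE inequality $(\rho\phi')'(\rho) \geq -a\rho\phi(\rho)$ on $(0, r]$.

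Integrating this inequality from $0$ to $s$ (the boundary contribution at the origin vanishing since $\rho\phi'(\rho) \to 0$), dividing by $s$, and integrating once more from $0$ to $\rho$, Fubini's theorem converts the iterated radial integral into a weighted planar integral, yielding
\[
\phi(0) \leq \phi(\rho) + \frac{a}{2\pi} \int_{B_\rho} \log(\rho/|x|)\, u(x)\, dx, \qquad \rho \in (0, r]. \qquad (\star)
\]
I would then apply $(\star)$ with $\rho = r$ and control the two terms on the right separately. For $\phi(r)$, Green's identity with the harmonic weight $G_0(x) := (r^2 - |x|^2)/4$, which vanishes on $\partial B_r$, yields $\pi r^2 \phi(r) = \int_{B_r} u - \int_{B_r} G_0\, \Delta u$; combined with the hypothesis and $0 \leq G_0 \leq r^2/4$ this gives the sub-mean-value bound $\phi(r) \leq (1 + ar^2/4)\cdot \frac{1}{\pi r^2}\int_{B_r} u$. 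For the logarithmic integral in $(\star)$, I would integrate by parts radially using the antiderivative $V(\tau) := \frac{\tau^2}{2}\log(r/\tau) + \frac{\tau^2}{4}$ of $\tau\log(r/\tau)$, and substitute the first-order ODE estimate $s\phi'(s) \geq -\frac{a}{2\pi}\int_{B_s} u$ into the resulting interior integral, using the monotonicity of $s \mapsto \int_{B_s}u$ to reduce back to $\int_{B_r} u$. Combining the two pieces produces an inequality $u(0) \leq C(a, r)\int_{B_r} u$ with $C(a, r)$ explicitly polynomial in the dimensionless product $ar^2$; elementary algebraic majorization then bounds $C(a, r)$ by $\frac{2}{\pi}(a + 4/r^2)$.

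The main technical obstacle is the unbounded logarithmic weight $\log(r/|x|)$ in $(\star)$: a naive $L^\infty$-pointwise substitution for $u$ would convert the desired $L^1$-control into an $L^\infty$-estimate, which is weaker and, in fact, not available from the hypothesis alone. The integration-by-parts trick above is the crucial device, since it substitutes the ODE bound on $\phi'$ rather than pointwise values of $u$ into the radial integral, so that the final weight pairs integrably against the natural area measure on $B_r$ and reproduces $\int_{B_r} u$ up to an explicit multiplicative constant.
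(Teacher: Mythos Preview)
The paper does not give its own proof of this lemma; it simply cites \cite[Lemma 7.3]{Dostoglou_Salamon}. Your attempt, however, contains a genuine gap.

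The derivation of $(\star)$ is correct, but the step bounding $\phi(r)$ is not. You assert $\pi r^2 \phi(r) = \int_{B_r} u - \int_{B_r} G_0\, \Delta u$ with $G_0(x) = (r^2-|x|^2)/4$, but Green's identity gives the \emph{opposite} sign: since $\Delta G_0 = -1$ and $\partial_n G_0|_{\partial B_r} = -r/2$, one obtains
\[
\pi r^2 \phi(r) \;=\; \int_{B_r} u \;+\; \int_{B_r} G_0\, \Delta u .
\]
Combined with $\Delta u \geq -au$ and $G_0 \geq 0$ this yields only the \emph{lower} bound $\phi(r) \geq (1 - ar^2/4)\,\tfrac{1}{\pi r^2}\int_{B_r} u$, which is useless here. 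Worse, no upper bound of the form $\phi(r) \leq C(a,r)\int_{B_r} u$ exists under the hypothesis: already for the subharmonic function $u(x)=|x|^2$ (so $a=0$ suffices) one has $\phi(r)=r^2$ while $\tfrac{1}{\pi r^2}\int_{B_r} u = r^2/2$, contradicting your claimed inequality $\phi(r)\leq \tfrac{1}{\pi r^2}\int_{B_r} u$ in the case $a=0$.

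This is not a recoverable typo: your whole strategy rests on controlling the boundary spherical mean $\phi(r)$, and your treatment of the logarithmic term in step~5 likewise produces a boundary contribution $\tfrac{r^2}{4}\phi(r)$ after integration by parts, so the argument cannot close. A correct proof must avoid $\phi(r)$ entirely --- for example by multiplying $(\star)$ by $\rho$ and integrating over $[0,r]$ (turning $\phi(\rho)$ into $\tfrac{1}{2\pi}\int_{B_r} u$) and then handling the remaining iterated logarithmic integral by a bootstrap, or alternatively by comparing $\phi$ directly with the Bessel function $J_0(\sqrt{a}\,\rho)$, which satisfies $(\rho\psi')' + a\rho\psi = 0$ and yields $\phi(\rho)\geq u(0)\,J_0(\sqrt{a}\,\rho)$ on the interval where $J_0>0$.
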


%\begin{proof}
%By rescaling one can reduce the problem to the case $r = 1$. Define 
%\beqn
%F(\rho) = ( 1- \rho) \max_{|z|\leq \rho} u(z),\ 0 \leq \rho \leq 1.
%\eeqn
%Then $F(\rho)$ attains its maximum at a certain $\rho^* \in [0, 1)$. Assume and denote
%\beqn
%c:= u(z^*) = \max_{ |z|\leq \rho^*} u,\ {\rm where}\ |z^*|\leq \rho^*.
%\eeqn
%Denote
%\beqn
%\epsilon = \frac{1}{2} ( 1 -\rho^*).
%\eeqn
%Then within the closed disk $\ov{B_\epsilon(z^*)}$ one has
%\beqn
%u(z) \leq 4 c.
%\eeqn
%Then one has
%\beqn
%u''(t) \geq - 4ac^2,\ |z-z^*| \leq \epsilon
%\eeqn
%which means the function $u(z) + ac |z-z^*|^2$ is a subharmonic function. Then by the mean value inequality for subharmonic %functions one obtains
%\beqn
%c^* = u(z^*) \leq \frac{1}{\pi \rho^2 } \int_{B_\rho(z^*)} ( u + ac|z-z^*|^2) \leq \frac{1}{\pi \rho^2} \int_{-1}^1 u(t) dt %+ \frac{ac^*\rho^2}{2},\ \forall 0 < \rho \leq \epsilon.
%\eeqn
%Now suppose first $a \epsilon^2 \leq 1$. Then take $\rho = \epsilon$ one obtains 
%\beqn
%c^* \leq \frac{1}{\pi \epsilon^2 } \int_{B_1} u + \frac{a\epsilon^2 c^*}{2} \Longrightarrow c^* \leq \frac{2}{\pi %\epsilon^2}\int_{B_1} u.%
%\eeqn
%Hence
%\beqn
%u(0) = F(0) \leq F(\rho^*) = 4 \epsilon^2 c^* \leq \frac{8}{\pi} \int_{B_1} u.
%\eeqn
%On the contrary, if $a \epsilon^2 \geq 1$, then choose $\rho$ such that $a\rho^2 = 1$, one obtains
%\beqn
%u(0) \leq c^* \leq \frac{2 a}{\pi} \int_{B_1} u.
%\eeqn
%The case for general $r>0$ follows from the $r=1$ case by rescaling. 
%\end{proof}

\section{The isoperimetric inequality}\label{section4}

It is not too far away from results of the previous section to the compactification of the moduli space of instantons over the product of the complex plane and the compact surface, namely Theorem \ref{thm15}. For the other type of noncompact four-manifold, namely the product ${\bm R} \times M$ where $M$ is the three-manifold with cylindrical end, there is another difficulty. As approaching to the infinity of the ${\bm R}$-direction, the ASD equation over ${\bm R}\times M_0$ is almost like a Lagrangian boundary condition imposed for the ASD equation over ${\bm H}\times \Sigma$. In the theory of holomorphic curves, the Lagrangian boundary condition allows one to extend interior elliptic estimates to the boundary which leads to compactness near the boundary. However, in our situation, the failure of being an actual Lagrangian boundary condition prevents one to have elliptic estimates near the boundary, at least not in a straightforward way. Indeed, one should view the ``seam'' ${\bm R} \times \Sigma$ between ${\bm R} \times M_0$ and ${\bm H} \times \Sigma$ as giving a Lagrangian seam condition given by an infinite dimensional Lagrangian correspondence (see \cite{Wehrheim_Woodward_2015}). One possible approach of our compactness problem would be based on certain hard estimates as did in \cite{Bottman_Wehrheim_2018} for finite dimensional holomorphic quilts (see another approach in \cite[Section 4]{Duncan_2012}). 

In this paper, instead, we take a less analytic approach. The main idea is to view the ASD equation over ${\bm R}\times M$ as a gradient line of the Chern--Simons functional on the closed three-manifold $M^{\rm double}$ with respect to a time-dependent metric. The asymptotic behavior as well as the compactness problem over the part ${\bm R} \times M_0$ can both be treated via an isoperimetric inequality. Roughly speaking, for a closed Riemannian three-manifold $N$, if the Chern--Simons functional for connections on an $SO(3)$-bundle $P \to N$ is Morse or Morse--Bott, then for an almost flat connection $A \in {\mc A}(P)$, one can define a ``local Chern--Simons action,'' which is the Chern--Simons action of $A$ relative to a certain nearby flat connection. The isoperimetric inequality says that the local action can be controlled by $\| F_A \|_{L^2(N)}^2$. This is analogous to the isoperimetric inequality in symplectic geometry (see \cite[Section 4.4]{McDuff_Salamon_2004} \cite[Chapter 3]{Pozniak}). 

In this section we prove the isoperimetric inequality and certain monotonicity properties of solutions to the ASD equation (which we call the annulus lemma). We will also derive a diameter estimate which will be needed for the compactness theorem as well as the asymptotic behavior and energy quantization property of instantons over ${\bm R}\times M$.

\subsection{The isoperimetric inequality}

In this subsection we derive the isoperimetric inequality. Let $N^-, N^+$ be a pair of three-manifolds with diffeomorphic boundary $\Sigma$ and let $P_{N^\pm} \to N^\pm$ be $SO(3)$-bundles with isomorphic boundary restrictions. Assume these object satisfy Hypothesis \ref{hyp210} and Hypothesis \ref{hyp213}. Let $\check N$ be the closed three-manifold obtained by gluing $N^-$ with $(N^+)^{\rm op}$ and let $P_{\check N} \to \check N$ be the glued $SO(3)$-bundle (see the detailed description in Subsection \ref{subsection24}).

\begin{lemma}\label{lemma41}
There exist $\epsilon>0$ and $C>0$ satisfying the following condition. Let $A$ be a piecewise smooth connection on $P_{\check N}$. % whose restriction to $N^{\rm neck} = [-1, 1]\times \Sigma$ is 
%\beqn
%A|_{[-1, 1] \times \Sigma} = d_t + \eta(t ) d t  + B(t ).
%\eeqn
 Suppose 
\beq\label{eqn41}
\| F_A \|_{L^2 (\check N)} \leq \epsilon.
\eeq
Then there exists a piecewise smooth flat connection $A_0 \in {\mc A}_{\rm flat}^{\rm p.s.}(P_{\check N})$ satisfying  
\beq\label{eqn42}
\| A - A_0 \|_{L^4 (\check N)} \leq C \| F_A \|_{L^2(\check N)}.
\eeq
%\end{enumerate}
\end{lemma}

\begin{proof}
%First, notice the statement of this lemma is gauge invariant. Hence we may assume that $\eta(t) \equiv 0$. For any pair $a, b\in [-1, 1]$ there holds
%\beqn
% \| B(b) - B(a)\|_{L^2 (\Sigma)} \leq \int_{a}^b \| B'(t) \|_{L^2 (\Sigma)} dt \leq l_2 (A).
%\eeqn
Let $C>0$ be a constant which is independent of $A$ but may vary in the context. Let $\epsilon$ be smaller than the constant $\epsilon_p$ of Lemma \ref{lemma215} for $p = 2$ and $P_0 \to M_0$ replaced by $P_{N^\pm} \to N^\pm$. %Denote by 
%\begin{align*}
%&\ \check{N}^- := N^- \cup ( [-1, 0]\times \Sigma),\ &\ \check{N}^+: = ( [0, 1]\times \Sigma )\cup N^+
%\end{align*}
%where we glue $\partial N^-$ with $\{-1\}\times \Sigma$ and glue $\partial N^+$ with $\{1\}\times \Sigma$. Then $N = \check{N}^- \cup \check{N}^+$ where the two parts share the common boundary $\partial \check{N}^- \cong \partial \check{N}^+ \cong \Sigma$. 
Then by Lemma \ref{lemma215} there exist flat $W^{1,2}$-connections $A^\pm$ on ${N}^\pm$ such that
\beq\label{eqn43}
\| A -  A^\pm \|_{W_A^{1,2}({N}^\pm)} \leq C \| F_A \|_{L^2({N}^\pm)}.
\eeq
We can replace ${A}^\pm$ by nearby smooth flat connections such that this bound is still valid for a slightly larger $C$. Let $B^\pm$ be the boundary restrictions of $A^\pm$ over $\partial {N}^\pm \cong \Sigma$. By the inequality 
\beqn
\big| \nabla | A - {A}^\pm| \big| \leq |\nabla^A (A - {A}^\pm)|
\eeqn
we see that $|A - {A}^\pm|\in W^{1,2}({N}^\pm)$ where $W^{1,2}( N^\pm)$ is the Sobolev space of functions defined using the Riemannian metric on ${N}^\pm$. Recall that the boundary restriction defines a trace operator
\beqn
W^{1,2}( N^\pm ) \cong H^1( N^\pm ) \to H^{\frac{1}{2}}( \partial N^\pm ) \cong   W^{\frac{1}{2}, 2}(\Sigma).
\eeqn
The fractional Sobolev embedding in dimension two (see \cite[Theorem 6.5]{Fractional_Sobolev}) says that there is a bounded embedding $W^{\frac{1}{2}, 2}(\Sigma) \hookrightarrow L^4(\Sigma)$. Therefore by \eqref{eqn43} one has
\beq\label{eqn44}
\| B^+ - B^-\|_{L^4(\Sigma)} \leq \| B^+ - A|_{\partial N^-} \|_{L^4(\Sigma)} + \| B^- - A|_{\partial N^+} \|_{L^4(\Sigma)} \leq C \| F_A \|_{L^2(  \check N )}.
\eeq

%Then
%\beqn
%\| B_\pm - B(\pm 1)\|_{L^2 (\Sigma)} \leq C \| A - A_\pm \|_{W_A^{1,p} (N^\pm)} \leq C \| F_A \|_{L^p (N^\pm)}.
%\eeqn
%On the other hand, one has
%\begin{multline}\label{eqn44}
%\| B_+ - B_-\|_{L^2 (\Sigma)} \leq \| B_+ - B(1) \|_{L^2 (\Sigma)} + \| B( 1) - B( -1)\|_{L^2 %(\Sigma)} + \| B(-1) - B_- \|_{L^2 (\Sigma)} \\
% \leq C \left( \| F_A \|_{L^p ( N^- \cup N^+ )} + l_2 (A) \right).
% \end{multline}

Denote $b^\pm = [B^\pm]\in R_{\Sigma}$ and $a^\pm = [A^\pm] \in L_{N^\pm}$. Let $d_{R_{\Sigma}}$ be the distance function on $R_{\Sigma}$ induced from the $L^2$-metric and let $d_{L_{N^\pm}}$ be the metric on $L_{N^\pm}$ pulled back from the immersion $\iota^\pm: L_{N^\pm} \to R_{\Sigma}$. 

\vspace{0.2cm}

\noindent {\it Claim. When $\epsilon>0$ is sufficiently small, for some $C>0$ independent of $A$, there exists $a_*^\pm \in L_{N^\pm}$ with $\iota^-(a_*^-) = \iota^+(a_*^+)$ such that 
\beqn
d_{L_{N^-}}(a^-, a_*^-) + d_{L_{N^+}}( a^+, a_*^+) \leq C d_{R_{\Sigma}}( b^-, b^+).
\eeqn
}

\vspace{0.1cm}

\noindent {\it Proof of the claim.} When $\epsilon>0$ is very small, $(a^-, a^+)$ is close to a point $(p^-, p^+)$ with $\iota^-(p^-) = \iota^+(p^+) = b \in R_{\Sigma}$. Because $\iota^-$ and $\iota^+$ intersect cleanly, there exists a local coordinate system of $R_{\Sigma}$ around $b$, denoted by $\varphi: U \to {\mb R}^{2n}$ such that $\varphi(b) = 0$ and the piece of $(\iota^\pm)^{-1}(U)$ containing $p^\pm$ is the subspace ${\mb R}^{I_\pm}$, where $I_\pm \subset \{1, \ldots, 2n\}$ is a subset of $n$ elements. Then if we denote 
\beqn
\varphi(b^\pm) = x^\pm = (x_1^\pm, \ldots, x_{2n}^\pm)
\eeqn
then $x_i^\pm = 0$ when $i \notin I_\pm$. Define $y_*$ by 
\begin{align*}
&\ y_* = (y_{*, 1}, \ldots, y_{*, 2n} ),\ &\ y_{*, i} = \left\{ \begin{array}{cc} \frac{1}{2} ( x_i^- + x_i^+),\ &\ i \in I_- \cap I_+,\\
                                                0,\ &\ i \notin I_- \cap I_+. \end{array}\right.
                                                \end{align*}
Then $\varphi^{-1}(y_*) \in \iota^-(L_{N^-}) \cap \iota^+(L_{N^+}) $ and corresponds to $a_*^\pm \in L_{N^\pm}$ which is close to $a^\pm$. Because locally the $L^2$-metric on $R_{\Sigma}$ is comparable to the Euclidean metric $d_{{\mb R}^{2n}}$, one has
\begin{multline*}
d_{L_{N^-}} (a^-, a_*^-) + d_{L_{N^+}} (a^+, a_*^+) \leq C \big( d_{{\mb R}^{2n}} (x^-, y_+) + d_{{\mb R}^{2n}} (x^+, y_*)\big) \\
\leq C  \left( \sum_{i\in I_- \cap I_+} |x_i^- - x_i^+| + \sum_{i \in I_- \setminus I_+} |x_i^-| + \sum_{i \in I_+ \setminus I_-} |x_i^+| \right) \leq C |x^- - x^+|\leq C d_{R_{\Sigma}}( b^-, b^+).
\end{multline*}

\hfill {\it End of the proof of the claim.}

\vspace{0.2cm}

By the above claim and \eqref{eqn44}, there are representatives $A_*^\pm\in {\mc A}_{\rm flat}^{1,2} (P_{ {N}^\pm })$ of $a_*^\pm$ such that 
\beqn
\| A^\pm - A_*^\pm \|_{W^{1,2}_{A^\pm}( {N}^\pm )} \leq C d_{L_{ N^\pm}} (a^\pm, a_*^\pm) \leq C d_{R_{ \Sigma}}(b^-,b^+) \leq C \| F_A \|_{L^2( \check N )}.
\eeqn
Then by \eqref{eqn43}, one also has
\beq\label{eqn45}
\| A - A_*^\pm \|_{W^{1,2}_A( {N}^\pm )} \leq C \| F_A \|_{L^2( \check N )}.
\eeq
One can replace $A_*^\pm$ by nearby gauge equivalent smooth connections such that the above estimate is still true for slightly large $C$. 

Now by the above claim, $A_*^+$ and $A_*^-$ agree on the boundary up to gauge equivalence. We would like to gauge transform $A_*^+$ such that they glue together to a piecewise smooth flat connection on $P_{\check N}$ satisfying \eqref{eqn42}. Indeed, let us denote $B_*^\pm:= A_*^\pm |_{\partial N^\pm}$ which are a pair of gauge equivalent flat connections on $\Sigma$. Then via the bounded restriction map $W^{1,2}({N}^\pm ) \to L^4(\partial  N^\pm )$, from \eqref{eqn45} one has
\beqn
\| B_*^- - B_*^+\|_{L^4(\Sigma )} \leq C \| F_A \|_{L^2( \check N)}.
\eeqn
Then by $p=4$ case of Lemma \ref{lemma222}, when $\epsilon$ is sufficiently small, there exists a gauge transformation $g = e^h$ such that $h \in W^{1,4}$, $B_*^- = g^* B_*^+$, and 
\beqn
\| h \|_{W^{1,4}_{B_*^+}} \leq  C \| B_*^+ - B_*^- \|_{L^4( \Sigma )} \leq C \| F_A \|_{L^2( \check N)}.
\eeqn
Then using a smooth cut-off function on $N^+$ which is supported in a small neighborhood of $\partial N^+$ and equal to $1$ near $\partial N^+$, one can extend $h$ to a section $h_+\in W^{1,4}_{A_*^+}({N}^+; {\rm ad} P_{ N^+})$ such that 
\beqn
\| h_+ \|_{W_{A_*^+}^{1,4}( N^+ )} \leq C \| h \|_{W^{1,4}_{B_*^+ }(\Sigma) } \leq C \| F_A \|_{L^2( \check N )}.
\eeqn
In particular, $h_+$ is uniformly small as one has the Sobolev embedding $W^{1,4} \to C^0$ in dimension three. Then we can define a piecewise smooth flat connection 
\beqn
A_0:= \left\{ \begin{array}{cc} A_*^-,\ &   {N}^- ,\\
                                (e^{h_+} )^* {A}_*^+,\ &  {N}^+  \end{array} \right.
\eeqn
Then one has
\beqn
\| A - A_0 \|_{L^4 ({N}^+ )} \leq  C \left(  \| A - {A}_*^+ \|_{L^4 (  {N}^+ )} + \| h_+ \|_{W^{1,4}_{A_*^+ }( {N}^+ )} \right) \leq C \| F_A \|_{L^2( \check N  )}.
\eeqn
Then together with \eqref{eqn45} and the Sobolev embedding $W^{1,2}\to L^4$ in dimension 3, one obtains that 
\beqn
\| A - A_0 \|_{L^4 ( \check N )} \leq C \| F_A \|_{L^2(  \check N  )}. \qedhere
\eeqn
\end{proof}

The above lemma allows one to define a local Chern--Simons action. 

\begin{defn}\label{defn42}
Let $A$ be a piecewise smooth connection on $P_{\check N}$ satisfying \eqref{eqn41}. Then there exists a nearby flat connection $A_0$ on $P_{\check N}$ satisfying properties of Lemma \ref{lemma41}. Then we define 
\beqn
F_{\rm loc}(A) = {\it CS}^{\rm rel}(A_0, A)
\eeqn
where ${\it CS}^{\rm rel}(A_0, A)$ is the Chern--Simons action of $A$ relative to $A_0$.
\end{defn}

From now on we assume that the closed three-manifold $\check N$ is the manifold $N$ constructed by gluing together $N^-$, $N^+$ and a neck region $N^{\rm neck} = [0, \pi]\times \Sigma$ (see Subsection \ref{subsection24}). The value of the local Chern--Simons functional is independent of the choice of the nearby reference flat connection as the relative Chern--Simons functional between two nearby flat connections is zero. Indeed, denote 
\beqn
\tilde L_N:= {\mc A}_{\rm flat}(P_{N})/ {\mc G}_0(P_{N})
\eeqn
where ${\mc G}_0(P_N )$ is the identity component of ${\mc G}(P_{ N})$, then the relative action ${\it CS}_{A_0}^{\rm rel}(A)$ only depends on the component of $\tilde L_{N}$ containing the ${\mc G}_0(P_{N})$-orbit of $A_0$. For any connected component $Z \in \pi_0(\tilde L_{N})$ and $A \in {\mc A}^{\rm p.s.}(P_{N})$, define
\beq\label{eqn46}
F_Z(A):= {\it CS}^{\rm rel}(A_0, A)
\eeq
where $A_0$ is any flat connection whose ${\mc G}_0(P_N)$-orbit is contained in $Z$.

Lemma \ref{lemma41} allows us to derive the following isoperimetric inequality. 

\begin{thm}[Isoperimetric Inequality]\label{isoperimetric}
There exist $\epsilon>0$ and $c_P >0$ such for all piecewise smooth connection $A$ on $P_N$ satisfying $\| F_A \|_{L^2(N)} \leq \epsilon$, there holds 
\beq\label{eqn47}
|F_{\rm loc}(A)| \leq c_P \| F_A \|_{L^2(N)}^2.
\eeq
\end{thm}

\begin{proof}
By Lemma \ref{lemma41}, there is a nearby flat connection $A_0$ and the local action is defined by Definition \ref{defn42}. Then by Lemma \ref{lemma41}, \eqref{eqn42}, and formula \eqref{eqn21}, one has 
\begin{multline*}
\int_N {\rm tr}\left[ \frac{1}{2} d_{A_0} (A-A_0) \wedge (A-A_0) + \frac{1}{3} (A-A_0) \wedge (A-A_0) \wedge (A-A_0) \right]\\
\leq C \| F_A\|_{L^2(N)} \| A-A_0 \|_{L^2(N)} + C \| A - A_0\|_{L^3(N)}^3 \leq C \| F_A \|_{L^2(N)}^2.
\end{multline*}  
\end{proof}

\begin{rem}
In (gauged) Lagrangian Floer theory with clean intersections one needs a similar isoperimetric inequality (see \cite[Lemma 3.4.5]{Pozniak}\cite[Proposition 4.3.1]{Schmaschke_thesis}\cite[Lemma 3.17]{Frauenfelder_thesis}) which can be viewed as certain finite-dimensional versions of Theorem \ref{isoperimetric}. In those cases, the proofs essentially depend on the normal form of the clean intersection or a Darboux chart. The current situation is infinite-dimensional and the estimates are sensitive to the choices of norms. Moreover, our proof is more involved as one does not have a true Lagrangian boundary condition. %The methodology of the proof via Lemma \ref{lemma41} also uses the idea of Gaio--Salamon's proof of the isoperimetric inequality in vortex theory for loops (\cite[Lemma 11.3]{Gaio_Salamon_2005}).
\end{rem}

%In practice it is more convenient to use the following version of the isoperimetric inequality. Notice that the $L^2$-length $l_2(A)$ can be bounded by $\| F_A \|_{L^2(N^{\rm neck})}$. So the $p=2$ case of Lemma \ref{lemma41} and Theorem \ref{thm43} imply the following more standard form of the isoperimetric inequality. %Indeed, by using the fact that the moduli space $L_N$ is a clean intersection (which is equivalent to the Chern--Simons functional being Morse--Bott), the following version follows from an estimate of the Hessian of the Chern--Simons functional (see\cite{Fukaya_1996}).

%\begin{thm}[Isoperimetric Inequality]
%There exist $\epsilon>0$ and $c_P>0$ satisfying the following condition. Let $A$ be a piecewise smooth connection on $P_N \to N$ satisfying 
%\beqn
%\| F_A \|_{L^2(N)}^2 \leq \epsilon.
%\eeqn
%Then the local Chern--Simons action $F_{\rm loc}(A)$ is well-defined and there holds
%\beqn
%|F_{\rm loc}(A)| \leq c_P \| F_A \|_{L^2(N)}^2.
%\eeqn
%\end{thm}

\subsection{The annulus lemma}

Now we turn to the decay property of the energy for ASD instantons. For $0 < r < R <\infty$ define the open annulus and the half annulus by 
\begin{align*}
&\ {\it Ann} (r, R):= \{ z \in {\bm C}\ |\ r < |z| < R \},\ &\ {\it Ann}^+(r, R) = {\it Ann}(r, R) \cap {\bm H}.
\end{align*}
The half annulus has two boundary components
\beqn
\partial^\pm {\it Ann}^+(r, R):= \{ s \in \partial {\bm H} \cong {\bm R}\ |\ r < \pm s < R \}.
\eeqn
Define a noncompact four-manifold $\N_{r, R}$ as 
\beqn
\N_{r, R} = ({\it Ann}^+(r, R) \times \Sigma) \cup (\partial^+ {\it Ann}^+(r, R) \times N^+) \cup (\partial^- {\it Ann}^+(r, R) \times N^-).
\eeqn
Equip $\N_{r, R}$ with the product metric. The bundle $P_N \to N$ also extends to a bundle ${\bm P}\to \N_{r, R}$. Moreover, for each $\rho \in (r, R)$, the slice $N_\rho \subset {\bm N}_{r, R}$ is the three-manifold
\beqn
N_\rho:= N^- \cup ( \{ z \in {\bm H}\ |\ |z| = \rho \} \times \Sigma ) \cup N^+.
\eeqn
The ``neck'' of $N_\rho$ is isometric to the product $N_\rho^{\rm neck}\cong [0, \rho \pi] \times \Sigma$. Then each smooth connection $\A \in {\mc A}({\bm P})$ restricts to a piecewise smooth connection 
\beqn
A_\rho \in {\mc A}^{\rm p.s.}( P_\rho),\ {\rm where}\ P_\rho:= {\bm P}|_{N_\rho}.
\eeqn
Moreover, using the polar coordinates, we can identify $P_\rho\to N_\rho$ with $P_N \to N$. By abuse of notation, the corresponding family of connections on $P_N$ parametrized by $\rho \in (r, R)$ is still denoted by $A_\rho\in {\mc A}^{\rm p.s.}(P_N)$. 

\begin{lemma}\label{lemma45}
There exists $\epsilon>0$ satisfying the following condition. Let $\A$ be a solution to the ASD equation over $\N_{r, R}$ with $r \geq 1$ and $\log R - \log r \geq 4$. Assume that 
\beqn
E(\A) < \epsilon.
\eeqn
Then for all $a\in {\mb R}$ such that $(a, a+1) \subset (\log r, \log R)$ there exists at least one $ \tau \in (a, a+1)$ for which $A_{e^\tau} \in {\mc A}^{\rm p.s.}(P)$ satisfies \eqref{eqn41}, hence the local action $F_{\rm loc}(A_{e^\tau})$ is defined.  
\end{lemma}

\begin{proof}
For all $a$ such that $(a,a+1) \subset (\log r +1, \log R-1)$, by the ASD equation, one has
\begin{multline}\label{eqn48}
E(\A; \N_{e^a, e^{a+1}}) = \int_{e^a}^{e^{a+1}} \| F_{A_\rho} \|_{L^2(N_\rho)}^2 d\rho\\
= \int_a^{a+1} \left( \| F_{A_{e^\tau}}\|_{L^2(N^- \cup N^+)}^2  + \| d \theta \wedge ( \partial_\theta B_{e^\tau} - d_{B_{e^\tau}} \eta_{e^{\tau}}) \|_{L^2(N_{e^\tau}^{\rm neck})}^2 + \| F_{B_{e^\tau}} \|_{L^2(N_{e^\tau}^{\rm neck})}^2 \right) e^\tau d\tau \\
= \int_a^{a+1} \left( e^\tau \| F_{A_{e^\tau}} \|_{L^2(N^- \cup N^+)}^2 + \| \partial_\theta B_{e^\tau} - d_{B_{e^\tau}} \eta_{e^\tau} \|_{L^2(N^{\rm neck})}^2 + e^{2\tau} \| F_{B_{e^\tau}} \|_{L^2(N^{\rm neck})}^2 \right) d\tau\\
\geq \int_a^{a+1} \left( \| F_{A_{e^\tau}}\|_{L^2(N^- \cup N^+)}^2 +  \| \partial_\theta B_{e^\tau} - d_{B_{e^\tau}} \eta_{e^{\tau}} \|_{L^2(N^{\rm neck})}^2 + \| F_{B_{e^\tau}} \|_{L^2(N^{\rm neck})}^2 \right) d\tau \\
= \int_a^{a+1} \| F_{A_{e^\tau}}\|_{L^2(N)}^2 d\tau.
\end{multline}
Then there exists some $\tau \in (a, a+1)$ such that 
\beqn
\| F_{A_{e^\tau}} \|_{L^2(N)}^2\leq \epsilon.
\eeqn
Therefore, when $\epsilon$ is sufficiently small, $A_{e^\tau}$ satisfies the hypothesis of Theorem \ref{isoperimetric} and hence its local action can be defined. 
\end{proof}

Now we state and prove the annulus lemma. Define the exponential factor 
\beqn
\delta_P:= c_P^{-1}
\eeqn
where $c_P$ is the isoperimetric constant of Theorem \ref{isoperimetric}. Without loss of generality, assume that $\delta_P \leq 1$. The constant depends on the bundle $P_N \to N$ and the Riemannian metric on $N$. But by abuse of notation we do not distinguish them since we only consider finitely many examples of such a bundle $P_N \to N$. 

\begin{prop}[Annulus Lemma] \label{annulus}
There exist $\epsilon>0$, $C>0$ satisfying the following conditions. Let $\A$ be a solution to the ASD equation over $\N_{r, R}$ with $r \geq 1$ and $\log R - \log r \geq 4$. Assume 
\beqn
E(\A) < \epsilon.
\eeqn
Then for $1 \leq s \leq \frac{1}{2} (\log R - \log r)$ there holds
\beq\label{eqn49}
E(\A; \N_{e^s r, e^{-s} R} ) \leq C e^{-\delta_P s} E(\A; \N_{r, R}).
\eeq
\end{prop}

\begin{proof}
One can identify $\A$ with a piecewise smooth connection over $( r, R ) \times N$ as explained above. In particular, one obtains a family of connections $A_\rho$ on $P_N \to N$ parametrized by $\rho \in (r, R)$. Define 
\beqn
I_\epsilon(r, R):= \{ \rho \in (r, R)\ |\ \| F_{A_\rho}\|_{L^2(N)} \leq \epsilon \}.
\eeqn
Then by Lemma \ref{lemma45}, $(e^a, e^{a+1}) \cap I_\epsilon(r, R) \neq \emptyset$ for all $(a, a+1) \subset (\log r, \log R)$ so that the local action of $A_{e^\tau}$, denoted temporarily by $F(A_{e^\tau})$, can be defined for $e^\tau \in I_\epsilon (r, R)$. 

We would like to show that these local actions, which {\it a priori} are not defined for all $\tau$, extend to a smooth function in $\tau$. In fact there is a map 
\beqn
Z: I_\epsilon( r,  R) \to \pi_0( \tilde L_N)
\eeqn
such that 
\beqn
F(A_\rho) = F_{Z(\rho)} (A_\rho),\ \forall \rho \in I_\epsilon(r, R)
\eeqn
where $F_{Z(\rho)} (A_\rho)$ is the Chern--Simons action of $A_\rho$ relative to the connected component $Z(\rho)$ (see \eqref{eqn46}). We claim that 
\beq\label{eqn410}
F_{Z(\rho') } (A_\rho) = F_{Z(\rho)} (A_\rho),\ \forall \rho, \rho' \in I_\epsilon(r, R).
\eeq
Indeed, by Lemma \ref{lemma21} we know that (suppose $\rho < \rho'$) 
\beqn
|F_{Z(\rho')}( A_\rho) - F_{Z(\rho')} ( A_{\rho'})| = E(\A; \N_{\rho, \rho'}) < \epsilon.
\eeqn
Moreover, by the isoperimetric inequality (Theorem \ref{isoperimetric}) we know that 
\beqn
| F_{Z(\rho') }( A_{\rho'}) - F_{Z(\rho)} ( A_\rho) | \leq c_P \left( \| F_{A_\rho}\|_{L^2(N)}^2 + \| F_{A_{\rho'}}\|_{L^2(N)}^2 \right) \leq C \epsilon^2.
\eeqn
Therefore, when $\epsilon$ is small enough, the difference between $F_{Z(\rho)} (A_\rho)$ and $F_{Z(\rho')} (A_\rho)$ is smaller than the minimal difference between critical values of the Chern--Simons functional on $N$. Hence \eqref{eqn410} is true. Then we can fix a connected component $Z_0 = Z(\rho_0)\in \pi_0(\tilde L_N)$ for some $\rho_0 \in I_\epsilon(r, R)$ and define 
\beqn
F(\rho):= F_{Z_0} ( A_\rho ),\ \forall \rho \in (r, R).
\eeqn
This is a smooth non-increasing function and $F(\rho)$ agrees with the local action of $A_\rho$ when $\rho \in I_\epsilon( r, R)$. Then for $s \in [1, \frac{1}{2}(\log R - \log r) ]$, define
\beqn
J(s):= F(r e^s) - F( R e^{-s}) = \int_{r e^s}^{Re^{-s}} \| F_\A \|_{L^2(N_\rho)}^2 d\rho.
\eeqn

We would like to derive a certain differential inequality of $J(s)$. First, by comparing the stretched metric on $N_\rho$ and the fixed metric on $N$, one has
\beq\label{eqn411}
\| F_{A_\rho}\|_{L^2(N)}^2 \leq \rho \| F_{A_\rho}\|_{L^2(N_\rho)}^2,\ \ \forall \rho \geq 1.
\eeq
By \eqref{eqn411} and the fact that $Re^{-s} \geq re^s \geq 1$ one obtains
\beqn
J'(s) = - r e^s  \| F_\A\|_{L^2(N_{re^s})}^2  -  R e^{-s} \| F_{\A}\|_{L^2(N_{R e^{-s}})}^2 \leq -  \| F_{A_{r e^s}}\|_{L^2(N)}^2 - \| F_{A_{R e^{-s}}}\|_{L^2(N)}^2.
\eeqn
If both $r e^s$ and $R e^{-s}$ are in $I_\epsilon(r, R)$, then by the isoperimetric inequality, one has
\beqn
J'(s) \leq -  \| F_{A_{r e^s}}\|_{L^2(N)}^2 - \| F_{A_{R e^{-s}}}\|_{L^2(N)}^2 \leq - \delta_P \left( F(r e^s) - F( R e^{-s}) \right) = - \delta_P J(s).
\eeqn
If $r e^s$ and/or $R e^{-s}$ are not in $I_\epsilon(r, R)$, i.e., when 
\beqn
\| F_{A_{re^s} }\|_{L^2(N)}^2 > \epsilon\ \ \ {\rm and/or}\ \ \ \| F_{A_{Re^{-s}}} \|_{L^2(N)}^2 > \epsilon,
\eeqn
there exists $s'$ and/or $s'' \in [s-1, s] \subset [0, \frac{1}{2} (\log R - \log r)]$ such that 
\beqn
\| F_{A_{re^{s'}}} \|_{L^2(N)}^2 \leq \epsilon\ \ \ {\rm and/or}\ \ \ \| F_{A_{R e^{-s''}}} \|_{L^2(N)}^2 \leq \epsilon.
\eeqn
Then by the isoperimetric inequality and the monotonicity of $F$, one obtains
\begin{multline*}
J'(s) \leq - \| F_{A_{re^s}} \|_{L^2(N)}^2 - \| F_{A_{R e^{-s}}}\|_{L^2(N)}^2 \leq - \| F_{A_{re^{s'}}} \|_{L^2(N)}^2  -  \| F_{A_{R e^{-s''}}} \|_{L^2(N)}^2 \\
\leq - \delta_P ( F( re^{s'}) - F( R e^{-s''}) ) \leq -\delta_P ( F( r e^s) - F(R e^{-s})) = -\delta_P J(s).
\end{multline*}
It follows that $J(s)$ decays exponentially as $s$ increases and hence \eqref{eqn49} is proved.
\end{proof}

The above annulus lemma contains two special cases corresponding to the two special cases of Remark \ref{rem214}. In the first case when $N^- \cong (N^+)^{\rm op} \cong M_0$, the four-manifold $\N_{r, R}$ is an open subset of ${\bm R} \times M$. In this case we denote 
\beqn
\M_{r, R}:= \N_{r, R},\ {\rm where}\ N \cong M^{\rm double}.
\eeqn
In the second case when $N^- \cong (N^+)^{\rm op} \cong [0, \pi]\times \Sigma$ (whose boundary is two copies of $\Sigma$ instead of one), one has a diffeomorphism $N \cong S^1 \times \Sigma$. Although the four-manifold $\N_{r, R}$ is not isometric to ${\it Ann}(r, R) \times \Sigma$ there is a constant $a>0$ independent of large $r$ and $R$ such that $\N_{r, R}$ is an open subset of ${\it Ann}(r-a, R + a) \times \Sigma$. Then one obtains a corresponding version of annulus lemma for instantons over ${\it Ann}(r, R) \times \Sigma$. 

\begin{cor}\label{cor47} There exist $\epsilon>0$, $C>0$, and $r_0>0$ satisfying the following conditions. Let $\A$ be a solution to the ASD equation over ${\it Ann}(r, R) \times \Sigma$ with $r \geq r_0$ and $\log R - \log r \geq 4$ and assume $E(\A) < \epsilon$. Then for $1 \leq s \leq \frac{1}{2} (\log R - \log r)$ there holds
\beqn
E(\A; {\it Ann}(e^s r, e^{-s} R) \times \Sigma) \leq C e^{-\delta_P s} E(\A; {\it Ann}(r, R) \times \Sigma).
\eeqn
\end{cor}

In both of the two special cases, we would like to extend the annulus lemma to allow $r = 0$. For $R>0$, define
\beqn
\M_R:= (B_R^+ \times \Sigma) \cup ( (-R, R) \times M_0)
\eeqn
where we glue the common boundary $(-R, R) \times \Sigma$ in the obvious manner. The four-manifold can be formally viewed as the annulus $\M_{0, R}$ considered above. 

\begin{prop}\label{prop48}
There exist $\epsilon>0$, $C>0$ satisfying the following conditions. Let $\A$ be a solution to the ASD equation over $\M_R$ with $\log R \geq 4$ and $E(\A) < \epsilon$. Then for $s \geq 0$ there holds
\beq\label{exponential}
E(\A; \M_{R e^{-s}}) \leq C e^{-\delta_P s} E(\A; \M_R).
\eeq
\end{prop}

\begin{proof}
Similar to the proof of Proposition \ref{annulus}, for all $\rho \in [1, R]$, one can define a relative Chern--Simons action $F(A_\rho)$ which agrees with the local action whenever $\| F_{A_\rho}\|_{L^2(N)}^2 \leq \epsilon$. We claim that when $R e^{-s} \geq 1$ there holds
\beq\label{eqn413}
E(\A; \M_{R e^{-s}}) = - F(A_{R e^{-s}}).
\eeq
Notice that the connection $A_0:= \A|_{\{0\}\times M_0}$ defines a not necessarily flat connection $A_0^{\rm double}$ over $M^{\rm double}$ by doubling $A_0$ and by Lemma \ref{lemma21} one has
\beqn
E(\A; \M_{R e^{-s}}) = - {\it CS}^{\rm rel}({A_0^{\rm double}}, A_{Re^{-s}}).
\eeqn
Next, by Uhlenbeck compactness, if $\epsilon$ is sufficiently small, the restriction $\A|_{\M_2}$ is sufficiently close to a flat connection $\A'$ on $\M_2$. Let $A_0' \in {\mc A}(P_0)$ be the restriction of $\A'$ to $M_0 \times \{0\}\subset \M_2$ and let $B_0'\in {\mc A}(Q)$ be the restriction of $A_0'$ to $\partial M_0$. By applying a gauge transformation ${\bm g}$ to $\A'$, one can assume that the restriction of $\A'$ to $(-2, 2) \times M_0$ is $d_s + A_0'$ and the restriction of $\A'$ to $B_2^+ \times \Sigma$ is $d_{\bm H} + B_0'$. Moreover the restriction of ${\bm g}$ to each $N_\rho \cong N$ for $\rho \in (0, 2)$ is contained in the identity component ${\mc G}_0(P_N)$. Let $A_\rho'$ be the restriction of $\A'$ to $N_\rho \subset \M_2$ for $\rho \in [1, 2]$. As $\| A_\rho - A_\rho'\|_{L^4(N)}$ is small for $\rho \in [1, 2]$, by the definition of the local action (Definition \ref{defn42}) and the definition of $F(A_\rho)$, one has 
\beqn
F(A_\rho) = CS^{\rm rel}(A_1', A_\rho),\ \forall \rho \in [1, R].
\eeqn
Notice that $A_1'$ is the double of a flat connection on $M_0$. Hence
\beqn
{\it CS}^{\rm rel}({A_0^{\rm double}}, A_1') = 0
\eeqn
as it is the sum of an integral of the same differential form over two copies of the same three-manifold with boundary having the opposite orientations. Therefore
\beqn
E(\A; \M_{Re^{-s}}) = - {\it CS}^{\rm rel}(A_0^{\rm double}, A_{R e^{-s}}) = - {\it CS}^{\rm rel} (A_1', A_{Re^{-s}}) = - F(R e^{-s}).
\eeqn
Abbreviate $J(s) = - F( Re^{-s})$. By \eqref{eqn413}, \eqref{eqn411}, and the condition $R e^{-s} \geq 1$ one has
\beqn
J'(s) = - R e^{-s} \| F_{\A}\|_{L^2(N_{R e^{-s}})}^2 \leq - \| F_{A_{R e^{-s}}} \|_{L^2(N)}^2.
\eeqn
Assume $s \geq 1$. If $\| F_{A_{Re^{-s}}}\|_{L^2(N)}^2\leq \epsilon$, then by the isoperimetric inequality one has 
\beqn
J'(s) \leq - \delta_P J(s). 
\eeqn
If $\| F_{A_{R e^{-s}}} \|_{L^2(N)}^2 > \epsilon$, then one can find $s' \in (s-1, s) \subset [0, \log R]$ such that $\| F_{A_{R e^{-s'}}}\|_{L^2(N)}^2 < \epsilon$. Then 
\beqn
J'(s) \leq - \| F_{A_{R e^{-s}}}\|_{L^2(N)}^2 \leq - \| F_{A_{R e^{-s'}}}\|_{L^2(N)}^2 \leq - \delta_P J(s') \leq - \delta_P J(s).
\eeqn
This shows that $J(s)$ decays exponentially for $s \in [1, \log R]$. So for some $C>0$,
\beq\label{eqn414}
E(\A; \M_{R e^{-s}} ) \leq C e^{-\delta_P s} E(\A; \M_R), \ \forall s \in [0, \log R].
\eeq
In particular, 
\beqn
E(\A; \M_2) \leq C R^{-\delta_P} E(\A; \M_R).
\eeqn 
Then by the standard interior estimate for the ASD equation (see \cite[Theorem 2.3.7 {\&} 2.3.8]{Donaldson_Kronheimer}), one has 
\beqn
\| F_\A \|_{L^\infty(\M_1)} \leq C \sqrt{ E(\A; \M_2)} \leq C R^{-\frac{\delta_P}{2}} \sqrt{ E(\A; \M_R)}.
\eeqn
On the other hand, for $r \leq 1$, one has ${\rm Volume}(\M_r) \leq Cr$. Hence for $s \geq \log R$, $r= R e^{-s} \leq 1$, using the condition that $\delta_P \leq 1$, one obtains
\beq\label{eqn415}
E(\A; \M_r) \leq C r R^{-\delta_P} E(\A; \M_R)  \leq C e^{-s} R^{1- \delta_P} E(\A; \M_R) \leq C e^{-\delta_P s} E(\A; \M_R).
\eeq
Combining \eqref{eqn414} and \eqref{eqn415} we obtain \eqref{exponential}.
\end{proof}

Similarly one has the following monotonicity property of ASD equation over the product of a disk with the compact surface, although it can be proved by using the mean value estimate instead of the isoperimetric inequality. 

\begin{prop}\label{prop49}
There exist $\epsilon>0$, $C>0$ satisfying the following conditions. Let $\A$ be a solution to the ASD equation over $B_R \times \Sigma$ with $\log R \geq 4$ and $E(\A) < \epsilon$. Then for $s \geq 0$ there holds
\beqn
E(\A; B_{R e^{-s}} \times \Sigma) \leq C e^{-\delta_P s} E(\A; B_R\times \Sigma).
\eeqn
\end{prop}

%The exponential decay property for instantons over $\M$ is a simple consequence of Proposition \ref{annulus}. Moreover, we can prove the energy quantization property for instantons over $\M$.

%%\begin{proof}[Proof of Theorem \ref{quantization}]
%We claim that this statement holds for $\hbar$ being the $\epsilon$ of Proposition \ref{annulus}. Suppose not, then there exists a nonconstant ASD instanton $\A$ over $\M$ with $E(\A) < \epsilon$. Choose $R>0$ such that 
%\beqn
%E(\A; \M_R ) = \frac{1}{2} E(\A) < \frac{\epsilon}{2}. 
%\eeqn
%For all large positive number $T$, let $\vartheta_T: \M \to \M$ be the translation in the ${\bm R}$-direction by $T$ and %denote $\A_T:= \vartheta_T^* \A$. Then $\A_T$ is also an ASD instanton. Then by Annulus Lemma II, there is a constant $C>0$ %(independent of $T$) such that 
%\beqn
%E( \A_T; \M_{R, \infty}) \leq C r^{-\delta_P}. 
%\eeqn
%However, one has 
%\beqn
%E(\A; \M_R ) \leq E(\A_T; \M_{T - r, \infty}) \leq C (T - r)^{-\delta_P}.
%\eeqn
%This is a contradiction when $T$ is sufficiently large. 
%\end{proof}

\subsection{Diameter bound}\label{subsection43}

To ensure the convergence towards holomorphic curves on the boundary, we need a further diameter control. We first define the notion of diameter. Let $S\subset {\bm H}$ be an open subset and let $\A$ be a solution to the ASD equation over $S \times \Sigma$ identified with a triple $(B, \phi, \psi)$. Suppose for each $z \in S$ the connection $B(z) \in {\mc A}(Q)$ is contained in the domain of the Narasimhan--Seshadri map ${\it NS}_2$. Then $\A$ projects to a continuous map $u: S \to R_\Sigma$. We define 
\beqn
{\rm diam}(\A; S \times \Sigma):= {\rm diam}(u(S)):= \sup_{p, q \in S} d_{R_\Sigma} ( u(p), u(q)).
\eeqn
Clearly this notion is gauge invariant. Moreover, denote
\beqn
\M_S:= (S \times \Sigma) \cup ( (S \cap \partial {\bm H}) \times M_0))
\eeqn
where the two parts are glued along the common boundary $(S \cap \partial {\bm H}) \times \Sigma$. If $\A$ is a solution on $\M_S$ whose restriction to $S \times \Sigma$ is $d_S + \phi ds + \psi dt + B(z)$ such that all $B(z)$ is contained in the domain of the Narasimhan--Seshadri map ${\it NS}_2$, then we define 
\beqn
{\rm diam}(\A; \M_S):= {\rm diam} ( \A; S \times \Sigma). 
\eeqn

By using Theorem \ref{thm31} and \ref{thm33}, one has the following interior diameter estimate.

\begin{lemma}[Interior diameter bound]\label{diameter1} There exist $C>0$, $T>0$, and $\epsilon>0$ such that for any solution to the ASD equation over $B_{2R} \times \Sigma$ with $R\geq T$ and
\begin{align*}
&\ E(\A) \leq \epsilon,\ &\ \sup_{B_{2R}} \| F_{B(z)} \|_{L^2(\Sigma)} \leq \epsilon
\end{align*}
there holds
\beqn
{\rm diam}(\A; B_R \times\Sigma) \leq C \sqrt{ E(\A; B_{2R}\times \Sigma)}.
\eeqn
\end{lemma}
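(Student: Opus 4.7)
The strategy is to rescale the domain to unit size and then invoke the Dostoglou--Salamon interior estimate (Theorem \ref{thm31}) together with the Narasimhan--Seshadri map. I would set $\A' := \varphi_R^* \A$ on $B_2 \times \Sigma$, which solves the $R$-ASD equation \eqref{eqn31} with rescaled energy $E_R(\A'; B_2) = E(\A; B_{2R}\times \Sigma) \le \epsilon$. Let $u: B_{2R} \to R_\Sigma$ be the projected map via $\ov{NS}_2$ (well-defined thanks to Lemma \ref{lemma215} since $\|F_{B(z)}\|_{L^2(\Sigma)} \le \epsilon$) and $u': B_2 \to R_\Sigma$ be its rescaled counterpart, so that $u'(z) = u(Rz)$ is holomorphic by Proposition \ref{prop217} and ${\rm diam}(u(B_R)) = {\rm diam}(u'(B_1))$.

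The first step is to verify the hypothesis $\sup e_R(z) \le c_0$ of Theorem \ref{thm31}. Since $E(\A; B_1(x)\times \Sigma) \le E(\A) \le \epsilon$ for every $x \in B_R \times \Sigma$ (using $R \ge T \ge 1$), the standard interior $L^\infty$-estimate for ASD connections with small local energy yields $\|F_{\A}\|_{L^\infty(B_R \times \Sigma)} \le C\sqrt{\epsilon}$. Under rescaling this translates to $e_R^\infty \le C\epsilon R^2$ on $B_1$, which, for $\epsilon$ small enough, is bounded by $R^2$. Lemma \ref{lemma35} then upgrades this to $e_R^\infty \le C$ on any fixed smaller subdisk, and integrating over $\Sigma$ gives $e_R \le c_0$ there. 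Theorem \ref{thm31} now yields $\|\A_s'\|_{L^\infty(K\times \Sigma)} \le C\sqrt{E_R(\A')} = C\sqrt{E(\A; B_{2R}\times\Sigma)}$ on any fixed $K \Subset B_1$; by a trivial covering/translation argument applied to rescaled versions of the same estimate, this bound extends up to $B_1$.

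Finally, since $D\ov{NS}_2$ annihilates infinitesimal (real and imaginary) gauge deformations $d_{B'}\phi'$ and acts as an isometry on the horizontal $L^2$-complement (which it does exactly at flat connections and up to an $o(1)$ error at nearby connections), one has
\[
|\partial_s u'(z)| \le \|\A_s'(z)\|_{L^2(\Sigma)} \le C\|\A_s'\|_{L^\infty(\{z\}\times\Sigma)},
\]
where the last inequality uses ${\rm vol}(\Sigma) < \infty$; an identical bound holds for $\partial_t u'$ by the Cauchy--Riemann equation $\A_s' + *\A_t' = 0$. Combining with the $L^\infty$-bound on $\A_s'$ gives a Lipschitz bound $|du'|_{L^\infty(B_1)} \le C \sqrt{E(\A; B_{2R}\times\Sigma)}$, which integrates over paths in $B_1$ to yield the desired estimate ${\rm diam}(u; B_R \times \Sigma) = {\rm diam}(u'; B_1) \le C \sqrt{E(\A; B_{2R}\times\Sigma)}$. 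The main obstacle I expect is the verification that Theorem \ref{thm31} applies after rescaling: the condition $\sup e_R \le c_0$ is delicate because $R$ may be arbitrarily large, and one must first control $R^2\|\mu_{\A'}\|_{L^2(\Sigma)}^2$ by exploiting both the total energy bound and the fibrewise smallness through Lemma \ref{lemma35}.
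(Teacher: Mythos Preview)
Your approach is correct and matches what the paper intends. The paper gives no explicit proof of this lemma, only the one-line remark ``By using Theorem \ref{thm31}, one has the following interior diameter estimate,'' and your proposal is precisely the natural elaboration of that remark: rescale by $R$, verify the hypothesis $\sup e_R \le c_0$ of Theorem \ref{thm31} via the standard ASD interior estimate together with Lemma \ref{lemma35}, and then pass from the $L^\infty$ bound on $\A_s'$ to a Lipschitz bound on $u'$ through the Narasimhan--Seshadri projection. The only point where your write-up is slightly loose is the domain bookkeeping: rather than getting $e_R^\infty \le R^2$ on $B_1$ and then invoking a ``covering/translation'' to reach the closure, it is cleaner to note that the interior ASD estimate already gives $\|F_\A\|_{L^\infty}$ on $B_{2R-1}\times\Sigma$, hence $e_R^\infty \le R^2$ on $B_{2-1/R} \supset B_{3/2}$ for large $R$; then Lemma \ref{lemma35} (applied after a harmless dilation) yields $e_R^\infty \le C$ on $B_{5/4}$, so one may take $S=B_{5/4}$ and $K=\overline{B_1}$ directly in Theorem \ref{thm31}. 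This avoids any covering argument. Otherwise the argument is sound, and the final step bounding $|\partial_s u'|$ by $\|\A_s'\|_{L^2(\Sigma)}$ via the contracting property of $D\ov{\it NS}_2$ is exactly the mechanism the paper uses elsewhere (e.g.\ in the proof of Lemma \ref{diameter2}).
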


Next, we prove the following diameter estimate near the boundary. 

\begin{lemma}[Boundary diameter bound] \label{diameter2}
There exist $C>0$, $T>0$, and $\epsilon>0$ such that, for any solution to the ASD equation $\A$ over $\M_{3R} = \M_{B_{3R}}$ with $E(\A) \leq \epsilon$ and $R \geq T$ satisfying
\begin{align}\label{eqn416}
&\ \| F_\A \|_{L^\infty(\M_{3R})} \leq 1,\ &\ \sup_{B_{3R}^+} \| F_{B(z)} \|_{L^3(\Sigma)} \leq \epsilon
\end{align}
there holds 
\beqn
{\rm diam}(\A; \M_R ) \leq C \sqrt{ E(A; \M_{3R})}.
\eeqn
\end{lemma}

\begin{proof}
Let $R$ and $\A$ satisfy the assumption of this lemma with $\epsilon$ and $T$ undetermined. Suppose the restriction of $\A$ to $B_{3R}^+ \times \Sigma$ by $d_{\bm C} + \phi ds + \psi dt + B(z)$. Denote by $u: B_{3R}^+ \to R_\Sigma$ the holomorphic map defined by $z \mapsto \ov{\it NS}_2 (B(z))$. Define the segment
\beqn
Z_R:= \big\{ z = s + {\bm i} t\ |\ -R \leq s \leq R,\ t = R \big\}. 
\eeqn
Since $Z_R$ can be covered by a fixed number of radius $R$ disks contained in $B_{3R}^+$, by Lemma \ref{diameter1}, for $T$ large and $\epsilon$ small, one has
\beq\label{eqn417}
{\rm diam} ( \A; Z_R \times \Sigma) \leq C \sqrt{ E(\A; \M_{3R})}.
\eeq
Hence it remains to show that 
\beq\label{eqn418}
\sup_{-R \leq s \leq R} \sup_{0 \leq t_1, t_2 \leq R} d_{R_\Sigma} ( u(s + {\bm i} t_1), u(s + {\bm i} t_2)) \leq C \sqrt{ E(\A; \M_{3R})}.
\eeq

To estimate the distance between $u(s + {\bm i} t_1)$ and $u(s + {\bm i} t_2)$ for $0 \leq t_1, t_2 \leq R$, we consider the rescaled equation and use the estimate of Corollary \ref{cor36}. The restriction of $\A$ to $B_{3R}^+\times \Sigma$ can be pulled back to a solution $\A' = d_{\bm C} + \phi' ds + \psi' dt + B'(z)$ to the $R$-ASD equation over $B_3^+ \times \Sigma$ satisfying $E_R(\A') \leq \epsilon$ and
\begin{align*}
&\ \sup_{B_3^+} e_R^\infty \leq R^2,\ &\ \sup_{B_3^+} \| F_{B'(z)} \|_{L^3 (\Sigma)} \leq \epsilon. 
\end{align*}
Then by Corollary \ref{cor36}, when $T$ is sufficiently large and $\epsilon$ is sufficiently small one has
\beqn
\| \A_s' \|_{L^2(\{s + {\bm i} t \}\times \Sigma)} \leq \frac{C}{t} \sqrt{ E_R ( \A'; B_t(z)\times \Sigma)},\ \forall |s|\leq 1,\ 0 < t \leq 1.
\eeqn
Moreover, by Proposition \ref{prop48} for the case $N = M^{\rm double}$, for some constant $C>0$ independent of $\A$ and $R$ one has
\beqn
E_R (\A'; B_t(z) \times \Sigma) \leq E_R ( \A'; B_{2t}^+(s) \times \Sigma) \leq E(\A; \M_{B_{2Rt}^+(Rs)}) \leq C t^{\delta_P} \cdot E(\A; \M_{3R}),\  \forall t \in (0, 1].
\eeqn
Therefore one has
\beqn
\| \A_s' \|_{L^2(\{ s + {\bm i} t \}\times \Sigma)} \leq C t^{-1 + \frac{\delta_P}{2}} \sqrt{ E(\A; \M_{3R})},\ \forall t\in (0, 1].
\eeqn
Now for $z_1 = s + {\bm i} t_1$ and $z_2 = s + {\bm i} t_2$ with $|s|\leq 1$ and $0 < t_1\leq t_2 \leq 1$ we would like to estimate the distance between $u(z_1)$ and $u(z_2)$. Using a suitable gauge transformation we can assume $\psi'(s + {\bm i} t) = 0$ for all $t \in [0, 1]$. Then 
\begin{multline*}
\| B' (z_1) - B' (z_2) \|_{L^2(\Sigma)} \leq \int_0^1 \|\partial_\tau B' (s + {\bm i} \tau)  \|_{L^2(\Sigma)} d\tau = \int_0^1 \| \A_t' \|_{L^2(\Sigma)} d\tau \\
= \int_0^1 \| \A_s'\|_{L^2(\Sigma)} d\tau \leq C \sqrt{ E(\A; \M_{3R})} \int_0^1 \tau^{-1 + \frac{\delta_P}{2}} d\tau  \leq C \sqrt{E(\A; \M_{3R})}.
\end{multline*}
Then by Lemma \ref{lemma217}, for $|s|\leq 1$ and $t_1, t_2 \in (0, 1]$, one has 
\begin{multline*}
{\rm dist}(u(R s + {\bm i} R t_1), u(R s + {\bm i} Rt_2)) \leq \| {\it NS}_2(B(z_1)) - {\it NS}_2(B(z_2)) \|_{L^2(\Sigma)} \\
\leq \| {\it NS}_2 (B(z_1)) - B(z_1) \|_{L^2(\Sigma)} + \| B(z_1) - B(z_2) \|_{L^2(\Sigma)} + \| B(z_2) - {\it NS}_2 (B(z_2)) \|_{L^2(\Sigma)}\\
\leq C \big( \| F_{B(z_1)} \|_{L^2(\Sigma)} + \| F_{B(z_2)}\|_{L^2(\Sigma)} + \sqrt{E(\A; \M_{3R})}\big).
\end{multline*}
It is standard to use the energy to bound the norm $\| F_{B(z)}\|_{L^2(\Sigma)}$. Hence one has 
\beqn
\sup_{-R \leq s \leq R} \sup_{0 \leq t_1, t_2 \leq R} {\rm dist}(u(s + {\bm i} t_1), u( s + {\bm i} t_2 )) \leq C \sqrt{ E(\A; \M_{3R})}.
\eeqn
This proves \eqref{eqn418}. Then together with \eqref{eqn417} the lemma is proved.
\end{proof}

\subsection{Consequences of the annulus lemma and the diameter estimate}

We first prove the asymptotic behavior of solutions to the ASD equation. 

\begin{thm}[Asymptotic Behavior] \label{asymptotic} Let $\A$ be a solution to the ASD equation over $\M = {\bm R}\times M$ with $E(\A)< \infty$. Then there exists a point $(a_-, a_+) \in (\iota \times \iota)^{-1}(\Delta_{R_\Sigma}) \cong L_{M^{\rm double}}$ whose image in $R_\Sigma$ is denoted by $b_\infty$ such that the following conditions hold.
\begin{enumerate}

\item In the quotient topology of the configuration spaces ${\mc A}(P_0)/ {\mc G}(P_0)$ one has 
\beqn
\lim_{s \to + \infty}  [\A|_{\{\pm s\}\times M_0} ] = a_\pm \in L_M \subset {\mc A}(P_0) / {\mc G}(P_0).
\eeqn

\item In the quotient topology of the configuration space ${\mc A}(Q)/ {\mc G}(Q)$ one has 
\beqn
\lim_{z\to \infty} [ \A|_{\{z\}\times \Sigma} ] = b_\infty \in R_\Sigma \subset {\mc A}(Q) / {\mc G}(Q).
\eeqn
\end{enumerate}
\end{thm}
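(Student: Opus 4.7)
My plan is to view $\A$ in polar coordinates on the cylindrical end of $M$ as a one-parameter family of piecewise smooth slice connections on the doubled three-manifold $N = M^{\rm double}$, and to show that this family converges exponentially fast to a flat connection on $N$. The three asymptotic limits of the theorem will then be read off from this slice convergence, with the boundary diameter estimate of Lemma \ref{diameter2} ensuring uniform control near the seam.

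Since $E(\A) < \infty$, I first pick $r_0 \ge 1$ with $E(\A; \M_{r_0, \infty}) < \epsilon$ below the threshold of Proposition \ref{annulus}. Applying that proposition with $N = M^{\rm double}$ to each finite $\M_{r_0, R}$ and letting $R \to \infty$ yields $E(\A; \M_{r_0 e^s, \infty}) \le C e^{-\delta_P s}$ for $s \ge 0$. Pulling back $\A|_{\M_{r_0,\infty}}$ via polar coordinates as in Subsection 4.2 produces a family of piecewise smooth slice connections $A_\rho \in {\mc A}^{\rm p.s.}(P_N)$ parametrized by $\tau = \log \rho$. Writing the energy as $\int_{\log r_0}^\infty g(\tau)\, d\tau$ with $g(\tau)$ the slice integrand of Subsection 4.2, the inequality $g(\tau) \ge \|F_{A_\rho}\|^2_{L^2(N)}$ for $\tau \ge 0$ together with the isoperimetric bound $|F_{\rm loc}(A_\rho)| \le c_P \|F_{A_\rho}\|^2_{L^2(N)}$ and the monotonicity identity $F(\tau_1) - F(\tau_2) = E(\A; \M_{e^{\tau_1}, e^{\tau_2}})$ derived in the proof of Proposition \ref{annulus} gives the differential inequality $dF/d\tau \le -\delta_P F$; hence $F(\tau)$, and by standard interior regularity also $g(\tau)$, decays like $Ce^{-\delta_P \tau}$. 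Lemma \ref{lemma41} then supplies nearby piecewise smooth flat connections $A_\rho^* \in L_N$ with $\|A_\rho - A_\rho^*\|_{W^{1,p}(N^\pm)}$ decaying at the same rate.

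The central step is that $[A_\rho]$ converges to a single gauge equivalence class as $\tau \to \infty$. Fixing a $d\tau$-temporal gauge, the ASD equation bounds the slice-rate by $\|\partial_\tau A_\tau\|^2_{L^2(N)} \le C g(\tau) \le C e^{-\delta_P \tau}$, and direct integration of $\sqrt{g(\tau)}$ gives $\int_{\tau_0}^\infty \|\partial_\tau A_\tau\|_{L^2(N)} d\tau < \infty$. Therefore $A_\tau$ is Cauchy in the configuration space modulo gauge and converges to a flat piecewise smooth connection $A_\infty$ on $P_N$, whose gauge class is a single point $(a_-, a_+) \in L_N \cong \Delta_{L_M} \cup R_{L_M} \subset L_M \times L_M$, with $a_\pm$ obtained by restricting $A_\infty$ to the two copies $M_0^\pm$. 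Flatness of $A_\infty$ on the neck $[0, \pi]_\theta \times \Sigma$ forces its $\Sigma$-restrictions to be $\theta$-independent and equal to a fixed point $b_\infty := \iota(a_-) = \iota(a_+) \in R_\Sigma$.

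Statement (a) then follows from $A_\rho|_{M_0^\pm} = \A|_{\{\pm\rho\} \times M_0}$ together with the $W^{1,p}$-convergence. For statement (b), the uniform convergence of $A_\rho|_{[0,\pi]_\theta \times \Sigma}$ to $b_\infty$ on any compact subinterval $[\delta, \pi-\delta]$ handles sequences $z_n \to \infty$ whose polar angles stay bounded away from the seam $\{\theta = 0, \pi\}$. Uniformity as $z_n$ approaches the seam is obtained from Lemma \ref{diameter2}, applied to large half-disks, which gives $\mathrm{diam}(\ov{\it NS}_2 \circ \A; \M_R) \le C\sqrt{E(\A; \M_{3R})} \to 0$. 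The main obstacle I anticipate is the gauge-theoretic bookkeeping in the central step: verifying that the $d\tau$-temporal gauge is compatible with the piecewise smooth structure of $A_\rho$ across the seam, and that the estimate $\|\partial_\tau A_\tau\|^2_{L^2(N)} \le C g(\tau)$ holds with all the conformal weights correctly accounted. Once this technical point is dealt with, the convergence is immediate from the exponential decay.
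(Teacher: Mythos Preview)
Your exponential decay of the tail energy and your use of the diameter estimate for the near-seam part of (b) are both in line with the paper. The gap is exactly where you flag it: the ``central step'' inequality $\|\partial_\tau A_\tau\|^2_{L^2(N)} \le C g(\tau)$ is false on the $N^\pm$ pieces once the conformal weights are unwound. Under the pullback $(\tau,x)\mapsto(e^\tau,x)$ on $(\log r,\log R)\times M_0$, the $d\tau$-temporal gauge coincides with the $ds$-temporal gauge, and one has $\partial_\tau A_{e^\tau}=e^\tau\,\partial_s A_s$. The ASD equation on ${\bm R}\times M_0$ gives $\|\partial_s A_s\|_{L^2(M_0)}=\|F_{A_s}\|_{L^2(M_0)}$, while the paper's energy integrand on $N^\pm$ is $g(\tau)|_{N^\pm}=e^\tau\|F_{A_{e^\tau}}\|^2_{L^2(M_0)}$. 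Hence
\[
\|\partial_\tau A_\tau\|^2_{L^2(N^\pm)}=e^{2\tau}\|F_{A_{e^\tau}}\|^2_{L^2(M_0)}=e^\tau\cdot g(\tau)|_{N^\pm},
\]
which is \emph{not} bounded by $Cg(\tau)$. Equivalently, in the original $s$-variable the annulus decay only gives $\int_r^\infty\|F_{A_s}\|^2_{L^2(M_0)}ds\le Cr^{-\delta_P}$; a dyadic Cauchy--Schwarz turns this into $\int_{2^k}^{2^{k+1}}\|F_{A_s}\|_{L^2(M_0)}ds\le C\,2^{k(1-\delta_P)/2}$, which fails to sum when $\delta_P\le 1$. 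Since $\delta_P=c_P^{-1}$ is just the inverse isoperimetric constant and there is no reason it exceeds $1$, your Cauchy argument for $[A_s]$ on $M_0$ does not close. (On the neck the estimate \emph{does} hold, because the polar change of coordinates there is conformal in the two-dimensional base.)

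The paper sidesteps this entirely by arguing part (a) topologically rather than metrically. It first proves (b) using only the diameter lemmas: covering $\M_{e^{\tau-1},e^{\tau+1}}$ by boundedly many half-disks and disks of radius $\sim e^\tau$ and applying Lemmas \ref{diameter1} and \ref{diameter2} gives ${\rm diam}(\A;\M_{e^{\tau-1},e^{\tau+1}})\le Ce^{-\delta_P\tau/2}$, so the $R_\Sigma$-images of the shells shrink and $b_\infty$ is well defined without any integration in $\tau$. For (a) it then uses only soft facts: Uhlenbeck compactness gives subsequential limits of $[\A|_{\{s\}\times M_0}]$ in $L_M$; any such limit must lie in $\iota^{-1}(b_\infty)$, which has at most two elements by Hypothesis \ref{hyp28}(d); and the Hausdorffness of ${\mc A}(P_0)/{\mc G}(P_0)$ (Lemma \ref{lemma412}) plus an intermediate-value argument rules out oscillation between the two preimages. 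This is what you should replace your Cauchy step with.
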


\begin{proof}
Denote the restriction of $\A$ to ${\it Ann}^+(r, \infty) \times \Sigma$ by $d_{\bm H} + \phi ds + \psi dt + B$. We first prove the convergence of the gauge equivalence class $[B(z)]$. By the strong Uhlenbeck compactness for Yang--Mills connections, we know that for each sequence $z_i \to \infty$, there is a subsequence (still indexed by $i$) for which $[B(z_i)]$ converges in ${\mc A}(Q) / {\mc G}(Q)$ to a limit in $R_\Sigma$. We would like to show that the subsequential limit is unique. By the annulus lemma (Proposition \ref{annulus}), there exist $C>0$ and $\delta_P>0$ such that for $r\geq 1$ there holds
\beq\label{eqn419}
E(\A; \M \setminus \M_r ) \leq C r^{-\delta_P}.
\eeq
Moreover, we claim that 
\beqn
\lim_{r \to \infty} \| F_\A \|_{L^\infty( \M \setminus \M_r)} = 0.
\eeqn
Indeed, if this is not the case, then a nontrivial ASD instanton over ${\bm C}\times \Sigma$, a nontrivial ASD instanton over ${\bm R}\times M$, or a nontrivial ${\bm R}^4$-instanton bubbles off at infinity, contradicting \eqref{eqn419}. Then by the diameter estimates (Lemma \ref{diameter1} and Lemma \ref{diameter2}), there exist a constant $C>0$ and a sufficiently large $r_0$ such that for all $\tau \geq \log r_0$, there holds
\beqn
{\rm diam} \big( \A; \M_{e^{\tau -1}, e^{\tau+1}} \big) \leq C \sqrt{ E(\A; \M_{e^{\tau-3}, e^{\tau + 3}} ) } \leq C e^{-\frac{\delta_P}{2} \tau}
\eeqn
(this is because the half annulus ${\it Ann}^+(e^{\tau-2}, e^{\tau+2})$ can be covered by a fixed number of half disks and a fixed number of disks with radii being comparable to $e^\tau$ which are contained in the half annulus ${\it Ann}^+(e^{\tau-3}, e^{\tau+3})$). Hence 
\beqn
\lim_{r \to \infty} {\rm diam}(\A; \M \setminus \M_r) = 0.
\eeqn
Therefore the subsequential limit of $[B(z)]$ in $R_\Sigma$ is unique. Denote the limit by $b_\infty$. 

Now we prove the convergence of $[ \A|_{\{s\}\times M_0}]$ as $s \to \pm \infty$. Abbreviate the restriction $\A_{\{s\}\times M_0}$ by $A(s)$. First, the finiteness of energy implies that
\beqn
\lim_{s \to \pm \infty} \| F_\A \|_{L^2([s-1, s+1]\times M_0)} = 0.
\eeqn
Hence by Uhlenbeck compactness, for any sequence $s_i \to \pm \infty$ there is a subsequence for which the sequence $[A(s_i)]$ converges to a limit in $L_M \subset {\mc A}(P_0)/ {\mc G}(P_0)$. Then any subsequential limit must be in $\iota^{-1}(b_\infty)$ where $\iota: L_M \immerse R_\Sigma$ is the Lagrangian immersion. We would like to show that as $s \to +\infty$ or $s\to -\infty$ the subsequential limit of $[A(s)]$ is unique. Indeed, if there are two sequences $s_i', s_i'' \to +\infty$ such that $[A(s_i)]$ and $[A(s_i')]$ converges to two different preimages of $\iota^{-1}(b_\infty)$, denoted by $a_+, a_+'$, then since the configuration space ${\mc A}(P_0) / {\mc G}(P_0 )$ is Hausdorff (see Lemma \ref{hausdorff} below), one can choose two disjoint neighborhoods $U, U'$ of $a_+$ and $a_+'$ and a sequence $s_i'' \to +\infty$ with $[A(s_i'')] \notin U' \cup U''$. Then a subsequence of $[A(s_i'')]$ converges to a limiting flat connection different from $a_+$ and $a_+'$. However since $b_\infty$ has at most two preimages, this cannot happen. Hence the subsequential limit of $[A(s)]$ is unique and hence $[A(s)]$ converges to a limit $a_\pm$ as $s \to \pm \infty$. 
\end{proof}

The following lemma is used in the previous proof.

\begin{lemma}\label{hausdorff}
Let $M_0$ be a three-manifold with boundary and $P_0 \to M_0$ be an $SO(3)$-bundle. Then the configuration space ${\mc A}(P_0)/ {\mc G}(P_0)$ is Hausdorff with respect to the quotient topology induced from the $C^0$-topology of ${\mc A}(P_0)$.
\end{lemma}

\begin{proof}
For $A_1, A_2 \in {\mc A}(P_0)$, define 
\beq\label{c0distance}
{\rm dist}_{C^0}(A_1, A_2):= \inf_{g \in {\mc G}(P_0)} \| g^* A_1 - A_2\|_{C^0(M_0)}.
\eeq
This clearly descends to a symmetric function on the quotient ${\mc A}(P_0)/ {\mc G}(P_0)$ satisfying the triangle inequality. We claim that this is a metric, namely, if $A_1$ and $A_2$ are not gauge equivalent, then ${\rm dist}_{C^0}(A_1, A_2) > 0$. Suppose this is not the case, then there exists a sequence of smooth gauge transformations $g_i$ such that $\| g_i^* A_1 - A_2 \|_{C^0 (M_0)} \to 0$. Since $g_i$ takes value in a compact group $SU(2)$, this implies that $\| g_i\|_{W_{A_1}^{1,\infty}}$ is uniformly bounded and so a subsequence of $g_i$ converges to a continuous gauge transformation $g$ on $P_0$. Moreover, in the weak sense, $g^* A_1 = A_2$. Since both $A_1$ and $A_2$ are smooth, $g$ is also smooth and hence $A_1$ is gauge equivalent to $A_2$, which contradicts our assumption. Hence ${\rm dist}_{C^0}$ is a metric on the configuration space and the lemma is proved. 
\end{proof}

Theorem \ref{asymptotic} is the analogue of the following results about the asymptotic behavior of instantons over ${\bm C} \times \Sigma$.

\begin{thm}\cite[Proof of Theorem 9.1]{Dostoglou_Salamon} \label{thm414} Let $\A$ be an ASD instanton over $({\bm C} \setminus B_R)\times \Sigma$ for some $R>0$ which can be written as $d_{\bm C} + \phi ds + \psi dt + B(z)$. Then there is a gauge equivalence class of flat connections $b_\infty\in R_\Sigma \subset {\mc A}(Q) / {\mc G}(Q)$ such that in the quotient topology one has
\beqn
\lim_{|z|\to \infty} [B(z)] = b_\infty.
\eeqn
\end{thm}

\begin{defn}\label{evaluation}
In the situation of Theorem \ref{asymptotic} resp. Theorem \ref{thm414}, the point $(a_+, a_-) \in L_N$\footnote{We choose $(a_+, a_-)$ rather than $(a_-, a_+)$ because we want to match the convention of evaluations of holomorphic disks with immersed Lagrangian boundary condition. See Definition \ref{defn28}.} resp. $b_\infty \in R_\Sigma$ is called the {\bf evaluation at infinity} of the solution $\A$ to the ASD instanton over $\M$ resp. $({\bm C} \setminus B_R)\times \Sigma$.  
\end{defn}

Finally we have the energy quantization property for instantons over $\M = {\bm R}\times M$.

\begin{thm}\label{quantization}
There exists $\hbar>0$ which only depends on the bundle $P \to M$ and which satisfies the following property. For any ASD instanton $\A$ over $\M$ there holds
\beqn
E(\A) \in \{0\}\cup [\hbar, +\infty).
\eeqn
\end{thm}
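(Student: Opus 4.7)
My plan is to argue by contradiction: suppose a sequence of ASD instantons $\A_i$ over $\M$ satisfies $0 < E(\A_i) \to 0$. The goal is to rule this out by showing that attainable energies lie in a discrete subset of $[0,\infty)$ whose positive part is bounded away from zero. First I would invoke Theorem~\ref{asymptotic} to extract asymptotic flat data $(a_-^i, a_+^i) \in L_N$, where $N = M^{\rm double}$ (Remark~\ref{rem212}); each such pair defines, via doubling across $\Sigma$, a gauge class of piecewise smooth flat connections on $P_N \to N$. Since $L_N$ is compact, after passing to a subsequence, $(a_-^i, a_+^i) \to (a_-, a_+) \in L_N$.

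The key identity I would establish is
\beqn
E(\A_i) \equiv {\it CS}\bigl([A^*], [(a_-^i, a_+^i)]\bigr) \pmod{\Gamma},
\eeqn
where $A^* \in {\mc A}_{\rm flat}^{\rm p.s.}(P_N)$ is any fixed reference flat connection and $\Gamma \subset {\bm R}$ is the (discrete) period lattice of the Chern--Simons functional on $P_N \to N$. To derive this, I would run the doubling argument from the proof of Proposition~\ref{prop47} over an exhaustion $\M_R$ with $R \to \infty$: one first shows $E(\A_i; \M_R) = -F_i(R)$ for all $R$ large, where $F_i$ is the smoothly extended local Chern--Simons action along the polar-coordinate slices, then passes to the limit $R \to \infty$ and identifies the limit with the CS value of the asymptotic pair. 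Once this identity is in hand, the fact that ${\it CS}$ has vanishing differential at flat connections implies the map $L_N \to {\bm R}/\Gamma$, $[a] \mapsto {\it CS}([A^*], [a])$, is locally constant; since $L_N$ is compact with finitely many connected components, its image is a finite set $\{\alpha_1, \ldots, \alpha_k\}$. Therefore the attainable energies all lie in the discrete set $\bigcup_j (\alpha_j + \Gamma) \cap [0,\infty)$, whose minimum positive element furnishes $\hbar$, contradicting $E(\A_i) \to 0^+$.

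The principal obstacle is the rigorous derivation of the CS-difference formula for $E(\A_i)$ in the cylindrical-end setting. A direct application of Lemma~\ref{lemma21} on a compact exhaustion such as $[-T, T] \times (M_0 \cup [0, R] \times \Sigma)$ produces an extra boundary integral over $[-T, T] \times \{R\} \times \Sigma$, and I would need to verify that this term vanishes as $R \to \infty$, using the exponential decay of the annulus lemma (Proposition~\ref{annulus}) together with the uniform convergence $[\A|_{\{z\}\times \Sigma}] \to b_\infty$ at infinity. The doubling construction of Proposition~\ref{prop47} sidesteps this difficulty by reducing each time-slice to a piecewise smooth connection on the closed manifold $M^{\rm double}$, where Chern--Simons theory is standard and $\Gamma$ is the familiar $SO(3)$ period lattice; adapting that construction to parametrize the exhaustion by $R$ (rather than by polar slices of an annular region) is the step requiring the most care.
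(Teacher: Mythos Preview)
Your approach is correct in outline, but it takes a genuinely different and longer route than the paper's proof.

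The paper's argument is a three-line application of Proposition~\ref{prop47}: set $\hbar$ equal to the $\epsilon$ of that proposition. If $E(\A) < \hbar$, then for any fixed $r>0$ and arbitrarily large $R$, the restriction $\A|_{\M_R}$ satisfies the hypothesis, so
\[
E(\A; \M_r) = E(\A; \M_{R e^{-s}}) \leq C e^{-\delta_P s} E(\A; \M_R) \leq C\epsilon\, e^{-\delta_P s},\qquad s = \log(R/r).
\]
Letting $s\to\infty$ forces $E(\A;\M_r)=0$ for every $r$, hence $\A$ is flat. No asymptotic limits, no Chern--Simons period lattice, no passage to a subsequence.

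Your route---identifying $E(\A)$ with a Chern--Simons difference at the asymptotic pair $(a_-,a_+)\in L_N$ modulo periods, then invoking finiteness of $\pi_0(L_N)$---is the standard ``energy spectrum'' argument for instanton Floer theory over cylinders on closed $3$-manifolds. It yields more: the full discrete set of attainable energies, not just a lower bound. The cost is the extra analytic work you flag (pushing the identity \eqref{eqn415} from $\M_R$ to $R\to\infty$ and matching the limit with the asymptotic data of Theorem~\ref{asymptotic}), plus appealing to Theorem~\ref{asymptotic} itself, which the paper's proof avoids entirely. Both arguments ultimately rest on the isoperimetric inequality and the annulus lemma, but the paper extracts the quantization directly from the exponential decay \eqref{exponential} rather than from the topological structure of the Chern--Simons values.
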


\begin{proof}
We claim that $\hbar$ being the $\epsilon$ of Proposition \ref{prop48} satisfies the condition of this theorem. Indeed, suppose there is an ASD instanton over $\M$ with $E(\A) < \hbar$. For all $r>0$ and $R = e^s r$ with $s$ sufficiently large, the restriction of $\A$ to $\M_R$ satisfies the hypothesis of Proposition \ref{prop48}. Then 
\beqn
E(\A; \M_r) = E(\A; \M_{R e^{-s}} ) \leq C e^{-\delta_P s} E (\A; \M_R) \leq C \epsilon e^{-\delta_P s}. 
\eeqn
As $s$ can be arbitrarily large, one has $E(\A; \M_r) = 0$. Hence $\A$ is a trivial solution. 
\end{proof}

\section{Compactness modulo energy blowup}\label{section5}

In this section we prove a compactness theorem modulo bubbling. It is the analogue of Theorem \ref{thm33} for the domain being $\M = {\bm R}\times M$. We first set up the problem. Recall that for any open subset $S \subset {\bm H}$, $\M_S$ denotes 
\beqn
\M_S = ( S \times \Sigma ) \cup ( ( S \cap \partial {\bm H}) \times M_0)
\eeqn 
where the two parts are glued over the common boundary $( S\cap \partial {\bm H}) \times \Sigma$. Let $S_i \subset {\bm H}$ be an exhaustive sequence of open subsets. They may or may not intersect the boundary of ${\bm H}$. Let $\rho_i \to +\infty$ be a sequence of positive numbers diverging to infinity. Recall that $\varphi_\rho: {\bm H} \to {\bm H}$ is the map corresponding to multiplying by $\rho$. In the proof of the main theorem of this paper we will only use the case that $S_i = {\bm H}$. 

\begin{thm}\label{thm51}
Suppose $\A_i$ is a sequence of ASD instantons on $\M_{\varphi_{\rho_i}(S_i)}$ with
\beqn
\limsup_{i \to \infty} E (  \A_i; \M_{\varphi_{\rho_i}(S_i)} ) < +\infty. 
\eeqn
Then there exist a subsequence (still indexed by $i$) and a holomorphic map 
\beqn
\tilde {\bm u}_\infty = ( u_\infty, W_\infty, \gamma_\infty, m_\infty),
\eeqn
from ${\bm H}$ to $R_\Sigma$ with mass (see Definition \ref{defn28}) satisfying the following conditions. 
\begin{enumerate}

\item For each $z \in {\bm H}$, for sufficiently small $r>0$, the limit 
\beqn
\lim_{i \to \infty} E \big( \A_i; \M_{\varphi_{\rho_i}(B_r^+(z))} \big)
\eeqn
exists. Moreover, one has the convergence
\beq\label{eqn51}
\lim_{r \to 0} \lim_{i\to \infty} E \big( \A_i; \M_{\varphi_{\rho_i}(B_r^+(z))} \big) = m_\infty (z).
\eeq

\item Over $\varphi_{\rho_i}(S_i) \times \Sigma$ we write $\A_i = d_{\bm H} + \phi_i ds + \psi_i dt + B_i$. Then for any precompact open subset $K \subset {\bm H} \setminus W_\infty$, for $i$ sufficiently large, $B_i (z)$ is in the domain of the Narasimhan--Seshadri map ${\it NS}_2$ (see Definition \ref{nsmap}) for all $z \in \varphi_{\rho_i}(K)$, hence projects down to a sequence of holomorphic maps $u_i: \varphi_{\rho_i}(K) \to R_\Sigma$. Further $u_i \circ \varphi_{\rho_i}$ converges to $u_\infty|_K$ in the sense of Definition \ref{defn25}. 

\item There is no energy lost in the following sense: 
\beq\label{eqn52}
\lim_{R \to \infty} \lim_{i \to \infty} E \big( \A_i; \M_{\rho_i R} \big) = E(u_\infty) + \int_{\bm H} m_\infty. 
\eeq
\end{enumerate}
\end{thm}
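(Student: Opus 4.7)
The proof would parallel Theorem \ref{thm33} but with substantial extra work on the boundary. First, pulling back by $\varphi_{\rho_i}$ produces a sequence $\A_i'$ of solutions to the $\rho_i$-ASD equation on $S_i \times \Sigma$, coupled across $\partial S_i \times \Sigma \subset \M_{\varphi_{\rho_i}(S_i)}$ to the original ASD solutions $\A_i$ on $\partial S_i \times M_0$. Define the candidate bubbling measure $m_\infty$ by formula \eqref{eqn51} and set $W_\infty = \{z \in {\bm H} : m_\infty(z) > 0\}$. The energy quantum for instantons over ${\bm R}^4$, for holomorphic spheres and disks in $R_\Sigma$, together with Theorem \ref{quantization} for ${\bm R}\times M$-instantons and its analogue for ${\bm C}\times \Sigma$-instantons, ensures $W_\infty$ is locally finite. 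On precompact open subsets of $({\rm Int}\,{\bm H})\setminus W_\infty$, Theorem \ref{thm33} applies directly to $\A_i'$ and produces the holomorphic map $u_\infty$ together with $C^\infty_{\rm loc}$-convergence of the projected maps $u_i \circ \varphi_{\rho_i}$.

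The crux is extending $u_\infty$ to $\partial {\bm H}\setminus W_\infty$ and constructing $\gamma_\infty$. Fix $z_0 \in \partial {\bm H}\setminus W_\infty$ and a small $r>0$ with $\limsup_{i\to\infty} E(\A_i; \M_{B_{\rho_i r}^+(z_0)})$ below the bubbling threshold. For any $z \in B_{r/4}^+(z_0)$ and $r' \in (0, r/4)$, the restriction of $\A_i$ to $\M_{B_{3\rho_i r'}^+(z)}$ satisfies the hypotheses of Lemma \ref{diameter2} once $i$ is large: the $L^\infty$-curvature bound follows from the absence of bubbles near $z_0$, smallness of fibrewise curvature $\|F_{B_i(\cdot)}\|_{L^2(\Sigma)}$ follows from Uhlenbeck compactness on fibres away from bubbles, and smallness of the total energy follows from Proposition \ref{prop47} applied to nearby half-balls (together with translation in the ${\bm R}$-direction). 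Hence
\beqn
\lim_{r' \to 0} \limsup_{i\to\infty} {\rm diam}\bigl( \A_i; \M_{B_{\rho_i r'}^+(z)} \bigr) = 0,
\eeqn
which is exactly condition \eqref{eqn26} of Proposition \ref{prop26}. Applied to the projected holomorphic maps $u_i \circ \varphi_{\rho_i}$, Proposition \ref{prop26} yields the continuous extension of $u_\infty$ and $C^0_{\rm loc}$-convergence up to the boundary. To build $\gamma_\infty$, one applies Lemma \ref{lemma213} slicewise: at each $s \in \partial {\bm H}\setminus W_\infty$ the slice $\A_i|_{\{\rho_i s\}\times M_0}$ has vanishing $L^p$-curvature in the limit (by Proposition \ref{prop47} on half-balls centered at $s$), so Lemma \ref{lemma213} produces a nearby flat $W^{1,p}$-connection whose gauge class converges along a subsequence to a candidate $\gamma_\infty(s) \in L_M$. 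Continuity in $s$ would follow by combining the $L^2$-bound between neighboring slices (controlled via Lemma \ref{diameter2} and Lemma \ref{lemma213}) with the local homeomorphism property of $\iota$; transverse double points at which the local sheet jumps are added to $W_\infty$ as switching points.

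Finally, the energy identity \eqref{eqn52} combines (i) concentration of mass $m_\infty(z)$ at each bubble point by definition, (ii) uniform convergence of the rescaled energy density $e_{\rho_i}$ to the energy density of $u_\infty$ on compact subsets away from $W_\infty$, furnished by Theorem \ref{thm33} together with the boundary analysis above, and (iii) the exponential decay estimates of Proposition \ref{prop47} and Corollary \ref{cor46}, which rule out energy escaping to spatial infinity as $R \to \infty$. The main obstacle is the boundary step, namely verifying the hypotheses of Lemma \ref{diameter2} uniformly across rescalings and producing a well-defined continuous $L_M$-valued lift $\gamma_\infty$ through the immersion $\iota$ with only transverse double points; this is precisely where the isoperimetric inequality and its consequences developed in Section \ref{section4} are essential.
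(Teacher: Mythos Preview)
Your outline tracks the paper's structure well for the interior and for extending $u_\infty$ continuously to the boundary via Lemma~\ref{diameter2} and Proposition~\ref{prop26}. There are, however, two genuine gaps.

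First, your construction of $\gamma_\infty$ does not work as written. Applying Lemma~\ref{lemma213} slicewise and passing to a subsequence at each $s$ only gives \emph{pointwise subsequential} convergence of $[\A_i|_{\{\rho_i s\}\times M_0}]$; there are uncountably many $s$, so no diagonal argument yields a single subsequence working for all of them. The paper handles this (Lemma~\ref{lemma57}) by choosing one point per boundary component of $\partial\H\setminus W_\infty$, passing to a subsequence there, and then proving that the set of $s$ where $[A_{i;s}]$ converges is both open and closed, using the Hausdorffness of ${\mc A}(P_0)/{\mc G}(P_0)$ (Lemma~\ref{lemma412}). Relatedly, your sentence ``transverse double points at which the local sheet jumps are added to $W_\infty$'' is backwards: $W_\infty$ is already fixed as the energy-concentration set, and the content of the argument is precisely that \emph{no} sheet jump can occur away from $W_\infty$. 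You cannot enlarge $W_\infty$ after the fact without breaking condition~(a).

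Second, your proof of the energy identity \eqref{eqn52} relies on ``uniform convergence of the rescaled energy density $e_{\rho_i}$ to the energy density of $u_\infty$ on compact subsets away from $W_\infty$.'' Theorem~\ref{thm33} supplies this only on compact subsets of the \emph{interior}; the boundary analysis via Proposition~\ref{prop26} gives $C^0$ convergence of maps, not of energy densities. Moreover, $E(\A_i;\M_{\rho_i R})$ includes energy over $(-\rho_i R,\rho_i R)\times M_0$, which is not captured by the fibrewise density $e_{\rho_i}$ at all. The paper's argument (Section~\ref{section5}.3) is quite different: it writes both $E(\A_i;\M_{\rho_i R})$ and $E(u_\infty)$ as Chern--Simons actions on the slice three-manifold $N=M^{\rm double}$ (Lemma~\ref{lemma21}, Proposition~\ref{prop218}), and then uses the $p=4$ isoperimetric inequality (Theorem~\ref{thm43}) together with the boundary convergence of $\gamma_\infty$ to show the relevant local actions tend to zero. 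This Chern--Simons comparison is the essential mechanism for energy conservation near the boundary, and it is missing from your sketch.
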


These two theorems motivates the following notion of convergence. For simplicity we restrict to the case $S_i = {\bm H}$.

\begin{defn}\label{defn52}
Let $\A_i$ be a sequence of ASD instantons over $\M$. Let $\tilde {\bm u}_\infty$ be a holomorphic map from ${\bm H}$ to $R_\Sigma$ with mass in the sense of Definition \ref{defn28}. Suppose $\{\rho_i\}$ be a sequence of positive numbers diverging to $\infty$. $\A_i$ is said to converge to $\tilde {\bm u}_\infty$ along with $\{ \rho_i\}$ if conditions (a), (b), and (c) of Theorem \ref{thm51} hold. 
\end{defn}

\subsection{Energy blowup threshold}\label{subsection51}

First we define the notion of energy blowup.

\begin{defn}[Energy blowup] Let $\rho_i, \A_i$ satisfy the assumptions of Theorem \ref{thm51}. For each $w \in {\bm H}$, we say that {\it energy blows up at $w$} if 
\beqn
\lim_{r\to 0} \limsup_{i \to \infty} E \big( \A_i; \M_{\varphi_{\rho_i}(B_r^+ (w))} \big) > 0.
\eeqn
\end{defn}

\begin{lemma}[Energy blowup threshold]  \label{lemma54}
There exists $\hbar > 0$ satisfying the following property. Let $\rho_i$ and $\A_i$ be as in Theorem \ref{thm51}. Then for all $w \in {\bm H}$ there holds
\beq\label{eqn53}
\lim_{r \to 0} \limsup_{i \to \infty} E\big( \A_i; \M_{\varphi_{\rho_i}( B_r^+(w))} \big) \in \{0\} \cup [\hbar, +\infty).
\eeq
\end{lemma}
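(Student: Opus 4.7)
The plan is to show that the threshold $\hbar$ can be taken equal to the constant $\epsilon$ appearing in the annulus lemmas, Propositions \ref{prop47} and \ref{prop48}. The strategy is a local version of the energy quantization argument used in Theorem \ref{quantization}: setting $f(r) := \limsup_{i \to \infty} E(\A_i; \M_{\varphi_{\rho_i}(B_r^+(w))})$, which is non-decreasing in $r$ with $m := \lim_{r \to 0^+} f(r)$ equal to the limit on the left of \eqref{eqn53}, I will assume $0 < m < \epsilon$ and derive a contradiction by iterating the exponential decay furnished by the annulus lemma.

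The first step is to identify, up to an isometry of $\M$, the region $\M_{\varphi_{\rho_i}(B_r^+(w))}$ with one of the standard regions on which the annulus lemma applies. If $w \in \partial {\bm H}$, then $\varphi_{\rho_i}(B_r^+(w))$ is a half-disk of radius $\rho_i r$ centered on the real axis, and translating by $-\rho_i {\rm Re}(w)$ in the $s$-direction identifies $\M_{\varphi_{\rho_i}(B_r^+(w))}$ with $\M_{\rho_i r}$. If $w \in {\rm Int}({\bm H})$ and $r < {\rm Im}(w)$, then $\varphi_{\rho_i}(B_r^+(w))$ is a full disk disjoint from $\partial {\bm H}$, so $\M_{\varphi_{\rho_i}(B_r^+(w))} = B_{\rho_i r}(\rho_i w) \times \Sigma$, which is isometric to $B_{\rho_i r} \times \Sigma$ by translation. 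For $r$ sufficiently small one has $f(r) < \epsilon$, and for $i$ sufficiently large both $E(\A_i; \M_{\varphi_{\rho_i}(B_r^+(w))}) < \epsilon$ and $\log(\rho_i r) \geq 4$ hold, so the hypotheses of the appropriate annulus lemma are satisfied.

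Applying Proposition \ref{prop47} in the boundary case or Proposition \ref{prop48} in the interior case with $R = \rho_i r$ and $s = \log(r/r')$ then yields, for every $r' \in (0, r)$,
\beqn
E(\A_i; \M_{\varphi_{\rho_i}(B_{r'}^+(w))}) \leq C(r'/r)^{\delta_P} E(\A_i; \M_{\varphi_{\rho_i}(B_r^+(w))}).
\eeqn
Taking $\limsup_{i \to \infty}$ gives $f(r') \leq C(r'/r)^{\delta_P} \epsilon$; letting $r' \to 0^+$ with $r$ fixed forces $m \leq 0$, contradicting $m > 0$. Hence $m \geq \epsilon$ whenever $m > 0$, so $\hbar = \epsilon$ works. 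The argument presents no serious obstacle beyond the bookkeeping of these two geometric cases; the substantive content is already contained in the annulus lemmas of Section \ref{section4}, which themselves combine the isoperimetric inequality with the monotonicity of the relative Chern--Simons functional.
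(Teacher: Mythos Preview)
Your proof is correct and follows essentially the same route as the paper: both arguments take $\hbar$ to be the $\epsilon$ of Proposition~\ref{prop47} (and Proposition~\ref{prop48} for interior points), assume the concentrated energy lies strictly between $0$ and $\epsilon$, and then use the exponential decay $E(\A_i;\M_{\rho_i r'}) \le C(r'/r)^{\delta_P} E(\A_i;\M_{\rho_i r})$ to force the limit to zero. Your version is slightly more explicit in setting up $f(r)$ and separating the boundary and interior cases, but the substance is identical.
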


\begin{proof}
When energy blows up at $w \in {\rm Int} {\bm H}$, \eqref{eqn53} follows from the bubbling analysis in \cite[Proof of Theorem 9.1]{Dostoglou_Salamon} (or via Proposition \ref{prop49} as argued below). We consider the case when $w \in \partial {\bm H}$. We claim that $\hbar$ being the $\epsilon$ of Proposition \ref{prop48} satisfies the property. Indeed, suppose on the contrary
\beqn
0 < \lim_{r \to 0} \limsup_{i \to \infty} E(\A_i; \M_{\varphi_{\rho_i}(B_r^+(w))})  < \epsilon.
\eeqn
Then there exists $r_0 >0$ such that 
\beqn
\limsup_{i \to \infty} E(\A_i; \M_{\varphi_{\rho_i}(B_{r_0}^+(w))} ) < \epsilon.
\eeqn
Then by Proposition \ref{prop48} for $r \ll r_0$, one has
\beqn
E(\A_i; \M_{\varphi_{\rho_i }(B_r^+(w))}) \leq C \left( \frac{r}{r_0} \right)^{\delta_P}. 
\eeqn
This contradicts the assumption that energy blows up at $w$.  
\end{proof}

\subsection{Locating the blowup points}

One can use the existence of energy blowup threshold to select a subsequence for which energy blowup happens at only finitely many points. However the current case is slightly more involved than the case of pseudoholomorphic curves and the case of Theorem \ref{thm33}. This is because near the boundary of ${\bm H}$, we do not have the equivalence between energy blowup and  energy density blowup.

\begin{lemma}
There exist a subsequence (still indexed by $i$) and a finite subset $W_\infty \subset {\bm H}$ such that (for this subsequence) energy blows up exactly at points in $W_\infty$. Moreover, for each $w\in W_\infty$ and each positive integer $n$, the limit
\beq\label{eqn54}
\lim_{i \to \infty} E \left( \A_i; \M_{\varphi_{\rho_i}(B_{1/n}^+(w))} \right) \in [\hbar, +\infty)
\eeq
exists.
\end{lemma}

\begin{proof}
We construct, inductively, for each positive integer $n$, the following objects.
\begin{itemize}
\item  A subsequence (still indexed by $i$) of the subsequence constructed in the $(n-1)$-st step. (For $n=1$, a subsequence of the original sequence.)

\item  A finite subset $W_n \subset {\bm H}$ and  $r_w^\bullet, r_w^\circ \in (0, \frac{1}{n})$ for each $w \in W_n$ with $r_w^\bullet < r_w^\circ$.
\end{itemize}
These objects are required to satisfy the following conditions. 
\begin{enumerate}
\item[(C1)]  For $w', w''\in W_n$, $w' \neq w''$, one has 
\beqn
\ov{B_{r_{w'}^\circ}^+(w')} \cap \ov{ B_{r_{w''}^\circ}^+(w'') } = \emptyset.
\eeqn

\item[(C2)]  If $n \geq 2$, then 
\beqn
\bigcup_{w \in W_n} \ov{B_{r_w^\circ}^+(w)} \subset \bigcup_{w \in W_{n-1}} B_{r_w^\circ}^+(w).
\eeqn

\item[(C3)]  For the subsequence, energy does not blow up at any point in 
\beqn 
{\bm H} \setminus \bigcup_{w \in W_n} \ov{B_{r_w^\bullet}^+ (w) }.
\eeqn

\item[(C4)]   For each $w \in W_n$, the limit (for the subsequence)
\beqn
\lim_{i \to \infty} E(\A_i; \M_{\varphi_{\rho_i} (B_{r_w^\bullet}^+(w))} ) \in [\hbar, +\infty)
\eeqn
exists.
\end{enumerate}

We start the construction. Suppose $w \in {\bm H}$ is a point where the energy blows up. Choose some $r_w^\bullet \in (0, 1)$, say $r_w^\bullet = \frac{1}{2}$. Then one can choose a subsequence (still indexed by $i$) such that the limit
\beqn
\lim_{i \to \infty} E \left( \A_i; \M_{\varphi_{\rho_i}(B_{r_w^\bullet}^+(w))} \right) \in [\hbar, +\infty)
\eeqn
exists. (Notice that for this subsequence, it is possible that energy no longer blows up at $w$.) Suppose we have find a subsequence (still indexed by $i$), finitely many points $w_1, \ldots, w_s$, and positive numbers $r_{w_1}^\bullet, \ldots, r_{w_s}^\bullet \in (0, 1)$ such that

\begin{enumerate}
\item  For $k \neq l$, there holds
\beqn
\ov{B_{r_{w_k}^\bullet}^+(w_k)} \cap \ov{ B_{r_{w_l}^\bullet}^+(w_l) } = \emptyset.
\eeqn

\item  For each $k$, the limit (for the subsequence)
\beqn
\lim_{i \to \infty} E(\A_i; \M_{\varphi_{\rho_i} (B_{r_{w_k}^\bullet}^+(w_k))} ) \in [\hbar, +\infty)
\eeqn
exists.
\end{enumerate}
Then consider for the current subsequence if energy blows up at any point in 
\beqn
{\bm H} \setminus \bigcup_{k=1}^s \ov{B_{r_{w_k}^\bullet}^+(w_k)}.
\eeqn
If there is some blowup point, then we can find a further subsequence (still indexed by $i$), a point $w$ not in the union of all $\ov{B_{r_{w_k}^\bullet}^+(w_k)}$, and a radius $r_w^\bullet \in (0, 1)$ such that 
\beqn
\ov{ B_{r_w^\bullet}^+(w)} \cap \ov{ B_{r_{w_k}^\bullet}^+(w_k) } = \emptyset,\ \forall k = 1, \ldots, s,
\eeqn
and such that 
\beqn
\lim_{i \to \infty} E\left( \A_i; \M_{\varphi_{\rho_i}( B_{r_w^\bullet}^+(w))} \right) \in [\hbar, +\infty)
\eeqn
exists. By the uniform energy bound and the fact that the regions $\ov{B_{r_k^\bullet}^+(w_k)}$ do not mutually intersect we know the induction must stops at finite time. The energy does not blow up at any point outside. Moreover, we can find slightly bigger numbers
\beqn
r_w^\circ \in (r_w^\bullet, 1)
\eeqn
such that the closed sets $\ov{B_{r_{w_k}^\circ}^+(w_k)}$ for all $w_k$ still have no mutual intersections. This finishes the inductive construction for the $n=1$ case.

Now suppose we have constructed a subsequence (still indexed by $i$), a finite subset $W_n$, and numbers $r_w^\bullet, r_w^\circ$ satisfying the required conditions [C1]---[C4]. Then for the current subsequence, energy blowup can only happen at points in 
\beqn
\bigcup_{w \in W_n} \ov{B_{r_w^\bullet}^+(w)}.
\eeqn
Then following similar procedure, one can find a further subsequence, a finite subset $W_{n+1}$ of the above closed set, and numbers $r_w^\bullet, r_w^\circ \in (0, \frac{1}{n+1})$ satisfying [C1]---[C4].

Notice that we have been choosing a subsequence for each $n$ from the subsequence constructed for $n-1$. Using the diagonal trick we choose a further subsequence. Then the conditions [C1]---[C4] are satisfied for this subsequence and all $n$. Moreover, by the uniform bound on energy, one has the bound for all $n$
\beq\label{eqn55}
\# W_n \leq N:= \left\lfloor \frac{ \displaystyle \limsup_{i \to \infty} E(\A_i)}{\hbar} \right\rfloor.
\eeq

Now consider the limiting set
\beqn
W_\infty:= \lim_{n \to \infty} \bigcup_{w \in W_n} \ov{ B_{r_w^\circ}^+ (w) } := \bigcap_{n\geq 1} \bigcup_{w \in W_n}  \ov{ B_{r_w^\circ}^+(w) } \subset {\bm H}.
\eeqn
By our construction, for the current subsequence, the energy does not blow up at any point not in $W_\infty$. Moreover, we claim that $W_\infty$ is finite and its number of elements is at most $N$ (defined in \eqref{eqn55}). Suppose on the contrary there exist distinct elements $y_0, \ldots, y_N \in W_\infty$. Choose $n$ sufficiently large such that $\frac{2}{n}$ is smaller than the minimal distance between two distinct $y_j$ and $y_l$. Then there must be distinct elements $w_0, \ldots, w_N \in W_n$ such that 
\beqn
y_j \in \ov{B_{r_{w_j}^\circ}^+(w_j)},\ j = 0, 1, \ldots, N.
\eeqn
This contradicts the bound \eqref{eqn55}.

Therefore for the current subsequence, energy blowup can only happen at points in $W_\infty$. Now for each $w \in W_\infty$, choose a subsequence inductively for each $n$. First consider the limit
\beqn
\limsup_{i \to \infty} E(\A_i; \M_{\varphi_{\rho_i}(B_1^+(w))} ).
\eeqn
If it is less than $\hbar$, then remove $w$ from $W_\infty$. If not, then choose a subsequence (still indexed by $i$) such that 
\beqn
\lim_{i \to \infty} E(\A_i; \M_{\varphi_{\rho_i}(B_1^+(w))} ) \in [\hbar, +\infty).
\eeqn
Suppose we have find a subsequence for $n$ such that the limit 
\beqn
\lim_{i \to \infty} E(\A_i; \M_{\varphi_{\rho_i} (B_{1/k}^+(w))} ) \in [\hbar, +\infty)
\eeqn
exists for all $k = 1, \ldots, n$. Then consider the limit 
\beqn
\limsup_{i \to \infty} E(\A_i; \M_{\varphi_{\rho_i} (B_{1/(n+1)}^+(w))} ).
\eeqn
If it is less than $\hbar$, then remove $w$ from $W_\infty$ because $w$ is no longer a blowup point for the current subsequence. If not, then choose a subsequence (still indexed by $i$) such that 
\beqn
\lim_{i \to \infty} E(\A_i; \M_{\varphi_{\rho_i} (B_{1/(n+1)}^+(w))} ) \in [\hbar, +\infty)
\eeqn
exists. Then using the diagonal trick, one can choose a subsequence (still indexed by $i$) and possibly remove certain elements from $W_\infty$ such that for each $w\in W_\infty$ which has not been removed the limit \eqref{eqn54} exists.
\end{proof}

\subsection{Constructing the limiting holomorphic curve}

Next we construct the limiting holomorphic curve. For any precompact open subset 
\beqn
K \subset {\bm H} \setminus W_\infty
\eeqn
(which may intersect the boundary of ${\bm H}$), we claim that 
\beqn
\lim_{i \to \infty} \sup_{z \in \varphi_{\rho_i} (K)} \| F_{B_i(z)}\|_{L^2(\Sigma)} = 0.
\eeqn
Indeed, if this is not true, then some subsequence will bubble off an ${\bm R}^4$-instanton, an instanton over ${\bm C}\times \Sigma$, or an instanton over ${\bm M}$, contradicting that no energy blows up at points away from $W_\infty$. Then for $i$ sufficiently large, $B_i(z)$ is in the domain of the Narasimhan--Seshadri map $\ov{\it NS}_2: {\mc A}_\epsilon^{1,2}(Q) \to R_\Sigma$. By Proposition \ref{prop219}, the map 
\beqn
u_i: \varphi_{\rho_i}( K ) \to R_\Sigma,\ u_i(z):= \ov{\it NS}_2(B_i(z))
\eeqn
is holomorphic. Denote the reparametrized holomorphic maps by
\beqn
u_i':= u_i \circ \varphi_{\rho_i}: K \to R_\Sigma.
\eeqn
One can choose a subsequence (still indexed by $i$) and an exhausting sequence of precompact open subsets $K_i \subset {\bm H} \setminus W_\infty$ such that $u_i'$ is defined over $K_i$.

\begin{lemma}\label{lemma56} 
There is a subsequence of $u_i'$ which converges to a holomorphic map $u_\infty: {\bm H} \setminus W_\infty \to R_\Sigma$ in the sense of Definition \ref{defn25}. Moreover, the energy density function $e_{\rho_i}$ of $\A_i'$ converges to the energy density of $u_\infty$ in $C^0_{\rm loc}( {\rm Int} {\bm H} \setminus W_\infty)$. 
\end{lemma}

\begin{proof}
By Theorem \ref{thm33}, a subsequence of $u_i': K_i \to R_\Sigma$ (still indexed by $i$) converges to a holomorphic map $u_\infty: { \rm Int}{\bm H} \setminus W_\infty \to R_\Sigma$ in $C^\infty_{\rm loc}({\rm Int}{\bm H}\setminus W_\infty)$ and the rescaled energy density function converges in $C^0_{\rm loc}( {\rm Int} {\bm H} \setminus W_\infty)$ to the energy density of $u_\infty$. To prove the boundary convergence, we need to verify that there is no diameter blowup along the boundary. Indeed, by the fact that no energy blowup happens in ${\bm H} \setminus W_\infty$ and Lemma \ref{diameter2}, there holds
\beqn
\lim_{r \to 0} \limsup_{i \to \infty} {\rm diam} (u_i'(B_r^+(z))) = 0,\ \forall z \in \partial {\bm H} \setminus W_\infty.
\eeqn
Then by Proposition \ref{prop26} the map $u_\infty$ extends continuously to the boundary and $u_i'$ converges to $u_\infty$ in the sense of Definition \ref{defn25}. 
\end{proof}

We would like to use Gromov's removal of singularity theorem to extend the limiting holomorphic map over the bubbling points. We first show that the limit $u_\infty$ satisfies the Lagrangian boundary condition. 

\begin{lemma}\label{lemma57}
The holomorphic map $u_\infty$ maps the boundary of ${\bm H} \setminus W_\infty$ into $\iota(L_M)$. 
\end{lemma}

\begin{proof}
For each $s \in \partial {\bm H} \setminus W_\infty$, denote $A_i'(s) = \A_i|_{\{ \rho_i s\} \times M_0 }$. Since there is no energy blow up at $s$, one has 
\beqn
\lim_{i \to \infty} \| F_{A_i'(s)}  \|_{L^2(M_0)} = 0.
\eeqn
Then there is a subsequence of $A_i'(s)$ (still indexed by $i$) which converges modulo gauge transformation to a flat connection $A_\infty(s)$ on $M_0$. Let $B_i'(s)$ be the boundary restriction of $A_i'(s)$. Since the map ${\mc A}(P_0)/ {\mc G}(P_0) \to {\mc A}(Q)/ {\mc G}(Q)$ induced from boundary restriction is continuous, one has
\beqn
\lim_{i \to \infty} [B_i'(s)] = \lim_{i \to \infty} [ A_i'(s)|_\Sigma] =  \Big( \lim_{i \to \infty} [A_i'(s) ]\Big)|_\Sigma \in \iota(L_M).
\eeqn
On the other hand, since $\| F_{B_i'(s)} \|_{L^\infty(\Sigma)} \to 0$, the point $u_i(s) \in R_\Sigma$ has a flat connection representative which is arbitrarily $L^2$-close to $B_i'(s)$. Since $u_i(s)$ converges to $u_\infty(s)$, $u_\infty(s)$ agrees with the above limit which is in $\iota(L_M)$.
\end{proof}

\begin{lemma}\label{lemma58}
There is a subsequence (still indexed by $i$) such that for all $s \in \partial {\bm H} \setminus W_\infty$, the sequence $[A_i'(s)]$ converges in ${\mc A}(P_0)/ {\mc G}(P_0)$ to a flat connection on $P_0 \to M_0$.
\end{lemma}

\begin{proof}
For each connected component $I_\alpha \subset \partial {\bm H} \setminus W_\infty$, pick one point $s_\alpha\in I_\alpha$. Then for the finitely many chosen points $z_\alpha$, we can find a subsequence of $\A_i$, still indexed by $i$, such that $[A_i'(s_\alpha)]$ converges to \textcolor{blue}{some limit $[A(s_\alpha)]$} for all $\alpha$. Then consider the sets
\beqn
I_\alpha^* = \Big\{ s \in I_\alpha\ |\ \textcolor{black}{[A_i'(s)]} \ {\rm converges\ in\ } {\mc A}(P_0)/ {\mc G}(P_0) \Big\}.
\eeqn
Then $s_\alpha \in I_\alpha^*$. Notice that, by the proof of Lemma \ref{lemma57}, for each $s \in I_\alpha^*$ one has
\beq\label{eqn56}
u_\infty(s) = \iota( \lim_{i \to \infty} \textcolor{black}{[ A_i' (s)])}.
\eeq
Moreover, if $u_\infty(s)$ is not a double point of $\iota(L_M)$, then $s \in I_\alpha^*$. The lemma follows if we can show that $I_\alpha^*$ is both open and closed. 

We prove $I_\alpha^*$ is open. Suppose this is not the case. Then there exist $s^* \in I_\alpha^*$ and a sequence $s_k \notin I_\alpha^*$ with $s_k \to s^*$. Since $u_\infty$ is continuous, $u_\infty(s_k)$ converges to $u_\infty(s^*)$. Since $u_\infty(s_k)$ is a double point for all $k$ and the set of double points is discrete, $u_\infty(s^*) = u_\infty(s_k)$ for sufficiently large $k$. Assume $\iota^{-1}(u_\infty(s^*)) = \{ a', a''\} \subset L_M$ and assume that \textcolor{black}{$[A_{i}' (s^*)]$} converges to $a'$. Then for each $k$, there is a subsequence of \textcolor{black}{$[A_{i}' (s_k)]$} which converges to $a''$. By Lemma \ref{hausdorff} which says the space ${\mc A}(P_0)/ {\mc G}(P_0)$ is Hausdorff, we can choose disjoint open neighborhoods ${\mc U}', {\mc U}'' \subset {\mc A}(P_0)/ {\mc G}(P_0)$. Then for each $k$, we can choose $i_k$ inductively such that 1) $i_{k+1} > i_k$; 2) $\textcolor{black}{[A_{i_k}' (s^*)]} \in {\mc U}'$; 3) $\textcolor{black}{[A_{i_k}' ( s_k )]} \in {\mc U}''$. Then there exists a point $w_k$ between $s^*$ and $s_k$ such that $\textcolor{black}{[A_{i_k}' (w_k)]} \notin {\mc U}' \cup {\mc U}''$. Hence a subsequence (still indexed by $k$) of \textcolor{black}{$[A_{i_k}' (w_k)]$} converges to a point in $L_M$ which is neither $a'$ or $a''$. Denote \textcolor{black}{$[B_{i_k}'(w_k)]:= [A_{i_k}' (w_k)]|_{\partial M_0}$}. Then this convergence implies 
\beq\label{eqn57}
\lim_{k \to \infty} u_{i_k}'(w_k) = \lim_{k \to \infty} \textcolor{black}{[B_{i_k}'(w_k) ]} \notin \iota( {\mc U}' \cup {\mc U}'' \cap L_M)
\eeq
which is different from $u_\infty(s^*)$. This contradicts the convergence $u_i'$ towards $u_\infty$ and the continuity of $u_\infty$. This proves the openness of $I_\alpha^*$.

We prove $I_\alpha^*$ is closed. Suppose this is not the case. Then there exists $s^* \notin I_\alpha^*$ and a sequence $s_k \in I_\alpha^*$ with $s_k \to s^*$. Then $u_\infty(s^*)$ is a double point of $\iota(L_M)$. Let ${\mc U}', {\mc U}'' \subset {\mc A}(P_0)/ {\mc G}(P_0)$ be disjoint open neighborhoods of the two preimages of $u_\infty(s^*)$. Then there must be a subsequence of $s_k$ (still indexed by $k$), and one of ${\mc U}'$ and ${\mc U}''$, say ${\mc U}'$, such that for all $k$, 
\beqn
\lim_{i \to \infty} \textcolor{black}{[A_{i}' (s_k)]} \in {\mc U}'.
\eeqn
By \eqref{eqn56} and the non-convergence of \textcolor{black}{$[A_{i}' (s^*)]$}, there must also be a subsequence of $i$ (still indexed by $i$), such that 
\beqn
\lim_{i \to \infty} \textcolor{black}{[A_{i}' (s^*)]} \in {\mc U}''.
\eeqn
Then for each $k$, one can find $i_k$ such that 
\begin{align*}
&\ \textcolor{black}{[A_{i_k}' (s_k)]} \in {\mc U}',\ &\ \textcolor{black}{[A_{i_k}' (s^*)]} \in {\mc U}''.
\end{align*}
Choose a sequence $i_k$ such that $i_{k+1} > i_k$. Then since ${\mc U}'$ and ${\mc U}''$ are disjoint, one can find for each $k$ a point $w_k$ between \textcolor{black}{$s^*$ and $s_k$} with 
\beqn
\textcolor{black}{[A_{i_k}' (w_k)]} \notin {\mc U}' \cup {\mc U}''.
\eeqn
Denote \textcolor{black}{$[B_{i_k}' (w_k)] = [A_{i_k}' (w_k)]|_{\partial M_0}$}. Then \eqref{eqn57} still holds and one derives the same contradiction. This proves the closedness of $I_\alpha^*$. 
\end{proof}

It follows from Lemma \ref{lemma58} that the boundary map can be defined.

\begin{cor}
There exist a map $\gamma_\infty: \partial {\bm H} \setminus W_\infty \to L_M$ and a subsequence (still indexed by $i$) such that
\beq\label{eqn58}
\lim_{i \to \infty} \textcolor{black}{[A_{i}' (s)]} = \gamma_\infty(s),\ \forall s\in \partial {\bm H} \setminus W_\infty
\eeq
and
\beqn
u_\infty |_{\partial {\bm H} \setminus W_\infty } = \iota\circ \gamma_\infty.
\eeqn
\end{cor}

\begin{lemma}\label{lemma510}
$\gamma_\infty$ is continuous.
\end{lemma}

\begin{proof}
One only needs to show the continuity at points whose image is a double point of $\iota(L_M)$. The argument is similar as in the proof of Lemma \ref{lemma58}, using the Hausdorffness of the configuration space ${\mc A}(P_0)/ {\mc G}(P_0)$. The details are left to the reader. 
\end{proof}

\textcolor{black}{Notice that by Lemma \ref{lemma29}, the holomorphic map $u_\infty$ is smooth along the boundary of ${\bm H} \setminus W_\infty$ and $\iota_\infty$ is locally the boundary restriction of $u_\infty$. Hence $\iota_\infty$ is also smooth. Further, by the convergence of $e_{\rho_i}$ to the energy density of $u_\infty$ given by Lemma \ref{lemma56} and the uniform energy bound on $\A_i$, one has 
\beqn
E(u_\infty) \leq \limsup_{i \to \infty} E(\A_i) < \infty.
\eeqn
}

\begin{cor}\label{cor511}
The holomorphic map $u_\infty: {\bm H} \setminus W_\infty \to R_\Sigma$ extends to a holomorphic map with immersed Lagrangian boundary condition with mass, denoted by 
\beqn
\tilde {\bm u}_\infty = (u_\infty, \gamma_\infty, W_\infty, m_\infty).
\eeqn
\end{cor}

\begin{proof}
For each point $w \in W_\infty \cap {\rm Int} {\bm H}$, by Gromov's removal of singularity theorem, $u_\infty$ extends smoothly over $w$. On the other hand, given $w \in W_\infty \cap \partial {\bm H}$ and consider a punctured neighborhood of $w$ that is biholomorphic to a strip $(0, +\infty) \times [0, \pi]$ with coordinates $(s, t)$. Then by the standard elliptic estimate for holomorphic maps with Lagrangian boundary condition, one can prove that the length of the paths $u_\infty(s, \cdot)$ converges to zero. Hence the pair of points $(\gamma_\infty(s, 0), \gamma_\infty(s, \pi))$ is either close to the diagonal of $L_M$ or is close to an ordered double point. In either case one can define a local action for the path $u_\infty(s, \cdot)$ and this path satisfies the isoperimetric inequality similar to \cite[Theorem 4.4.1 (ii)]{McDuff_Salamon_2004}. The isoperimetric inequality implies that as $s \to +\infty$, the paths $u_\infty(s, \cdot)$ converge to a constant path. Hence $u_\infty$ extends continuously to $w$. 
\end{proof}

\subsection{Conservation of energy}

The last step of proving Theorem \ref{thm51} is to prove its assertion (a) and assertion (c). For $r>0$, define 
\beqn
S_r:= B_{1/r}^+ \setminus \bigcup_{w \in W_\infty} \ov{B_r^+(w)} \subset {\bm H},
\eeqn
which is an open subset of ${\bm H}$. When $r= 0$, define 
\beqn
S_0:= {\bm H} \setminus W_\infty.
\eeqn
\textcolor{black}{For each $\delta>0$ define 
\beqn
S_{r, \delta}:= S_r \cap \{ s + {\bm i} t \in {\bm H}\ |\ t < \delta \}.
\eeqn}
The conservation of energy will follow from the following proposition.

\begin{prop}\label{prop512}
For any $\epsilon>0$, there exists $r_\epsilon>0$ such that for all $r \in (0, r_\epsilon]$, one has 
\beqn
\limsup_{i \to \infty} \big| E( \A_i; \M_{\varphi_{\rho_i}(S_r)}) - E(u_\infty; S_r) \big| \leq \epsilon.
\eeqn
\end{prop}

\textcolor{black}{
The proof is provided in the next subsection. One corollary of this proposition and the interior convergence of the energy density (see Lemma \ref{lemma56}) is that no energy escape from the finite part $M_0$. 
}

\textcolor{black}{
\begin{cor}\label{cor513}
For any $r>0$, one has 
\beqn
\lim_{\delta \to 0} \limsup_{i \to \infty} E(\A_i; \M_{\varphi_{\rho_i}(S_{r, \delta})} ) = 0.
\eeqn
\end{cor}}

\begin{proof}
\textcolor{black}{Suppose this is not the case. Then there exist $r_0 >0$ and $\epsilon>0$ such that
\beqn
\lim_{\delta \to 0} \limsup_{i \to \infty} E(\A_i; \M_{\varphi_{\rho_i}(S_{r_0, \delta})} ) >   2 \epsilon.
\eeqn
Choose $r$ smaller than $r_0$ and $r_\epsilon$ which is the one from Proposition \ref{prop512}. For the holomorphic map $u_\infty$, choose $\delta_0$ such that 
\beqn
E(u_\infty; S_{r, \delta_0}) < \epsilon.
\eeqn
Then one has 
\beqn
\limsup_{i \to \infty} E(\A_i; \M_{\varphi_{\rho_i}(S_{r, \delta_0})} ) \geq \limsup_{i \to \infty} E( \A_i; \M_{\varphi_{\rho_i}(S_{r_0, \delta_0})} )  > 2 \epsilon.
\eeqn
Then choose a subsequence (still indexed by $i$) such that for all $i$ there holds
\beqn
E(\A_i; \M_{\varphi_{\rho_i}(S_{r, \delta_0})} ) > 2\epsilon.
\eeqn
Then one has
\beqn
E(\A_i; \M_{\varphi_{\rho_i}(S_r)} ) = E(\A_i; \M_{\varphi_{\rho_i}(S_{r, \delta_0})} ) + E(\A_i; \varphi_{\rho_i}(S_r  \setminus S_{r, \delta_0}) \times \Sigma).
\eeqn
Taking $\limsup$ with respect to $i \to \infty$ on both sides, one obtains
\begin{multline*}
\limsup_{i \to \infty} E(\A_i; \M_{\varphi_{\rho_i}(S_r)} ) > 2 \epsilon + \lim_{i \to \infty} E(\A_i; ( \varphi_{\rho_i}(S_r \setminus S_{r, \delta_0}) \times \Sigma)\\
 = 2\epsilon + E(u_\infty; S_r \setminus S_{r, \delta_0}) > \epsilon + E(u_\infty; S_r).
 \end{multline*}
Here we used the interior uniform convergence of the energy density (see Lemma \ref{lemma56}). This contradicts Proposition \ref{prop512}.
}
\end{proof}

\begin{cor}
\textcolor{black}{
For each $z \in {\bm H}$ and sufficiently small $r$, the limit 
\beq\label{eqn59}
\lim_{i \to \infty} E( \A_i; \M_{\varphi_{\rho_i}(B_r^+(z))} )
\eeq
exists. }
\end{cor}

\begin{proof}
\textcolor{black}{
If $z \notin W_\infty$, then choose $r>0$ such that $B_r^+(z)$ does not intersect $W_\infty$. Then by the interior uniform convergence of the energy density and Corollary \ref{cor513}, one has 
\beqn
\lim_{i \to \infty} E(\A_i; \M_{\varphi_{\rho_i}(B_r^+(z))} ) = E(u_\infty; B_r^+(z)).
\eeqn
On the other hand, consider $z = w \in W_\infty$. So far our construction guarantees that the limit \eqref{eqn59} exists for $r = 1/n$. Choose $n$ sufficiently large so that the regions $B_{1/n}^+(w)$ for all $w \in W_\infty$ do not mutually intersect. Then for all $r < 1/n$, Corollary \ref{cor513} and the interior uniform convergence of the energy density (Lemma \ref{lemma56}) imply that  
\beqn
\lim_{i \to \infty} E(\A_i; \M_{\varphi_{\rho_i}(B_{1/n}^+(w) \setminus B_r^+(w))}) = E(u_\infty; B_{1/n}^+(w) \setminus B_r^+(w)).
\eeqn
Therefore, for all $r< 1/n$ the limit \eqref{eqn59} exists. }
\end{proof}

\textcolor{black}{
Then we can define $m_\infty: {\bm H} \to [0, +\infty)$ by
\beqn
m_\infty(z) = \left\{ \begin{array}{cc} 0,\ z \notin W_\infty,\\
                                     \displaystyle \lim_{r \to 0} \lim_{i \to \infty} E(\A_i; \M_{\varphi_{\rho_i}(B_r^+(z))} ),\ z \in W_\infty.
\end{array}\right.
\eeqn
Therefore, for the current subsequence, the assertion (a) of Theorem \ref{thm51}; the assertion (c) follows immediately.}

\subsection{Proof of Proposition \ref{prop512}}

Now we prove Proposition \ref{prop512}. Using the rescaling $\varphi_{\rho_i}$, one can identify the restriction $\A_i$ with a piecewise smooth connection $\A_i'$ on $\M$. It is defined as follows. Suppose over ${\bm R} \times M_0$ we can write 
\beqn
\A_i|_{{\bm R}\times M_0} = d_s + \eta_i(s) ds + A_i(s).
\eeqn
Then 
\beqn
\A_i'|_{{\bm R}\times M_0} = d_s + \eta_i'(s) ds + A_i'(s) =  d_s + \rho_i \eta_i(\rho_i s) ds + A_i(\rho_i s).
\eeqn
Suppose over ${\bm H}\times \Sigma$ we can write
\beqn
\A_i|_{{\bm H}\times \Sigma} = d_{\bm H} + \phi_i(z) ds + \psi_i(z) dt + B_i(z).
\eeqn
Then 
\beqn
\A_i'|_{{\bm H} \times \Sigma} = d_{{\bm H}} + \phi_i'(z) ds + \psi_i'(z) dt + B_i'(z) = d_{{\bm H}} + \rho_i \phi_i(\rho_i z) ds + \rho_i \psi_i(\rho_i z) dt	+ B_i(\rho_i z).
\eeqn

The basic idea of proving Proposition \ref{prop512} is to compare $\A_i'$ with a connection $\A_\infty$ associated to the holomorphic curve $u_\infty$. As we did in proving Proposition \ref{prop221}, one can find a piecewise smooth connection $\A_\infty$ over $\M$ lifting the holomorphic curve $u_\infty$ (see Definition \ref{lift}). More precisely
\beqn
\A_\infty|_{{\bm R}\times M_0} = d_s + A_\infty (s),\ {\rm where}\ A_\infty (s) \in {\mc A}_{\rm flat}(P_0)
\eeqn
and 
\beqn
\A_\infty|_{{\bm H}\times \Sigma} = d_{\bm H} + \phi_\infty ds + \psi_\infty dt + B_\infty(z),\ {\rm where}\ B_\infty \in {\mc A}_{\rm flat}(Q)
\eeqn
such that over ${\bm H} \times \Sigma$ the triple $(B_\infty, \phi_\infty, \psi_\infty)$ satisfies the equation 
\beqn
\partial_s B_\infty - d_{B_\infty} \phi_\infty + *( \partial_t B_\infty - d_{B_\infty} \psi_\infty) = 0
\eeqn
and such that 
\begin{align*}
&\ [B_\infty(z)] = u_\infty(z) \in R_\Sigma,\ &\ A_\infty(s)|_{\partial M_0} = B_\infty(s).
\end{align*}
Since $u_\infty$ is a smooth holomorphic disk, one may assume that $B_\infty(z)$ depends smoothly on $z \in {\bm H}\cup \{\infty\}$. 

As the first step, we gauge transform $\A_i'$ over $\M_{S_0}$ such that it is close to $\A_\infty$ in a certain sense.

\begin{lemma}\label{lemma515}
After applying a piecewise smooth gauge transformation on $\M_{S_0}$ to each $\A_i'$, the following condition holds: for all $r>0$, 
\beq\label{eqn510}
\lim_{i \to \infty} \sup_{z \in S_r} \| B_i'(z) - B_\infty(z)\|_{L^4(\Sigma)} = 0
\eeq
and
\beq\label{eqn511}
\lim_{i \to \infty} \sup_{s \in S_r \cap \partial {\bm H}} \textcolor{black}{\| A_i'(s) - A_\infty(s)\|_{L^4(M_0)} }= 0.
\eeq
\end{lemma}

\begin{proof}
By our construction of the limit, for each $r>0$, there holds
\beq\label{eqn512}
\lim_{i \to \infty} \sup_{z \in S_r} \| F_{B_i'(z)}\|_{L^4 (\Sigma)} = 0.
\eeq
\textcolor{black}{Then there exists $i_r>0$ such that for all $i \geq i_r$, $B_i'(z)$ lies in the domain of the Narasimhan--Seshadri map ${\it NS}_4$ for all $z \in S_r$. Then one can choose a sequence $r_i>0$ which converges to zero such that for all sufficiently large $i$ and all $z \in S_{r_i}$, $B_i'(z)$ lies in the domain of ${\it NS}_4$. Denote $\uds{B}_i'(z) = {\it NS}_4 (B_i'(z)) \in {\mc A}_{\rm flat}(Q)$ whose gauge equivalence class is the value $u_i'(z) \in R_\Sigma$. Moreover, by \eqref{eqn512} and \eqref{eqn214} one can adjust $r_i \to 0$ such that
\beq\label{eqn513}
\lim_{i \to \infty}\sup_{z \in S_{r_i}} \| \uds{B}_i'(z) - B_i' (z)\|_{L^4(\Sigma)} = 0.
\eeq
On the other hand, by the $C^0$-convergence of $u_i'$ to $u_\infty$ over compact subsets of ${\bm H}\setminus W_\infty$, for a sufficiently small $\epsilon > 0$, one can modify the sequence $r_i \to 0$ such that for all sufficiently large $i$ one has
\beqn
\sup_{z\in S_{r_i}} d_{R_\Sigma}(u_i'(z), u_\infty(z)) \leq \epsilon.
\eeqn
Then for each $z \in S_{r_i}$ there exists a smooth gauge transformation $g_i(z) \in {\mc G}(Q)$ unique up to $\pm 1$ such that 
\beqn
g_i(z)^* \uds{B}_i'(z) - B_\infty(z) \in {\rm ker} d_{B_\infty(z)}^*.
\eeqn
As $g_i(z)$ is canonically defined from $B_i'(z)$ and $B_\infty(z)$, $g_i(z)$ gives a smooth gauge transformation ${\bm g}_i$ on $S_{r_i}\times \Sigma$. We extend ${\bm g}_i$ to a smooth gauge transformation on $S_0 \times \Sigma$ and then extend it to a piecewise smooth gauge transformation on $\M_{S_0}$. We claim that if we replace $\A_i'$ by ${\bm g}_i^* \A_i'$, then \eqref{eqn510} holds. Indeed, by the uniform convergence $u_i' \to u_\infty$ over compact subsets of ${\bm H}\setminus W_\infty$, for all $r>0$, one has 
\beqn
\lim_{i \to \infty} \sup_{z\in S_r} \| g_i(z)^* \uds{B}_i'(z) - B_\infty(z) \|_{L^4 (\Sigma)} = 0.
\eeqn
Therefore, combining with \eqref{eqn513} one obtains that for all small $r>0$,
\begin{multline*}
\lim_{i \to \infty} \sup_{z\in S_r} \| g_i(z)^* B_i'(z) - B_\infty(z) \|_{L^4 }\\
 \leq \lim_{i \to \infty} \sup_{z \in S_r} \| g_i(z)^* B_i'(z) - g_i(z)^* \uds B_i'(z)\|_{L^4 } + \lim_{i \to \infty} \sup_{z\in S_r} \| g_i(z)^* \uds B_i'(z) - B_\infty(z) \|_{L^4 } \\
 = \lim_{i \to \infty} \sup_{z \in S_r} \| B_i'(z) - \uds B_i'(z)\|_{L^4 } + \lim_{i \to \infty} \sup_{z\in S_r} \| g_i(z)^* \uds B_i'(z) - B_\infty(z) \|_{L^4 } = 0.
\end{multline*}
}
	
\textcolor{black}{Now we can assume that for the original sequence $\A_i'$, \eqref{eqn510} is true for all $r > 0$. We would like to construct another sequence of piecewise smooth gauge transformations ${\bm g}_i$ on $\M_{S_0}$ such that replacing $\A_i'$ by $({\bm g}_i)^* \A_i'$ makes \eqref{eqn511} true without altering \eqref{eqn510}. First, for each fixed small $r>0$, because energy does not blowup along $S_r \cap \partial {\bm H}$, one has
\beqn
\lim_{i \to \infty} \sup_{s \in S_r \cap \partial {\bm H}} \| F_{A_i'(s)} \|_{L^4(M_0)} = 0.
\eeqn
Then $[A_i'(s)]$ converges to $[A_\infty(s)]$ in ${\mc A}^{1,4}(P_0)/{\mc G}^{2,4}(P_0)$ uniformly for $s \in S_r \cap \partial {\bm H}$. Then there exists $i_r >0$ such that for $i \geq i_r$ and all $s \in S_r \cap \partial {\bm H}$, there exists a unique (up to $\pm 1$) smooth gauge transformation $g_i (s) \in {\mc G}(P_0)$ such that $g_i(s)^* A_i'(s)$ is in the Coulomb slice through $A_\infty(s)$ and there holds
\beq\label{eqn514}
\lim_{i \to \infty} \sup_{s \in S_r \cap \partial {\bm H}} \| g_i (s)^* A_i'(s) - A_\infty(s) \|_{L^4(M_0)} = 0.
\eeq
One can then find a sequence $r_i>0$ converging to $0$ such that $g_i(s)$ (which is specified by a gauge fixing condition) is defined when $s \in S_{r_i} \cap \partial {\bm H}$. Since $A_i'(s)$ and $A_\infty(s)$ are smooth in $s$, $g_i (s)$ depends smoothly on $s \in S_{r_i} \cap \partial {\bm H}$. Then we obtains a smooth gauge transformation ${\bm g}_i$ over $(S_{r_i} \cap \partial {\bm H}) \times M_0$.}

\textcolor{black}{Given any piecewise smooth extension of ${\bm g}_i$ to $\M_{S_0}$, if we apply this extension to $\A_i'$, then \eqref{eqn514} implies that \eqref{eqn511} is true. Now we would like to choose a suitable extension such that after such a gauge transformation, \eqref{eqn510} (which has been assumed to hold) remains true. Indeed, by restricting \eqref{eqn514} to the boundary one obtains that for all $r>0$
\beqn
\lim_{i \to \infty} \sup_{s \in S_r \cap \partial {\bm H}} \| (g_i(s)|_{\partial M_0})^* B_i'(s) - B_\infty(s)\|_{L^4(\Sigma)} = 0.
\eeqn
Then by Lemma \ref{lemma222}, for each $r$, for sufficiently large $i$ and $s \in S_r \cap \partial {\bm H}$, one can write $g_i(s)|_{\partial M_0} = e^{h_i(s)}$ with
\beq\label{eqn515}
\lim_{i \to \infty} \sup_{s \in S_r \cap \partial {\bm H}} \| h_i(s) \|_{W^{1,4}_{B_i'(s)}(\Sigma)}  = 0.
\eeq
We may then choose $r_i \to 0$ appropriately such that $h_i(s)$ is defined for $s \in S_{r_i}\cap \partial {\bm H}$ and such that $g_i(s)|_{\partial M_0} = e^{h_i(s)}$. Now choose a smooth cut-off function $\nu: [0, +\infty) \to [0,1]$ such that
\beqn
\nu(t) = \left\{ \begin{array}{ll} 1,\ &\ t \leq \frac{1}{4} \min \{ {\rm Im} w\ |\ w \in W_\infty \cap {\rm Int} {\bm H}\},  \\
                                   0,\ &\ t \geq \frac{1}{2} \min\{ {\rm Im} w\ |\ w \in W_\infty \cap {\rm Int} {\bm H}\}.
                                   \end{array} \right.
                                   \eeqn
For each $i$, choose a smooth cut-off function $\nu_i': (-\infty, +\infty) \to [0, 1]$ such that 
\beqn
\nu_i'(s) = \left\{ \begin{array}{ll} 1,\  &\ s \in \partial {\bm H} \cap  S_{r_i},\\
                                      0,\ &\ s \in \partial {\bm H} \setminus S_{\frac{r_i}{2} }.
\end{array} \right.
\eeqn                                   
Then define 
\beqn
g_i(s,t) = e^{\nu(t) \nu_i'(s) h_i(s)}
\eeqn
which is smooth over ${\bm H}\times \Sigma$. Define ${\bm g}_i$ which is equal to $g_i(s)$ on $\{s\}\times M_0$ for $s \in \partial {\bm H} \setminus W_\infty$ and which is equal to $g_i(s, t)$ on $(s,t) \times \Sigma$. Then by \eqref{eqn514} and \eqref{eqn515}, both \eqref{eqn510} and \eqref{eqn511} hold for all $r>0$ if we apply ${\bm g}_i$ to $\A_i'$.}
\end{proof}

Now we can prove the Proposition \ref{prop512}. Notice that by Lemma \ref{lemma21} and Proposition \ref{prop221}, one has
\begin{align*}
&\ E_{\rho_i}( \A_i'; \M_{S_r} ) = {\it CS}_{\bm P} ( \A_i'|_{\partial \M_{S_r}}),\ &\ E(u_\infty; S_r) = {\it CS}_{\bm P}( \A_\infty|_{\partial \M_{S_r}} ).
\end{align*}
Hence Proposition \ref{prop512} can be proved if one can control the relative Chern--Simons action
\beqn
{\it CS}^{\rm rel}( \A_i'|_{\partial \M_{S_r}}, \A_\infty |_{\partial \M_{S_r}}).
\eeqn
We will estimate the relative action over each boundary component. Notice that each $w \in W_\infty \cap {\rm Int} {\bm H}$ corresponds to a boundary component 
\beqn
S^1 \times \Sigma \cong \partial B_r(w) \times \Sigma  \subset \partial \M_{S_r}
\eeqn
and each $w\in W_\infty \cap \partial {\bm H}$ corresponds to a boundary component 
\beqn
N = N^- \cup N^{\rm neck} \cup N^+ \subset \partial \M_{S_r}.
\eeqn
Here $N^- \cong N^+ \cong M_0$. There is another boundary component corresponding to the big half circle $\partial B_{1/r}^+$ which is also diffeomorphic to $N$. \textcolor{black}{We only carry out the detailed argument for the case when $w \in W_\infty \cap \partial {\bm H}$.} The other cases can be treated similarly. Choose $w \in W_\infty \cap \partial {\bm H}$. For each small $r$, denote the restrictions of $\A_i'$ and $\A_\infty$ to the corresponding boundary component by $A_{i,r}$ and $A_{\infty, r}$ respectively, which can be viewed as piecewise smooth connections on $P_N \to N$. Then one can prove the following result.

\begin{lemma}\label{lemma516}
For any $\epsilon>0$, there exists $r_\epsilon>0$ such that for all $r \in (0, r_\epsilon)$, for sufficiently large $i$, one has  
\beq\label{eqn516}
|{\it CS}^{\rm rel}(A_{i, r}, A_{\infty, r}) | \leq \epsilon.
\eeq
\end{lemma}

Before proving Lemma \ref{lemma516} we first do some preparation. Over the neck region $N^{\rm neck} \cong [0, \pi]\times \Sigma$ where $\theta \in [0, \pi]$ is the angular coordinate, we can write a connection $A$ as $A = d_\theta + \eta d\theta + B(\theta)$. For each $p>1$, define 
\beqn
l_p(A):= \int_0^\pi \| \partial_\theta B(\theta) - d_{B(\theta)} \eta(\theta)\|_{L^p(\Sigma)} d \theta.
\eeqn
This is a certain integral of the norm of some component of $F_A$, hence is gauge invariant. Then one has the following estimate. 

\begin{lemma}\label{lemma517}
For any $\epsilon>0$, there exists $r_\epsilon>0$ such that for all $r \in (0, r_\epsilon)$, for sufficiently large $i$
\begin{enumerate}

\item there holds
\beq\label{eqn517}
l_4(A_{i,r}) \leq \epsilon;
\eeq

\item there exists $\tilde r_i \in (\frac{r}{2}, r)$ such that
\beq\label{eqn518}
\| F_{A_{i, \tilde r_i}} \|_{L^2(N)} \leq \epsilon.
\eeq
\end{enumerate}
\end{lemma}

\begin{proof}
Let $a$ be a small number to be determined. Then one can choose $r_a>0$ such that 
\beqn
\limsup_{i \to \infty} \textcolor{black}{ E(\A_i; \M_{\varphi_{\rho_i}( B_{2r_a}^+(w) )}) } \leq m_\infty(w) + a^2.
\eeqn
Then for all $r \in (0, r_a)$, when $i$ is sufficiently large, one has 
\beq\label{eqn519}
E_{\rho_i} (\A_i'; \M_{\frac{r}{2}, r}) \leq a^2.
\eeq
Over $N^{\rm neck}$ write $A_{i, r} = d_\theta + \eta_{i,r} d\theta + B_{i, r}(\theta)$. Then by Corollary \ref{cor36} and Proposition \ref{prop48}, for some $\delta>0$, one has
\beqn
\| \partial_\theta B_{i, r} - d_{B_{i,r}} \eta_{i,r} \|_{L^2(\Sigma)} \leq C a \min \{ \theta, \pi - \theta\}^{-1 + 2 \delta} .
\eeqn
By Lemma \ref{lemma35} one also has 
\beqn
\| \partial_\theta B_{i, r} - d_{B_{i,r}} \eta_{i,r} \|_{L^\infty (\Sigma)} \leq C \min \{ \theta, \pi - \theta\}^{-1}.
\eeqn
Then by interpolation, one has 
\beqn
\| \partial_\theta B_{i, r} - d_{B_{i,r}} \eta_{i,r} \|_{L^4 (\Sigma)} \leq \sqrt{a} \min C \{ \theta, \pi - \theta\}^{-1 + \delta}.
\eeqn
By integrating the above inequality over $\theta \in [0, \pi]$ and choosing $a$ sufficiently small, one can make $l_4(A_{i,r}) \leq \epsilon$. 

\textcolor{black}{To prove (b), we use the same method in the proof of Lemma \ref{lemma45}. Similar to \eqref{eqn48}, one has 
\beqn
E_{\rho_i}(\A_i'; \M_{\frac{r}{2}, r}) \geq \int_{\log r - \log 2}^{\log r} \| F_{A_{i, e^\tau}} \|_{L^2(N)}^2 d \tau.
\eeqn
Then taking $a$ sufficiently small in \eqref{eqn519}, the above estimate shows that there exists some $\tilde r_i \in (\frac{r}{2}, r)$ such that $\| F_{A_{i, \tilde r_i}} \|_{L^2(N)} \leq \epsilon$. }
\end{proof}

\begin{proof}[Proof of Lemma \ref{lemma516}]
For any $r \in (0, r_\epsilon)$ where $r_\epsilon$ is the one of Lemma \ref{lemma517} and $r \in (0, r_\epsilon)$, denote the restriction of $A_{\infty, r}$ to the neck region by $d_\theta + \eta_{\infty, r} d\theta + B_{\infty, r} (\theta)$ and the restriction of $A_{i, r}$ to the neck region by $d_\theta + \eta_{i, r} d\theta + B_{i, r}(\theta)$. Then for each $i$, there exists a unique smooth gauge transformation $g = g_{i,r}$ over $N^{\rm neck}$ of the form $g = e^h$ such that 
\beqn
\tilde A_{i,r}:= g^* A_{i,r} = d_\theta + \tilde B_{i,r} (\theta),\ g |_{\{0\} \times \Sigma} = 1.
\eeqn
Then it follows from Lemma \ref{lemma517} that when $i$ is sufficiently large, for any pair $\theta_1, \theta_2 \in [0, \pi]$, 
\beqn
\| \tilde B_{i,r} (\theta_1) - \tilde B_{i,r} (\theta_2) \|_{L^4 (\Sigma)} \leq \int_{\theta_1}^{\theta_2} \| \partial_\theta \tilde B_{i,r} (\theta) \|_{L^4(\Sigma)} d\theta \leq  l_4(A_{i,r}) \leq \epsilon.	
\eeqn
\textcolor{black}{Notice that when choosing the lift $\A_\infty$ of the limiting map $u_\infty$, one can require that 
\beqn
\lim_{r \to 0} \sup_{\theta_1, \theta_2 \in [0, \pi]} \| B_{\infty, r} (\theta_1) - B_{\infty, r} (\theta_2)\|_{L^4} = 0.
\eeqn}
Then by Lemma \ref{lemma515}, for each $r \in (0, r_\epsilon)$ and $i$ is sufficiently big, for all $\theta \in [0, \pi]$, 
\begin{multline}\label{eqn520}
\| \tilde B_{i,r}(\theta) - B_{i,r} (\theta) \|_{L^4} \\
\leq \| \tilde B_{i,r} (\theta) - \tilde B_{i,r}(0) \|_{L^4} + \| B_{i,r}(0) - B_{\infty,r} (0) \|_{L^4} + \| B_{\infty,r} (0) - B_{\infty,r} (\theta) \|_{L^4} \leq 3\epsilon. 
\end{multline}
Then by Lemma \ref{lemma222}, when $\epsilon$ is sufficiently small, for each $\theta$, 
\beqn
\| h(\theta,\cdot) \|_{W^{1,4 }_{B_{i,r}(\theta)}(\Sigma)} \leq C \epsilon.
\eeqn
\textcolor{black}{Then one can extend $h$ to a continuous and piecewise smooth section of ${\rm ad}P_N$ which is zero on $N^-$ such that 
\beq\label{eqn521}
\| h \|_{W^{1,4}_{A_{i,r}} (N^+)} \leq C \epsilon.
\eeq
By abuse of notation, denote by $\tilde A_{i, r}$ the piecewise smooth connection transformed from $A_{i,r}$ via the extended gauge transformation $g = e^h$. Clearly, $g$ belongs to the identity component ${\mc G}_0^{\rm p.s.} (P_N) \subset {\mc G}^{\rm p.s.}(P_N)$. Therefore $\tilde A$ has no $d\theta$ component over $N^{\rm neck}$. Moreover, \eqref{eqn511} and \eqref{eqn521} imply that 
\beq\label{eqn522}
\| \tilde A_{i, r} - A_{\infty, r} \|_{L^4(N^- \cup N^+)} \leq C \epsilon.
\eeq
One can similarly gauge transform $A_{\infty, r}$ via a transformation in ${\mc G}_0^{\rm p.s.}(P_N)$ to eliminate the $d\theta$ component of $A_{\infty, r}$ over $N^{\rm neck}$ and such that \eqref{eqn520} and \eqref{eqn522} still hold. Then both $\tilde A_{i,r}$ and $A_{\infty, r}$ have no $d\theta$ component over $N^{\rm neck}$. Therefore, for any fixed $r\in (0, r_\epsilon)$, for $i$ sufficiently large one has
\beqn
\| \tilde A_{i, r} - A_{\infty, r} \|_{L^4(N)} \leq C \epsilon.
\eeqn
Indeed, when $i$ is sufficiently large, one can guarantee that 
\beq\label{eqn523}
\sup_{\frac{r}{2} \leq \tilde r \leq r} \| \tilde A_{i, \tilde r} - A_{\infty, \tilde r} \|_{L^4(N)} \leq C \epsilon.
\eeq
Then by the expression of the relative Chern--Simons action (see \eqref{eqn21}) and its invariance under the identity component ${\mc G}_0^{\rm p.s.}(P_N)$, for any $\tilde r \in [\frac{r}{2}, r]$, one has 
\begin{multline*}
| {\it CS}^{\rm rel}(A_{i,  \tilde r}, A_{\infty, \tilde r}) |  = |  {\it CS}^{\rm rel} (\tilde A_{i, \tilde r}, A_{\infty, \tilde r}) | \\
\leq C \int_N \big(  |F_{A_{i, \tilde r}} | |\tilde A_{i, \tilde r} - A_{\infty, \tilde r} | + |F_{A_{\infty, \tilde r}}| | \tilde A_{i, \tilde r} - A_{\infty, \tilde r} | + |\tilde A_{i, \tilde r} - A_{\infty, \tilde r} |^3 \big) d{\rm vol}_N\\
\leq C\Big( \big( \| F_{A_{i, \tilde r}} \|_{L^2(N)} + \| F_{A_{\infty, \tilde r}} \|_{L^2(N)} \big) \| \tilde A_{i, \tilde r} - A_{\infty, \tilde r} \|_{L^2(N)} + \| \tilde A_{i, \tilde r} - A_{\infty, \tilde r}  \|_{L^3(N)}^3\Big).
\end{multline*}
If we set $\tilde r = \tilde r_i$ where $\tilde r_i$ is the one from Lemma \ref{lemma517}, then one has 
\beqn
| {\it CS}^{\rm rel}(A_{i,  \tilde r_i}, A_{\infty, \tilde r_i}) | \leq C \epsilon.
\eeqn
Finally, by the property of the relative Chern--Simons action, one has 
\begin{multline*}
|{\it CS}^{\rm rel}(A_{i, r}, A_{\infty, r})| \leq | {\it CS}^{\rm rel}(A_{i,r}, A_{i, \tilde r_i})| + |{\it CS}^{\rm rel}(A_{i, \tilde r_i}, A_{\infty, \tilde r_i})| + |{\it CS}^{\rm rel}(A_{\infty, \tilde r_i}, A_{\infty, r})|\\
\leq E_{\rho_i} ( \A_i'; \M_{\tilde r_i, r}) + C\epsilon +  E(u_\infty; {\it Ann}^+(\tilde r_i, r)).
\end{multline*}
The right hand side can be bounded by $2C \epsilon$ if we choose $r_\epsilon$ sufficiently small and $i$ sufficiently large.}
\end{proof}

It is similar to prove similar results for other boundary components of $\M_{S_r}$. We omit the details. Hence Proposition \ref{prop512} is proved.

\section{Stable scaled instantons}\label{section6}

In this section we define the Gromov--Uhlenbeck convergence for ASD instantons. In fact we define a notion called {\bf stable scaled instantons}, which is a combination of stable maps in symplectic geometry and ``instantons with Dirac measures'' in gauge theory. The combinatorial model of trees is also used in the study of the vortex equation (see for example \cite{Wang_Xu} \cite{Woodward_Xu}) and holomorphic quilts (see \cite{Bottman_Wehrheim_2018}).

In this and the next sections, we will emphasize on the discussion of instantons over ${\bm R} \times M$. The situation for instantons over ${\bm C}\times \Sigma$ can be dealt with similarly and most of the details will be left to the reader.

\subsection{Trees}

Let us fix a few notations. A tree, usually denoted by $\Gamma$, consists of a set of vertices $V_\Gamma$ and a set of edges $E_\Gamma$. %, and a set of leaves $L_\Gamma$.\footnote{In some convention a leaf is regarded as a ``semi-infinite edge'' which is only attached to a single vertex, while other edges are regarded as ``finite'' edges.} 
One can associate to each tree a 1-complex whose $0$-cells are vertices and whose $1$-cells are edges. %, while leaves are regarded as combinatorial decorations. 
A {\bf rooted tree} is a tree $\Gamma$ together with a distinguished vertex $v_\infty \in V_\Gamma$ called the {\bf root}. A {\bf rooted subtree} of a rooted tree $(\Gamma, v_\infty)$ is a subtree which contains $v_\infty$. %A {\bf ribbon tree} is a tree $\Gamma$ together with an isotopy class of embeddings of $\Gamma$ into the complex plane. 
A {\bf based tree} is a tree $\Gamma$ with a rooted subtree $\uds \Gamma$ 
%with $\uds\Gamma$ equipped with the structure of a ribbon tree. 
called the {\bf base} of $\Gamma$. A based tree can be used to model stable holomorphic disks such that vertices in the base correspond to disk components and vertices not in the base correspond to sphere components. 

We only consider rooted trees and often skip the term ``rooted.'' Notice that the root $v_\infty \in V_\Gamma$ induces a natural partial order $\leq$ among all vertices: $v \leq v'$ if $v$ is closer to the root $v_\infty$. We write $v' \succ v$ if $v \leq v'$ and $v$, $v'$ are adjacent; in this case we denote the corresponding edge by $e = e_{v'\succ v} \in E_\Gamma$. An edge $e = e_{v' \succ v}$ is called a {\bf boundary edge} if it is an edge of $\uds\Gamma$, i.e., if $v', v \in V_{\uds \Gamma}$. Edges which are not boundary edges are called {\bf interior edges} of $\Gamma$.

\begin{defn}
Consider the set $\{1, \infty\}$ ordered as $1 \leq \infty$. A {\bf scaling} on a based tree $\Gamma$ is a map ${\mf s}: V_\Gamma \to \{1, \infty\}$ satisfying the following condition. 
\begin{itemize}
\item Denote $V_\Gamma^1 = {\mf s}^{-1}(1)$ and $V_\Gamma^\infty = {\mf s}^{-1}(\infty)$. Then $V_\Gamma^\infty$ forms a (possibly empty) rooted subtree of $\Gamma$ and vertices in $V_\Gamma^1$ are all disconnected from each other. 
\end{itemize}
A based tree $\Gamma$ with a scale ${\mf s}$ is called a {\bf scaled tree}. 
\end{defn}

Each vertex of a scaled tree is supposed to support an instanton or a holomorphic map over a certain domain, which we specify as follows. Consider a scaled tree $\Gamma = (\Gamma, {\mf s})$ with possibly empty base $\uds \Gamma$. For each $v \in V_{\uds\Gamma}$, define $\M_v = {\bm R}\times M$, $S_v = {\bm H}$, and $\ov{S_v} = {\bm H} \cup \{\infty\} \cong {\bm D}^2$; for each $v  \in V_\Gamma \setminus V_{\uds \Gamma}$, define $\M_v = {\bm C} \times \Sigma$, $S_v = {\bm C}$, and $\ov{S_v} = {\bm C} \cup \{\infty\} \cong {\bm S}^2$. For each $v$, there is an $SO(3)$-bundle ${\bm P}_v \to \M_v$ specified previously. 

\begin{defn}\label{defn62}
A {\bf stable scaled instanton} modelled on a scaled tree $(\Gamma, {\mf s})$ consists of a collection 
\beqn
{\mc C} = \Big( \big\{ ( \a_v, {\bm m}_v ) \ |\ v \in V_\Gamma^1 \big\},\ \big\{ w_e \ |\ e \in E_\Gamma \big\},\ \big\{ {\bm u}_v = (u_v, W_v, \gamma_v) \ |\ v \in V_\Gamma^\infty \big\} \Big)
\eeqn
where the symbols denote the following objects.
\begin{itemize}

\item For each $v \in V_\Gamma^1$, $\a_v$ is a gauge equivalence class of ASD instantons on $\P_v \to \M_v$ and ${\bm m}_v$ is a positive measure on $\M_v$ with finite support.

\item For each edge $e  = e_{v' \succ v} \in E_\Gamma$, $w_e$ is a point of $S_v$. 

\item For each $v \in V_\Gamma^\infty$, $W_v$ is the subset of $S_v$ defined by
\beq\label{eqn61}
W_v:= \{ w_e\ |\ e = e_{v'\succ v} \in E_\Gamma \}
\eeq
and ${\bm u}_v$ is a holomorphic map from $S_v$ to $R_\Sigma$ with boundary in $\iota(L_M)$ (see Definition \ref{defn28}). We call $w_e \in W_v$ a {\bf boundary node} resp. {\bf interior node} if $e$ is a boundary resp. interior edge. 
\end{itemize}
These objects also satisfy the following conditions.
\begin{enumerate}
\item If $e = e_{v' \succ v}$ is a boundary resp. interior edge, then $w_e \in \partial S_v$ resp.  $w_e \in {\rm Int} S_v$.

\item For every $v \in V_\Gamma^\infty$, the collection of points defining $W_v$ in \eqref{eqn61} are distinct. 

\item The measure ${\bm m}_v$ takes value in $4\pi^2 {\bf Z}$.

\item {\bf (Matching Condition)} For each interior edge $e = e_{v'\succ v}$ the evaluation at infinity of $\a_{v'}$ or ${\bm u}_{v'}$, which is a point of $R_\Sigma$, is equal to the evaluation of ${\bm u}_v$ at $w_e$. For each boundary edge $e = e_{v'\succ v}$ the evaluation at infinity of $\a_{v'}$ or ${\bm u}_{v'}$, which is a point in $L_N \cong \Delta_{L_M} \cup R_{L_M}$ is equal to the transpose of ${\rm ev}_{w_e}({\bm u}_v)$ (see Definition \ref{defn28} and Definition \ref{evaluation} for the definitions of different evaluations.)

\item {\bf (Stability Condition)} If $\a_v$ has zero energy, then ${\bm m}_v \neq 0$; if $v \in V_{\uds\Gamma}^\infty$ and $u$ is a constant map, then the number of boundary nodal points plus twice of the interior nodal points attached to $S_v$ is at least two; if $v \in V_\Gamma^\infty \setminus V_{\uds\Gamma}^\infty$ and $u_v$ is a constant map, then the number of nodal points on $S_v$ is at least two.
\end{enumerate}
\end{defn}

A typical configuration of a stable scaled instanton has been shown in Figure \ref{figure4}.

One can see that on the combinatorial level a stable scaled instanton is very similar to a stable affine vortex over $\C$ (when $\Gamma$ has an empty base) or over $\H$ (when $\Gamma$ has a nonempty base). These two constructions appeared in \cite{Ziltener_book} and \cite{Wang_Xu} respectively. We can define a notion of equivalence among stable scaled instantons by incorporating the translation symmetry of ASD instantons and the conformal invariance of holomorphic maps. The details are left to the reader. It is then easy to check that the automorphism group of a stable scaled instanton is finite.

\subsection{Sequential convergence}

Now we define the notion of convergence of a sequence of ASD instantons over $\M = {\bm R} \times M$ towards a stable scaled instanton. Let 
\beqn
{\mc C} = \Big( \big\{ ( \a_v, {\bm m}_v ) \ |\ v \in V_\Gamma^1 \big\},\ \big\{ w_e \ |\ e \in E_\Gamma \big\},\ \big\{ {\bm u}_v  \ |\ v  \in V_\Gamma^\infty \big\} \Big)
\eeqn
be a stable scaled instanton modelled on a based scaled tree $(\Gamma, {\mf s})$. For each $v \in V_\Gamma^\infty$, we have the set of nodes $W_v \subset S_v$ defined by \eqref{eqn61}. Then we can define a measure $m_v$ on $S_v$ supported in $W_v$ as follows. For each $w_e \in W_v$ with $e = e_{v'\succ v}$, the mass $m_v$ at $w_e$ is the sum of the energy of all components in ${\mc C}$ labelled by $v'' \in V_\Gamma$ with $v'' \geq v'$. Denote 
\beqn
\tilde {\bm u}_v = ({\bm u}_v, m_v)
\eeqn
which is a holomorphic curve with mass (see discussion after Definition \ref{defn28}).

\begin{defn}[Convergence towards stable scaled instantons] \label{defn63}
Let $\a_i = [\A_i]$ be a sequence of gauge equivalence classes of ASD instantons on $\P \to \M$. Let $(\Gamma, {\mf s})$ be a based scaled tree. Let 
\beqn
{\mc C} = \Big( \big\{ ( \a_v, {\bm m}_v ) \ |\ v \in V_\Gamma^1 \big\},\ \big\{ w_e\  |\ e \in E_\Gamma \big\},\ \big\{ {\bm u}_v  \ |\ v  \in V_\Gamma^\infty \big\}\Big)
\eeqn
be a stable scaled instanton modelled on $(\Gamma, {\mf s})$. We say that $\a_i$ converges (modulo gauge transformation) to ${\mc C}$ if the following conditions are satisfied. 
\begin{enumerate}

\item For each $v \in V_{\uds \Gamma}^1$, there exist a sequence of real translations $\varphi_{i, v}(z) = z + Z_{i, v }$ such that $\varphi_{i, v }^* \a_i$ converges in the Uhlenbeck sense to $( \a_v, {\bm m}_v)$ over the manifold $\M_v = {\bm R} \times M$ (see Definition \ref{defn22}).

\item For each $v \in V_\Gamma^1 \setminus V_{\uds \Gamma}^1$, there exist a sequence of complex translations $\varphi_{i, v}(z) =  z + Z_{i, v}$ satisfying the following properties.
\begin{enumerate}

\item ${\bf Im} Z_{i, v} \to +\infty$; 

\item Viewing $\varphi_{i, v }$ as a diffeomorphism from ${\bm C} \times \Sigma$ to itself, for all $R>0$, $\varphi_{i, v}^* (\a_i|_{\varphi_{i, v}(B_R) \times \Sigma})$ converges to $( \a_v |_{B_R \times \Sigma}, {\bm m}_v |_{B_R \times \Sigma})$ in the Uhlenbeck sense.
\end{enumerate}

\item For each $v \in V_{\uds\Gamma}^\infty$, there exists a sequence of real affine transformations \textcolor{black}{$\varphi_{i, v}(z) = \rho_{i, v} z + Z_{i, v}$} satisfying the following properties.
\begin{enumerate}

\item $\rho_{i, v} \in {\bm R}$, $Z_{i, v } \in {\bm R}$, $\rho_{i, v} \to +\infty$;

\item Denoting the translation $z \mapsto z + Z_{i, v}$ by $\psi_{i, v }$ and viewing it as a diffeomorphism from $\M$ to itself, $\psi_{i, v}^* \a_i$ converges to $\tilde {\bm u}_v$ along with $\{\rho_{i, v}\}$ in the sense of Definition \ref{defn52}.
\end{enumerate}

\item For each $v \in V_\Gamma^\infty \setminus V_{\uds\Gamma}^\infty$, there exist a sequence of complex affine transformations \textcolor{black}{$\varphi_{i, v}(z) = \rho_{i, v}  z + Z_{i, v} $} satisfying the following properties. 
\begin{enumerate}

\item $\rho_{i, v} \in {\bm R}$, $Z_{i, v} \in \C$, $\rho_{i, v} \to +\infty$, and ${\bf Im} Z_{i, v} \to +\infty$;

\item Denoting the translation $z \mapsto z + Z_{i, v}$ to be $\psi_{i, v}$ and viewing it as a diffeomorphism from ${\bm C} \times \Sigma$ to itself, then for all sufficiently big $R>0$, the sequence of pullbacks $\psi_{i, v}^* ( \a_i|_{\varphi_{i, v}(B_R)\times \Sigma})$, which are instantons over $B_{\rho_{i, v} R} \times \Sigma$, converge to $\tilde {\bm u}_v |_{B_R}$ along with $\{\rho_{i, v}\}$ in the sense of Definition \ref{defn34}.
\end{enumerate}

\item The reparametrizations $\varphi_{i, v}$ mentioned above can all be viewed as M\"obius transformations on the complex plane. One can also view all $S_v$ (being either ${\bm H}$ or ${\bm C}$) as subsets of ${\bm C}$. For each edge $e = e_{v'\succ v} \in E_\Gamma$, the map $\varphi_{i, v}^{-1} \circ \varphi_{i, v'}$ converges uniformly with all derivatives on compact sets to the constant $w_e\in S_v \subset {\bm C}$. 

\item There is no energy lost, i.e., 
\beqn
\lim_{i \to \infty} E ( \A_i ) = \sum_{v \in V_\Gamma^\infty} E (u_v) + \sum_{v  \in V_\Gamma^1} \Big( E ( \a_v ) +  |{\bm m}_v | \Big).
\eeqn
\end{enumerate}
\end{defn}

Now we state the precise form of the main theorem (Theorem \ref{thm14}) of this paper. 

\begin{thm}\label{thm64}
Given a sequence of ASD instantons on ${\bm R} \times M$ with uniformly bounded energy, there exists a subsequence which converges to a stable scaled instanton in the sense of Definition \ref{defn63}.
\end{thm}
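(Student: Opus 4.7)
The plan is to construct the limiting stable scaled instanton by an inductive bubble-tree procedure, assembling one vertex at a time, with termination guaranteed by energy quantization. The underlying philosophy is standard: at each iteration we locate a region and a scale at which a nontrivial amount of energy survives in the limit, normalize by an appropriate translation-plus-rescaling $\varphi_{i,v}$, and extract a limiting component---either an ASD instanton via Uhlenbeck compactness (Theorem \ref{thm23}) or a holomorphic curve in $R_\Sigma$ via Theorem \ref{thm51} or Theorem \ref{thm33}. The threshold $\hbar$ provided by Lemma \ref{lemma54} and Theorem \ref{quantization} ensures every new component carries at least $\hbar$ of energy, so the process stops in finitely many stages.

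For the base of the tree I would first attempt a direct Uhlenbeck limit of $\A_i$ on $\mathfrak{M}=\mathbb{R}\times M$. If no energy escapes, the tree is a single vertex and we are done. Otherwise the missing energy either (a) escapes along the $\mathbb{R}$-direction, and a translation $Z_{i,v}\in\mathbb{R}$ followed by Uhlenbeck compactness (and Theorem \ref{asymptotic} on the asymptotic limits) produces another $\mathbb{R}\times M$ vertex in $V_{\uds\Gamma}^1$, or (b) escapes into the cylindrical end of $M$. In case (b), we choose the minimal scaling $\rho_{i,v}\to\infty$ such that the escaping energy becomes contained in $\mathfrak{M}_{\rho_{i,v}R}$ for some fixed $R$, and invoke Theorem \ref{thm51}: after pullback by $\varphi_{i,v}(z)=\rho_{i,v}(z+Z_{i,v})$ we obtain a holomorphic map $u:\mathbb{H}\to R_\Sigma$ with boundary in $\iota(L_M)$, a continuous boundary lift $\gamma:\partial\mathbb{H}\setminus W_\infty\to L_M$ (by Lemma \ref{lemma57} and Corollary \ref{cor510}), a finite bubbling set $W_\infty$, and a bubbling measure. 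This is a vertex of $V_{\uds\Gamma}^\infty$, and the matching condition at the edge attaching it to its parent is obtained by combining Theorem \ref{asymptotic} (asymptotic value in $L_N\cong\Delta_{L_M}\cup R_{L_M}$) with the convergence of $u_i'$ on the boundary; the transposition in Definition \ref{defn62}(d) appears because an interior point of the parent four-manifold is approached from the boundary of the child strip.

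Proceed by induction: at every point of $W_\infty$ and at every residual escape locus of the preceding extraction, apply a further translation-plus-rescaling so that the concentrating region becomes bounded, and re-invoke the appropriate compactness theorem---Theorem \ref{thm23} when the new scale is still instanton-like (producing $\mathbb{C}\times\Sigma$ or $\mathbb{R}\times M$ components, or bubble measures), Theorem \ref{thm33} or Theorem \ref{thm51} when the rescaling rate diverges (producing holomorphic spheres or disks). The relations $\varphi_{i,v}^{-1}\circ\varphi_{i,v'}\to w_e$ in Definition \ref{defn63}(e) come automatically from the way rescalings are chosen to normalize concentration loci. The main obstacle throughout is to simultaneously (i) prevent energy loss in the ``thin neck'' regions between consecutive scales and (ii) verify matching at boundary nodes; both rest on the boundary analysis of Section \ref{section4}. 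The annulus lemma (Proposition \ref{annulus} and Proposition \ref{prop47}) yields exponential decay on neck regions, which combined with the fact that any positive mass of energy is at least $\hbar$ rules out energy leaks. The boundary diameter bound (Lemma \ref{diameter2}), together with the Hausdorffness of the configuration space (Lemma \ref{lemma412}), is what lets us identify the limiting flat connection at the asymptotic end of a parent $\mathbb{R}\times M$ instanton with the transposed evaluation of the child holomorphic disk at the relevant boundary nodal point. Once these matching and no-energy-loss statements are checked at each step, stability follows by pruning trivial components and termination from the quantum $\hbar$, completing the proof of Theorem \ref{thm64}.
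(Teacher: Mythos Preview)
Your overall strategy---inductive bubble-tree extraction using Theorem \ref{thm23}, Theorem \ref{thm33}, and Theorem \ref{thm51}, with the annulus lemma (Proposition \ref{annulus}) controlling neck energy and the diameter bound (Lemma \ref{diameter2}) securing the matching condition---matches the paper's, and the ingredients you cite are the right ones. However, there is a genuine gap in how you handle stability.

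You write that ``stability follows by pruning trivial components and termination from the quantum $\hbar$.'' This does not work as stated. The danger is that at a concentration point $w_k$ you rescale, obtain a constant holomorphic map with all mass sitting at a \emph{single} new bubbling point, rescale again there, and repeat indefinitely: each such ghost component carries zero energy, so energy quantization gives no termination, and the process never reaches a nontrivial leaf. The paper avoids this via the \emph{soft rescaling} trick (Section 7.2): one does not merely pick a scale at which the concentrating energy becomes bounded, but chooses the new center $w_{i,k}$ and radius $r_{i,k}$ by a minimization (equation \eqref{eqn76}), taking $r_{i,k}$ to be the smallest radius over all nearby centers for which a fixed energy defect $m_\infty(w_k)-\hbar/2$ is captured. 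This minimality is exactly what forces the new component to be stable: if it were a constant map with a single bubbling point, one could recenter and shrink the radius further, contradicting minimality (Lemmas \ref{lemma77} and \ref{lemma78}). Your proposal mentions ``minimal scaling'' once for the root but does not carry this minimization through the induction, and pruning after the fact cannot repair a non-terminating construction.

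A secondary point: you begin by taking a direct Uhlenbeck limit on $\mathbb{R}\times M$ and then chase escaped energy outward. The paper instead builds the tree from the root (largest scale) inward: it first selects the normalizing radius $R_i$ via Remark \ref{rem71}(c), and the root is a holomorphic disk (via Theorem \ref{thm51}) when $R_i\to\infty$, or a single instanton when $R_i$ stays bounded. In particular, by Definition 6.1 vertices in $V_\Gamma^1$ are pairwise disconnected, so your case (a)---two $\mathbb{R}\times M$ instanton vertices produced at the same scale and joined directly---cannot occur in the limiting object; such pieces must be separated by an intermediate holomorphic component at a larger scale. Building from the root down makes this structure automatic.
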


The rest of this paper provides a proof of Theorem \ref{thm64}.

\begin{rem}
It is easy to extend our definitions and our compactness theorem to the case of instantons over $\C \times \Sigma$. For example, when $\Gamma$ is a scaled tree with empty base, the object defined in Definition \ref{defn62} is a possible limiting configuration of a sequence of instantons over $\C \times \Sigma$. One can then prove the corresponding compactness theorem (Theorem \ref{thm15}) for such instantons in a routine way. 
\end{rem}

\section{Proof of the compactness theorem}\label{section7}

Now we start to prove the compactness theorem (Theorem \ref{thm64}). The basic strategy is similar to the proof of Gromov compactness for pseudoholomorphic \textcolor{black}{spheres} used in \cite{McDuff_Salamon_2004} \textcolor{black}{(see also \cite{Frauenfelder_disk} for the case of pseudoholomorphic disks and \cite{Wang_Xu} for adiabatic limit of disk vortices). The construction of the limiting object is inductive and each inductive step is based on the ``soft rescaling'' argument in which one can verify the stability condition of the limiting object (see Subsection \ref{subsection72}). The matching condition is proved by using the annulus lemma (see Subsection \ref{subsection73}).} The inductive construction will stop after finitely many steps because of various energy quantization phenomena. We reset the value of $\hbar>0$ such that it is smaller than the following positive numbers.  
\begin{enumerate}

\item The $\epsilon$ of the annulus lemma (Proposition \ref{annulus} and Corollary \ref{cor47}).

\item The threshold of energy blowup given by Lemma \ref{lemma54}.

\item The minimal energy of nonconstant holomorphic spheres in $R_\Sigma$.

\item The minimal energy of nonconstant holomorphic disks in $R_\Sigma$ with boundary in $\iota(L_M)$ having at most two switching points.

\item The minimal energy of nonconstant ASD instanton over ${\bm C}\times \Sigma$ (the existence of the minimal energy is proved in \cite[Proof of Theorem 9.1]{Dostoglou_Salamon} and \cite{Wehrheim_2006}).

\item The minimal energy of a nontrivial ASD instanton over $\M$ (cf. Theorem \ref{quantization}). 

\end{enumerate}

\textcolor{black}{We give a brief account of the general induction scheme which is standard in symplectic geometry but not necessarily in gauge theory. By using Theorem \ref{thm51} for a suitable sequence $\rho_i$, we construct the root component of the limiting object to which a subsequence converges modulo bubbling. Then depending on the rate of energy blowup and the distance from the boundary of the holomorphic curve, one can ``zoom in'' with an appropriate rate at the bubbling point to extract another component with possibly additional bubbling points. In each step, the component we will construct could be either a (combinatorially) stable component with possibly zero energy or an unstable component with positive energy. In the former case, the total number of bubbling points strictly increases and each bubbling point has an amount of escaped energy at least $\hbar$; in the latter case, the energy of the unstable component also has a lower bound $\hbar$. Because of the uniform energy bound of the sequence, the induction necessarily stops after finitely many steps.}

% component, such as a sphere with three or more nodal points, which will increase the total number of blowup points in the constructed configuration, or construct a (combinatorially) unstable component with positive energy, such as a holomorphic disk with boundary in $\iota(L_M)$ having two switching points and no interior node. The {\it a priori} bound of the total energy and the various energy quantization phenomena listed above guarantee the termination of the induction after finitely many steps.

Now we start the construction. Fix a sequence of ASD instantons $\A_i\in {\mc A}(\P)$ satisfying 
\beqn
\sup_i E (\A_i) < +\infty.
\eeqn 
By Theorem \ref{quantization} we may assume that 
\beq\label{eqn71}
E  ( \A_i) \geq \hbar\ \ ( \forall i\geq 1).
\eeq
Otherwise there is a subsequence of trivial instantons.

It is also convenient to fix the ambiguity caused by the translation invariance. Recall the notation $B_r^+(z)$: for any $z \in {\bm H}$ and $r>0$, $B_r^+(z)$ is the intersection of the radius $r$ open ball $B_r(z) \subset {\bm C}$ with the upper half plane ${\bm H}$. For each $i\geq 1$ and $Z \in {\bm R}$, let $R = R(Z)$ be the minimal real number satisfying
\beqn
E \big( \A_i; \M \setminus \M_{B_R^+(Z)} \big) = \frac{\hbar }{2}.
\eeqn
By the decay of energy, when $Z$ approaches to $\pm \infty$, the $R (Z)$ satisfying the above condition approaches to $\infty$. It is also easy to see that $R(Z)$ depends on $Z$ continuously. Then one can choose for each $i$ a number $Z_i$ (which might not be unique) such that $R_i = R(Z_i)$ is smallest possible. Then by using a translation of $\M$ in the ${\bm R}$-direction, we may assume $Z_i = 0$ for all $i$. Hence one has
\beq\label{eqn72}
E \big( \A_i; \M \setminus \M_{R_i} \big) = \frac{\hbar }{2}.
\eeq

By taking a subsequence, we may assume that either $R_i$ stays bounded or diverges to infinity. We first prove that there is no energy blowup in the area further away from $\M_{R_i}$. 

\begin{prop}\label{prop71}
For any $\rho_i$ with $\rho_i \gg R_i$, one has 
\beq\label{eqn73}
\lim_{i \to \infty} E \big( \A_i; \M \setminus \M_{\rho_i} \big) = 0.
\eeq
\end{prop}

\begin{proof} Follows from the annulus lemma (Proposition \ref{annulus}).
\end{proof}

\subsection{Constructing the root component}

Now we are going to construct the root component of the limiting object. A simple situation is when $R_i$ is bounded. Then by Theorem \ref{thm23}, a subsequence of $\A_i$ converges to an ASD instanton $\A_\infty$ with a bubbling measure ${\bm m}_\infty$ in the Uhlenbeck sense. Then $( [\A_\infty], {\bm m}_\infty )$ is the limiting object. Indeed, the scaled tree underlying this limiting object has only one vertex, and Proposition \ref{prop71} is equivalent to the no-energy-lost condition of Definition \ref{defn63}.

From now on we assume that $R_i$ is unbounded and we take a subsequence such that $R_i$ diverges to infinity. Then by applying Theorem \ref{thm51} for $R_i$ and $\A_i$, one obtains a subsequence (still indexed by $i$) and a holomorphic curve with mass $\tilde {\bm u}_\infty = ({\bm u}_\infty, m_\infty)$ from ${\bm H}$ to $R_\Sigma$ such that $\A_i$ converges to $\tilde {\bm u}_\infty$ along with $\{R_i\}$ in the sense of Definition \ref{defn52}. Proposition \ref{prop71} and \eqref{eqn52} imply that 
\beq\label{eqn74}
\lim_{i \to \infty} E (\A_i) = E (u_\infty ) +  | m_\infty |.
\eeq

We would like to show that this limiting component is stable. 

\begin{lemma}
If $u_\infty$ is a constant map, the support of $m_\infty$ contains either an interior point of $\H$, or contains at least two points in the boundary of $\H$.
\end{lemma}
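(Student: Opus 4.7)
The plan is to exploit the normalization from Remark~\ref{rem71}(c): the translation has been fixed so that $R_i = R(0) = \min_{Z \in {\bm R}} R(Z)$, where $R(Z)$ is the unique radius satisfying $E(\A_i; \M \setminus \M_{B_R^+(Z)}) = \hbar/2$. This rigidity, combined with the concentration statement \eqref{eqn51} and the global energy identity \eqref{eqn74}, will rule out both degenerate shapes of $m_\infty$ in the case that $u_\infty$ is constant.

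First, if $u_\infty$ is constant then $E(u_\infty) = 0$, so \eqref{eqn74} reduces to $|m_\infty| = \lim_i E(\A_i) \geq \hbar$ by \eqref{eqn71}. Thus $m_\infty$ is not the zero measure, and the only remaining case to exclude is that $\mathrm{supp}\, m_\infty = \{s_0\}$ for a single point $s_0 \in \partial \H$.

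Suppose for contradiction that $m_\infty(\{s_0\}) = |m_\infty|$ for some $s_0 \in \partial \H$. Fix any $\eta \in (0, \hbar/4)$. Applying \eqref{eqn51} at $z = s_0$ together with \eqref{eqn74}, I can choose $r \in (0,1)$ and an index $i$ (depending on $r$) arbitrarily large so that
\beqn
E(\A_i; \M_{B_{R_i r}^+(R_i s_0)}) > |m_\infty| - \eta \quad \text{and} \quad E(\A_i) < |m_\infty| + \eta.
\eeqn
Subtracting these gives
\beqn
E(\A_i; \M \setminus \M_{B_{R_i r}^+(R_i s_0)}) < 2\eta < \hbar/2.
\eeqn
Since $R \mapsto E(\A_i; \M \setminus \M_{B_R^+(R_i s_0)})$ is continuous and nonincreasing from $E(\A_i)$ down to $0$, this inequality forces $R(R_i s_0) < R_i r$. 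But the minimality $R(Z) \geq R(0) = R_i$ for all $Z \in {\bm R}$ gives $R(R_i s_0) \geq R_i$, hence $R_i < R_i r$, i.e., $r > 1$, contradicting the choice $r < 1$.

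The only subtle point is the simultaneous coordination of the two limits $r \to 0$ and $i \to \infty$ in \eqref{eqn51} with the third limit $i \to \infty$ in \eqref{eqn74}; this is handled by a routine diagonal selection, and no analytic input beyond Theorem~\ref{thm51} and the setup of Remark~\ref{rem71} is required. Once both the empty-support and single-boundary-point configurations are excluded, the finitely supported nonzero measure $m_\infty$ must contain either an interior point or at least two boundary points, as asserted.
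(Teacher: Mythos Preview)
Your proof is correct and follows essentially the same approach as the paper: both arguments exploit the minimality of $R_i$ from Remark~\ref{rem71}(c) to derive a contradiction from the hypothesis that all the mass concentrates at a single boundary point. The paper phrases the endgame slightly differently, finding a sequence $r_i \to 0$ with $E(\A_i; \M \setminus \M_{B_{r_i R_i}^+(R_i w_0)}) = \hbar/2$ exactly and then noting $r_i R_i \ll R_i$, whereas you fix $r<1$ and use monotonicity of the complementary energy to locate $R(R_i s_0)$ strictly below $R_i r$; these are cosmetic variants of the same idea.
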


\begin{proof}
The assumption \eqref{eqn71} and the equality \eqref{eqn74} imply that if $u_\infty$ is constant, then $W_\infty = {\rm Supp} m_\infty \neq \emptyset$. Suppose $W_\infty \subset \partial \H$ and contains only one element $w_0$. Then all energy is concentrated at $w_0$, which means that 
\beqn
\lim_{r \to 0} \lim_{i \to \infty} E \Big( \A_i; \M \setminus \M_{\varphi_{R_i}(B_r^+ (w_0))} \Big) = 0.
\eeqn
Since the total energy is at least $\hbar$, there exists a sequence of $r_i \to 0$ such that for all $i$ sufficiently large,
\beqn
E  \Big( \A_i; \M \setminus \M_{\varphi_{R_i} (B_{r_i}^+ (w_0))} \Big) = E \Big( \A_i; \M \setminus \M_{B_{r_i R_i}^+( R_i w_0)} \Big) = \frac{\hbar }{2}.
\eeqn
Since $r_i R_i \ll R_i$, this contradicts the choice of $R_i$ (see \eqref{eqn72}). 
\end{proof}

\subsection{Soft rescaling}\label{subsection72}

What we have done is constructing the root component of the limiting stable scaled instanton. Next we inductively construct all other components using the ``soft-rescaling'' method (cf.\cite[Section 4.7]{McDuff_Salamon_2004} \cite{Frauenfelder_disk} \cite{Ziltener_book} \cite{Wang_Xu}).

Let $v_\infty$ denote the root component and $m_\infty$ the bubble measure. Suppose
\beqn
W_\infty = {\rm Supp} m_\infty = \{ w_k\ |\ k =1, \ldots, k_\infty \}.
\eeqn
For each $w_k \in {\rm Supp} m_\infty$, one has that 
\beq\label{eqn75}
\lim_{r \to 0} \lim_{i \to \infty} E \Big( \A_i;  \textcolor{black}{ \M_{\varphi_{R_i}( B_r^+ (w_k))} } \Big) = m_\infty (w_k) \geq \hbar.
\eeq
Then choose $r_0 >0$ such that $B_{2r_0}^+ (w_k) \cap W_\infty = \{w_k \}$ and for sufficiently big $i$ there holds
\beqn
m_\infty (w_k ) - \frac{\hbar}{4} \leq E \Big( \A_i;  \M_{\varphi_{R_i} \textcolor{black}{(B_{r_0}^+ (w_k )) }} \Big) = E \Big( \A_i; \M_{B_{R_i r_0}^+( R_i w_k)} \Big) \leq m_\infty (w_k) + \frac{ \hbar }{4}.
\eeqn
Then for each $w \in B_{r_0}^+ (w_k)$ and $i$, there exists $r_i(w) > 0$ such that
\beqn
E \Big( \A_i; \M_{\varphi_{R_i} (B_{r_i(w)}^+ (w)) }  \Big) = m_\infty (w_k) - \frac{\hbar}{2}.
\eeqn
Notice that the minimal $r_i(w)$ depends continuously on $w$. Therefore, one chooses $w_{i, k} \in B_{r_0}^+ (w_k)$ such that 
\beq\label{eqn76}
r_{i,k}:= r_i(w_{i,k}) = \min_{w \in B_{r_0}^+ (w_k)} r_i(w).
\eeq
Since energy blows up at $w_k$, $w_{i, k}$ converges to $w_k$ and $r_i(w_{i, k})$ converges to zero. The point $w_{i,k}$ can be viewed as the point near $w_k$ where energy blows up in the fastest rate (an analogue of the local maximum of the energy density). 

The following argument bifurcates in different cases. Suppose 
\beqn
w_{i,k} = s_{i,k} + {\bm i} t_{i,k}.
\eeqn
By choosing a suitable subsequence, we are in one of the following situations.

\vspace{0.1cm}

\noindent {\bf Case I.} Suppose we are in the situation where 
\begin{align}\label{eqn77}
&\ w_k \in {\rm Int} \H,\ &\ \lim_{i \to \infty} r_{i,k} R_i < \infty.
\end{align}
Then choose a sequence $\rho_i \to \infty$ but grows slower than $R_i$. Then when $i$ is large, $B_{\rho_i}(R_i w_{i,k})  \subset {\bm C}$ is contained in the upper half plane. Define the sequence of translations on ${\bm C}$ by 
\beqn
\varphi_{i,k}(z) = z + R_i w_{i,k}.
\eeqn
Via $\varphi_{i,k}$, the sequence of restrictions of $\A_i$ to $B_{\rho_i} ( R_i w_{i,k}) \times \Sigma$ can be identified with a sequence $\A_{i,k}'$ of solutions to the ASD equation over $B_{\rho_i} \times \Sigma$. The total energy of $\A_{i, k}'$ is uniformly bounded, hence a subsequence (still indexed by $i$) converges in the Uhlenbeck sense to a limiting ASD instanton $\A_k$ over $\M_k:= \C \times \Sigma$ with a bubbling measure ${\bm m}_k$ on $\M_k   $.

We prove that no energy is lost. 
\begin{lemma}\label{lemma73}
There holds 
\beqn
m_\infty (w_k) = E ( \A_k ) + \int_{\M_k} {\bm m}_k.
\eeqn
\end{lemma}

\begin{proof}
Choose $\epsilon>0$. By assumption, there exists $r_\epsilon>0$ such that for $i$ sufficiently large, there holds
\beqn
E \Big( \A_{i, k}'; B_{R_i r_\epsilon}  \times \Sigma \Big) \leq m_\infty (w_k)  + \epsilon.
\eeqn
Then the Uhlenbeck convergence $\A_{i, k}' \to ( \A_k, {\bm m}_k)$ implies that
\beqn
E ( \A_k ) + |{\bm m}_k| = \lim_{R \to \infty} \lim_{i \to \infty} E \Big( \A_{i, k}'; B_R  \times \Sigma \Big) \leq \lim_{i \to \infty} E \Big( \A_{i,k}; B_{R_i r_\epsilon}  \times \Sigma \Big) \leq m_\infty (w_k) + \epsilon.
\eeqn
On the other hand, we claim that when $\epsilon$ is small enough, there exists $\tau_\epsilon>0$ such that for $i$ sufficiently large, there holds
\beq\label{eqn78}
E \Big( \A_{i,k}'; B_{\tau_\epsilon}  \times \Sigma \Big) \geq m_\infty (w_k) - 2 \epsilon.
\eeq
If this is not true, then there exist a sequence $\tau_i \to \infty$ such that 
\beqn
E \Big( \A_{i, k}'; B_{\tau_i}  \times \Sigma \Big) = m_\infty (w_k) - 2 \epsilon.
\eeqn
There are also a sequence $\tau_i' > \tau_i$ such that
\beq\label{eqn79}
E \Big(  \A_{i,k}'; B_{\tau_i'}  \times \Sigma \Big) = m_\infty (w_k) - \epsilon.
\eeq
Hence
\beq\label{eqn710}
E \Big( \A_{i,k}'; (B_{\tau_i'}  \setminus B_{\tau_i} ) \times \Sigma \Big) = \epsilon.
\eeq
On the other hand, choose $R_0>1$ which is bigger than the limit of $r_{i,k} R_i$. Then one has 
\begin{multline}\label{eqn711}
E \Big( \A_{i,k}'; ( B_{R_i r_\epsilon} \setminus B_{R_0}  ) \times \Sigma \Big)  = E \Big( \A_{i,k}'; B_{R_i r_\epsilon} \times \Sigma \Big) - E \Big( \A_{i,k}'; B_{R_0} \times \Sigma \Big)\\
 \leq E \Big( \A_{i,k}'; B_{R_i r_\epsilon} \times \Sigma \Big) - E \Big( \A_{i,k}'; B_{r_{i, k} R_i} \times \Sigma \Big)   \\
 \leq m_\infty( w_k) + \epsilon - (m_\infty(w_k) - \frac{ \hbar }{2}) = \frac{ \hbar}{2} + \epsilon.
\end{multline}

We would like to show that \eqref{eqn710} and \eqref{eqn711} contradict Corollary \ref{cor47}. To see this, notice that the conformal radius of $B_{R_i r_\epsilon}  \setminus B_{R_0} $ diverges to infinity. Since $\tau_i \to +\infty$, one has $\tau_i/ R_0 \to \infty$. Moreover, we claim that $\tau_i'/ R_i r_\epsilon \to 0$. Indeed, if this is not the case, then there is a subsequence (still indexed by $i$) such that $\tau_i' / R_i$ converges to a positive number $3r'>0$. Then \eqref{eqn79} implies that 
\begin{multline*}
\lim_{i \to \infty} E \Big( \A_i; \varphi_{R_i}( B_{r'} (w_k)) \times \Sigma \Big) \leq \lim_{i \to \infty} E \Big( \A_i; \varphi_{R_i} ( B_{2r'} (w_{i,k}) ) \times \Sigma \Big) \\
 \leq \lim_{i \to \infty} E \Big( \A_{i,k}'; B_{\tau_i'} \times \Sigma \Big)  = m_\infty(w_k) - \epsilon.
\end{multline*}
This contradicts \eqref{eqn75}. Hence the claim that $\tau_i' / R_i r_\epsilon \to 0$ is true. Then by Corollary \ref{cor47}, the energy on the smaller annular region $(B_{\tau_i'}\setminus B_{\tau_i} ) \times \Sigma$ should be exponentially smaller than the energy on the larger annular region $(B_{R_i r_\epsilon} \setminus B_{R_0}) \times \Sigma$, contradicting \eqref{eqn710}. 

Therefore, when $\epsilon>0$ is small enough, the claim of the existence of $\tau_\epsilon$ satisfying \eqref{eqn78} is true. Then one has 
\beqn
E ( \A_k) + |{\bm m}_k| \geq \lim_{i \to \infty} E \Big( \A_{i,k}'; B_{\tau_\epsilon}  \times \Sigma \Big) \geq m_v(w_k)- 2\epsilon.
\eeqn
Since $\epsilon$ is an arbitrary small number, this lemma is proved. 
\end{proof}

The following lemma follows directly from Lemma \ref{lemma73}.

\begin{lemma}\label{lemma74}
If $\A_k$ is a trivial instanton, then ${\bm m}_k \neq 0$. 
\end{lemma}

\vspace{0.1cm}

\noindent {\bf Case II.} Suppose we are in the situation where
\begin{align*}
&\ w_k \in \partial \H,\ &\ \lim_{i \to \infty} (r_{i,k} + t_{i,k}) R_i < \infty. 
\end{align*}
Then choose a sequence $\rho_i \to \infty$ but grows slower than $R_i$. Define the sequence of real translations by (recall $s_{i,k} = {\bf Re} w_{i,k}$)
\beqn
\varphi_{i,k}(z) = z + R_i s_{i,k}.
\eeqn
Via $\varphi_{i,k}$, the sequence of restrictions of $\A_i$ to $\M_{B_{\rho_i}^+ ( R_i s_{i, k})}$ can be identified with a sequence $\A_{i,k}'$ of solutions to the ASD equation over $\M_{B_{\rho_i}^+}$. The energy of $\A_{i,k}'$ is uniformly bounded. Then a subsequence of $\A_{i,k}'$ converges in the Uhlenbeck sense to an ASD instanton $\A_k'$ over $\M_k = \M = {\bm R} \times M$ with a bubbling measure ${\bm m}_k$. Similar to Lemma \ref{lemma73}, one use the annulus lemma (Proposition \ref{annulus}) for the case $N = M^{\rm double}$ to prove that no energy is lost, i.e., 
\beqn
m_\infty (w_k) = E ( \A_k) + \int_{\M_k} {\bm m}_k.
\eeqn
Similar to the above case, one has the following facts.

\begin{lemma}\label{lemma75}
If $\A_k$ is trivial, then ${\bm m}_k \neq 0$. 
\end{lemma}

\vspace{0.1cm}

\noindent {\bf Case III.} Suppose we are in the situation where
\begin{align*}
&\ w_k \in {\rm Int} \H,\ &\ \textcolor{black}{\lim_{i \to \infty} \tau_{i,k} = \infty\ {\rm where}\ \tau_{i,k}:= r_{i,k} R_i.}
\end{align*}
In this case, choose a sequence $\rho_i\to \infty$ which grows faster than $\tau_{i,k}$ but slower than $R_i$. Then when $i$ is large, the disk $B_{\rho_i}(R_i w_{i,k})\subset {\bm C}$ is contained in the upper half plane. Define a sequence of affine transformations $\varphi_{i,k}$ and a sequence of translations $\psi_{i,k}$ by
\begin{align*}
&\ \varphi_{i,k}(z) = \tau_{i,k} \left( z + \frac{ w_{i,k} }{r_{i,k}} \right),\ &\ \psi_{i,k}(z) = z +  \frac{ w_{i,k} }{r_{i,k}}.
\end{align*}
Via $\psi_{i,k}$, the sequence of restrictions of $\A_i$ to $B_{\rho_i} ( R_i w_{i,k}) \times \Sigma$ can be identified with a sequence $\A_{i,k}'$ of solutions to the ASD equation over $B_{\rho_i} \times \Sigma$. By Theorem \ref{thm33} a subsequence of $\A_i'$ converges \textcolor{black}{(in the sense of Definition \ref{defn34})} to a holomorphic map $\tilde {\bm u}_k = ({\bm u}_k, m_k)$ with mass from ${\bm C}$ to $R_\Sigma$ along with the sequence $\{ \tau_{i,k}\}$. Using similar method as proving Lemma \ref{lemma73} one can prove that 
\beqn
m_\infty (w_k) = E ( {\bm u}_k) + \int_{\bm C} m_k.
\eeqn

Moreover, we verify the stability condition as follows.

\begin{lemma}\label{lemma76}
When ${\bm u}_k$ is a constant, the support of $m_k$ contains at least two points.
\end{lemma}

\begin{proof}
If this is not the case, then $m_k$ is supported at a single point $z_0 \in \C$ with $m_k(z_0) = m_\infty (w_k)$. This implies that 
\beqn
\lim_{r \to \infty} \lim_{i \to \infty} E \Big( \A_{i, k}'; B_{r \tau_{i,k}}(z_0) \times \Sigma \Big)  = m_k(z_0) = m_\infty(w_k).
\eeqn
Since $m_k(z_0) \geq \hbar$, there is a sequence $\delta_i \to 0$ such that for large $i$
\beqn
E \Big( \A_{i,k}'; B_{\delta_i \tau_{i,k}}(z_0) \times \Sigma \Big) = E \Big( \A_i; \M_{\varphi_{R_i} ( B_{r_i'}( w_{i,k} + z_0) ) } \Big) = m_\infty(w_k) - \frac{\hbar}{2}.
\eeqn
Here 
\beqn
r_i' = \frac{ \delta_i \tau_{i,k}}{R_i} =  \delta_i r_{i,k}
\eeqn
which is smaller than $r_{i,k}$. This contradicts the choice of $w_{i,k}$ and $r_{i,k}$ (see \eqref{eqn76}). Hence the support of $m_k$ contains at least two points. 
\end{proof}

\vspace{0.1cm}

\noindent {\bf Case IV.} Suppose we are in the situation where 
\begin{align*}
&\ w_k \in \partial \H,\ &\ \textcolor{black}{\lim_{i \to \infty} \tau_{i,k} = \infty\ {\rm where}\ \tau_{i,k}:=R_i (r_{i,k} + t_{i,k} ).} 
\end{align*}
Define the sequence of \textcolor{black}{translations $\psi_{i,k}$} by
\beqn
\textcolor{black}{\psi_{i,k} (z) = z + R_i s_{i,k}}.
\eeqn
Denote the pull-back of $\A_k$ via $\psi_{i,k}$ by $\A_{i,k}'$, which are still ASD instantons over $\M$. Without loss of generality, we may assume that $R_i s_{i,k} = 0$ for all $i$. Then by Theorem \ref{thm51}, a subsequence of $\A_{i,k}'$ (still indexed by $i)$ converges to a holomorphic map with mass $\tilde {\bm u}_k = ({\bm u}_k, m_k)$ from ${\bm H}$ to $R_\Sigma$ along with the sequence $\{ \tau_{i,k}\}$. Moreover, one has
\beqn
\lim_{R \to \infty} \lim_{i \to \infty} E \Big( \A_{i,k}'; \M_{\varphi_{\tau_{i,k}}(B_R^+)} \Big) = E ( u_k ) + \int_{\bm H} m_k.
\eeqn
As before one can prove that the left hand side above is $m_\infty (w_k)$, hence
\beqn
m_\infty(w_k) = E(u_k) + \int_{{\bm H}} m_k.
\eeqn

We verify the stability condition as follows. 

\begin{lemma}\label{lemma77}
Suppose $u_k$ is a constant map. 
\begin{enumerate}
\item If $\displaystyle\lim_{i \to \infty} t_{i,k} / r_{i,k} < \infty$, then ${\rm Supp} m_k$ contains at least two points;

\item If $\displaystyle \lim_{i \to \infty} t_{i,k}/ r_{i,k} = \infty$, then ${\rm Supp} m_k$ contains at least one point in the interior of $\H$. 
\end{enumerate}
\end{lemma}

\begin{proof}
Suppose $\displaystyle \lim_{i \to \infty} t_{i,k} /r_{i,k} < \infty$ and ${\rm Supp} m_k$ contains only one point $z_0 \in \H$ with $m_k(z_0) = m_\infty (w_k)$. This implies that 
\beqn
\lim_{r \to 0} \lim_{i \to \infty} E \Big( \A_{i,k}'; \M_{\varphi_{\tau_{i,k}}(B_{r}^+(z_0) )} \Big) = m_\infty (w_k).
\eeqn
Then there exists a sequence $\delta_i \to 0$ satisfying 
\beqn
E \Big( \A_i; \M_{\varphi_{R_i} ( B_{\tau_{i,k} \delta_i/R_i}^+ ( \tau_{i,k} z_0/R_i))} \Big)  =  E \Big( \A_{i,k}'; \M_{\varphi_{\tau_{i,k}} (B_{\delta_i}^+(z_0) ) }  \Big) = m_\infty (w_k) - \frac{\hbar}{2}.
\eeqn
However we have
\beqn
\frac{\tau_{i,k}\delta_i}{R_i} = \delta_i (r_{i,k} + t_{i,k}) \ll r_{i,k},
\eeqn
which contradicts the choice of $r_{i,k}$ (see \eqref{eqn76}).

On the other hand, suppose $\displaystyle \lim_{i \to \infty} t_{i,k} /r_{i,k} = \infty$. Then $\tau_{i,k} \gg r_{i,k} R_i$. Still assume for simplicity that $s_{i,k} = {\bf Re} w_{i,k} = 0$. Then for any $r>0$, for $i$ sufficiently large, the disk centered at $R_i w_{i,k} = {\bm i} R_i t_{i,k}$ of radius $r_{i,k} R_i$ is contained in the disk centered at ${\bm i} \tau_{i,k}$ of radius $r \tau_{i,k}$. This implies that $m_k$ is supported at the single point ${\bm i} \in {\rm Int} \H$. \end{proof}

\subsection{Bubble connects}\label{subsection73}

The above discussion of four different cases allows us to inductively construct the limiting object. The process stops after finitely many steps due to the energy quantization phenomena (see the discussion at the beginning of this section). This finishes the construction of a collection of ASD instantons with measures $([\A_v], {\bm m}_v )$, a collection of holomorphic maps ${\bm u}_v$ from $S_v$ to $R_\Sigma$ with boundary lying in $\iota(L_M)$. They satisfy 
\beqn
\lim_{i \to \infty} E ( \A_i) = \sum_{v \in V_\Gamma^1}  \Big( E (\A_v ) + |{\bm m}_v | \Big) + \sum_{v \in V_\Gamma^\infty}  E (u_v).
\eeqn
Lemma \ref{lemma74}, \ref{lemma75}, \ref{lemma76}, and \ref{lemma77} imply that the collection satisfies the stability condition of Definition \ref{defn62}. To show that the limiting object satisfies the definition of stable scaled instantons (Definition \ref{defn62}), it remains to prove that this collection form a stable scaled instanton, i.e., proving ``bubbles connect'' or more precisely, the matching condition of Definition \ref{defn62}. We only prove this condition for a boundary node $w_e \in W_v$ corresponding to an edge $e_{v' \succ v} \in E_\Gamma$. The case for interior node can be proved in a similar way (see Definition \ref{defn62} for the notions of boundary and interior nodes). By the construction of the limit, there are two sequences of M\"obius transformations 
\begin{align*}
&\ \varphi_{i,v}(z) = \rho_{i,v} ( z + Z_{i,v}),\ &\ \varphi_{i,v'} = \rho_{i,v'} (z + Z_{i,v'})
\end{align*}
with $Z_{i,v}, Z_{i,v'}\in {\bm R}$ and $\rho_{i,v} \gg \rho_{i,v'}$. By the translation invariance, one can assume that $Z_{i,v'} \equiv 0$. Then one has the following lemma (recall the notion of the diameter in Subsection \ref{subsection43}.)

\begin{lemma}[Bubble connects] For sufficiently large $s$ one has
\beq\label{eqn712}
\lim_{i \to \infty} \sup_{z \in {\it Ann}^+( e^s \rho_{i, v'}, e^{-s} \rho_{i,v})} \| F_{\A_i} \|_{L^2(\{z \}\times \Sigma)} = 0
\eeq
and 
\beq\label{eqn713}
\lim_{s \to \infty} \lim_{i \to \infty} {\rm diam} ( \A_i; \M_{ e^s \rho_{i,v'}, e^{-s} \rho_{i,v}} ) = 0.
\eeq
\end{lemma}

\begin{proof}
By the inductive construction of the limiting object, there exists $s_0>0$ such that for all sufficiently large $i$, there holds
\beqn
E(\A_i; \M_{e^{s_0} \rho_{i,v'}, e^{-s_0} \rho_{i, v}}) \leq \frac{\hbar}{2}. 
\eeqn
Then the annulus lemma (Proposition \ref{annulus}), one has 
\beqn
\lim_{s \to \infty} \lim_{i \to \infty} E(\A_i; \M_{e^s \rho_{i,v'}, e^{-s} \rho_{i,v}}) = 0.
\eeqn
Then \eqref{eqn712} follows from standard estimate of the ASD equation. Hence the diameter in the limit \eqref{eqn713} is defined for sufficiently large $s$ and sufficiently large $i$. Then \eqref{eqn713} follows then from the diameter bound given by Lemma \ref{diameter1} and Lemma \ref{diameter2}.
\end{proof}

The above lemma implies that the evaluations of the two sides of the node corresponding to the edge $e_{v' \succ v}$ (which are points in $L_N$) agree after being mapped to $\iota(L_M) \subset R_\Sigma$. One can use the ASD equation on $\partial {\it Ann}^+(e^s \rho_{i, v'}, e^{-s} \rho_{i, v}) \times M_0$ and the fact that the energy of the solution restricted to this region shrinks to zero to prove that the evaluation (as a point of $L_N$) \textcolor{black}{of the limiting holomorphic map at one side of the node after transposing coincides with the evaluation at the other side of the node}. This finishes the proof of the matching condition for the edge $e_{v' \succ v}$. 

We declare that we have finished the proof of Theorem \ref{thm64} (the same as Theorem \ref{thm14}). As we have addressed previously, the case of instantons over $\C\times \Sigma$ (Theorem \ref{thm15}) can be proved using a similar and more simplified method.

\appendix

\section{Proof of Theorem \ref{thm33}}\label{appendixa}

The following lemma is similar to part of \cite[Theorem 3.1.1]{Duncan_thesis} (identical to \cite[Theorem 3.6]{Duncan_2012}). However as the statement is weaker and the proof is slightly different from Duncan's proof. 

\begin{lemma}\label{lemmaa1}
For any \textcolor{black}{$p\geq 2$}, the linearization of the Narasimhan--Seshadri map ${\it NS}_p: {\mc A}_{\epsilon_p}^{1,p}(Q) \to {\mc A}_{\rm flat}^{1,p}(Q)$, which is a map
\beqn
{\it DNS}_p: {\mc A}_{\epsilon_p}^{1,p}(Q) \times W^{1,p} (\Lambda^1\otimes {\rm ad}Q) \to W^{1,p}(\Lambda^1 \otimes {\rm ad}Q),
\eeqn
extends to a continuous map
\beqn
{\it DNS}_p: {\mc A}_{\epsilon_p}^{1,p}(Q) \to {\rm End}(L^p( \Lambda^1 \otimes {\rm ad}Q)).
\eeqn
\end{lemma}

\begin{proof}
Consider the map 
\beqn
{\mc F}: {\mc A}_{\epsilon_p}^{1,p}(Q) \times W^{2,p}({\rm ad} Q) \to L^p({\rm ad} Q)
\eeqn
defined by
\beqn
{\mc F}(B, h) = * F_{(e^{{\bm i} h})^* B}
\eeqn
\textcolor{black}{(see the explicit formula \eqref{eqn212})}. Then $h_B$ is the unique solution (subject to the conditions of the implicit function theorem) to ${\mc F}(B, h) = 0$. Moreover, there holds 
\beqn
\frac{\partial {\mc F}}{\partial B} + \frac{\partial {\mc F}}{\partial h} \frac{\partial h_B}{\partial B} = 0.
\eeqn
The partial derivative $\partial {\mc F}/ \partial h$ is a second order operator on $h$, which, as an invertible linear map from $W^{2,p}$ to $L^p$, depends on $B$ continuously. Hence it extends to an invertible operator from $W^{1,p}$ to $W^{-1,p}$ depending on $B\in W^{1,p}$ continuously. On the other hand, the partial derivative $\partial {\mc F}/ \partial B$ is a first order differential operator which, as a bounded operator from $W^{1,p}$ to $L^p$, depends continuously on $B \in W^{1,p}$. Hence it is also a bounded operator from $L^p$ to $W^{-1,p}$. Therefore, one has 
\beqn
\frac{\partial h_B}{\partial B} = - \left( \frac{\partial {\mc F}}{\partial h} \right)^{-1} \circ \frac{\partial {\mc F}}{\partial B}: L^p (\Lambda^1 \otimes {\rm ad} Q) \to W^{1,p}_B( {\rm ad} Q)
\eeqn
is bounded and depends on $B$ continuously. 
\end{proof}

Now we would like to compare the derivative of the Narasimhan--Seshadri map and the orthogonal projection onto harmonic forms. \textcolor{black}{Restrict to the $p = 2$ case.} For each $B \in {\mc A}^{1,2}(Q)$, one has the Hodge decomposition 
\beqn
L^2(\Sigma, \Lambda^1\otimes {\rm ad}Q) \simeq {\mc H}_B \oplus {\rm Im} d_B \oplus {\rm Im} d_B^*
\eeqn
where ${\mc H}_B$ is the subspace of harmonic ${\rm ad}Q$-valued 1-forms. Let 
\beqn
\pi_B: L^2(\Sigma, \Lambda^1 \otimes {\rm ad}Q) \to L^2(\Sigma, \Lambda^1 \otimes {\rm ad} Q)
\eeqn
be the orthogonal projection onto ${\mc H}_B$. On the other hand, suppose $\bar B = {\it NS}_2 (B)\in {\mc A}_{\rm flat}^{1,2}(Q)$ is defined and $\bar B$ represent a point $b \in R_\Sigma$. Then one has the natural identification 
\beqn
T_b R_\Sigma \cong {\mc H}_{\bar B}.
\eeqn
Via this identification, one can see that the derivative of $\ov{\it{NS}}_2: {\mc A}_{\epsilon_2}^{1,2}(Q) \to R_\Sigma$ at $B$ coincides with 
\beqn
D_B \ov{\it{NS}}_2 = \pi_{\bar B} \circ D_B {\it{NS}}_2: W^{1,2}_B(\Lambda^1 \otimes {\rm ad} Q) \to  {\mc H}_{\bar B} \subset L^2(\Lambda^1 \otimes {\rm ad} Q).
\eeqn
By Lemma \ref{lemmaa1}, $D_B\ov{\it{NS}}_2: L^2(\Lambda^1 \otimes {\rm ad}Q) \to L^2(\Lambda^1\otimes {\rm ad}Q)$ depends on $B$ continuously. 

The following corollary is an analogue of \cite[Corollary 3.1.13]{Duncan_thesis} (identically \cite[Corollary 3.17]{Duncan_2012}). Notice the difference in the choice of Sobolev exponents.

\begin{cor}\label{cora2}
For $\epsilon>0$ sufficiently small, the map 
\begin{align}\label{eqna1}
&\ {\mc A}_{\epsilon}^{1,2}(Q) \to [0, +\infty),\ &\ B \mapsto \| \pi_B - D_B \ov{\it{NS}}_2 \|_{L^2\to L^2}
\end{align}
is continuous and converges to zero as $\| F_B \|_{L^2(\Sigma)} \to 0$.
\end{cor}

\begin{proof}
\textcolor{black}{The projection of $\xi \in L^2 (\Sigma, \Lambda^1 \otimes {\rm ad} Q)$ is of the form}
\beq\label{eqna2}
\pi_B(\xi) = \xi - d_B h_\xi' - * d_B h_\xi''.
\eeq
\textcolor{black}{The condition that $\pi_B(\xi)$ is orthogonal to ${\rm Im} d_B \oplus {\rm Im} d_B^*$ is equivalent to that }
\begin{align}\label{eqna3}
&\ d_B^* d_B h_\xi' + * [F_B, h_\xi''] = d_B^* \xi,\ &\  d_B^* d_B h_\xi'' - * [F_B, h_\xi']  = - * d_B \xi.
\end{align}
\textcolor{black}{The left hand side is a linear operator $L_B$ on $h_\xi = (h_\xi', h_\xi'')$ which is bounded from $W^{1,2}$ to $W^{-1, 2}$. With respect to the operator norm, $L_B$ is continuous in $B\in {\mc A}_\epsilon^{1,2}$. Moreover, when $B$ is flat, $L_B$ coincides with the Laplacian $d_B^* d_B: W^{1,2} \to W^{-1, 2}$ which is invertible. As shown by Lemma \ref{lemma217}, when $\| F_B \|_{L^2}$ is sufficiently small, $B$ is $W^{1,2}$-close to a flat connection. Therefore when $\epsilon$ is sufficiently small, the equation \eqref{eqna3} has a unique solution $h_\xi = (h_\xi', h_\xi'') \in W^{1,2}$ to \eqref{eqna3} and there exists $C>0$ such that for all $B \in {\mc A}_\epsilon^{1,2}(Q)$ and $\xi \in L^2(\Sigma, \Lambda^1\otimes {\rm ad} Q)$}
\beq\label{eqna4}
\| h_\xi \|_{W^{1,2}_B} \leq C \left( \| d_B^* \xi \|_{W^{-1, 2}_B} + \| d_B \xi \|_{W^{-1, 2}_B} \right) \leq C \left( \| d_B^* \xi \|_{L^2} + \| d_B \xi \|_{L^2} \right).
\eeq
Therefore, the family of linear operators $\pi_B:L^2 \to L^2$ depends continuously on $B$. Together with Lemma \ref{lemmaa1}, the function \eqref{eqna1} is continuous. Moreover, when $B\in {\mc A}_{\rm flat}^{1,2}(Q)$, we know that $\pi_B = D_B \ov{\it{NS}}_2$. Therefore the function \eqref{eqna1} converges to zero as $\| F_B \|_{L^2} \to 0$. 
%where $h'$ and $h''$ are the unique solutions to 
%\begin{align*}
%&\ d_B d_B^* h' = d_B^* \xi,\ &\ d_B^* d_B h'' = - * d_B \xi.
%\end{align*}
%When $B \in {\mc A}_{\rm flat}(Q)$, $\Delta_B = d_B^* d_B: W^{2,2}_B({\rm ad}Q) \to L^2({\rm ad} Q)$ is invertible. Then when $B \in {\mc A}_\epsilon^{1,2}(Q)$ for $\epsilon$ sufficiently small, $\Delta_B: W_B^{2,2}({\rm ad} Q) \to L^2({\rm ad} Q)$ is still invertible. So 
%\beq\label{eqna2}
%\pi_B (\xi) = \xi - d_B \Delta_B^{-1} d_B^* \xi + * d_B \Delta_B^{-1} * d_B \xi
%\eeq
%which depends on $B\in {\mc A}_\epsilon^{1,2}(Q)$ continuously. Together with Lemma \ref{lemmaa1}, the function \eqref{eqna1} is continuous. When $B \in {\mc A}_{\rm flat}^{1,2}(Q)$, we know that $\pi_B = D_B \ov{\it{NS}}_2$. When $\| F_B \|_{L^2(Q)}$ is small, Lemma \ref{lemma217} shows that $B$ is $W^{1,2}$-close to a flat connection. Therefore the function \eqref{eqna1} converges to zero as $\| F_B \|_{L^2(\Sigma)} \to 0$. 
\end{proof}

We now turn to the proof of Theorem \ref{thm33}. We first prove it for the special case when the energy density does not blow up.

\begin{prop}\label{propa3}
Let $S$, $S_i$, $\A_i$, and $\rho_i$ be as in Theorem \ref{thm33}. Suppose for all compact subsets $K \subset S$
\beq\label{eqna5}
\limsup_{i \to \infty} \| e_{\rho_i} \|_{L^\infty(K)} < \infty.
\eeq
Then the conclusion of Theorem \ref{thm33} holds with $W_\infty = \emptyset$. 
\end{prop}

\begin{proof}

For any precompact open subset $K \subset S$, the hypothesis implies that 
\beqn
\limsup_{i \to \infty} \| \mu_{\A_i'} \|_{L^\infty( K)} = 0.
\eeqn
Hence when $i$ is sufficiently large the restriction $\A_i'|_K$ satisfies hypothesis of Proposition \ref{prop219} and hence defines a sequence of holomorphic maps
\beqn
u_i': K \to R_\Sigma,\ u_i'(z):= \ov{\it NS}_2(B_i'(z)).
\eeqn
We would like to show that $u_i'$ converges to a limiting holomorphic curve without bubbling. First we estimate the energy density function $e_i'(z) = |\partial_s u_i'(z)|^2$. One has 
\beq\label{eqna6}
\begin{split}
&\  \big| \| \pi_{B_i'(z)} ( \A_{i,s}' ) \|_{L^2(\{z\}\times \Sigma)}^2 - | \partial_s u_i'(z) |^2 \big| \\
= &\ \big| \| \pi_{B_i'(z)} ( \A_{i,s}' )\|_{L^2(\{z\}\times \Sigma)}^2 - \| (D_{B_i'(z)} \ov{\it NS}_2 ) ( \A_{i,s}') \|_{L^2(\{z\} \times \Sigma)}^2 \big|\\
\leq &\ C \| \A_{i,s}'\|_{L^2(\{z\}\times \Sigma)} \| \pi_{B_i'(z)}  (\A_{i,s}') - ( D_{B_i'(z)} \ov{\it NS}_2 ) ( \A_{i,s}') \|_{L^2(\Sigma)} \\
\leq &\ C \| \pi_{B_i'(z)} - D_{B_i'(z)} \ov{\it{NS}}_2 \|_{L^2 \to L^2}  \| \A_{i,s}' \|_{L^2(\{z\}\times \Sigma)}^2.
\end{split}
\eeq
Then since $ \| \A_{i,s}'\|_{L^2(\{z\}\times \Sigma)}^2 \leq e_{\rho_i}(z)$ is uniformly bounded over compact subsets of $S$, by Corollary \ref{cora2}, the energy density of $u_i'$ is also uniformly bounded over compact subsets of $S$. Therefore for the sequence $u_i'$ there is no energy blowup. Then by Gromov compactness (Proposition \ref{prop26}), a subsequence of $u_i'$ (still indexed by $i$) converges to a holomorphic map $u_\infty: S \to R_\Sigma$. 

We verify that $e_{\rho_i}$ converges to the energy density $e_\infty$ of $u_\infty$ in $C^0_{\rm loc}$. Recall that 
\beqn
e_{\rho_i}(z) = \| \A_{i,s}' \|_{L^2(\{z\}\times \Sigma)}^2 + \rho_i^2 \| \mu_{\A_i'}\|_{L^2(\{z\}\times \Sigma)}^2.
\eeqn
For any compact $K \subset S$, sufficiently large $i$, and $z \in K$, \textcolor{black}{by \eqref{eqna2} and \eqref{eqna4}, one has
\begin{multline*}
\| \A_{i,s}' - \pi_{B_i'(z)} (\A_{i,s}')  \|_{L^2(\{z\}\times \Sigma)} \\
 \leq  \| d_{B_i'(z)} h_{\A_{i, s}'}' \|_{L^2(\{z\}\times \Sigma)} +  \| d_{B_i'(z)}^* h_{\A_{i,s}'}'' \|_{L^2(\{z\}\times \Sigma)} \\
 \leq C \left( \| d_{B_i'(z)} \A_{i, s}' \|_{L^2(\{z\}\times \Sigma)} + \| d_{B_i'(z)}^* \A_{i,s}' \|_{L^2(\{z\}\times \Sigma)} \right)   .
\end{multline*}
%Since the Laplacian $\Delta_{B_i'(z)}$ is bounded from below, for some $C>0$ one has
%\beqn
%\| \A_{i,s}' -  \pi_{B_i'(z)} ( \A_{i,s}' ) \|_{L^2(\Sigma)} \leq C\Big( \| d_{B_i'(z)}^* %\A_{i,s}' \|_{L^2(\Sigma)} + \| d_{B_i'(z)} \A_{i,s}'\|_{L^2(\Sigma)}\Big).
%\eeqn
By Theorem \ref{thm31} and \eqref{eqn34}}, for some compact subset $K'\subset S$ whose interior contains $K$, one has 
\beq\label{eqna7}
\sup_{ z\in K} \| \A_{i,s}' - \pi_{B_i'(z)} (\A_{i,s}' ) \|_{L^2(\{z \} \times \Sigma)}^2 \leq C \rho_i^{-2} E_{\rho_i} (\A_i'; K' \times \Sigma).
\eeq
Moreover, by Theorem \ref{thm31} and \eqref{eqn36} (for some $p>2$), one has 
\beq\label{eqna8}
\sup_{z \in K} \rho_i^2 \| \mu_{\A_i'} \|_{L^2(\{z \}\times \Sigma)}^2 \leq C \rho_i^{-2\varepsilon}
\eeq
for certain $\varepsilon>0$. Then by \eqref{eqna6}, \eqref{eqna7}, and \eqref{eqna8}, one obtains that for all $z \in K$, 
\beqn
\begin{split}
&\  \big| e_{\rho_i}(z) - |\partial_s u_i'(z)|^2 \big| \\
\leq &\ \big| \| \pi_{B_i'(z)}( \A_{i,s}' ) \|_{L^2(\{z\}\times \Sigma)}^2 - |\partial_s u_i'(z)|^2 \big| + \| \A_{i,s}' - \pi_{B_i'(z)} ( \A_{i,s}' ) \|_{L^2(\Sigma)}^2 +  \rho_i^2 \| \mu_{\A_i'} \|_{L^2(\{z\}\times \Sigma)}^2 \\
\leq &\ C \| \pi_{B_i'(z)} - D_{B_i'(z)} \ov{\it{NS}}_2 \|_{L^2 \to L^2} \| \A_{i,s}'\|_{L^2(\{z\}\times \Sigma)}^2 + C \rho_i^{-2\varepsilon}.
\end{split}
\eeqn
By Corollary \ref{cora2} and the fact that $F_{B_i'(z)} \to 0$ uniformly over $K' \times \Sigma$, one has
\beqn
\lim_{i \to \infty} \| e_{\rho_i} - |\partial_s u_i'|^2\|_{L^\infty(K)} \to 0.
\eeqn
Since $u_i'$ converges to $u_\infty$ in $C^\infty_{\rm loc}$, it follows that $e_{\rho_i}$ converges to $e_\infty$ in $C^0_{\rm loc}$.
\end{proof}

In the constructive proof of Theorem \ref{thm33} one needs the following lemma.

\begin{lemma}[Hofer's lemma]\cite[Lemma 4.6.4]{McDuff_Salamon_2004}\label{hoferlemma}
Let $X$ be a metric space. Let $x \in X$ and $d > 0$. Suppose the closed ball $\ov{B_{2d}(x)}$ is complete. Let $f: X \to [0, +\infty)$ be a continuous function. Then there exist $\xi\in B_{2d}(x)$ and $\varepsilon \in (0, d]$ such that
\begin{align*}
&\ \sup_{\ov{B_\varepsilon(\xi)} } f \leq 2 f(\xi),\ &\ \varepsilon f(\xi) \geq d f(x).
\end{align*}
\end{lemma}

Now we prove the general case of Theorem \ref{thm33}. Define $\hbar > 0$ to be the smaller than 1) the minimal energy of nontrivial ${\bm R}^4$-instantons 2) the minimal energy of nontrivial ASD instantons over ${\bm C}\times \Sigma$ (whose existence is proved in \cite[Proof of Theorem 9.1]{Dostoglou_Salamon}\cite{Wehrheim_2006}) and 3) the minimal energy of nontrivial holomorphic spheres in $R_\Sigma$. 

\begin{proof}[Proof of Theorem \ref{thm33}]

{\bf Step 1.} We prove the following claim. 

\vspace{0.2cm}

\noindent {\it Claim A.} There exist a subsequence (still indexed by $i$), a finite subset $W_+ \subset S$, and a function $m_+: W_+ \to [\hbar, +\infty)$ satisfying the following condition.
\begin{enumerate}
\item For any compact subset $K \subset S \setminus W_+$ one has 
\beq\label{eqna9}
\limsup_{i \to \infty} \rho_i^{-2} \| e_{\rho_i}^\infty \|_{L^\infty(K)} < \infty.
\eeq

\item  \textcolor{black}{For each $w \in W_+$ one has
\beq\label{eqna10}
\lim_{r \to 0} \liminf_{i \to \infty} E_{\rho_i}(\A_i'; B_r(w) \times \Sigma) = m_+(w).
\eeq}
\end{enumerate}

\vspace{0.1cm}

\noindent {\it Proof of Claim A.} We construct inductively a finite subset $W_+^k \subset S$ with $k$ elements and a function $m_+^k:W_+^k \to [\hbar, +\infty)$ such that \eqref{eqna10} is satisfied for $W_+ = W_+^k$ for all $k$. For $k = 0$, define $W_+^0 = \emptyset$. Suppose we have constructed $W_+^k$ for some $k \geq 0$. Then for all compact subset $K \subset S \setminus W_+^k$, consider the limit \eqref{eqna9}. \textcolor{black}{If it is always zero}, then our inductive construction stops and the claim is proved. If the limit \eqref{eqna9} is not true for some $K$, then there exist a subsequence (still indexed by $i$), a sequence of points $w_i \in S_i\setminus W_+^k$ converging to $w \in S \setminus W_+^k$ such that
\beq\label{eqna11}
\lim_{i \to \infty} \rho_i^{-2} e_{\rho_i}^\infty(w_i) \to \infty.
\eeq
\textcolor{black}{Then there exists $r>0$ such that for all sufficiently large $i$,} 
\beqn
\ov{B_r(w_i)} \subset S_i \setminus W_+^k.
\eeqn
Define $\psi_i: B_{\rho_i r} \to B_r(w_i)\subset S_i$ by
\beqn
\textcolor{black}{\psi_i(z) = w_i + \frac{z}{\rho_i}.}
\eeqn
Then $\psi_i$ pulls back $\A_i'$ to a solution $\A_i$ to the un-scaled ASD equation over $B_{\rho_i r} \times \Sigma$. \eqref{eqna11} and the energy bound imply that 
\begin{align*}
&\ \limsup_{i \to \infty} \| F_{\A_i}\|_{L^2(B_{\rho_i r}\times \Sigma)} < \infty,\ &\ \| F_{\A_i}\|_{L^\infty(\{0\}\times \Sigma)} \to \infty.
\end{align*}
Then by the standard bubbling analysis for the ASD equation, for some subsequence (still indexed by $i$), a nontrivial ${\bm R}^4$-instanton bubbles off at some point in $\{0\}\times \Sigma$. Then for all small $r>0$, one has
\beqn
\liminf_{i \to \infty} E_{\rho_i}(\A_i'; B_r(w) \times \Sigma) \in [\hbar, +\infty).
\eeqn
\textcolor{black}{As the above limit is monotonic in $r$, the limit}
\beqn
\textcolor{black}{m_+(w):= \lim_{r\to 0} \liminf_{i \to \infty} E_{\rho_i}(\A_i'; B_r(w) \times \Sigma)\in [\hbar, +\infty) }
\eeqn
exists. Then $W_+^{k+1}$ and $m_+(w)$ are defined by adding to $W_+^k$ this point and the above value. Then the induction can be carried out. Lastly, since there always holds
\beqn
k \hbar \leq  \sum_{w \in W_+^k} m_+(w) \leq \limsup_{i \to \infty} E_{\rho_i}(\A_i') < \infty,
\eeqn
the induction must stops at a finite $k$. \hfill {\it End of the proof of Claim A.}

\vspace{0.2cm}

\noindent {\bf Step 2.} We prove the following claim.

\vspace{0.2cm}

\noindent {\it Claim B.} There is a subsequence (still indexed by $i$), a finite subset $W_0 \subset S \setminus W_+$, and a function $m_0: W_0 \to [\hbar, +\infty)$ satisfying the following conditions. 
\begin{enumerate}
\item For all compact subsets $K \subset S \setminus (W_+ \cup W_0)$, there holds
\beq\label{eqna12}
\limsup_{i \to \infty} \rho_i^{-2} \| e_{\rho_i}\|_{L^\infty(K)}  = 0.
\eeq

\item  For each $w \in W_0$, one has
\beqn
\textcolor{black}{ \lim_{r \to 0} \liminf_{i \to \infty} E_{\rho_i}(\A_i'; B_r(w) \times \Sigma) = m_0(w).}
\eeqn
\end{enumerate}

\vspace{0.1cm}

\noindent {\it Proof of Claim B.} We construct inductively a subset $W_0^k \subset S \setminus W_+$ with $k$ elements and a subsequence (still indexed by $i$) such that the above condition (b) is satisfied for $W_0 = W_0^k$. For $k = 0$ define $W_0^0 = \emptyset$. Suppose we have found $W_0^k$. Consider the limit \eqref{eqna12} for all compact subsets $K \subset S \setminus (W_+ \cup W_0^k)$. If it is always \textcolor{black}{zero}, then the induction stops and the claim is proved. Otherwise, there exist a subsequence (still indexed by $i$), a sequence of points $w_i \in S_i \setminus (W_+ \cup W_0^k)$ converging to a limit $w \in S \setminus (W_+ \cup W_0^k)$ such that
\begin{align}\label{eqna13}
&\ \lambda_i^2:= e_{\rho_i}(w_i) \to \infty,\ &\ \lim_{i \to \infty} \rho_i^{-2} \lambda_i^2 > 0.
\end{align}
By Claim A and \eqref{eqna9}, $\rho_i^{-2} \lambda_i^2$ converges to a finite limit. 

For each $R>0$, for $i$ sufficiently large, one has 
\beqn
B_{\rho_i^{-1} R}(w_i) \subset S_i \subset S. 
\eeqn
Consider the map 
\beqn
\psi_i: B_R \to \varphi_{\rho_i}( S_i ),\ z \mapsto  w_i + \frac{z}{\rho_i}.
\eeqn
The $\psi_i$ pulls back $\A_i'$ to a sequence of solutions $\A_i$ to the un-scaled ASD equation over $B_R \times \Sigma$. Condition \eqref{eqna9} implies that 
\beqn
\limsup_{i \to \infty} \| F_{\A_i}\|_{L^\infty(B_R \times \Sigma)} < \infty.
\eeqn
Then a subsequence (still indexed by $i$) converges up to gauge transformation to a solution to the un-scaled ASD equation over $B_R \times \Sigma$. Pushing $R$ to infinity, 
Moreover, the condition \eqref{eqna13} implies that
\beqn
\lim_{i \to \infty} \| F_{\A_i}\|_{L^2(\{0\}\times \Sigma)} > 0.
\eeqn
Hence the limiting solution is nontrivial, hence its energy is at least $\hbar$. Then we define $W_0^{k+1}:= W_0^k \cup \{w\}$ and \textcolor{black}{the limit}
\beqn
\textcolor{black}{m_0(w):= \lim_{r \to 0} \liminf_{i\to \infty} E_{\rho_i}( \A_i'; B_r(w) \times \Sigma)}
\eeqn
exists. Since the total energy is bounded, this induction stops at a finite $k$. Then the assertion of Claim B holds for $W_0 = W_0^k$. \hfill {\it End of the proof of Claim B.}

\vspace{0.2cm}

\noindent {\bf Step 3.} We prove the following claim. 

\vspace{0.2cm}

\noindent {\it Claim C.} There exist a subsequence (still indexed by $i$), a finite subset $W_- \subset S \setminus (W_+ \cup W_0)$, and a function $m_-: W_- \to (0, +\infty)$ satisfying the following conditions. 

\begin{enumerate}
\item For all compact subsets $K \subset S \setminus (W_+ \cup W_0 \cup W_-)$, one has 
\beq\label{eqna14}
\limsup_{i \to \infty} \| e_{\rho_i}\|_{L^\infty(K)} < \infty.
\eeq
 
\item For each $w \in W_-$, there holds
\beqn
\textcolor{black}{
\lim_{r \to 0} \liminf_{i \to \infty} E(\A_i'; B_r(w) \times \Sigma) = m_- (w).}
\eeqn
\end{enumerate}

\vspace{0.1cm}

\noindent {\it Proof of Claim C.} Similar as the proofs of Claim A and B, we construct a finite subset $W_-^k \subset S \setminus (W_+ \cup W_0)$ and the function $m_-: W_-^k \to (0, +\infty)$ inductively such that the above condition (b) holds. Starting from $W_-^0 = \emptyset$, suppose we have constructed $W_-^k$. Then consider the limit \eqref{eqna14} for all compact subsets $K \subset S\setminus (W_+ \cup W_0 \cup W_-^k)$. If it is always finite, then the induction stops and we have proved this claim. If not, then there exist a subsequence (still indexed by $i$), a sequence $w_i \in S_i \setminus (W_+ \cup W_0 \cup W_-^k)$ converging to a limit \textcolor{black}{$w \in S \setminus (W_+ \cup W_0 \cup W_-^k)$} such that 
\beqn
\lambda_i^2:= e_{\rho_i}(w_i)  \to +\infty.
\eeqn
Claim A and B imply that $\lambda_i$ grows slower than $\rho_i$. We would like to construct a holomorphic sphere bubble for a certain subsequence. 

Indeed, choose $d>0$ such that $\ov{B_{2d}(w_i)} \subset S_i \setminus (W_+ \cup W_0 \cup W_-^k)$.  For each $i$, apply Hofer's lemma for $f_i = \sqrt{ e_{\rho_i}}$. Then there exist $w_i' \in B_{2d}(w_i)$ and $\varepsilon_i \in (0, d]$ such that 
\begin{align}\label{eqna15}
&\ \sup_{B_{\varepsilon_i}(w_i')} e_{\rho_i} \leq 4 e_{\rho_i}(w_i'),\ &\ \varepsilon_i^2 e_{\rho_i}(w_i') \geq d^2 e_{\rho_i}(w_i).
\end{align}
Denote $\lambda_i':= \sqrt{ e_{\rho_i}(w_i')}$. Then \textcolor{black}{by \eqref{eqna12} and \eqref{eqna15} there holds}
\beqn
\lambda_i \leq \lambda_i' \ll \rho_i.
\eeqn
%implies that $\lambda_i'$ is comparable to $\rho_i$, hence is also comparable to $\lambda_i$.
Choose a sequence $R_i \to \infty$ which grows slower than $\lambda_i$. Then the map
\beqn
\psi_i: B_{R_i} \to S_i \setminus (W_+ \cup W_0 \cup W_-^k),\ z \mapsto w_i' + \frac{z}{\lambda_i'}
\eeqn
is defined for all $i$. By \eqref{eqna15}, one has 
\beq\label{eqna16}
\psi_i(B_{R_i}) = B_{R_i / \lambda_i'}(w_i') \subset B_{R_i \varepsilon_i/ d \lambda_i}(w_i') \subset B_{\varepsilon_i}(w_i').
\eeq
Then we obtained a sequence of solutions to the rescaled ASD equation
\begin{align*}
&\ \A_{i,s}'' + * \A_{i, t}'' = 0,\ &\  \kappa_{\A_i''} + (\rho_i'')^2 * \mu_{\A_i''} = 0
\end{align*}
over $B_{R_i} \times \Sigma$ where $\rho_i'':= \rho_i/ \lambda_i' \to \infty$. Let $e_{\rho_i''}$ be the energy density function of $\A_i''$. The first inequality of \eqref{eqna15} and \eqref{eqna16} implies that 
\beqn
\limsup_{i \to \infty} e_{\rho_i''}  \leq 4.
\eeqn
Moreover, $e_{\rho_i''}(0) = 1$. Then for all sufficiently large $i$ and all $z\in B_{R_i}$, $B_i''(z)$ is contained in the domain of $\ov{\it{NS}}_2$. So we obtain a sequence of holomorphic maps 
\begin{align*}
&\ u_i'': B_{R_i} \to R_\Sigma,\ &\ u_i''(z):= \ov{\it{NS}}_2( B_i''(z)).
\end{align*}
Then by Proposition \ref{propa3}, a subsequence of $u_i''$ converges to in $C^\infty_{\rm loc}$ to a holomorphic map $u_\infty'': {\bm C} \to R_\Sigma$. Moreover the convergence of energy density and the fact that $e_{\rho_i''}(0) = 1$ implies that $u_\infty''$ is a nontrivial holomorphic map with finite energy. Therefore, by Gromov's removal of singularity theorem, $u_\infty''$ extends to a holomorphic sphere in $R_\Sigma$, hence the energy of $u_\infty''$ is at least $\hbar$. Then we define 
\begin{align*}
&\ W_-^{k+1} = W_-^k \cup \{w\},\ &\ \textcolor{black}{m_-(w):= \lim_{r \to 0} \liminf_{i \to \infty} E_{\rho_i}(\A_i'; B_r(w) \times \Sigma).} 
\end{align*}
Similar to the previous claims, the induction stops at a finite $k$. \hfill {\it End of Proof of Claim C.}

\vspace{0.2cm}

\noindent {\bf Step 4.} We construct the holomorphic curve. Denote $W_\infty = W_+ \cup W_0 \cup W_-$ and let $m_\infty: W_\infty \to [\hbar, +\infty)$ be the union of $m_+$, $m_0$, and $m_-$. Take an arbitrary precompact open subset $K \subset S \setminus W_\infty$. Then \eqref{eqna14} implies that 
\beqn
\lim_{i \to \infty} \sup_{z \in K} \| F_{B_i'(z)} \|_{L^2(\Sigma)} = 0.
\eeqn
Therefore, for sufficiently large $i$ and all $z \in K$, $B_i'(z)$ is contained in the domain of $\ov{\it{NS}}_2$. Hence by Proposition \ref{prop219}, one obtains a sequence of holomorphic curves
\begin{align*}
&\ u_i': K \to R_\Sigma,\ &\ u_i'(z):= \ov{\it{NS}}_2(B_i'(z)).
\end{align*}
One can choose a subsequence (still indexed by $i$) and an exhaustive sequence of precompact open subsets $K_i \subset S \setminus W_\infty$ such that $K_i \subset K_{i+1}$ and $u_i'$ is defined over $K_i$. Then by condition (a) and Proposition \ref{propa3}, for a further subsequence (still indexed by $i$), the sequence $u_i': K_i \to R_\Sigma$ of holomorphic curves converges to a holomorphic map 
\beqn
u_\infty: S \setminus W_\infty \to R_\Sigma.
\eeqn
The convergence of energy density and the uniform energy bound imply that 
\beq\label{eqna17}
\lim_{i \to \infty} E_{\rho_i}(\A_i'; K \times \Sigma) = E(u_\infty; K)\ (\forall\ {\rm compact}\ K \subset S \setminus W_\infty).
\eeq
In particular, $u_\infty$ has finite energy. Then by Gromov's removal of singularity theorem, $u_\infty$ extends smoothly to a holomorphic map $u_\infty: S \to R_\Sigma$. 

\vspace{0.2cm}

\noindent \textcolor{black}{{\bf Step 5.} We prove the remaining claims of Theorem \ref{thm33}. First we prove that there exists a subsequence (still indexed by $i$) such that
\beq\label{eqna18}
\lim_{i \to \infty} E_{\rho_i}(\A_i'; B_r(z) \times \Sigma)\ {\rm exists}\	 (\forall\ r>0\ {\rm sufficiently\ small\ and}\ \forall z\in S).
\eeq
Indeed, if $z\notin W_\infty$, then by the uniform convergence of the energy density towards the energy density of the limiting holomorphic curve, we see that the above limit converges to the energy of $u_\infty$ restricted to $B_r(z)$. On the other hand, choose $r_0>0$ such that
\beqn
B_{r_0}(w) \cap B_{r_0}(w') = \emptyset,\ \forall w, w' \in W_\infty,\ w \neq w'.
\eeqn
Then one can find a subsequence (still indexed by $i$) such that for each $w \in W_\infty$, the limit 
\beqn
\lim_{i \to \infty} E_{\rho_i}(\A_i'; B_{r_0}(w)\times \Sigma)
\eeqn
exists. Then for all $r\in (0, r_0)$ and $w\in W_\infty$, by \eqref{eqna17}, one has
\beqn
\lim_{i \to \infty} E_{\rho_i}(\A_i'; (B_{r_0}(w) \setminus B_r(w)) \times \Sigma) = E(u_\infty; B_{r_0}(w) \setminus B_r(w)).
\eeqn
Hence the limit in \eqref{eqna18} exists for $z = w \in W_\infty$ and sufficiently small $r>0$. }

Now extend $m_\infty: W_\infty \to {\mb R}$ to a measure on $S$ by setting $m_\infty(z) = 0$ if $z \notin W_\infty$. Then by the definition of $m_\infty$ and \eqref{eqna18}, one has 
\beq\label{eqna19}
m_\infty(z) = \lim_{r\to \infty} \lim_{i \to \infty} E_{\rho_i}(\A_i'; B_r(z) \times \Sigma).
\eeq
Hence item (a) of Theorem \ref{thm33} is verified.

Notice that item (b) of Theorem \ref{thm33} has been established when we construct the limiting holomorphic curve. Now we verify item (c) of Theorem \ref{thm33}. From the definition of $m_\infty$, \eqref{eqna17}, and \eqref{eqna18}, if $K\subset S$ is any compact subset whose interior containing $W_\infty$ \textcolor{black}{and $r>0$ is sufficiently small}, then one has
\beqn
\begin{split}
&\ \lim_{i \to\infty} E_{\rho_i}(\A_i'; K \times \Sigma)\\
 = &\ \lim_{i \to \infty} \left( E_{\rho_i}( \A_i'; (K \setminus B_r(W_\infty)) \times \Sigma ) +  E_{\rho_i} (\A_i'; B_r(W_\infty) \times \Sigma) \right) \\
= &\ \lim_{i \to \infty} E_{\rho_i}( \A_i'; (K \setminus B_r(W_\infty)) \times \Sigma ) +  \lim_{i \to \infty}  E_{\rho_i} (\A_i'; B_r(W_\infty) \times \Sigma)  \\
= &\  E(u_\infty; K \setminus B_r(W_\infty)) +  \lim_{i \to \infty}  E_{\rho_i} (\A_i'; B_r(W_\infty) \times \Sigma) .
\end{split}
\eeqn
Let $r$ go to $0$, using \eqref{eqna19}, one obtains that
\beqn
\lim_{i \to \infty} E_{\rho_i}(\A_i'; K \times \Sigma) = E(u_\infty; K) + \int_K m_\infty.
\eeqn

Lastly, from the above construction and Proposition \ref{propa3}, $W_\infty= \emptyset$ if and only if $e_{\rho_i}$ is uniformly bounded over all compact subsets of $S$.
\end{proof}

\bibliographystyle{amsalpha}

\bibliography{../../../../mathref}

\end{document}